\def\spacingset#1{\renewcommand{\baselinestretch}%
{#1}\small\normalsize} \spacingset{1}
\DeclareRobustCommand{\bb}[1]{\mathbb{#1}}
\DeclareRobustCommand{\c}[1]{\mathcal{#1}}
\DeclareRobustCommand{\mr}[1]{\mathrm{#1}}
\DeclareRobustCommand{\beef}{\vphantom{\sum}}
\DeclareRobustCommand{\HS}{\mathrm{HS}}
\DeclareRobustCommand{\op}{\mathrm{op}}
\DeclareRobustCommand{\H}{H}
\DeclareRobustCommand{\N}{\mathfrak{n}}
\DeclareRobustCommand{\BS}{\mathfrak{B}}
\DeclareRobustCommand{\s}{\mathfrak{s}}
\DeclareRobustCommand{\E}{\mathbb{E}}
\DeclareRobustCommand{\R}{\mathbb{R}}
\DeclareRobustCommand{\a}{\alpha}
\DeclareRobustCommand{\b}{\beta}
\DeclareRobustCommand{\gam}{\gamma}
\DeclareRobustCommand{\d}{\delta}
\DeclareRobustCommand{\D}{\Delta}
\DeclareRobustCommand{\ep}{\varepsilon}
\DeclareRobustCommand{\eps}{\epsilon}
\DeclareRobustCommand{\bX}{\boldsymbol{X}}
\DeclareRobustCommand{\f}[2]{\frac{#1}{#2}}
\DeclareRobustCommand{\set}[1]{\left\{#1 \right\}}
\DeclareRobustCommand{\Set}[2][]{\left\{#1 \, \middle|\, #2 \right\}}
\DeclareRobustCommand{\bk}[2]{\left\langle #1,\,#2\right\rangle}
\DeclareRobustCommand{\sbk}[2]{\langle #1,\,#2\rangle}
\DeclareRobustCommand{\norm}[1]{\left\lVert #1 \right\rVert}
\DeclareRobustCommand{\snorm}[1]{\lVert #1 \rVert}
\DeclareRobustCommand{\inv}[1]{{#1}^{-1}}
\DeclareMathOperator{\tr}{tr}
\DeclareMathOperator{\vspan}{span}
\DeclareMathOperator{\diag}{diag}
\DeclareMathOperator*{\argmin}{\arg\!\min}
\newtheorem{lemma}{Lemma}
\newtheorem{theorem}{Theorem}
\newtheorem{corollary}{Corollary}
\newtheorem{assumption}{Assumption}
\newtheorem{proposition}{Proposition}
\theoremstyle{definition}
\newtheorem{algorithm}{Algorithm}
\newtheorem{definition}{Definition}
\theoremstyle{remark}
\newtheorem{example}{Example}
\newtheorem{remark}{Remark}
\DeclareRobustCommand{\VAN}[2]{#1}  %
\begin{document}

\def\spacingset#1{\renewcommand{\baselinestretch}%
{#1}\small\normalsize} \spacingset{1}

\title{\bf Kernel Ridge Regression Inference \\
with Applications to Preference Data}
\author{Rahul Singh\\
  Society of Fellows and Department of Economics, Harvard University\\
  \and
  Suhas Vijaykumar\thanks{
  This paper was initially circulated in February 2023 \citep{singh2023kernel}. We thank Alberto Abadie, Isaiah Andrews, Joshua Angrist, Victor Chernozhukov, Anna Mikusheva, Whitney Newey, Parag Pathak, Neil Shephard, Vasilis Syrgkanis, and Elie Tamer for helpful discussions. We are particularly grateful to Anna Mikusheva for guidance. Miriam Nelson and Moses Stewart provided excellent research assistance.
  Both authors received support from the Hausman Dissertation Fellowship, and part of this work was done while Rahul Singh visited the Simons Institute.}\hspace{.2cm}\\
  Department of Economics, U.C.~San Diego}
\date{Original draft: February 2023. This draft: July 2026.}
\maketitle

\bigskip
\begin{abstract}
We provide uniform confidence bands for kernel ridge regression (KRR), a widely used nonparametric regression estimator for nonstandard data such as preferences, sequences, and
graphs.
Despite the prevalence of these data\textemdash e.g., student preferences in school matching mechanisms\textemdash the inferential theory of KRR is not fully known.
We construct valid and sharp confidence sets that shrink at nearly the minimax rate, allowing nonstandard regressors.
Our bootstrap procedure uses anti-symmetric multipliers for computational efficiency and for validity under mis-specification.
We use the procedure to develop a test for match effects, i.e. whether students benefit more from the schools they rank highly.
\end{abstract}

\noindent%
{\it Keywords:} Gaussian approximation, nonparametric regression, reproducing kernel Hilbert space, preference data, school choice.

\vfill
\newpage
\spacingset{1.9} %

\section{Introduction and related work}

 We study a regularized, nonparametric regression estimator for nonstandard data---such as preferences, sequences, and  graphs---called kernel ridge regression.
 Kernel ridge regression is ubiquitous in data science, and it has a practical, closed form expression \citep{kimeldorf1971some}.
 Several recent works advocate for its use in econometric problems \citep{kasy2018optimal,nie2021quasi,singh2020kernel}.
 However, its inferential theory is not fully known, limiting its role in economic research.

Our interest in KRR is motivated by its effectiveness with nonstandard regressors:
data whose traditional, high-dimensional representation fails to capture their latent, low-dimensional structure.
Important examples of nonstandard regressors in economics are rank-ordered lists that represent preference relations, and binary sequences that represent workers' movements in and out of employment.
Our research question is how to construct uniform confidence bands for KRR that are valid and sharp, with nonstandard regressors.

Preference lists, recorded as rankings over a set of choices, are widely collected by the mechanisms that match students to schools and doctors to hospitals \citep{roth1992two,abdulkadirouglu2005new,abdulkadirouglu2017research,abdulkadirouglu2020parents}.
However, they pose a challenge for traditional econometric methods due to their massive dimension: a saturated regression on preferences over $p$ schools produces $p!$ variables.
Employment histories, recorded as sequences of movements in and out of employment, pose a similar challenge \citep{card1988measuring}.
Traditional methods for uniform inference require a small ambient dimension.
As discussed in Section~\ref{sec:model}, lasso-type methods face issues in this setting since it is impractical to enumerate all regressors.

Our main assumption, which is well studied in prior work on KRR's consistency
\citep{smale2007learning,caponnetto2007optimal,mendelson2010regularization,fischer2020sobolev}, is that the data have a low effective dimension
with respect to
 an implicit, nonlinear basis.
This basis is jointly determined by  the data distribution and by a
user-specified kernel. Well-studied examples of kernels include
 the Gaussian kernel for standard data, a kernel based on Kendall's rank correlation for preference data, and a kernel based on the Hamming distance for employment data.\footnote{Certain kernels for standard data recover the Haar or Hermite bases, often used in series regression.} Kernels exist for many other data types commonly used in empirical economic research, such as networks, images, and text.
Section~\ref{sec:model} formalizes low effective dimension, interprets it as an approximate nonlinear factor model, and assesses it through diagnostic plots.

In the preference example, our main assumption is satisfied if most students' preferences are similar to relatively few preference types. Standard representations of preferences using indicator variables or numeric lists either (i) are highly restrictive, (ii) are computationally intractable, or (iii) fail to reflect a natural notion of similarity between preferences.
By contrast, a nonlinear representation based on Kendall's rank correlation is practical, fully nonparametric, and allows us to exploit latent, low dimensional structure that is not captured by the standard representation: many student's preferences may be similar to each other, in the sense that they make similar pairwise comparisons.

Our primary contribution is uniform inference for KRR in a general setting, allowing nonstandard regressors and various kernels. Our inferential procedure re-uses the artifacts of computing KRR once; it is computationally efficient in this sense. It is a multiplier bootstrap for an empirical process \citep{chernozhukov2014anticoncentration,chernozhukov2016empirical}, but with dependent multipliers.
 Our confidence sets contract at nearly the minimax rate of estimation, with a vanishing incremental factor \citep{andrews2013inference}.

To justify our inferential procedure, we derive nonasymptotic, uniform Gaussian couplings in a reproducing kernel Hilbert space (RKHS), \textcolor{black}{building on the recent and sharp Euclidean couplings of \citet{eldan2020clt}.} These imply a strong form of uniform Gaussian approximation under conditions which often correspond to weaker forms of approximation
\citep{chernozhukov2014gaussian,chernozhukov2016empirical,chernozhuokov2022improved}. We also construct bootstrap couplings to sample from this approximating distribution.
To be useful for uniform inference, the couplings we derive are nonasymptotic, so they can handle the increasing complexity of the KRR estimator as regularization vanishes.
 Although we derive these couplings to analyze KRR, future work may use them to provide inference for other kernel methods.

Finally, we use our inferential procedure to construct a test for match effects \citep{narita2018match,bau2022estimating,angrist2023choice}: do students benefit more from the schools they rank highly? Here, KRR allows us to regress on the $25!$ possible preferences that students may have over
the $25$ high schools in the Boston match
\citep{mania2018kernel}.
Our procedure conducts inference for KRR, and therefore tests for match effects.
In a semi-synthetic exercise calibrated to real student preference data, our test has more power than a currently used approach based on indicators for coarsened preference categories.

Our results complement previous works on Gaussian approximation for KRR. \citet{hable2012asymptotic} studies the setting where regularization is bounded away from zero, which
precludes consistency of KRR.
\citet{shang2013local} study the special case of splines with data that are uniform over $[0,1]$.
\citet{yang2017frequentist} study posterior coverage and sup-norm credible sets in Gaussian process regression under additional assumptions motivated by splines. In particular, uniform boundedness of eigenfunctions
\cite[Assumption B]{yang2017frequentist}
appears to rule out the Gaussian kernel over $\R^2$ \citep{bisiacco2024gaussian} and other widely used kernels and data types.
To our knowledge, our results are the first to provide valid, sharp inference in the nonparametric regime where regularization vanishes, applicable to a wide range of settings where KRR may be applied, e.g., preference data.

Whereas we provide uniform inference for KRR with nonstandard data, a mature literature provides uniform inference for series regression with standard data. See e.g. the review of \cite{chen2007large}. A series procedure is typically tractable, with strong asymptotic guarantees, when the regressor belongs to a low dimensional Euclidean space. By contrast, our procedure remains tractable for preference data.
For standard data settings, the series literature provides results that exceed the scope of this paper, e.g. dependence, instruments, and adaptive hyperparameters \citep{belloni2015some,chen2018optimal,chen2024adaptive}.   Future work may develop analogous results for kernel methods with nonstandard data.

Section~\ref{sec:model} interprets the assumptions that underlie consistency of KRR. The same assumptions will justify inference.
Section~\ref{sec:algo_main} proposes our computationally efficient, valid, and sharp procedure for uniform KRR
inference with nonstandard data. We prove that our confidence sets contract at nearly the optimal rate. Section~\ref{sec:partial} derives our underlying Gaussian and bootstrap couplings. By using anti-symmetric multipliers to cancel bias, we justify inference under mis-specification. Section~\ref{sec:application} applies our procedure as a test for match effects using preference data. In calibrated simulations, it is more powerful than an existing approach.

\section{Model and assumptions}\label{sec:model}

We denote the Euclidean norm in $\mathbb{R}^n$ by $\|\cdot\|_{\mathbb{R}^n}$. For vectors $u,v$ in a Hilbert space $H$ with norm $\|u\|$, we denote by $u \otimes v^*: H \to H$ the tensor product, i.e. the rank one operator with
$(u \otimes v^*)t = \bk{v}{t}u.$ For any $A: H \to H$ we use $\norm{A}_{\op}$ for operator norm, $\norm{A}_{\HS}$ for Hilbert-Schmidt (or Frobenius) norm, and $\tr A$ for trace.
If $A$ is compact and self-adjoint, then $H$ admits an orthonormal basis of $A$-eigenvectors $\{e_1(A), e_2(A), \ldots\}$ and corresponding eigenvalues $\{\nu_1(A), \nu_2(A), \ldots\}$. We suppress the operator $A$ when it is clear from context.

We use $C$, $C'$, et cetera, to denote  sufficiently large, positive universal constants whose value may change across displays. $C(t)$ denotes a large enough number that depends only on the parameter $t$. Similarly, $c$, $c'$, $c(t)$ denote sufficiently small positive quantities. We also use the notation $\lesssim$ (or $\lesssim_t$) to denote an inequality that holds up to a universal constant (or function of $t$). In summary, for $a,b>0$, $a \lesssim b$ means $a \le Cb$, which means $ca \le b.$

Our formal results hold upon event of probability at least $1-\eta$, where $\eta \in (0,1)$. We write $o_p(1)$ for quantities that converge to zero for any fixed $\eta \in (0,1)$ as the sample size grows.

\subsection{Previous work: Closed form estimation}

Our goal is to learn the regression function $f_0(X)=\E(Y|X)$ where the regressors belong to a Polish space, $S$. The regression model $H$ is a space of functions $f:S\rightarrow\R$ with additional structure. In particular, $H$ is derived from a kernel $k:S\times S\rightarrow \R$ chosen by the researcher. The functions $k_x(\cdot)=k(x,\cdot)$ span $H$, and $k(x,x')=\langle k_x,k_{x'} \rangle$ defines an inner product in $H$. These conditions imply the reproducing property: $f(x)=\langle f, k_x\rangle$, for any $f\in H$.\footnote{Formally, $k$ is a positive definite function, so the inner product is well defined. We define the RKHS $\H$ as the closure of $\Set[k_x]{x \in S}$ with respect to $\bk{\cdot}{\cdot}$. We require $S$ to be a separable, complete metric space, so that $H$ is separable. \citet{berlinet2004reproducing} give further background.}

In the final expression, $x\mapsto k_{x}$ may be viewed as a nonlinear basis expansion of a regressor value $x$.\footnote{Sometimes $\phi:x\mapsto k_x$ is called the feature map, where $\phi(x)\in H$ are features of $x$ used to estimate $f_0(x)$.} The kernel function $k(x,x')$ encodes similarity between regressor values $(x,x')$. The implied norm in $H$, given by $\snorm{f}=\bk{f}{f}^{\f{1}{2}}$, reflects a corresponding notion of smoothness: similar regressor values are assigned similar outcome values.  Due to the popularity of kernel methods, there is extensive theoretical and
practical guidance on how to choose $k$, with examples below.

As a weak regularity condition, we maintain that the kernel is bounded, i.e. $k(x,x')\leq \kappa^2$.
Therefore, control of the $H$ norm implies control of the supremum norm: by the Cauchy-Schwarz inequality,
$\sup_{x \in S}|f(x)|=\sup_{x \in S}|\langle f, k_x\rangle|\leq \|f\|\|k_x\|\leq \kappa \|f\|.$
For simplicity, we also maintain that the regression residual $\varepsilon=Y-\E(Y|X)$ satisfies $|\ep| \le \bar\sigma$ and $\E(\ep^2|X)\geq \underline{\sigma}^2$.

KRR extends linear regression to regressors that may be preferences, sequences, or graphs. Consider an independent, identically distributed (i.i.d.) sequence of $n$ observations $(X_i, Y_i)$ in $S \times \mathbb{R}$, which are supported on a probability space $(\bb{P}, \Omega, \c F)$.\footnote{We assume the probability space is sufficiently rich so that we may construct couplings, e.g.~that it supports a countable sequence of i.i.d.~Gaussians independent of the data.} %
The KRR estimator is
$$
\hat f =\argmin_{f \in H} \left[ \E_n \{Y - f(X)\}^2 + \lambda \norm{f}^2 \right]
$$
where $\E_n(\cdot)=\frac{1}{n}\sum_{i=1}^n(\cdot)$ is the average over observations. In the ridge penalty, $\lambda>0$ is the regularization parameter and $\norm{f}$ is the norm in $H$.

The solution to the optimization has a closed form, given in Algorithm~\ref{algo:krr} below, which makes KRR practical even when using nonstandard data $S$ and an infinite basis $x\mapsto k_x$.

\begin{algorithm}[Kernel ridge regression \citep{kimeldorf1971some}]\label{algo:krr}
    Given an i.i.d. sample $D = \{(X_i,Y_i)\}_{i=1}^n$, a kernel $k$, and regularization parameter $\lambda>0$:
    \begin{enumerate}
\item Compute the kernel matrix $K \in \mathbb{R}^{n\times n}$ with entries $K_{ij}=k(X_i,X_j)$, and the kernel vector $K_x\in\mathbb{R}^{1\times n}$ with entries $k(x,X_i)$.
    \item Estimate KRR as $\hat{f}(x)=K_x(K+n\lambda  I )^{-1}Y$, where $Y\in\mathbb{R}^n$ is the vector of outcomes.
    \end{enumerate}
\end{algorithm}

\begin{example}[Linear kernel]\label{ex:linear}
    If $S = \bb{R}^p$, so that the regressors are finite vectors, then $k(x,x') = x^{\top} x'$ recovers linear models: $H$ consists of linear functions $f_\beta(x) = \beta^\top x$ for $\beta \in \bb{R}^p$. Here, $k_x=x$ is a trivial basis expansion, i.e. the identity mapping.

    Let $\bX\in\R^{n\times p}$ be the design matrix. Then $K_x=x^{\top}\bX^{\top}$, $K=\bX\bX^{\top}$, and hence
$$
\hat{f}(x)=x^{\top}\bX^{\top}(\bX\bX^{\top}+n\lambda  I )^{-1}Y=x^{\top}(\bX^{\top}\bX+n\lambda  I )^{-1}\bX^{\top}Y.\footnote{The equality follows from the identity $(UV+aI)U=U(VU+aI)$ for matrices $(U,V)$ and scalar $a$.}
$$
This recovers linear ridge regression, expressed in terms of the Gram matrix $\bX\bX^{\top} \in\R^{n\times n}$ rather than the covariance matrix $\bX^{\top}\bX \in\R^{p\times p}$, which helps when $p$ is large or infinite.
\end{example}

\begin{example}[Polynomial kernel]\label{ex:poly}
   If $S = \bb{R}^2$, then $k(x,x') = (x^\top x'+1)^2$ recovers quadratic models: $H$ consists of nonlinear functions $f_\beta(x) = \beta^\top k_x$, where $k_x=(x_1^2,x_2^2,\sqrt{2}x_1x_2,\sqrt{2}x_1,\sqrt{2}x_2,1)^{\top}$ is a fully interacted quadratic expansion of $x=(x_1,x_2)^{\top}$.

   For quadratic ridge regression, we compute $K_{ij}=(X_i^\top X_j+1)^2$ for each pair of regressors. More generally, for polynomial ridge regression of degree $d$ and $X \in \mathbb{R}^p$, we compute $K_{ij}=(X_i^\top X_j+1)^d$.
Once $K\in\R^{n\times n}$ has been computed, estimation is similar to the linear case.
\end{example}

The kernels in Examples~\ref{ex:linear} and~\ref{ex:poly} correspond to ridge regression with finite basis expansions. Already, in Example \ref{ex:poly}, the polynomial basis has size $p^d$: for large values of $p$ and moderate values of $d$, KRR has the significant practical advantage that it only works with this basis implicitly via the matrix $K\in\R^{n\times n}$.
This intuition extends to kernels that correspond to infinite basis expansions, e.g. Sobolev and Gaussian kernels, which recover classical spline estimators. Most importantly, our results apply to kernels for nonstandard data, as we  discuss below.

\begin{example}[Preference kernel]\label{ex:kendall}
    Suppose that $S$ consists of preferences over $25$ high schools in the Boston match. The space $H$ of functions $f:S \to \bb{R}$ has dimension $25!>10^{25}$. Consider a kernel based on Kendall's rank correlation: $k(x,x')=e^{-N(x,x')}$, where $N(x,x')$ counts the number of pairwise comparisons in which the preferences $x$ and $x'$ disagree \citep{kendall1938new,mallows1957non}.
    The RKHS induced by this kernel recovers $H$ \citep{mania2018kernel}. %

    For preference data regression, we compute $K_{ij}=e^{-N(X_i,X_j)}$ for each pair of preferences.
Once $K\in\R^{n\times n}$ has been computed, estimation is similar to the linear case.
\end{example}

This preference kernel reflects the idea that some preferences are closer than others. For example, with four schools, it asserts that the preference $(A,B,C,D)$ is closer to $(B,A,C,D)$ than $(D,C,B,A)$.
By contrast, using $4!=24$ indicators for each possible preference disregards similarity between preferences. Our results are generic and apply to alternative preference kernels, as long as $k(x,x')$ is a bounded, positive definite function.\footnote{For example, one may place weightings on the relative importance of each position in the preference.}

\begin{example}[Sequence kernel]\label{ex:hamming}
    Suppose that $S$ consists of employment histories, each expressed as a binary sequence over $25$ years \citep{card1988measuring}. The space $H$ of functions $f:S \to \bb{R}$ has dimension $2^{25}>10^{7}$. Consider a kernel based on the Hamming distance: $k(x,x')=e^{-N(x,x')}$, where $N(x,x')$ counts the number of years in which the sequences $x$ and $x'$ disagree \citep{hamming1950error}.
    The RKHS induced by this kernel recovers $H$. %
    For sequence data regression, we compute $K_{ij}=e^{-N(X_i,X_j)}$ for each pair of sequences.
Thereafter, estimation is similar to the linear case.
\end{example}

As before, this sequence kernel reflects the idea that some sequences are closer than others. For example, with four years, it asserts that the employment sequence $(0,0,0,1)$ is closer to $(0,0,1,1)$ than $(1,0,0,0)$.
Again, our results apply to many other sequence kernels.\footnote{Extensions accommodate other aspects of employment histories, e.g. unequal sequence lengths, temporal importance, and subsequence similarity \citep{lodhi2002text}.}

\begin{remark}[Kernel representations versus traditional representations for nonstandard data]
    Examples~\ref{ex:kendall} and~\ref{ex:hamming} demonstrate how KRR remains tractable with nonstandard data, without restricting the variety of preferences or sequences. In both cases, KRR performs estimation via implicit, nonlinear representations that can exploit plausible notions of similarity, e.g. Kendall's rank correlation or the Hamming distance.

A series or lasso method would encounter the problem that it is not possible to regress on $25!$ or $2^{25}$ explicit basis functions. KRR avoids this problem by working with the basis implicitly.

\end{remark}

\subsection{Goal: Valid and sharp confidence sets}

We provide valid and sharp confidence sets for KRR that
(i) apply to nonstandard data types $S$ with various kernels $k$;
(ii) compute as efficiently as computing KRR once;
and (iii) contract at nearly the optimal rate. We now define nonasymptotic validity and sharpness for a sequence of confidence sets $\hat C_n$.\footnote{``Validity'' is also known as ``honesty.'' ``Sharpness'' modifies the usual definition of ``exactness.''}

  \begin{definition}[Validity]\label{def:honest-cb} $\hat C_n$ are $\tau$-valid at level $\a$ if
        $ \bb{P}(f_0 \in \hat{C}_n) \ge 1 - \a - \tau.$
\end{definition}
Validity means that the confidence sets $\hat{C}_n$ cover the true regression function $f_0$ at least at the nominal level, up to a tolerance of $\tau$. \textcolor{black}{In our main results, $\tau=O(n^{-\xi})$ for a sufficiently small fixed $\xi>0$.}

        \begin{definition}[Sharpness]\label{def:exact-cb} $\hat{C}_n$ are $(\d,\tau)$-sharp at level $\a$ if for some $\d, \tau \ge 0$,
        $\bb{P}\{f_0 \in \d \hat f + (1-\delta) \hat{C}_n \} \le 1 - \a + \tau.$
        \end{definition}

        Sharpness means that the confidence sets are not conservative. Our confidence sets $C_n$ are centered at $\hat{f}$, so the convex combination $\d \hat f + (1-\delta) \hat C_n$ is a slight contraction. If the confidence set is contracted by a factor of $1-\delta$, then coverage falls below the nominal level,  up to a tolerance of $\tau$. We will take $\delta=0$ in easy cases, $\delta=1/\log(n)$ in hard cases, \textcolor{black}{and $\tau=O(n^{-\xi})$ in both cases for fixed $\xi>0$.} When $\delta=0$ and $\tau \downarrow 0$, validity and sharpness imply nominal coverage: $\bb{P}(f_0 \in \hat{C}_n) \rightarrow 1 - \a$.

Valid and sharp inference for regression over $25!$ possible regressors is impossible without further structure. In this work, we demonstrate that exactly the same assumptions used to verify the consistency of KRR will justify our inferential procedure.

A bias variance decomposition of KRR illuminates each assumption's role. Let
$
f_{\lambda}= \argmin_{f \in H} [\E \{Y - f(X)\}^2 + \lambda \norm{f}^2]
$ be the pseudo-true parameter. Adding and subtracting,
$$
n^{1/2}(\hat{f}-f_0)=\underbrace{n^{1/2}\{(\hat{f}-f_{\lambda})-\E_n(U)\}}_{\text{residual}}+\underbrace{n^{1/2}\E_n(U)}_{\text{pre-Gaussian}}+\underbrace{n^{1/2}(f_{\lambda}-f_0)}_{\text{bias}},
$$
where $U_i\in H$ is a mean zero random function explicitly defined below.
The first term is a residual that vanishes. The second is the pre-Gaussian term, for which we prove new nonasymptotic Gaussian and bootstrap couplings under a low effective dimension assumption. The third is the bias term, which vanishes by standard approximation arguments under a smoothness assumption. When $f_0\not\in H$, the bias does not vanish, yet we still provide valid inference for $f_{\lambda}$ via our analysis of the residual and pre-Gaussian terms.

\subsection{Main assumption: Low effective dimension}

Our main assumption, which is necessary for consistency of KRR, is that the data, when passed through the kernel, have a low effective dimension: the eigenvalues of the covariance operator $T=\E(k_X\otimes k_X^*)$, satisfying $\langle f,Tg\rangle =\E\{f(X)g(X)\}$, decay at least polynomially. In the special case of linear ridge regression where $p=\dim(X)$ may exceed $n$, we require that the covariance matrix $T=\E(XX^{\top})$ has relatively few important eigenvectors, similar to a factor model assumption in the panel data literature. More generally, we require that $T$ has relatively few important eigenfunctions, though the total number may be infinite.

We quantify low effective dimension using the local width. Let $U = (U_1, U_2, \ldots , U_n)$ be an i.i.d.~sequence of $n$ random variables taking values in $H$, such that  $\bb{E}(U_i)=0$ and $\bb{E}\norm{U_i}^2 < \infty$, and let $\Sigma: H \to H$ defined by
$\Sigma=\bb{E}(U_i \otimes U_i^*)$
denote the associated covariance operator, which is self-adjoint and has finite trace.
The complexity of $\Sigma$ plays a central role in our Gaussian and bootstrap couplings for the empirical process $n^{1/2}\E_n(U)$. By working with the covariance operator, our theory covers settings where the data are high-dimensional and yet possess a latent, low-dimensional structure. In this way, we also avoid imposing explicit regularity conditions on the eigenfunctions, which are challenging to characterize for nonstandard data and general probability distributions.

\begin{assumption}[Local width]\label{assumption:width}
Given $m \ge 0$, the local width
of $\Sigma$ is given by
$\sigma^2(\Sigma, m) = \sum_{s > m} \nu_s(\Sigma),$
where $\{\nu_1(\Sigma), \nu_2(\Sigma), \ldots \}$ are decreasing eigenvalues. We assume that the local width decays: $\sigma^2(\Sigma,m)\lesssim m^{-c}$ for some positive $c>0$ and $m \ge 1$.
\end{assumption}

 The local width is the tail sum of eigenvalues. It quantifies how much of the covariance is not explained by the top $m$ eigenfunctions, i.e. by a nonlinear factor model with $m$ latent factors. It converges to zero for large $m$ when $\bb{E}\snorm{U_i}^2 < \infty$, which is a very weak regularity condition. Our abstract results in Section~\ref{sec:partial} allow $\sigma^2(\Sigma,m) \asymp m^{-c}$ for any positive $c > 0$, corresponding to empirical processes $n^{1/2}\E_n(U)$ indexed by large function classes $H$.
    In this sense, the local width does relatively little to restrict the generality of our results.

To apply our abstract results to KRR, we prove that the pre-Gaussian term for KRR is of the form $n^{1/2}\E_n(U)$, where each summand is given by the random function
 $$
 U_i=(T+\lambda)^{-1}\{(k_{X_i} \otimes k_{X_i}^*-T)(f_0-f_{\lambda}) + \ep_i k_{X_i}\}.$$
For KRR, $n^{1/2}\E_n(U)$ may be viewed as a conditional empirical process \citep{stute1986conditional}.
As $\lambda \downarrow 0$, the Donsker property does not hold because $(T+\lambda)^{-1}$ does not remain bounded.
Therefore we provide nonasymptotic arguments via the local width of $\Sigma$. Then we bound the local width of $\Sigma=\bb{E}(U_i \otimes U_i^*)$ by the local width of $T=\E(k_X\otimes k_X^*)$ via the relation
$$
\sigma^2(\Sigma,m) \le \left(\frac{\kappa\norm{f_0} + \bar\sigma}{\lambda}\right)^2 \sigma^2(T,m).
$$
As such, we ultimately need $\sigma^2(T,m) \downarrow 0$, i.e. spectral decay of the covariance operator $T$.

Spectral decay of the covariance $T$ is reasonable in many cases.
For example, it is satisfied by the Sobolev space of functions with $s$ square integrable derivatives over $[0,1]^p$, as long as $p/s<2$ \citep{fischer2020sobolev}. Intuitively, the effective dimension is increasing in the ambient dimension $p$ and decreasing in the smoothness $s$. It also holds for the RKHS with the Gaussian kernel. For nonstandard yet structured data, such as preferences, we may not always have an analytic justification for the condition. In such cases, we may visualize the eigenvalues through a scree plot. Figure~\ref{fig:eigen} demonstrates that in the Boston Public School setting, while there are $25!$ possible preferences, there are effectively $25$ types of preferences.\footnote{This mirrors other settings with nonstandard data, where the effective dimension has been observed to be much smaller than the ambient dimension \citep{liang2020just}.}

\begin{figure}
\captionsetup[subfigure]{justification=Centering}
\begin{subfigure}[t]{0.48\textwidth}
         \centering
        \resizebox{\textwidth}{!}{%
       \includegraphics[width=\textwidth]{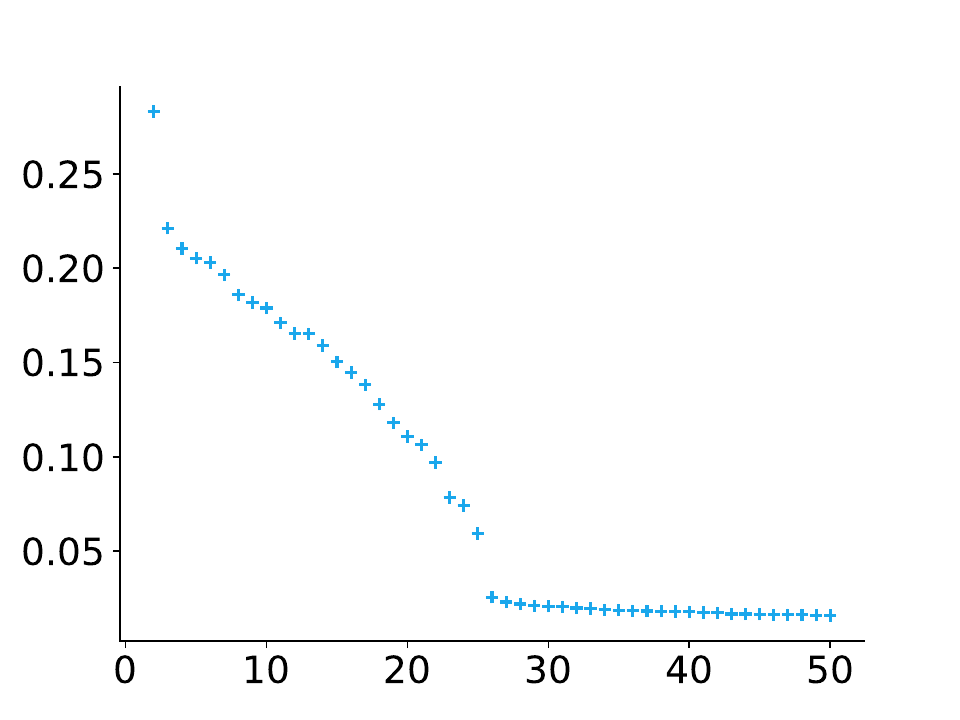}
        }
    \caption{\footnotesize Preferences have a low effective dimension.}\label{fig:eigen}
\end{subfigure}\hspace{\fill} %
\begin{subfigure}[t]{0.48\textwidth}
          \centering
        \resizebox{\textwidth}{!}{%
      \includegraphics[width=\textwidth]{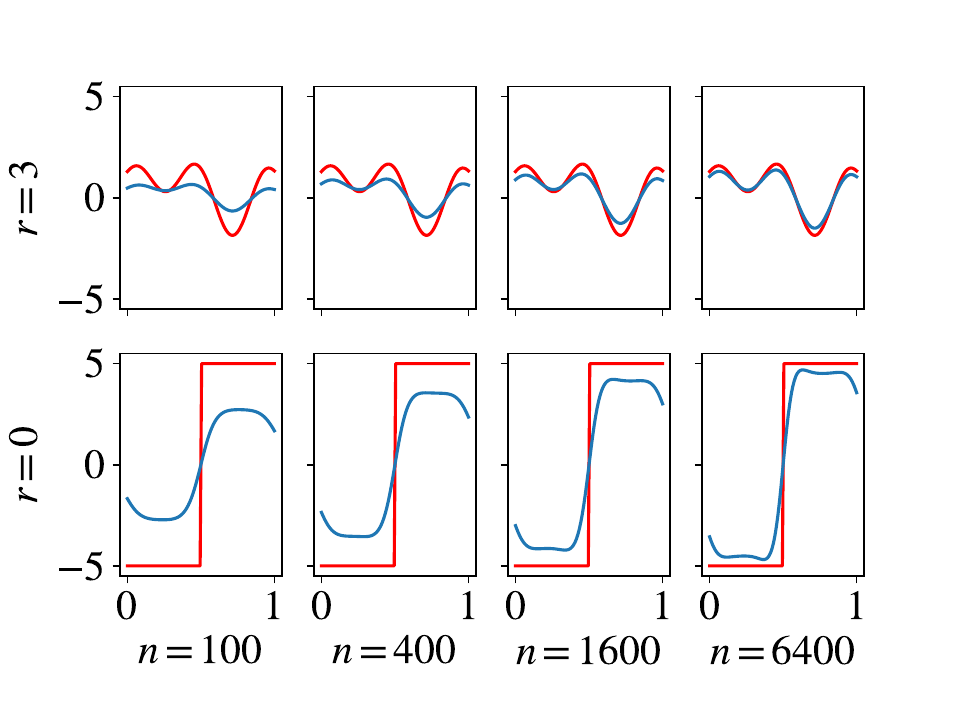}
        }
    \caption{\footnotesize The source condition means faster rates.}\label{fig:source}
\end{subfigure}
\caption{We visualize assumptions underlying consistency of KRR. Figure~\ref{fig:eigen} visualizes spectral decay via the initial $50$ eigenvalues of $\E_n(k_{X}\otimes k_X^*)$, where $X$ is from our application in Section~\ref{sec:application} and $k$ is the preference kernel from Example~\ref{ex:kendall}. Figure~\ref{fig:source} compares $f_{\lambda}$ (blue) to $f_0$ (red). In the upper row, $f_0\in H^3$; in the lower row, $f_0\not\in H$. As we move to the right, we use more observations $(n\uparrow)$ and hence less regularization $(\lambda \asymp n^{-1/2} \downarrow)$.  We see that $f_{\lambda}$ converges to $f_0$ more quickly in the upper row.}
\end{figure}

In Appendix~\ref{sec:spectrum} we further characterize Assumption~\ref{assumption:width}.
We provide concrete upper and lower bounds on local width in leading cases: polynomial and exponential decay of eigenvalues. We also relate it to the entropy number, facilitating comparison with other models. In Appendix~\ref{sec:symbols}, we derive the relation between $\sigma^2(\Sigma,m)$ and $\sigma^2(T,m)$ that we use to derive nonasymptotic inference guarantees for KRR.

\subsection{Additional assumption: Sufficient smoothness}

Our main assumption suffices for valid inference of $f_{\lambda}$. For valid inference of $f_0$, we further require that the bias $\|f_{\lambda}-f_0\|$ vanishes, which is accomplished via the well known source condition \citep{groetsch1984theory}. The source condition imposes that $f_0$ is correctly specified by $H$ and well approximated by the initial eigenfunctions of the covariance operator $T$.
This smoothness assumption appears in
the minimax analysis of KRR, both in $L^2$ and in $H$ \citep{caponnetto2007optimal,fischer2020sobolev}.
Intuitively, KRR is consistent when $f_0$ is sufficiently smooth, with faster rates when $f_0$ is smoother; see Figure~\ref{fig:source}.

\begin{assumption}[Source condition]\label{assumption:source}
        The true regression $f_0$ satisfies $f_0\in H^r$ for some $r\in(1,3]$, where we define $H^r\subseteq H \subseteq L^2$ as
        $
H^r=\left\{f=\sum_{s=1}^{\infty}f_s e_s(T):\;\sum_{s=1}^{\infty} f_s ^2\nu_s^{-r}(T)<\infty\right\}.
        $ Recall $e_s(T)$ are eigenfunctions and $\nu_s(T)$ are eigenvalues of the covariance $T=\E(k_X\otimes k_X^*)$.
    \end{assumption}

  Taking $r=0$ recovers square summability: $\sum_{s=1}^{\infty} f_s ^2<\infty$, which defines $L^2$. Taking $r=1$ gives $\sum_{s=1}^{\infty} f_s ^2 / \nu_s(T) <\infty$, which is equivalent to correct specification. For $r>1$, the smoothness of $f_0$ exceeds the worst case smoothness of $H$; $f_0$ is approximated well by the leading terms in the series $\{e_1(T), e_2(T), \ldots\}$.  This notion of smoothness depends on the kernel and data. For example, two preferences are similar if they make similar pairwise comparisons, and we define smoothness with respect to the induced covariance $T$.

  \begin{figure}
\captionsetup[subfigure]{justification=Centering}
\begin{subfigure}[t]{0.48\textwidth}
         \centering
        \resizebox{\textwidth}{!}{%
       \includegraphics[width=\textwidth]{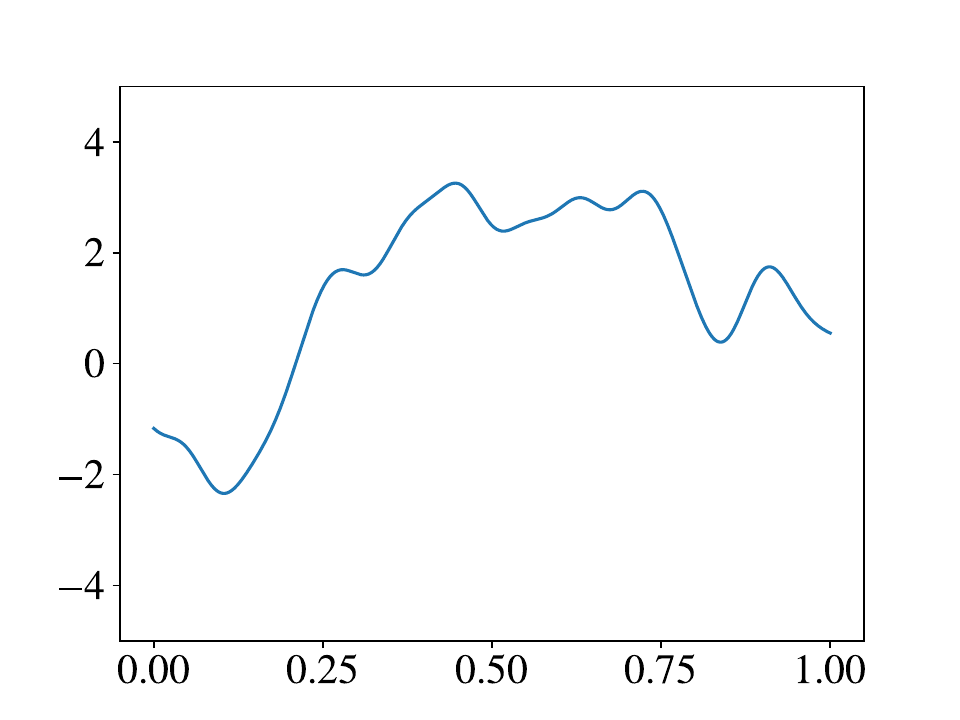}
        }
    \caption{\footnotesize Source condition with $r=1$.}\label{fig:source1}
\end{subfigure}\hspace{\fill} %
\begin{subfigure}[t]{0.48\textwidth}
          \centering
        \resizebox{\textwidth}{!}{%
      \includegraphics[width=\textwidth]{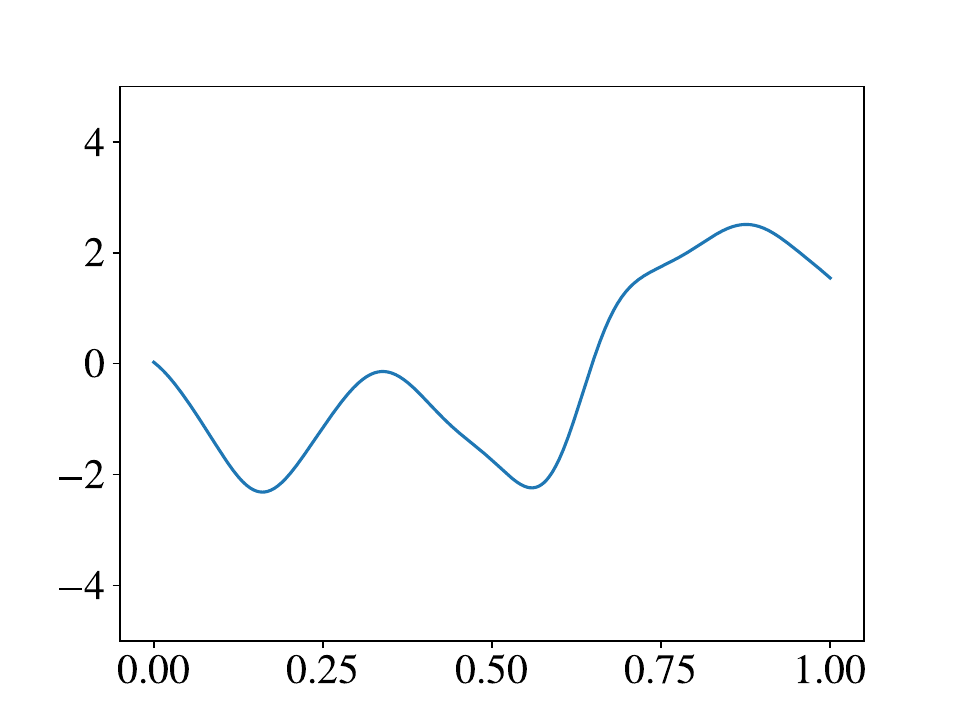}
        }
    \caption{\footnotesize Source condition with $r=3$.}\label{fig:source3}
\end{subfigure}
\caption{The source condition in Sobolev space means the number of square integrable derivatives. We take $H=\mathbb{H}_2^s$. Figures~\ref{fig:source1} and~\ref{fig:source3} visualize typical functions from $H^1$ and $H^3$, respectively.}\label{fig:sources}
\end{figure}

The source condition here
resembles the
one used to analyze ill-posedness in the nonparametric instrumental variable literature; see discussions in e.g. \cite{carrasco2007linear,chen2011rate,singh2020negative}. In Sobolev space, it is satisfied when the true regression $f_0$ has more square integrable derivatives than the regression model $H$ used for estimation.

    \begin{example}[Sobolev space]
        Denote by $\mathbb{H}_2^s$ the Sobolev space with $s>p/2$ square integrable derivatives over $[0,1]^p$. If $H=\mathbb{H}_2^s$ and $f_0\in \mathbb{H}_2^{s_0}$ then $r=s_0/s$ and  $\mathbb{H}_2^{s_0}=(\mathbb{H}_2^{s})^r$. Figure~\ref{fig:sources} illustrates functions with $p=1$, $s=1$, and either $r=1$ or $r=3$.
    \end{example}

\section{Confidence bands with nonstandard data}\label{sec:algo_main}

Inference for KRR presents several challenges.
First, our motivating interest in KRR is its versatility with nonstandard regressors such as preferences.
To preserve this versatility, we derive Gaussian and bootstrap couplings in the RKHS that can exploit low effective dimension.
Importantly, our couplings directly apply to many data types $S$ and kernels $k$.
Second, a computationally intensive inference procedure may undermine the practicality of KRR.
To preserve computational efficiency, we propose an anti-symmetric bootstrap, in closed form, that re-uses the kernel evaluations and matrix inversions of KRR.
Third, to cover $f_0$ we require $\lambda \downarrow 0$, but in this regime asymptotic central limit theorems based upon e.g.~the Donsker property do not apply.\footnote{As $\lambda \downarrow 0$, the resulting stochastic process is not totally bounded in $L^2(\bb{P})$.}
To establish inference without a stable Gaussian limit, we develop a nonasymptotic framework that transfers Gaussian and bootstrap couplings to our inferential procedure.

It is not obvious we can choose a sequence $\lambda \downarrow 0$ that vanishes slowly enough for Gaussian approximation and yet quickly enough to cover $f_0$. We prove that both can be achieved in many settings, often alongside a near-optimal rate of estimation in $H$ norm. Formally, Theorem~\ref{thm:near-minimax-band} provides $H$ norm confidence sets for $f_0$ that are valid, sharp, and nearly minimax.
These imply valid yet conservative $\sup$ norm bands, since $\|f\|_{\infty}\leq \kappa\|f\|$ for all $f\in H$.
As an extension, we show that non-conservative $\sup$ norm bands are possible under an additional assumption.

\subsection{This work: Anti-symmetric bootstrap}\label{sec:algo_bootstrap}

We state our procedure at a high level before filling in details.
\begin{itemize}
    \item For each bootstrap iteration, draw Gaussians and compute the function $\mathfrak{B}$.
    \item Across bootstrap iterations, compute the $(1-\alpha)$-quantile, $\hat{t}_{\alpha}$, of $\|\mathfrak{B}\|$.
    \item Calculate the band $\hat C_\a$ containing $\hat f + \hat{t}_\a \cdot  n^{-1/2}f$ for $\|f\|\leq 1+\delta$.
\end{itemize}

This structure is familiar. As part of our contribution, we express $\mathfrak{B}$ in closed form, with low computational overhead. We propose anti-symmetric multipliers in the definition of $\BS$ to efficiently correct regularization bias. Our proposal departs from e.g.~\citet{yang2017frequentist}, who provide frequentist analysis of the Bayesian posterior.

\begin{algorithm}[Incremental factor confidence band]\label{algo:incremental}
     Given a sample $D = \{(X_i,Y_i)\}_{i=1}^n$, a kernel $k$, a regularization parameter $\lambda>0$, and an incremental factor set to $\delta=0$ or $\delta=1/\log(n)$:
     \begin{enumerate}
         \item Compute the kernel matrix $K \in \mathbb{R}^{n\times n}$ with entries $K_{ij}=k(X_i,X_j)$.
         \item Compute the KRR residuals $\hat{\varepsilon}\in\R^n$ by $\hat{\varepsilon}=Y-K(K+n\lambda  I )^{-1}Y$.
         \item For each bootstrap iteration,
         \begin{enumerate}
             \item draw multipliers $q\in\R^n$ from $\mathcal{N}(0,I-\boldsymbol{1}\boldsymbol{1}^{\top}/n)$, where $\boldsymbol{1} = (1, \ldots, 1)^{\top} \in \bb{R}^n$
             \item compute the vector $\hat{\gamma}=n^{1/2}(K+n\lambda  I )^{-1} \diag(\hat{\varepsilon})q$;
             \item compute the scalar $M=(\hat{\gamma}^{\top}K\hat{\gamma})^{1/2}$.
         \end{enumerate}
         \item Across bootstrap iterations, compute the $(1-\alpha)$-quantile, $\hat{t}_{\alpha}$, of $M$.
         \item Calculate the $H$ norm band $\hat C_\a= \Set[\hat f + \hat{t}_\a \cdot  n^{-1/2}f]{\|f\|\leq 1+\delta}$.
         \item Calculate the uniform band $\hat C_\a(x)=\hat{f}(x) \pm \hat{t}_\a \cdot  n^{-1/2}\kappa (1+\delta)$.
     \end{enumerate}
\end{algorithm}

Implicitly, within each bootstrap iteration, $M=\|\mathfrak{B}\|$ is computed in closed form. The confidence set $\hat C_\a$ is KRR plus the $H$ norm critical value, times $n^{-1/2}$ and an incremental factor $(1+\delta)$. We emphasize that $\delta$ is not a tuning parameter: we recommend taking $\delta = 1/\log(n)$ as a conservative choice. For moderate $n$, the resulting confidence bands are indistinguishable from $\delta=0$ in practice.
Section~\ref{sec:sup-band} below provides an extension with variable width confidence bands.

Inference (Algorithm~\ref{algo:incremental}) is not much costlier than estimation (Algorithm~\ref{algo:krr}) from a computational perspective, as it reuses the regularized inverse $(K+n\lambda  I )^{-1}$. It has no kernel evaluations nor matrix inversions beyond KRR.

The multipliers $q\in\R^n$ can be viewed as anti-symmetric: each element of $\diag(\hat{\varepsilon})q$ is $q_i \hat{\varepsilon}_i=\frac{1}{\sqrt{n}}\sum_{j=1}^n\frac{h_{ij}-h_{ji}}{\sqrt{2}}\hat{\varepsilon}_i$, where $h\in\R^{n\times n}$ is a matrix of i.i.d. standard Gaussians. Here, $\frac{h_{ij}-h_{ji}}{\sqrt{2}}$ is also a standard Gaussian due to independence between $h_{ij}$ and $h_{ji}$, and the matrix $\frac{h-h^{\top}}{\sqrt{2}}$ is anti-symmetric. By taking the difference between $h_{ij}$ and $h_{ji}$, we cancel the bias in $\hat{\varepsilon}_i$. This is equivalent to empirically centering the multipliers when performing the standard Gaussian multiplier bootstrap.

The technique of de-meaning bootstrap residuals has a classical precedent in standard regression, in homoscedastic models with nonrandom regressors \citep{freedman1981bootstrapping}.\footnote{In contrast to the multiplier bootstrap for KRR considered here, \citet{freedman1981bootstrapping} considers re-sampling regression errors, and so does not accommodate heteroscedasticity.}
A related idea appears when testing conditional moment restrictions \citep{sorensen2022testing}. Its application to confidence bands for regularized nonparametric regression appears to be new.

\subsection{Main result: Valid and sharp inference}\label{sec:algo_lead}

We now state our primary theoretical contribution: Theorem~\ref{thm:near-minimax-band} shows that under Assumptions \ref{assumption:width} and \ref{assumption:source},
 Algorithm \ref{algo:incremental} produces valid and sharp confidence sets. Moreover, for an appropriate choice of $\lambda$, the confidence sets contract at a rate arbitrarily close to the minimax rate of estimation in $H$-norm.
Corollary \ref{cor:implied-band} shows that these yield valid uniform confidence bands, which are easy to compute, and which contract at a similar rate. Section~\ref{sec:algo_incremental} below demonstrates that our results hold more generally, for a wide range of regularization parameters $\lambda$.

We focus on an $H$-norm critical value for three key reasons.
First, the $H$-norm $\|\BS\|$ can be computed efficiently in closed form, as detailed in Algorithm \ref{algo:incremental} above.
Second, the rate of convergence of KRR in $H$-norm and its minimax lower bound depend only on the effective dimension and source conditions of Section \ref{sec:model}. This allows us to characterize precisely when valid and sharp inference is possible with nonstandard data, and furthermore when nominal coverage is possible. Third, they imply practical yet valid uniform confidence bands.

Necessarily, a $\sup$-norm critical value will have certain shortcomings for nonstandard data. A practical shortcoming is that with preference or sequence data, bootstrapping the supremum of $|\BS(x)|$ requires searching for the maximum among a large number of elements $x \in S$.  A theoretical shortcoming is that for the $\sup$ norm, minimax rates of estimation have only been worked out in a few special cases, and these are needed for a complete characterization. Nevertheless, Section~\ref{sec:sup-band} below provides and analyzes an extension of our inference procedure with a  $\sup$-norm critical value.

We state our result in two leading cases. In one leading case, the spectrum decays polynomially, i.e. $\nu_s(T) \asymp \omega s^{-\beta}$for some $\beta> 1$.\footnote{The case $\beta \le 1$ is ruled out by the fact that $k$ is bounded, which implies $\sum_{s \ge 1} \nu_s(T) < \infty$. Thus, the assumption of polynomial decay involves only a small loss of generality. The matching lower-bound on eigenvalues of $T$ imposes a form of self-similarity, see \citet{bull2013adaptive}.}  Sobolev kernels belong to this regime. In another leading case, it decays exponentially, i.e. $\nu_s(T) \asymp \omega \exp(-\alpha s^\gamma)$ for some $\gamma \in(0,1)$. The popular Gaussian kernels and inverse multi-quadratic kernels belong to this regime.
Each regime implies that Assumption~\ref{assumption:width} holds for the pre-Gaussian term of KRR.

\textcolor{black}{Throughout, the bounded-kernel and bounded-residual assumptions imply that the KRR summands are bounded, and we state the results directly under this maintained regime.}

\begin{theorem}[Nearly minimax confidence sets]\label{thm:near-minimax-band}
For $\a \in (0,1)$, define $\hat{t}_\a$ by $\bb{P}(\snorm{\mathfrak{B}} > \hat{t}_\a|D) = \a$.\footnote{Conditional upon $D$, $\snorm{\mathfrak{B}}$ has a density \textcolor{black}{or is identically zero}, as it is a quadratic form of a finite dimensional Gaussian.}
\textcolor{black}{Suppose Assumption~\ref{assumption:source} holds with $r>1$}.

\begin{enumerate}[label=(\alph*)]
    \item Under polynomial decay, choose $\d = 1/\log(n)$  %
   and $\lambda \asymp n^{-(\beta + \eps)/(r\beta + 1)}$ for some small $\eps > 0$.
   \textcolor{black}{For any fixed $\xi\in(0,1/(\beta+1))$, $\hat{C}_\a$ in Algorithm~\ref{algo:incremental} is $O(n^{-\xi})$ valid and $\{2/\log(n),O(n^{-\xi})\}$ sharp,} and shrinks at rate $n^{-\rho + \epsilon}$, where $\rho = \beta(r-1)/\{2(r\beta+1)\}$ reflects the minimax rate in $H$ norm.
   \item Under exponential decay, choose $\delta=0$ %
   and $\lambda \asymp n^{-(1+\eps)/r}$ for some small $\eps > 0$. %
   \textcolor{black}{For any fixed $\xi\in(0,1-(1+\eps)/r)$, $\hat{C}_\a$ in Algorithm~\ref{algo:incremental} is $O(n^{-\xi})$ valid and $\{0,O(n^{-\xi})\}$ sharp,}
   and shrinks at rate $n^{-\rho + \epsilon}$, where $\rho = (r-1)/2r$ reflects the minimax rate in $H$ norm. In particular, it obtains nominal coverage: $\bb{P}(f_0 \in \hat{C}_\a) \rightarrow 1 - \a$ as $n \to \infty$.
\end{enumerate}
\end{theorem}

\begin{remark}[Nonzero incremental factor for polynomial decay.]
To obtain the minimax rate of estimation, the regularization parameter $\lambda$ must balance KRR's bias and variance, summarized in the initial two rows of Table~\ref{tab:rate-all}.
With rate-optimal regularization, polynomial decay is a ``hard'' case while exponential decay is an ``easy'' case.
In hard cases, the variance is too large to use an anti-concentration argument:
the error $\|n^{1/2}(\hat{f}-f_{\lambda})\|$, when scaled by the variance, has a degenerate distribution. Despite this challenge, a nonzero incremental factor $\delta=1/\log(n)$ confers valid and sharp inference, as developed in Section~\ref{sec:algo_incremental}.
In easy cases, the variance is small enough to use an anti-concentration argument. In these cases, it is possible obtain not only valid and sharp inference but also nominal coverage by taking $\delta=0$, as developed in Section~\ref{sec:sup-band}.
In practice, we recommend choosing $\delta=1/\log(n)$; the difference in size is often  negligible.
\end{remark}

An important fact underlying many aspects of our analysis is that the $H$-norm is stronger than the $\sup$ norm: for any $f \in H$, $\sup_{x\in S}|f(x)| \le \kappa \|f\|$. This allows us to easily construct uniform confidence bands $\hat C_\alpha(x)$ from $\hat C_\alpha$, which contract at rates in Theorem \ref{thm:near-minimax-band}. Although conservative, they perform well in practice, as documented by extensive simulations in Section~\ref{sec:application}.

\begin{corollary}[Implied uniform confidence bands]\label{cor:implied-band}
The uniform confidence band $\hat f(x) \pm n^{-\nicefrac{1}{2}}\kappa(1+\delta)\hat t_\alpha$ given in Algorithm \ref{algo:incremental} is \textcolor{black}{$O(n^{-\xi})$ valid, for $\xi$ as in Theorem~\ref{thm:near-minimax-band}.}
\end{corollary}

\subsection{Incremental factor handles hard cases}\label{sec:algo_incremental}

To handle the hard cases described above,
we develop inference with the nonzero incremental factor $\delta=1/\log(n)$, building on the technique of \cite{andrews2013inference}. We state our general
result at a high level before filling in the details in Table~\ref{tab:rate-all}. Importantly, our general result allows a wide range of $\lambda$ values.

\begin{assumption}[High level quantities]\label{assumption:light-main}
    We assume there exist functions $(Q,R,L,B)$ so that the following statements hold. We summarize these functions in Table~\ref{tab:rate-all} for leading cases. We present them with generality in Appendix~\ref{sec:explicit}, using Assumptions~\ref{assumption:width} and~\ref{assumption:source}.
    \begin{itemize}
        \item Gaussian approximation. There exists a Gaussian random element $Z$ in $H$ such that with probability at least $1-\eta$,
    $\norm{\sqrt{n}(\hat f - f_\lambda) - Z} \le Q(n,\lambda,\eta).$ This condition will be verified with Theorem~\ref{thm:gauss-main} using local width.
        \item Bootstrap approximation. There exists a random element $Z'$ in $H$ whose conditional distribution given $D$ is almost surely Gaussian with covariance $\Sigma$, and with probability at least $1-\eta$,
    $\bb{P}\left\{\beef \norm{\mathfrak{B} - Z'} \le R(n,\lambda,\eta)\middle|D\right\} \ge 1-\eta.$ This condition will be verified with Theorem~\ref{thm:bs-main} using local width.
        \item Variance lower bound. It holds with probability $1-\eta$ that
    $\snorm{Z} \ge L(\lambda,\eta)$ for some function $L$ which is strictly increasing in $\eta$. This condition will be verified with a lemma, again using local width.
        \item Bias upper bound. It holds that $\sqrt{n}\snorm{f_\lambda - f_0} \le B(\lambda)$. This condition is well known using the source condition:
        $B(\lambda)=n^{1/2}\kappa^{1-r} \lambda^{(r-1)/2}\|f_0\|$ \citep{smale2005shannon}.

    \end{itemize}
 Finally let $\Delta(n,\lambda,\eta)=Q(n,\lambda,\eta) + R(n,\lambda,\eta)$. We abbreviate by suppressing arguments.
\end{assumption}

These high level quantities illustrate the tension between estimation and inference. On the one hand, as $\lambda\downarrow 0$, the bias $B$ shrinks. On the other hand, as $\lambda\downarrow 0$, $\Delta=Q+R$ and $L$ increase because the Gaussian process increases in complexity. Our general result navigates this tension.

\begin{proposition}
[$H$ norm inference via incremental factor]\label{prop:h-band}
For $\a \in (0,1)$, define $\hat{t}_\a$ by $\bb{P}(\snorm{\mathfrak{B}} > \hat{t}_\a|D) = \a$.
Suppose Assumption~\ref{assumption:light-main} holds, and the incremental factor $\d$ satisfies
$$\f 1 2 \ge \d \ge \frac{\D(n,\lambda,\eta) + B(\lambda)}{L(\lambda,1-\a - 2\eta) - \D(n,\lambda,\eta)},$$ where the denominator is positive and $\eta < \min(\alpha, \frac{1-\alpha}{2})$.
Then
$\hat{C}_\a$ in Algorithm~\ref{algo:incremental} is $(3\eta)$-valid and $(2\d,3\eta)$-sharp in $H$ norm.
\end{proposition}

We prove Theorem \ref{thm:near-minimax-band}(a) by combining Proposition~\ref{prop:h-band} with an explicit characterization of $(Q,R,L,B)$, and choosing \textcolor{black}{$\eta = n^{-\xi}$} and $\delta = 1/\log(n)$. Table~\ref{tab:rate-all} provides details. For each regime, we characterize $(Q,R,L,B)$ of Assumption~\ref{assumption:light-main}. We then characterize the restrictions on the regularization parameter $\lambda$ implied by the conditions that $B\ll L$ (undersmoothing) and $(Q+R)\ll L$ (valid approximation) in Proposition~\ref{prop:h-band}. Finally, we demonstrate that these conditions typically allow $\lambda$ to approach the minimax optimal choice $\lambda \asymp n^{-\beta/(r\beta+1)}$ for estimation.

\begin{table}
    \centering
    \caption{\label{tab:rate-all} \textcolor{black}{Theorem \ref{thm:near-minimax-band}(a) under polynomial and exponential spectral decay, suppressing logarithmic factors.$^*$}}

    {\color{black}
    \begin{tabular}{ccc}
       & Poly. : $\nu_s(T) \asymp \omega s^{-\beta}$ & Exp. : $\nu_s(T) \asymp \omega \exp(-\alpha s^\gamma)$ \\
       \cmidrule(lr){2-2}\cmidrule(lr){3-3}
        $B$ & $\lambda^{\f{r-1}{2}}n^{\f 1 2}$ & $\lambda^{\f{r-1}{2}}n^{\f 1 2}$ \\
        $L$ & $\lambda^{- \f 1 2 - \f {1}{2\beta}}$ & $\lambda^{- \f 1 2 }$ \\
        residual & $\lambda^{-1-1/\beta}n^{-\frac{1}{2}}$ & $\lambda^{-1}n^{-\frac{1}{2}}$ \\
        \rule{0pt}{3ex} $Q_{\bullet}$ &
            $\lambda^{-1}n^{\f{(1-\xi)(1-\beta)}{2\beta}}$ &
            $\lambda^{-1} n^{-\frac{1-\xi}{2}}$ \\
        \rule{0pt}{1ex} $R_{\bullet}$ &
            $\left({\lambda^{3+\f{1}{\beta}+\f{2}{\beta-1}}n}\right)^{\f{1-\beta}{4\beta-2}}$ &
            $\lambda^{-\frac{3}{4}}n^{-\frac{1}{4}}$ \\
        \cmidrule(lr){1-3}
        $B \ll L$ & $\lambda \ll n^{-\frac{\beta}{r\beta+1}}$ & $\lambda \ll n^{-\frac{1}{r}}$ \\
        \rule{0pt}{3ex} $Q + R \ll L$ &
            $\lambda \gg n^{-\frac{\beta}{\beta+1}}$ &
            $\lambda \gg n^{-(1-\xi)}$ \\
        minimax? & \checkmark & \checkmark \\
        \cmidrule(lr){1-3}
    \end{tabular}}

\raggedright
     \footnotesize $^*${Here, $Q=Q_{\bullet}+\text{residual}$ and $R=R_{\bullet}+\text{residual}$. The initial two rows present rates for the bias upper bound $B$ and variance lower bound $L$. The third row bounds the KRR residual.  The fourth and fifth rows, when added with the third row, present rates for the Gaussian coupling $Q$ and bootstrap coupling $R$.
 The sixth row presents the restriction on $\lambda$  implied by $B\ll L$.
 The seventh row  presents the restriction on $\lambda$  implied by $Q+R \ll L$. The final row evaluates whether the allowed path for $\lambda$ can approach the minimax optimal choice for estimation.
     \textcolor{black}{Here $\eta=n^{-\xi}$; the polynomial-decay column takes $\xi<1/(\beta+1)$, and the exponential-decay column takes $\xi$ as in Theorem~\ref{thm:near-minimax-band}. Letting $\xi\downarrow0$ compatibly as $\beta\to\infty$, the polynomial-decay rates recover the exponential-decay rates.}}
\end{table}

\subsection{Nominal coverage in easy cases, and extensions}\label{sec:sup-band}

As previewed in Section~\ref{sec:algo_lead}, in easy cases, we can improve our inference guarantee from validity and sharpness to nominal coverage, by building on the techniques of \citet{chernozhukov2014anticoncentration}. As before, we state our general result at a high level then fill in details using Table~\ref{tab:rate-all}.

In such cases, our result allows us to extend inference from $H$ norm bands to additional nonlinear functionals of $n^{1/2}(\hat{f}-f_0)$. For example, it allows us to study variable-width uniform confidence bands for $f_0$, which are useful for empirical researchers. It also allows us to study the functional that corresponds to a test for match effects, which will be the focus of Section~\ref{sec:application}.

We state our results for an arbitrary $\sup$ norm continuous functional $F$. In particular, let $F: H \to \bb{R}$ be a uniformly continuous functional, i.e.
$|F(u) - F(v)| \le \psi(\norm{u-v})$ for some modulus of continuity $\psi: \bb{R} \to \bb{R}$.\footnote{This condition weaker than uniform continuity in $\sup$ norm since $\sup_{x\in S}|u(x)-v(x)|\leq \kappa \|u-v\|$.}
If $F(f) = \|f\|$, then $\psi(x) \le  x$.
If $F(f) = \sup_{x \in S} |f(x)|$, then $\psi(x) \le \kappa x$.
In order to establish nominal coverage for $F$, we use the following additional assumption.

\begin{assumption}[Anti-concentration]\label{assumption:anti-concentration}
{\color{black}
There is a nondecreasing function $\mathfrak{a}:(0,\infty)\to[0,1]$, potentially depending on $n$ and $\lambda$, such that $\mathfrak{a}(r)\downarrow0$ as $r\downarrow0$ and
$
\sup_{t\in\bb R}\bb{P}\{|F(Z)-t|\leq r\}\leq\mathfrak{a}(r)$
 for every $r>0$.
}
\end{assumption}
{\color{black}
Intuitively, $\mathfrak{a}(r)$ controls the probability that $F(Z)$ occupies any window of width $r$. A density bounded by $\zeta$ is one sufficient condition, giving $\mathfrak{a}(r)\leq2\zeta r$. When $F(\cdot)=\|\cdot\|$ is the $H$-norm, as in Theorem~\ref{thm:near-minimax-band}(b), Proposition~\ref{prop:hnorm-anti} verifies that
\(
\mathfrak{a}(r)
\) is of order $r\sqrt{\lambda}$
up to logarithmic factors. Similar bounds have been derived in many other settings \citep{chernozhukov2014anticoncentration}. Since $\mathfrak{a}(r)$ is not always known for nonstandard data and general $F$, we also provide a data-driven bound that can be computed and used as a diagnostic.\footnote{In general, $\mathfrak{a}(r)$ will depend both on the functional $F$ and on the covariates' precise form and distribution.}
}

\begin{proposition}[Uniform inference via anti-concentration]\label{prop:inf-intro}
{\color{black}
Suppose Assumptions~\ref{assumption:light-main} and~\ref{assumption:anti-concentration} hold. Then with probability $1-\eta$,
$$
\sup_{t \in \bb{R}} \left|\bb{P}\left[F\Big\{\sqrt{n}(\hat f - f_0)\Big\} \le t \right] -  \bb{P}\left\{ \beef F(\mathfrak{B}) \le t \middle| D\right\} \right|
\le \mathfrak{a}\{\psi(Q+B)\}+\mathfrak{a}(\psi R)+2\eta.
$$
Moreover, even in the absence of Assumption~\ref{assumption:anti-concentration}, we have the data-driven bound
$$
    \sup_{t \in \bb{R}} \left|\bb{P}\left[F\Big\{\sqrt{n}(\hat f - f_0)\Big\} \le t \right]  -  \bb{P}\left\{ \beef F(\mathfrak{B} ) \le t \middle| D\right\} \right|
    \le 2\sup_{t\in\mathbb{R}} \mathbb{P}\left\{\beef |F(\mathfrak{B}) -t|
     \le 2\psi(\Delta + B) \middle| D \right\} + 4\eta.
$$
}
\end{proposition}

{\color{black}
We prove Theorem~\ref{thm:near-minimax-band}(b) by combining Proposition~\ref{prop:inf-intro} with an explicit characterization of $(Q,R,B)$, then choosing $\eta=n^{-\xi}$ and $F(\cdot)=\|\cdot\|$. Together, Proposition~\ref{prop:hnorm-anti} and Proposition~\ref{prop:inf-intro} imply that the Kolmogorov distance vanishes whenever $\mathfrak{a}(\Delta+B)\to0$. This simplifies to $B\ll\lambda^{-1/2}$ (undersmoothing) and $(Q+R)\ll\lambda^{-1/2}$ (valid approximation). Therefore, replacing $L$ by $\lambda^{-1/2}$ in Table~\ref{tab:rate-all} gives sufficient conditions for nominal coverage of Algorithm~\ref{algo:incremental} with $\delta=0$. Suppressing logarithmic factors, $L\asymp\lambda^{-1/2}$ under exponential decay, so estimation approaches the minimax rate while ensuring nominal coverage via anti-concentration. Under polynomial decay, $L\gg\lambda^{-1/2}$; nominal coverage via anti-concentration requires further undersmoothing.\footnote{Theorem~\ref{thm:near-minimax-band}(a) avoids further undersmoothing by using an incremental factor.}
}

Proposition~\ref{prop:inf-intro} confers nominal coverage not only for $F=\|\cdot\|$, but also for other important choices of $F$. Confidence sets for other choices of $F$ are obtained by lightly extending Algorithm~\ref{algo:incremental}: instead of quantiles of $\|\BS\|$, use quantiles of $F(\BS)$ across bootstrap iterations to determine the critical value. This extension gives variable width uniform confidence bands.

\begin{algorithm}[Variable width confidence band]\label{algo:variable}
Consider the setting of Algorithm~\ref{algo:incremental}.
\begin{enumerate}
    \item For each bootstrap iteration of Algorithm~\ref{algo:incremental}, draw Gaussians and compute $\mathfrak{B}(x) =  K_x \hat\gamma$.
    \item Estimate $\tilde{\s}^2(x)=\E_q\{\mathfrak{B}(x)^2\}$ by averaging across bootstrap iterations.
    \item Across bootstrap iterations, compute the $(1-\alpha)$-quantile, $\hat{t}_{\alpha}$, of $\sup_{x \in S} \left|\tilde{\s}(x)^{-1}\BS(x)\right|$.
    \item Calculate the uniform band $\hat C_\a(x)=\hat f(x) \pm \hat{t}_\a \cdot  n^{-1/2}\tilde{\s}(x)$ for $x\in S$.
\end{enumerate}
\end{algorithm}

Comparing Algorithms~\ref{algo:incremental} and~\ref{algo:variable}, the bootstrap $\mathfrak{B}$ is the same. We modify the critical value to provide less conservative and variable width bands in $\sup$ norm.%

\begin{corollary}[Variable width coverage]\label{cor:variable} Let $w: S \to (0,\infty)$ be a bounded weight function. Then $F_w(f) = \sup_{x \in S}|f(x)w(x)|$ is uniformly continuous with modulus $x \mapsto x \kappa\snorm{w}_\infty$. Moreover, $F_w\{\sqrt{n}(\hat f - f_0)\} \le t$ is precisely the event that for all $x \in S$,
$$ \hat f(x) - t \cdot n^{-1/2}w(x)^{-1} \le f_0(x) \le \hat f(x) + t \cdot n^{-1/2}w(x)^{-1}.$$
When $w(x)^{-1}$ is replaced by the estimated quantity $\tilde{\mathfrak{s}}(x)$, Appendix~\ref{sec:band} \textcolor{black}{discusses conditions under which}
$
\mathbb{P} \{f_0(x)\in \hat{C}_{\a}(x) \text{ for all $x\in S$}\} \rightarrow 1-\a.$
\end{corollary}

{\color{black}
The $\sup$ norm critical value in Algorithm~\ref{algo:variable} has certain practical and theoretical shortcomings compared to the $H$ norm critical value in Algorithm~\ref{algo:incremental}, discussed in Section~\ref{sec:algo_lead}. Future work may characterize the finite-window modulus $\mathfrak{a}$ for Algorithm~\ref{algo:variable}, which may depend on specific details of the covariate space $S$. By contrast, Proposition~\ref{prop:hnorm-anti} characterizes this modulus for Algorithm~\ref{algo:incremental}; when in doubt, we recommend that researchers use the robust uniform bands of Corollary~\ref{cor:implied-band}.
}

\section{Nonasymptotic couplings}\label{sec:partial}

Recall the bias variance decomposition for KRR
$$
n^{1/2}(\hat{f}-f_0)=\underbrace{n^{1/2}\{(\hat{f}-f_{\lambda})-\E_n(U)\}}_{\text{residual}}+\underbrace{n^{1/2}\E_n(U)}_{\text{pre-Gaussian}}+\underbrace{n^{1/2}(f_{\lambda}-f_0)}_{\text{bias}}.
$$
We now prove nonasymptotic Gaussian and bootstrap couplings for $n^{1/2}\E_n(U)$.

We provide results general for i.i.d. centered sequences $(U_1,U_2,...)$ taking values in $H$, with $\bb{E}(U_i)=0$, $\bb{E}\norm{U_i}^2 < \infty$, and
$\Sigma=\bb{E}(U_i \otimes U_i^*)$. We demonstrate that Gaussian and bootstrap couplings in $H$  have excellent dependence on the $L^2$ covering number. Theorem~\ref{thm:gauss-main} constructs a Gaussian coupling $Z$ for $n^{1/2}\E_n(U)$. Theorem~\ref{thm:bs-main} allows us to sample from $Z$ via the bootstrap coupling $Z_{\mathfrak{B}}$.

We apply these general results to the pre-Gaussian term of KRR, where
$$
U_i=(T+\lambda)^{-1}\{(k_{X_i} \otimes k_{X_i}^*-T)(f_0-f_{\lambda}) + \ep_i k_{X_i}\}.
$$
Doing so characterizes $(Q,R)$ in Assumption~\ref{assumption:light-main} and hence underpins Propositions~\ref{prop:h-band} and~\ref{prop:inf-intro}. Within Algorithms~\ref{algo:incremental} and~\ref{algo:variable}, $\mathfrak{B}$ is the empirical counterpart to the bootstrap $Z_{\mathfrak{B}}$ below. %

\subsection{Gaussian and bootstrap approximations for non-standard data}

\textcolor{black}{In addition to our main assumption on the local width, given in Section~\ref{sec:model}, we impose boundedness on the summands $U_i$.}

\begin{definition}[Bounded summands]\label{def:regularity_partial}
\textcolor{black}{$U_i$ is \emph{$a$-bounded} if $\norm{U_i} \le a$ almost surely.}
\end{definition}

\textcolor{black}{For KRR, our maintained bounded-kernel and bounded-residual assumptions imply that we can choose $a=\lambda^{-1}\left(\kappa^2\norm{f_0} + \kappa\bar\sigma\right)$.}

\begin{theorem}[Gaussian coupling]\label{thm:gauss-main}
Suppose the $U_i$ are $a$-bounded. Then for all $m \ge 1$ there exists a Gaussian random variable $Z$ taking values in $H$, with $\bb{E}(Z \otimes Z^*)=\Sigma$, such that
with probability at least $1-\eta$,
$$
\textcolor{black}{\left\|n^{1/2}\E_n(U) - Z\right\| \lesssim
a\sqrt{\frac{m[1+\log(n)]}{n\eta}}
+\sqrt{\log(6/\eta)} \sigma(\Sigma, m).}
$$
\end{theorem}

\textcolor{black}{Theorem~\ref{thm:gauss-main} is a nonasymptotic coupling for an empirical process. Such nonasymptotic couplings are important for approximating KRR, since $U_i$ depends on $\lambda$ and thus varies with the sample size.}

On the right hand side, we obtain a bound for any truncation $m$ of the spectrum.
\textcolor{black}{The former term reflects the quality of the bounded-vector Gaussian coupling for the initial $m$ eigenfunctions, which we isometrically embed into $\R^m$ and analyze using a recent, sharp result of \citet{eldan2020clt}. For KRR, we can choose $a=\lambda^{-1}\left(\kappa^2\norm{f_0} + \kappa\bar\sigma\right)$.} The latter term in the bound is the local width, i.e. the tail sum of eigenvalues after the threshold. For KRR, we verify that
$$
\sigma^2(\Sigma,m)\leq \left(\frac{\kappa\norm{f_0} + \bar\sigma}{\lambda}\right)^2 \sigma^2(T,m).
$$
This finite dimensional projection technique is established \cite{gotze2011estimates}. It seems it has not been applied to the RKHS before, where it is especially powerful, implying approximation in $\sup$ norm with very good dependence on model complexity.

Our proof strategy is classical, yet it appears to use the right tools for the job. Compared to highly general results that apply beyond the RKHS, e.g.~\citet{chernozhukov2014gaussian,chernozhukov2016empirical} and related works, we improve the mode and rate of Gaussian coupling within the RKHS.
In particular, we obtain stronger forms of approximation, and faster rates, by focusing on Hilbert norms.
This strategy appears uniquely suited to the RKHS setting, where we can analyze Gaussian approximation in a Hilbert norm then transfer it to the $\sup$ norm.
See Appendix~\ref{sec:zaitsev} for formal details.

\subsection{Anti-symmetric bootstrap}

In order to perform inference, we need some way to sample from the approximating Gaussian distribution. To do so, we propose the anti-symmetric multiplier bootstrap
$$
Z_{\mathfrak{B}}=\frac{1}{n}\sum_{i=1}^n\sum_{j=1}^n\left(\frac{U_i - U_j}{\sqrt 2}\right)h_{ij}=\frac{1}{n}\sum_{i=1}^n\sum_{j=1}^n \left(\frac{V_i - V_j}{\sqrt 2}\right)h_{ij}
$$
where the $h_{ij}$ are i.i.d.~Gaussian multipliers and
 $V_i = U_i + \mu$, for some unknown, deterministic bias $\mu$.
The anti-symmetric bootstrap is motivated by our application to KRR, where we aim to provide valid uncertainty quantification even when the bias is significant.
It is
based on the observation that $U_i$ has the same covariance as $(U_i-U_j)/\sqrt{2}=(V_i-V_j)/\sqrt{2}$. In Section~\ref{sec:collect} below, we apply it to KRR, matching symbols with Algorithms~\ref{algo:incremental} and~\ref{algo:variable}.

\begin{theorem}[Anti-symmetric bootstrap coupling]\label{thm:bs-main}
Suppose the $U_i$ are $a$-bounded and $n \ge 2$. Then for all $m$, there exists a random variable $Z'$ such that the conditional law of $Z'$ given $U$ is almost surely Gaussian with covariance $\Sigma$, and with probability $1-\eta$,
$$
\bb{P}\left(\beef \norm{Z' - Z_{\mathfrak{B}}} \ge  C \log^{3/2}(C/\eta)  \left[ \left\{\frac{a^2\sigma^2(\Sigma, 0)m}{n}+\frac{a^4m}{n^2}\right\}^{\f 1 4} + \sigma(\Sigma, m) \right] \middle|\, U \right) \le \eta. $$
\end{theorem}

On a high probability event, we can approximately sample from the distribution of $Z'$, and hence from the distribution of the empirical process $n^{1/2}\E_n(U)$, by sampling from the bootstrap process $Z_{\mathfrak{B}}$ conditional on the realized data $U$.  As before, we heavily exploit the geometry of $H$ to derive a stronger form of approximation without incurring slower rates, departing from  \citet{chernozhukov2016empirical}. Within each bound, the former term reflects the quality of bootstrap coupling for the initial $m$ eigenfunctions, and the latter term is the sum of the remaining eigenvalues.  The following technical result underlies our method.

\begin{proposition}[Abstract bootstrap coupling]
For any $m \ge 1$ there exists a $Z'$ that is conditionally Gaussian with covariance $\Sigma$, such that with probability at least $1-3\eta$,
$$
\norm{Z' - Z_{\mathfrak{B}}}\leq \left \{1 + \sqrt{2\log(1/\eta)}\,\right\} \left\{ m^{\f 1 4}\Delta_1^{1/2}+\Delta_2^{1/2} +  2\sigma(\Sigma, m)\right\},
$$
where $\Delta_1=\|\hat{\Sigma}-\Sigma\|_{\HS}$ and $\Delta_2=\max\{\tr\,\{\Pi_m^\perp (\hat \Sigma - \Sigma) \Pi_m^\perp\},0\}$. Here, $\hat{\Sigma}=\bb{E}(Z_{\mathfrak{B}} \otimes Z_{\mathfrak{B}}^*|U)$ is the bootstrap covariance, $\Pi_m^\perp=I-\Pi_m$, and $\Pi_m$ projects onto the initial $m$ eigenfunctions.
\end{proposition}

This technical result only needs bounds on $\hat{\Sigma} - \Sigma$, so it applies to other boostrap procedures. For example, one may consider the traditional Gaussian multiplier bootstrap  $n^{1/2}\E_n( h U)$, subsampled data, or more modern covariance matrix approximations.

\subsection{Application: KRR inference}\label{sec:collect}

We now tie together the main results in Sections~\ref{sec:algo_main} and~\ref{sec:partial}. We demonstrate how our framework translates nonasymptotic Gaussian and bootstrap couplings into sharp and valid inference for KRR via our proposed $\mathfrak{B}$. \textcolor{black}{Inference remains valid for $f_{\lambda}$ under mis-specification, which is important because correct specification is implausible with nonstandard data.}

Within Algorithms~\ref{algo:incremental} and~\ref{algo:variable}, the implicit $H$-valued bootstrap is
$$
\mathfrak{B}=\frac{1}{n}\sum_{i=1}^n\sum_{j=1}^n  \left(\frac{\hat{V}_i-\hat{V}_j}{\sqrt{2}}\right) h_{ij},\quad \hat{V}_i=(\hat{T}+\lambda)^{-1}\hat{\ep}_i k_{X_i},\quad \hat{T}=\E_n(k_X\otimes k_X^*),\quad \hat{\ep}_i=Y_i-\hat{f}(X_i).
$$
The $\hat{V}_i$ are familiar from classical regression, where a bootstrap is often constructed by replacing $Y_i$ with $\hat{\ep}_i$ in the regression formula. Here, KRR is $\hat{f}=(\hat{T}+\lambda)^{-1}\E_n(Yk_X)$.

Bias of the KRR estimator $\hat{f}$, due to regularization or possible mis-specification, translates into bias of $\hat{\ep}_i$ and hence $\hat{V}_i$. Notice that our procedure $\mathfrak{B}$ is the empirical counterpart to
$$
Z_{\mathfrak{B}}
=\frac{1}{n}\sum_{i=1}^n\sum_{j=1}^n \left(\frac{V_i - V_j}{\sqrt 2}\right)h_{ij},\quad  V_i=(T+\lambda)^{-1}\{Y_i-f_{\lambda}(X_i)\} k_{X_i},\quad T=\E(k_X\otimes k_X^*).
$$
The issue is that $V_i$ is biased, with mean $\mu=(T+\lambda)^{-1}T(f_0-f_{\lambda})$. \textcolor{black}{If $f_{\lambda}$ fails to  approximate $f_0$ at a sufficient rate when $\lambda \downarrow 0$, a standard multiplier bootstrap would fail.}

A key insight is that $V_i=U_i+\mu$, where $U_i$ for KRR is stated at the beginning of Section~\ref{sec:partial}. Therefore, our procedure $\mathfrak{B}$ can cancel the bias $\mu$ by taking differences of $\hat{V}_i$ and $\hat{V}_j$. Equivalently, we use anti-symmetric multipliers, as demonstrated in Algorithms~\ref{algo:incremental} and~\ref{algo:variable}. The technique ensures valid inference for \textcolor{black}{$\hat f - f_{\lambda}$ , even when $f_0\not\in H$,} according to the decomposition
$$
n^{1/2}(\hat{f}-f_\lambda)=\underbrace{n^{1/2}\{(\hat{f}-f_{\lambda})-\E_n(U)\}}_{\text{residual}}+\underbrace{n^{1/2}\E_n(U)}_{\text{pre-Gaussian}}.
$$
This insight, for regularized nonparametric regression bands, may be of independent interest.

\textcolor{black}{\begin{corollary}[KRR inference under mis-specification]\label{cor:mis}
Suppose that $\|f_0-f_{\lambda}\|_{\infty}$ is bounded above uniformly in $\lambda$. Then the results of Propositions~\ref{prop:h-band} and~\ref{prop:inf-intro} continue to hold when replacing $f_0$ with $f_{\lambda}$ and replacing $B(\lambda)$ with zero.
\end{corollary}}

\section{Heterogeneous student preferences in Boston schools}\label{sec:application}

Our procedure performs well in nonlinear simulations, both with standard data and with preference data. We also conduct a semi-synthetic exercise using preference data calibrated to Boston Public School students. Our inferential procedure detects match effects well when they are present, whereas an approach based on indicators for preference categories does not.

\subsection{Nominal coverage for standard data and preference data}

We verify nominal coverage in nonlinear regression simulations with standard and preference data. In the standard data design, the true regression function $f_0$ is a weighted average of the initial five eigenfunction of the Gaussian kernel $k(x,x')= \exp\left\{-50(x-x')^2\right\}$.\footnote{The general Gaussian kernel is $k(x,x')= \exp\left\{-\frac{1}{2}\frac{(x-x')^2}{\iota^2}\right\}$. Here, $\iota$ plays the role of the standard deviation, which effectively standardizes the regressors. It has well known heuristics; see Appendix~\ref{sec:simulation_details}.} With Gaussian data, the eigenfunctions of the Gaussian kernel are the Hermite polynomials \citep[Section 4.3]{williams2006gaussian}.
In the preference data design, $f_0$ is a weighted average of the initial five eigenfunction of the kernel based on Kendall's rank correlation in Example~\ref{ex:kendall}.

\begin{figure}
\captionsetup[subfigure]{justification=Centering}
\begin{subfigure}[t]{0.48\textwidth}
         \centering
        \resizebox{\textwidth}{!}{%
       \includegraphics[width=\textwidth]{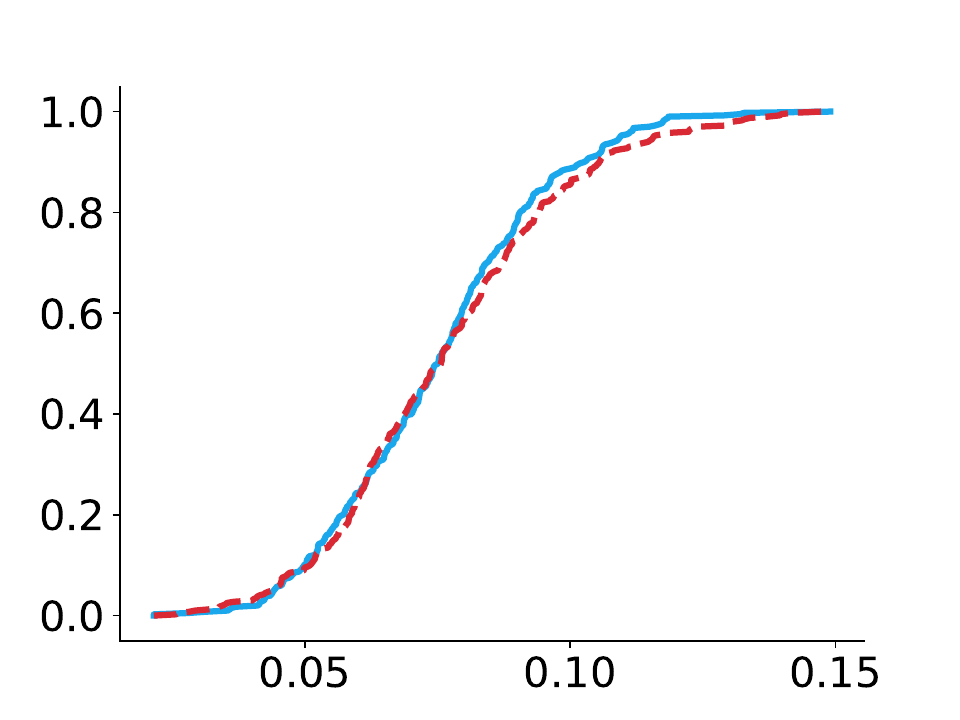}
        }
    \caption{\footnotesize Standard data. Coverage of $f_0$ is $95.0\%$.}\label{fig:cdf}
\end{subfigure}\hspace{\fill} %
\begin{subfigure}[t]{0.48\textwidth}
          \centering
        \resizebox{\textwidth}{!}{%
      \includegraphics[width=\textwidth]{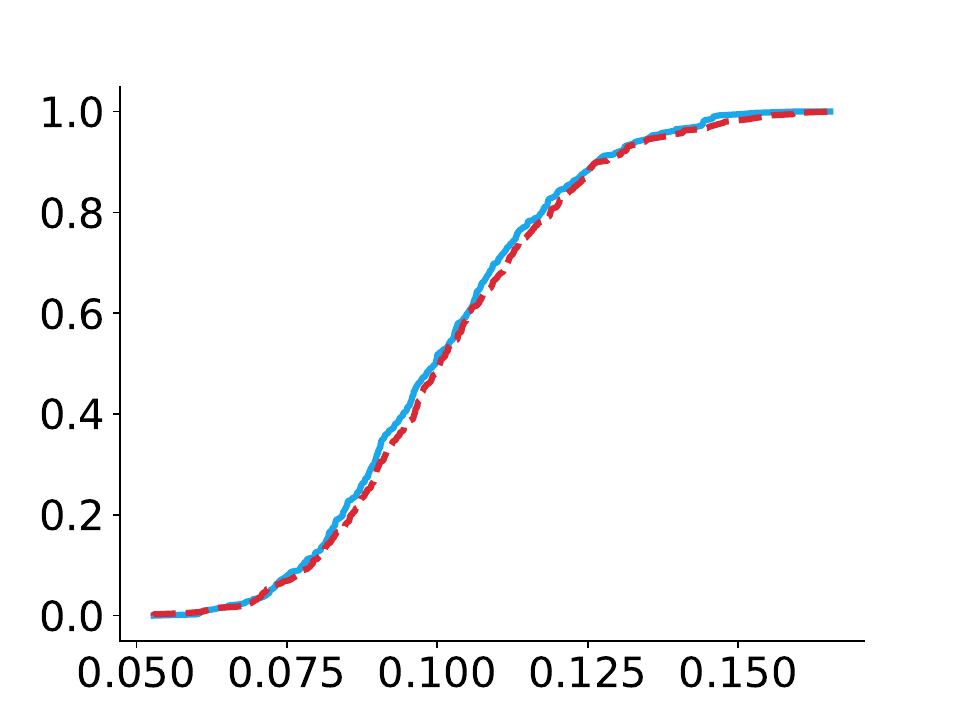}
        }
    \caption{\footnotesize Preference data. Coverage of $f_0$ is $96.8\%$}\label{fig:cdf_ranking}
\end{subfigure}
\caption{We compare the distribution of $n^{1/2}\snorm{\hat f-f_0}_\infty$ across many samples (dashed red), with the distribution of our proposal $\snorm{\mathfrak{B}}_\infty$ across many bootstrap iterations, conditional upon a single sample (solid blue).}\label{fig:cdf_all}
\end{figure}

Figure~\ref{fig:cdf_all} compares distributions that Proposition~\ref{prop:inf-intro} proves to be close: the distribution of the true quantity $\|n^{1/2}(\hat{f}-f_0)\|_{\infty}$ across 500 samples; and the distribution of our proposal $\|\mathfrak{B}\|_{\infty}$ across 500 bootstrap iterations conditional upon the first sample. The distributions closely align, both with standard and nonstandard data, illustrating the fidelity of our results.

We quantify performance in coverage tables. Across 500 draws, Table~\ref{tab:cov_n_all} confirms nominal coverage across sample sizes, with $\lambda=n^{-1/2}$ following Theorem~\ref{thm:near-minimax-band}(b). Different columns record coverage in $\sup$ norm and $H$ norm of $f_0$ and $f_{\lambda}$. Table~\ref{tab:cov_lambda_all} evaluates robustness to alternative tunings of $\lambda$. Across sample sizes and regularization values, the confidence bands attain nominal coverage, unless $\lambda$ strongly deviates from our theoretical guidelines.  Appendix~\ref{sec:simulation_details} examines additional metrics of confidence band performance beyond coverage, and confirms strong performance in Sobolev spaces.

\begin{table}
\captionsetup[subtable]{justification=Centering}
\caption{Coverage is nominal across sample sizes. Across rows, we vary $n$ and set $\lambda=n^{-1/2}$.
}\label{tab:cov_n_all}
\begin{subtable}[t]{0.42\textwidth}
        \centering
        \resizebox{\textwidth}{!}{%
        \begin{tabular}{ccccc}
           \multirow{2}{*}{sample}  & \multicolumn{2}{c}{$\sup$  norm}  & \multicolumn{2}{c}{$H$ norm} \\
           \cmidrule(lr){2-3}\cmidrule(lr){4-5}
             &  true & pseudo & true & pseudo \\
            \hline
  100 &               0.938 &                 0.952 &             0.942 &               0.942 \\
  250 &               0.960 &                 0.958 &             0.970 &               0.968 \\
  500 &               0.968 &                 0.985 &             0.980 &               0.975 \\
 1000 &               0.950 &                 0.955 &             0.965 &               0.965 \\
            \hline
        \end{tabular}
        }
    \caption{\label{tab:cov_n} \footnotesize \textsc{Standard data}}
\end{subtable} \quad\quad\quad %
\begin{subtable}[t]{0.42\textwidth}
          \centering
        \resizebox{\textwidth}{!}{%
      \begin{tabular}{ccccc}
           \multirow{2}{*}{sample}  & \multicolumn{2}{c}{$\sup$ norm}  & \multicolumn{2}{c}{$H$ norm}  \\
           \cmidrule(lr){2-3}\cmidrule(lr){4-5}
             &  true & pseudo & true & pseudo
             \\
            \hline
  100 &               0.982 &                 0.985 &             0.915 &               0.922 \\
  250 &               0.975 &                 0.982 &             0.968 &               0.965 \\
  500 &               0.932 &                 0.960 &             0.965 &               0.972 \\
 1000 &               0.968 &                 0.962 &             0.950 &               0.952 \\
            \hline
        \end{tabular}
        }
    \caption{\label{tab:cov_n_ranking} \footnotesize \textsc{Preference data}}
\end{subtable}

\end{table}

\begin{table}
\captionsetup[subtable]{justification=Centering}
\caption{Coverage is nominal for regularization near $n^{-1/2}$. Across rows, we fix $n=500$ and vary $\lambda$.}\label{tab:cov_lambda_all}
\begin{subtable}[t]{0.42\textwidth}
    \centering
    \resizebox{\textwidth}{!}{%
     \begin{tabular}{ccccc}
         \multirow{2}{*}{reg.} & \multicolumn{2}{c}{$\sup$ norm} &\multicolumn{2}{c}{$H$ norm } \\
         \cmidrule(lr){2-3}\cmidrule(lr){4-5}
        & true & pseudo & true & pseudo \\
        \hline
  0.500 &               0.102 &                 0.963 &             0.897 &               0.970 \\
  0.100 &               0.932 &                 0.958 &             0.953 &               0.953 \\
  0.050 &               0.973 &                 0.970 &             0.965 &               0.968 \\
  0.010 &               0.965 &                 0.965 &             0.968 &               0.968 \\
  0.005 &               0.930 &                 0.930 &             0.965 &               0.965 \\
  0.001 &               0.938 &                 0.938 &             0.963 &               0.963 \\
        \hline
    \end{tabular}
    }
    \caption{\label{tab:cov_lambda} \footnotesize \textsc{Standard data.}}
\end{subtable} \quad\quad\quad
\begin{subtable}[t]{0.42\textwidth}
    \centering
    \resizebox{\textwidth}{!}{%
     \begin{tabular}{ccccc}
         \multirow{2}{*}{reg.} & \multicolumn{2}{c}{$\sup$ norm} &\multicolumn{2}{c}{$H$ norm } \\
         \cmidrule(lr){2-3}\cmidrule(lr){4-5}
        & true & pseudo & true & pseudo \\
        \hline
  0.500 &               0.115 &                 0.983 &             0.833 &               0.973 \\
  0.100 &               0.935 &                 0.980 &             0.953 &               0.958 \\
  0.050 &               0.943 &                 0.945 &             0.973 &               0.973 \\
  0.010 &               0.955 &                 0.955 &             0.960 &               0.960 \\
  0.005 &               0.978 &                 0.980 &             0.970 &               0.970 \\
  0.001 &               0.998 &                 0.998 &             0.115 &               0.115 \\
        \hline
    \end{tabular}
    }
    \caption{\label{tab:cov_lambda_ranking} \footnotesize \textsc{Preference data.}}
\end{subtable}
\end{table}

\textcolor{black}{Finally, we provide simulation results under mis-specification, i.e.~$f_0\not\in H$, showcasing how our inferential results still
tend to cover
$f_{\lambda}$. Figure~\ref{fig:mis_band} visualizes a mis-specified design where the true regression $f_0$ is a step function. Figure~\ref{fig:mis_cdf} compares
the distribution of the pseudo true quantity $\|n^{1/2}(\hat{f}-f_{\lambda})\|_{\infty}$ across 500 samples; and the distribution of our proposal $\|\mathfrak{B}\|_{\infty}$ across 500 bootstrap iterations conditional upon the first sample. The distributions closely align, illustrating the robustness of our theory. Appendix~\ref{sec:simulation_details} gives coverage tables.}

\begin{figure}
\captionsetup[subfigure]{justification=Centering}
\begin{subfigure}[t]{0.48\textwidth}
         \centering
        \resizebox{\textwidth}{!}{%
       \includegraphics[width=\textwidth]{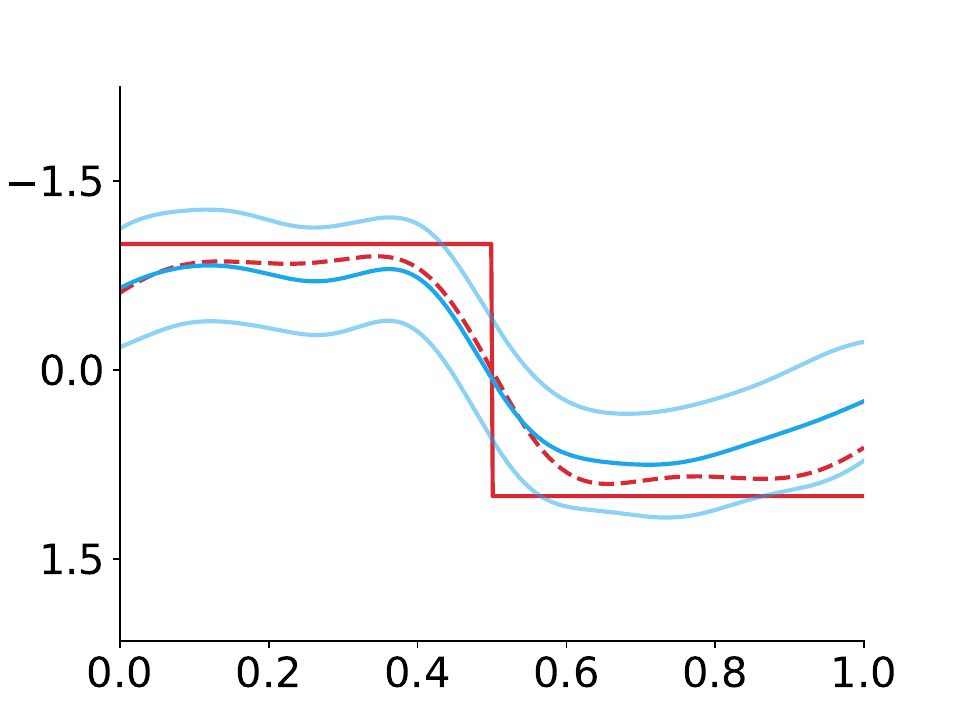}
        }
    \caption{\footnotesize Mis-specification. Here $f_0\not\in H$ yet $f_{\lambda}\in H$.}\label{fig:mis_band}
\end{subfigure}\hspace{\fill} %
\begin{subfigure}[t]{0.48\textwidth}
          \centering
        \resizebox{\textwidth}{!}{%
      \includegraphics[width=\textwidth]{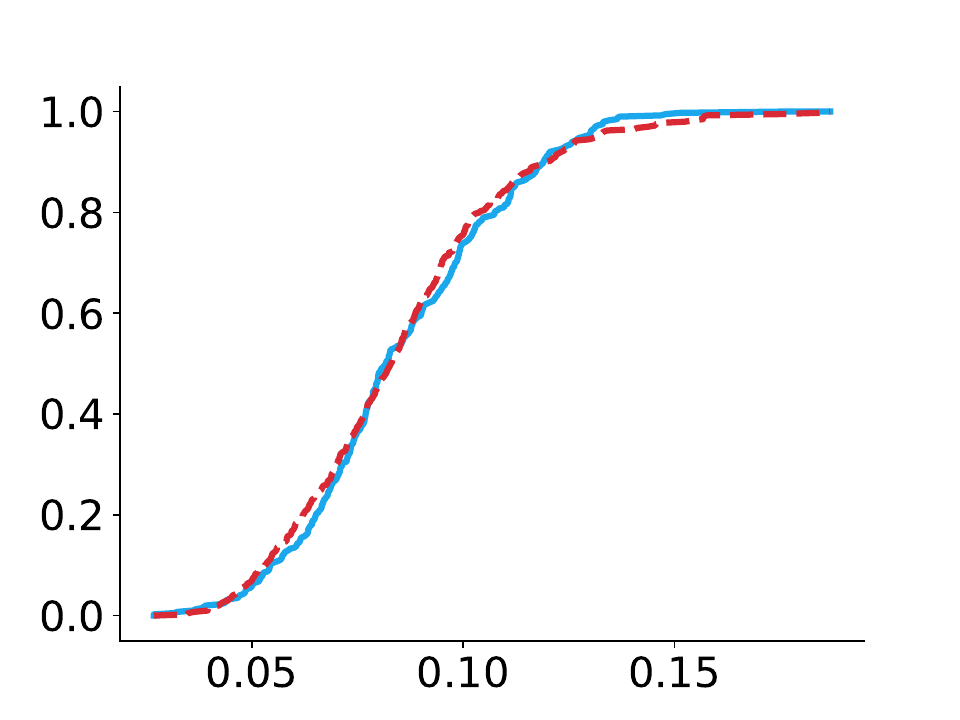}
        }
    \caption{\footnotesize Mis-specification.  Coverage of $f_{\lambda}$ is 95.8\%}\label{fig:mis_cdf}
\end{subfigure}
\caption{
In Figure~\ref{fig:mis_band}, we compare the true regression $f_0$ (red) with the pseudo true parameter $f_{\lambda}$ (dashed red). For one sample, we visualize the KRR estimator (dark blue) and our confidence band (light blue).
In Figure~\ref{fig:mis_cdf}, we compare the distribution of $n^{1/2}\snorm{\hat f-f_\lambda}_\infty$ across many samples (dashed red), with the distribution of our proposal $\snorm{\mathfrak{B}}_\infty$ across many bootstrap iterations, conditional upon a single sample (solid blue).}\label{fig:mis_all}
\end{figure}

\subsection{An improved test for match effects}

Previous work proves that heterogeneous school effects are identified by data from a centralized matching mechanism. Due to random tie breaking in the matching mechanism, conditional upon submitted student preferences, school assignment is as good as random \citep{abdulkadirouglu2017research}. Therefore the heterogeneous effects of attending a certain school, or school sector, are nonparametrically identified as a function of student preferences.

We use KRR to estimate heterogeneous school sector effects. Let $D_i\in\{0,1\}$ indicate whether a student is assigned to a pilot school. Let $X_i$ denote a student's preference over the $25$ schools. The treatment propensity $\pi(X_i)=\mathbb{P}(D_i=1|X_i)$ may be computed to arbitrary precision by running the mechanism many times. We regress the pseudo-outcomes $\tilde{Y}_i=\frac{Y_iD_i}{\pi(X_i)}-\frac{Y_i(1-D_i)}{1-\pi(X_i)}$ on $X_i$ using KRR with the preference kernel of Example~\ref{ex:kendall}, and conduct inference with Algorithm~\ref{algo:variable}. Figure~\ref{fig:eigen} validates our main assumption: there appear to be relatively few types of preferences. These semi-synthetic preferences
have a low effective dimension. %
Appendix~\ref{sec:application_details} visualizes and interprets the eigenfunctions of the preference kernel, i.e. its implicit nonlinear basis.

The %
regression function over all $25!$ preferences is challenging to visualize, so we calculate $25$ group averages. Each group $S_{\rho}$ consists of students whose highest rank given to a pilot school is $\rho$. Proposition~\ref{prop:inf-intro} gives simultaneous inference on the $25$ group effects via the functional $F(f)=\max_{1\leq \rho\leq 25}|\E_n\{f(X_i)|X_i\in S_{\rho}\}|$, which is continuous in $\sup$ norm.\footnote{Figure~\ref{fig:sub_all} reports effects for $(S_1,...,S_{10})$, since $(S_{11},...S_{25})$ are never assigned to pilot schools in practice.}

\begin{figure}
\captionsetup[subfigure]{justification=Centering}
\begin{subfigure}[t]{0.48\textwidth}
         \centering
        \resizebox{\textwidth}{!}{%
       \includegraphics[width=\textwidth]{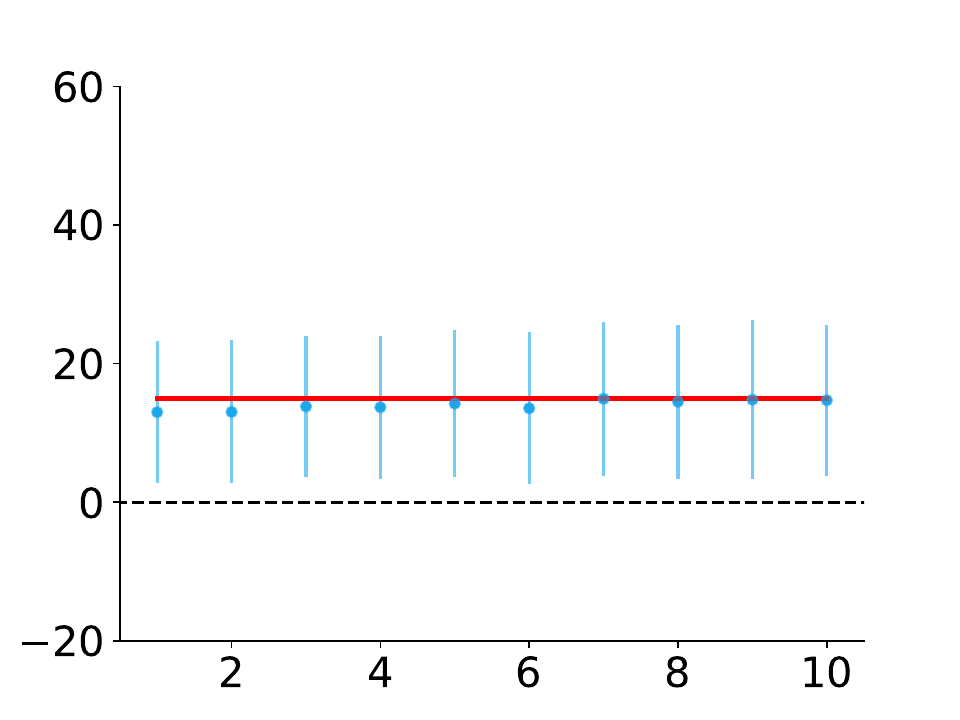}
        }
    \caption{\footnotesize No match effects.}\label{fig:sub1}
\end{subfigure}\hspace{\fill} %
\begin{subfigure}[t]{0.48\textwidth}
          \centering
        \resizebox{\textwidth}{!}{%
        \includegraphics[width=\textwidth]{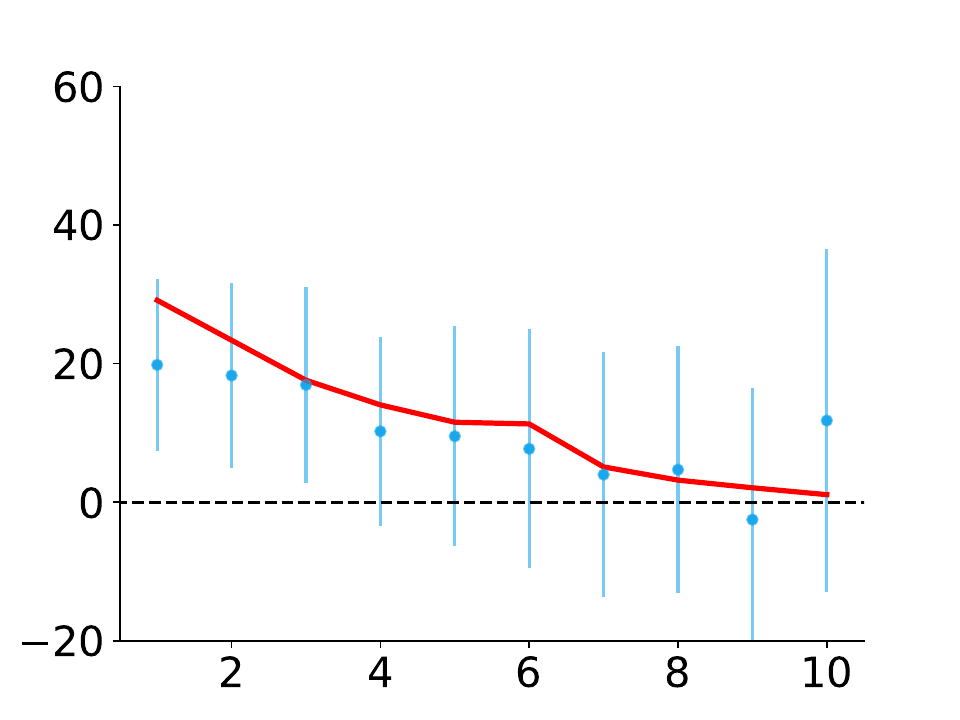}
        }
    \caption{\footnotesize Match effects.}\label{fig:sub2}
\end{subfigure}
\caption{KRR inference detects match effects. The true group average treatment effects are in red. The dark blue estimates are from KRR. The light blue intervals are from our KRR inference procedure.}\label{fig:sub_all}
\end{figure}

\begin{figure}
\centering
\begin{subfigure}{.32\textwidth}
  \centering
  \includegraphics[width=\linewidth]{img_semi/ranking_no_match_fx.pdf}
  \caption{KRR (uniform bands)}
  \label{fig:sub1_appendix}
\end{subfigure}
\begin{subfigure}{.32\textwidth}
  \centering
  \includegraphics[width=\linewidth]{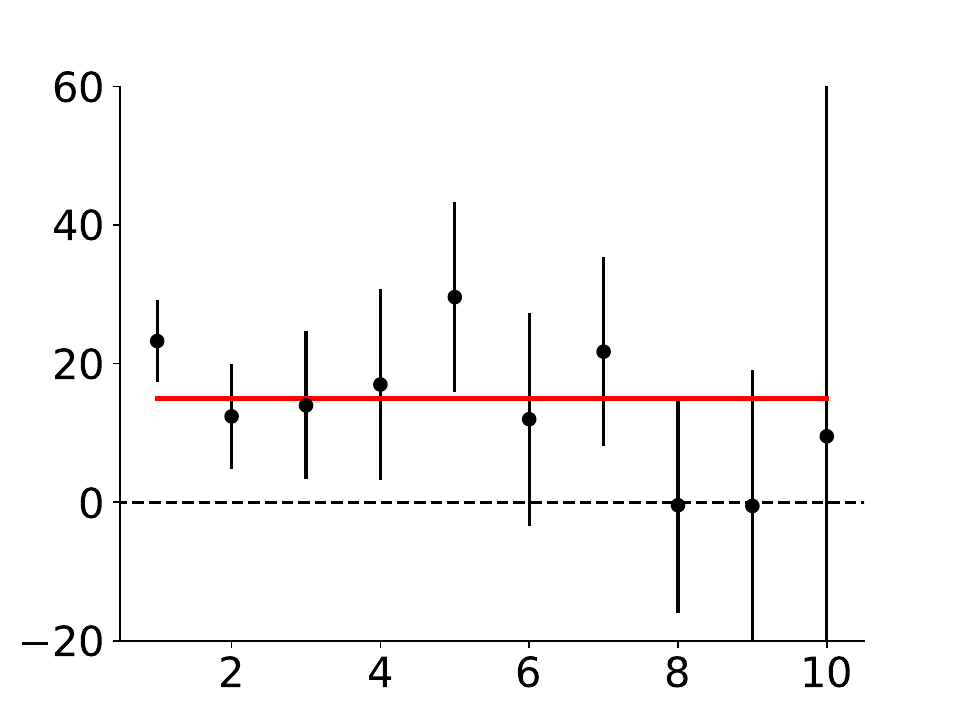}
  \caption{Indicators (pointwise bands)}
  \label{fig:sub2_appendix}
\end{subfigure}
\begin{subfigure}{.32\textwidth}
  \centering
  \includegraphics[width=\linewidth]{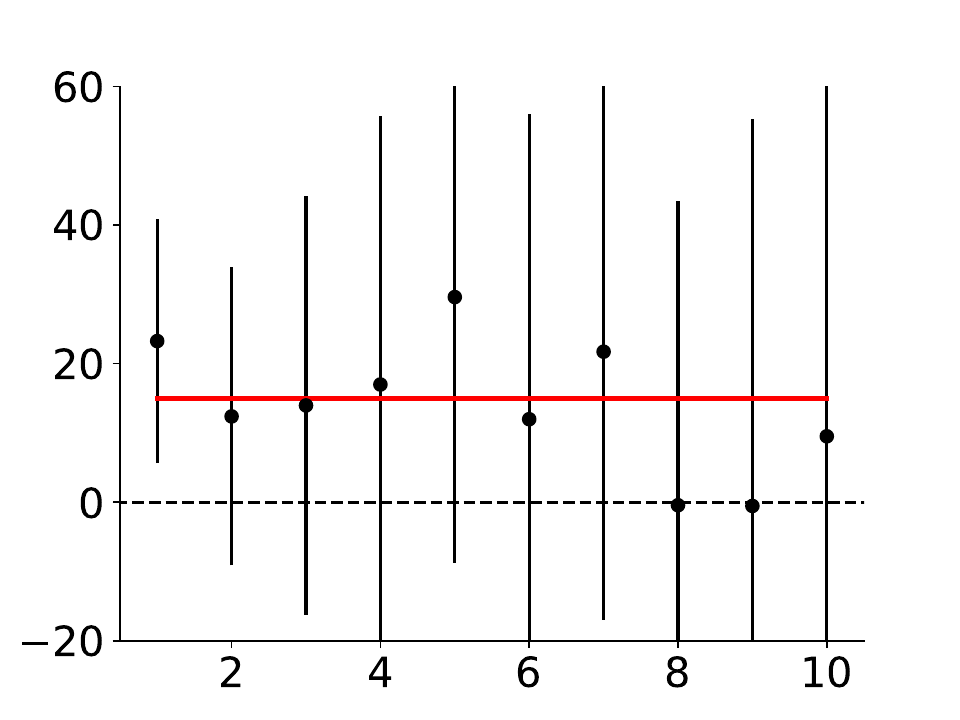}
  \caption{Indicators (uniform bands)}
  \label{fig:sub3_appendix}
\end{subfigure}
\caption{With no match effects, KRR inference improves power compared to an indicator approach. The true group average treatment effects are in red. Our KRR approach is in blue. The indicator approach is in black.
}\label{fig:no_match_appendix}
\end{figure}

\begin{figure}
 \centering
\begin{subfigure}{.32\textwidth}
  \centering
  \includegraphics[width=\linewidth]{img_semi/ranking_match_fx.pdf}
  \caption{KRR (uniform bands)}
  \label{fig:sub4_appendix}
\end{subfigure}
\begin{subfigure}{.32\textwidth}
  \centering
  \includegraphics[width=\linewidth]{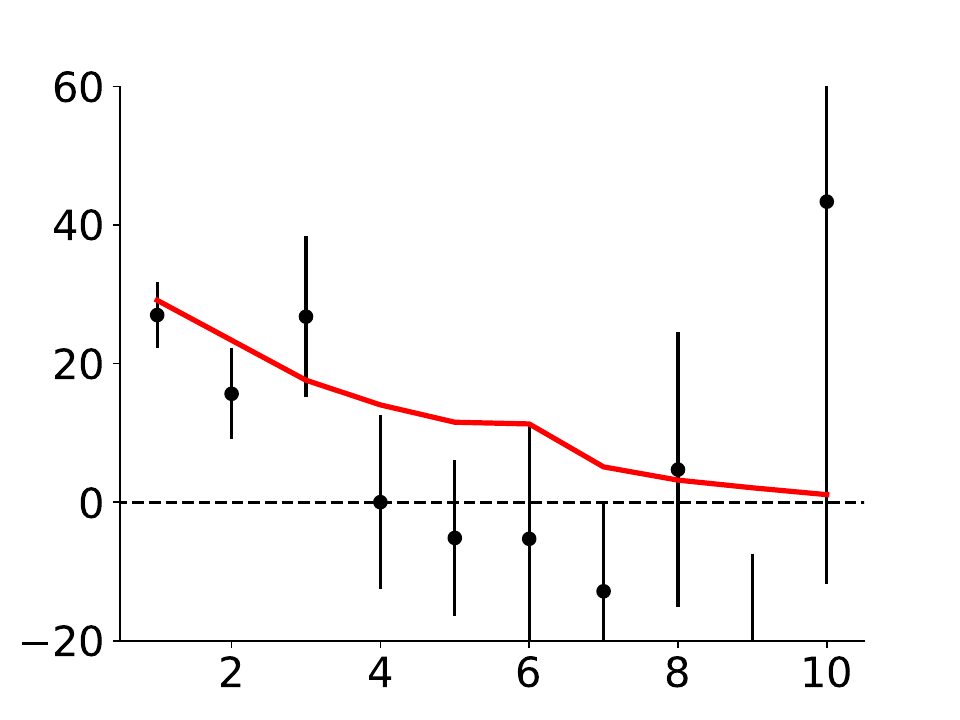}
  \caption{Indicators (pointwise bands)}
  \label{fig:sub5_appendix}
\end{subfigure}
\begin{subfigure}{.32\textwidth}
  \centering
  \includegraphics[width=\linewidth]{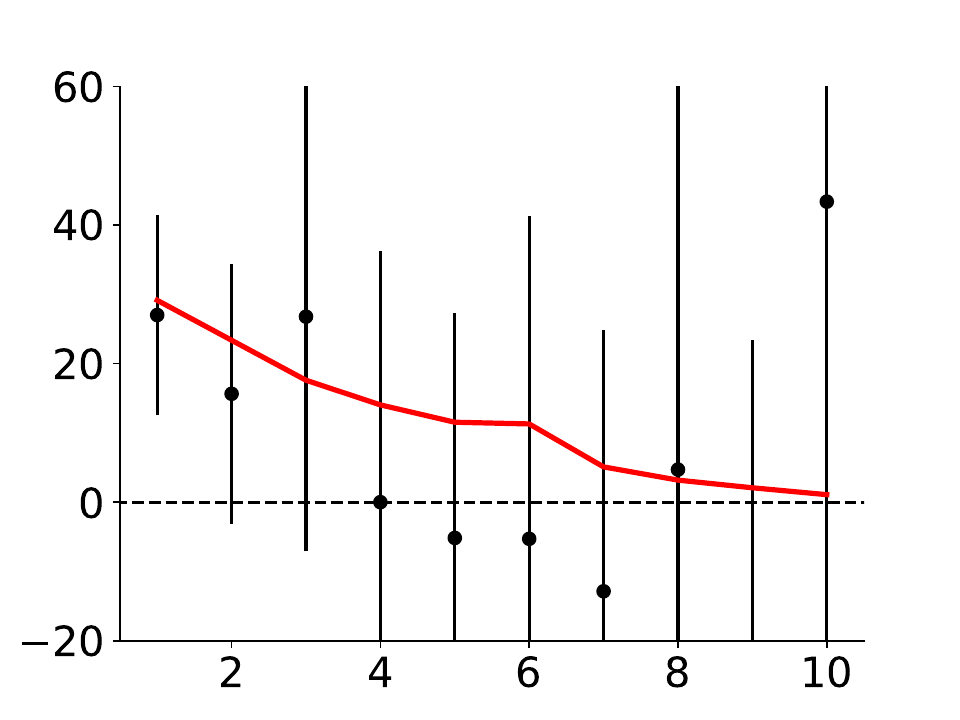}
  \caption{Indicators (uniform bands)}
  \label{fig:sub6_appendix}
\end{subfigure}
\caption{With match effects, KRR inference improves power compared to an indicator approach. The true group average treatment effects are in red. Our KRR approach is in blue. The indicator approach is in black.
}\label{fig:match_appendix}
\end{figure}

With this methodology, we demonstrate that KRR inference can distinguish between two counterfactual scenarios: no match effects (Figure~\ref{fig:sub1}) versus match effects (Figure~\ref{fig:sub2}). In the no match effect design, we generate 4000 student preferences calibrated to real preferences of students at Boston Public Schools \citep{pathak2021well}. We impose no match effects, hence flat group average treatment effects, visualized in red. In the match effect design, we generate 4000 student preferences calibrated as before but now modified so that students who rank pilot schools highly also benefit more from them. This leads to decreasing group average treatment effects, again visualized in red. Our KRR inference procedure, in blue, covers the truth in both scenarios. In the match effect design, it rejects the null hypothesis of no effect for students who highly rank pilot schools.

Our approach, based on KRR, asserts that some preferences are closer than others. It improves power compared to an ``indicator'' approach based on \cite{abdulkadirouglu2017research}, which does not consider similarity between preferences. The indicator approach averages the pseudo-outcomes $\tilde{Y}_i$ within the groups $S_{\rho}$, recovering a standard inverse propensity weighted estimator of the group average treatment effect. In Figures~\ref{fig:no_match_appendix} and~\ref{fig:match_appendix}, it
often gives the incorrect sign, with pointwise significance. Its uniform bands are less informative than the KRR inference bands; the test for match effects based on KRR is more powerful.

\section{Discussion: Kernel methods in economics}\label{sec:conclusion}

 Our main theoretical contribution is to prove uniform inference guarantees for KRR.
 Our uniform confidence bands have simple closed form expressions, strong statistical guarantees, and robust empirical performance with nonstandard data.
 While Section~\ref{sec:algo_main} focuses on KRR, Section~\ref{sec:partial} builds a framework to use Gaussian and bootstrap couplings for more general kernel methods. Future research may apply our results to nonparametric estimands beyond regression.

Our main practical contribution is to design a new inferential procedure for preference data.
Preferences,  and other nonstandard data, are increasingly used in empirical economics.
While Section~\ref{sec:application} focuses on a test for match effects in preference data, future work may apply our results to test shape restrictions on preferences, for various choice models. Future work may also analyze nonstandard economic data via kernels for sequences, networks, images, and text.

\bibliographystyle{hapalike_mod}
\DeclareRobustCommand{\VAN}[2]{#2}  %
\spacingset{1}
{
\bibliography{0_refs}}
\spacingset{1.5}
\DeclareRobustCommand{\VAN}[2]{#1}  %

\clearpage
\appendix

\begin{center}
{\large\bf PRIMARY APPENDIX}
\end{center}

Appendix~\ref{sec:zaitsev} proves Theorem~\ref{thm:gauss-main}.
Appendix~\ref{sec:cov} proves Theorem~\ref{thm:bs-main}.
Appendix~\ref{sec:symbols} links Theorems~\ref{thm:gauss-main} and~\ref{thm:bs-main} with Propositions~\ref{prop:h-band} and~\ref{prop:inf-intro}, by matching symbols between the abstract empirical process and the pre-Gaussian term of KRR.

\section{Gaussian coupling}\label{sec:zaitsev}

We prove a high-probability bound on
$
\norm{n^{1/2}\E_n(U) - Z},
$
 where $Z$ is a Gaussian random variable in $H$ with covariance $\Sigma = \bb{E}(U_i \otimes U_i^*)$. This implies nonasymptotic Gaussian couplings for RKHS-valued partial sums in $L^\infty$ since, by the Cauchy-Schwarz inequality,
$$
\sup_{x \in S} \left| n^{1/2}\E_n\{U(x)\} - Z(x)\right|
=
\sup_{x \in S} \left|  \langle n^{1/2}\E_n(U) - Z, k_x\rangle \right| \leq \|n^{1/2}\E_n(U) - Z\| \cdot \sup_{x\in S}\|k_x\|
$$
and we assume $\sup_{x\in S} \|k_x\|\leq \kappa$.

Nonasymptotic bounds are important in inverse problems such as KRR, where the complexity of $\Sigma$ increases with $n$, and thus the limiting process is non-Donsker.
The bounds we derive only depend on $n$, the spectrum of $\Sigma$, and the kernel bound $\kappa$.
They apply across a broad range of settings in which RKHS methods find use, unlike couplings based upon the Hungarian construction. Appendix~\ref{sec:spectrum} specializes these bounds for leading cases.

\paragraph{\textcolor{black}{Notation.}} Let $U = (U_1, U_2, \ldots )$ be an i.i.d.~sequence of centered random elements in $\H$. Let $\Sigma: \H \to \H$ denote the covariance operator of the summands: for all $u,v \in \H$,
$\bb{E}(\bk{u}{U}\bk{v}{U}) = \bk{u}{\Sigma v}.$ Suppose that  $\bb{E}\norm{U_i}^2 < \infty$. Then $\Sigma$ is trace-class and self-adjoint, so that we may choose an $\H$-orthonormal basis of $\Sigma$-eigenvectors $(e_1, e_2, \ldots)$, with corresponding eigenvalues $(\nu_1, \nu_2, \ldots)$. We define
$\sigma^2(m)=\sigma^2(\Sigma,m) = \sum_{k > m} \nu_k,$ which plays a role similar to the metric entropy, quantifying the compactness of $\Sigma$. Let
$\Pi_m = \sum_{i=1}^m e_i \otimes e_i^* $
be the self-adjoint and idempotent projection onto the span of $(e_1, e_2, \ldots, e_m)$.

To construct couplings, we require that the background probability space $(\bb{P},\c F, \Omega)$ is rich enough. It suffices that there exists a countable sequence of standard normal random variable~independent of $U$, which is a minor technicality. %

\paragraph{\textcolor{black}{Overview.}} To construct the Gaussian approximation $Z$, we project the summands $U_i$ onto an appropriate $m$-dimensional subspace. We then appeal to known, finite-dimensional Euclidean coupling results in $\mathbb{R}^m$. Finally, we apply high-probability bounds on the distance between $U_i$ and its projection. This argument exploits Hilbert space structure to give sharp bounds.

\paragraph{\textcolor{black}{Finite-dimensional coupling.}} \textcolor{black}{The following Euclidean result is the key input to our construction.}
\begin{lemma}[\textcolor{black}{Theorem 1 of \citealp{eldan2020clt}}]\label{lemma:fd-coupling}
\textcolor{black}{Let $\xi_1,\ldots,\xi_n$ be i.i.d.~centered random vectors in $\bb{R}^m$ satisfying $\norm{\xi_i}_{\mathbb{R}^m}\leq a$ almost surely. There exists a Gaussian random vector $\zeta\sim N\{0,\mathbb{E}[\xi_1\xi_1^\top]\}$ coupled with $S_n=n^{-1/2}\sum_{i=1}^n\xi_i$ such that, for every $\eta\in(0,1)$,
\[
\bb{P}\left\{\norm{S_n-\zeta}_{\mathbb{R}^m}>
a\sqrt{\frac{m\{32+2\log_2(n)\}}{n\eta}}\right\}\leq\eta.
\]}
\end{lemma}

\begin{proof}
\color{black}
The cited theorem gives
\[
\mathcal{W}_2^2\left(S_n,N\{0,\mathbb{E}[\xi_1\xi_1^\top]\}\right)
\leq \frac{a^2 m[32+2\log_2(n)]}{n}.
\]
By definition of the Wasserstein distance, we can construct a coupling of $(S_n,\zeta)$ on a suitable probability space where $\mathbb{E}\|S_n - \zeta\|_{\mathbb{R}^m}^2$ satisfies the same bound. Chebyshev's inequality then gives
\[
\bb{P}\left\{\norm{S_n-\zeta}_{\mathbb{R}^m}>
a\sqrt{\frac{m\{32+2\log_2(n)\}}{n\eta}}\right\}
\leq \frac{\bb{E}\norm{S_n-\zeta}_{\mathbb{R}^m}^2}
{a^2m\{32+2\log_2(n)\}/(n\eta)}\leq\eta.
 \qedhere \]
\end{proof}

\paragraph{\textcolor{black}{High probability bounds.}} Having coupled $\Pi_m Z$ and $\Pi_m U_i$, we use the following vector-valued concentration inequalities to bound $\|Z - \Pi_m Z\|$ and $\|\mathbb{E}_n(U_i - \Pi_m U_i)\|$.

\begin{lemma}[Bounded tail]\label{lemma:tail} If $\norm{U_i} \le a$ almost surely then w.p. $1-\eta$,
$$
\norm{\frac{1}{\sqrt n}\sum_{i=1}^n (I - \Pi_m)U_i} \le 2\sigma(m)\sqrt{\log(2/\eta)} \vee \frac{4a\log(2/\eta)}{\sqrt n}.
$$
\end{lemma}

\begin{proof}
  Let  $\xi_i= (I-\Pi_m)U_i.$ Then $\mathbb{E}(\xi_i)=\mathbb{E}\left\{(I-\Pi_m)U_i\right\}=(I-\Pi_m)\mathbb{E}(U_i)=0$. Moreover,
  $
    \left\|(I-\Pi_m)U_i \right\| \leq \left\|U_i \right\| \leq a
    $
    and
     $$
    \mathbb{E}\left\|(I-\Pi_m)U_i \right\|^2
    =
    \mathbb{E} \langle U_i,(I-\Pi_m)U_i\rangle
    =
    \mathbb{E}  \tr \{(I-\Pi_m)U_i\otimes U_i^*\}
    =
    \tr \{(I-\Pi_m)\Sigma\}
    =
    \sigma^2(m).
    $$
    Combining these, we can bound
    $\bb{E}\norm{(I-\Pi_m)U_i}^\ell \le \sigma^2(m)a^{\ell-2}
    \le \frac{\ell!}{2}\sigma^2(m)a^{\ell-2}$ for $\ell \ge 2$.
    Finally, apply Bernstein's inequality (Lemma \ref{lemma:c_dv}) with $A = 2a$ and $B = \sigma(m)$.%
\end{proof}

\begin{lemma}[Gaussian tail]\label{lemma:tail_gaussian} For each $i$, let $Z_i$ be an independent Gaussian element with covariance $\Sigma$. Then w.p. $1-\eta$,
$
\norm{\frac{1}{\sqrt n} \sum_{i=1}^n (1 - \Pi_m)Z_i} \le \left\{1 + \sqrt{2\log(1/\eta)} \right\} \sigma(m).
$
\end{lemma}

\begin{proof}
We compute
\begin{align*}
\bb{E}\norm{\frac{1}{\sqrt n} \sum_{i=1}^n (1 - \Pi_m)Z_i}^2 &= \frac{1}{n}\sum_{i=1}^n\sum_{j=1}^n \bb{E}\bk{(1 - \Pi_m)Z_i}{(1 - \Pi_m)Z_j}= \frac{1}{n}\sum_{i=1}^n\ \bb{E}\norm{(1 - \Pi_m)Z_i}^2
\end{align*}
which equals $\sigma^2(m)$ by the same argument as Lemma~\ref{lemma:tail}, since $Z_i$ has covariance $\Sigma$. The result then follows from Borell's inequality (Lemma \ref{lemma:gaussian-concentration-trace}).
\end{proof}

\paragraph{\textcolor{black}{Main results.}} We tie together the results above.

\begin{theorem}[Bounded coupling]\label{thm:technical}
\textcolor{black}{Suppose $\|U_i\|\leq a$ almost surely. Suppose the probability space is rich enough to support the coupling below. Then, for all $m$, there exists a Gaussian random element $Z$ with covariance $\Sigma$ such that w.p.~$1-\eta$,
\[
\left\|\frac{1}{\sqrt{n}} \sum_{i=1}^n U_i-Z\right\|
\lesssim \sqrt{\log(6/\eta)}\,\sigma(m)
+a\sqrt{\frac{m\{32+2\log_2(n)\}}{n\eta}}.
\]}
\end{theorem}

\begin{proof}
\color{black}
We identify the range of $\Pi_m$, spanned by the top $m$
 eigenvectors of $\Sigma$, with $\bb{R}^m$.
 Let $A$ be the orthogonal projection of $\H$ onto $\bb{R}^m$ defined by
$A: f \mapsto (\bk{f}{e_i})_{1 \le i \le m}.$
Its adjoint $A^*$ isometrically embeds $\bb{R}^m$ into the range of $\Pi_m$, and $\Pi_m = A^*A$.

 Using Lemma~\ref{lemma:fd-coupling}, we couple $n^{-1/2}\sum_{i=1}^n A U_i$ with a Gaussian vector $\zeta\in\bb R^m$ having covariance $A\Sigma A^*$. Since $A^*$ is an isometry and $A^*A=\Pi_m$, w.p.~$1-\eta$,
\[
\norm{\frac{1}{\sqrt n}\sum_{i=1}^n\Pi_mU_i-A^*\zeta}
\leq a\sqrt{\frac{m\{32+2\log_2(n)\}}{n\eta}}.
\]
Let $(h_s)_{s>m}$ be independent standard Gaussian random variables, independent of the coupling above, and set
\[
Z=A^*\zeta+\sum_{s=m+1}^\infty h_s\sqrt{\nu_s}e_s.
\]
Then $Z$ is Gaussian with covariance $\Sigma$. By the triangle inequality,
\begin{equation*}
    \begin{split}
    \norm{\frac{1}{\sqrt n}\sum_{i=1}^n U_i-Z}
    &\leq \norm{\frac{1}{\sqrt n}\sum_{i=1}^n\Pi_mU_i-\Pi_mZ} +
        \norm{\frac{1}{\sqrt n}\sum_{i=1}^n (1 - \Pi_m)U_i}
        + \norm{(1 - \Pi_m)Z}.
    \end{split} \label{orthogonal-decomp}
\end{equation*}
The first term is controlled by our construction above. The second and third terms are controlled by Lemmas~\ref{lemma:tail} and~\ref{lemma:tail_gaussian} (with $n=1$), respectively. We apply a union bound over these three events, replace $\eta$ with $\eta/3$, and suppress universal constants. The term $a\log(6/\eta)/\sqrt n$ from Lemma~\ref{lemma:tail} is absorbed by the finite-dimensional coupling term.
\end{proof}

\paragraph{\textcolor{black}{Comparisons.}} Finally, we extend our comparisons between Theorem~\ref{thm:gauss-main} and related work.
Theorem \ref{thm:gauss-main} is a strong form of approximation, describing the distribution of any continuous functional $F: H \to \bb{R}$.
Unlike \citet{chernozhukov2014gaussian} and related work, it is not limited to suprema of linear functionals. By using Hilbert space geometry, we improve the mode and rate of convergence.

To roughly compare rates,
\citet{berthet2006revisiting} and \citet{chernozhukov2014gaussian,chernozhukov2016empirical} cover the index set with $p = 2^m$ points at resolution $\d(p)$ and incur the approximation error $\sigma(\Sigma, \log p)$, which is the local Gaussian complexity at the scale $\d(p)$.\footnote{Appendix~\ref{sec:spectrum} characterizes the relationship between the local width and the local Gaussian complexity.}
Applying the recent result of \citet[Theorem~2.1]{chernozhuokov2022improved} in the above argument gives the comparable rate $\{\log^5(np)/n\}^{1/4} + \sigma(\Sigma, \log p)$, with a weaker form of approximation. \textcolor{black}{By contrast, for bounded data we obtain the rate $\{\log(p)\log(n)/n\}^{1/2}+\sigma(\Sigma,\log p)$ for coupling in $H$ norm, hence also in $\sup$ norm.}
Such improvements are specific to the RKHS setting, and are important for constructing confidence sets that shrink at nearly the minimax rate.

\section{Bootstrap coupling}\label{sec:cov}

We prove the distribution of $Z_{\mathfrak{B}} =   \frac{1}{n}\sum_{i=1}^n\sum_{j=1}^n h_{ij} \frac{V_i-V_j}{\sqrt{2}}$ conditional upon the data $D$ is approximately Gaussian with covariance $\Sigma = \bb{E}(U_i \otimes U_i^*)$.

\paragraph{\textcolor{black}{Overview.}} First, we show that the conditional distribution of $Z_{\mathfrak{B}}$ is Gaussian with covariance $\hat \Sigma$, where
$
\hat{\Sigma}=\mathbb{E}_n(U_i\otimes U_i^*)-\mathbb{E}_n(U_i)\otimes \{\mathbb{E}_n(U_i)\}^*.
$ We use the empirically centered $\hat{\Sigma}$ because it corresponds to the anti-symmetric bootstrap $Z_{\mathfrak B}$.
Second, we prove a bound on $\snorm{\hat \Sigma^{\f 1 2} - \Sigma^{\f 1 2}}$ and show that this implies a bound on the distance between Gaussian distributions.
Third, we construct $Z'$ that is conditionally Gaussian with covariance $\Sigma$, such that $\norm{Z'-Z_{\mathfrak{B}}}$ is small w.h.p.

\paragraph{\textcolor{black}{Notation.}} As before, let $U = (U_1, U_2, \ldots)$ denote an i.i.d.~sequence of centered random elements in $H$ with $\Sigma = \bb{E}(U_i \otimes U_i^*)$. Let $V_i=U_i+\mu$ for some arbitrary deterministic $\mu \in H$. We require that $U$ is $\sigma(D)$ measurable.

Given an independent sequence of standard normals $(g_1, g_2, \ldots)$ we write
$g = \sum_{i=1}^\infty g_ie_i$ and $\Sigma^{\f 1 2}g = \sum_{i=1}^\infty \sqrt{\nu_i}g_ie_i$, which belong almost surely to $L^2$ and $H$, respectively.

Let $X \overset{D}{\sim} Y$ mean that the $\sigma(D)$-conditional distributions of the random variables $X$ and $Y$ are equal, i.e. for any Borel set $A$,
$\bb{P}(X \in A|D) = \bb{P}(Y \in A|D)$ holds
$D$-almost surely.

\paragraph{\textcolor{black}{Characterizing the anti-symmetric bootstrap process.}}

\begin{lemma}[Anti-symmetric bootstrap covariance]\label{lemma:sym_cov}
$\hat{\Sigma}^{1/2}g \overset{D}{\sim} \frac{1}{n} \sum_{i=1}^{n} \sum_{j=1}^n h_{ij} \left(\frac{V_i-V_j}{\sqrt 2}\right).$
\end{lemma}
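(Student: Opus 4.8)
The plan is to compute the conditional covariance operator of the right-hand side, $W \coloneqq \frac{1}{n}\sum_{i,j} h_{ij}(V_i - V_j)/\sqrt 2$, given $U$ (equivalently given $V$, since $V_i = U_i + \mu$), show it equals $\hat\Sigma$, and then note that both $W$ and $\hat\Sigma^{1/2}g$ are, conditionally on $U$, mean-zero Gaussian in $H$ — the former because it is a finite linear combination of the i.i.d.\ standard Gaussians $h_{ij}$ with coefficients measurable with respect to $U$, the latter by construction — so having the same conditional covariance forces equality of conditional law. First I would record that $\bb{E}(W \mid U) = 0$ since $\bb{E}(h_{ij}) = 0$ and the coefficients $(V_i - V_j)/\sqrt2$ are $U$-measurable. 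Then, using independence of the $h_{ij}$ across the pair $(i,j)$ and $\bb{E}(h_{ij}h_{kl}) = \1\{(i,j) = (k,l)\}$, the conditional second moment operator is
\[
\bb{E}(W \otimes W^* \mid U) = \frac{1}{n^2}\sum_{i=1}^n\sum_{j=1}^n \frac{(V_i - V_j)\otimes(V_i - V_j)^*}{2}.
\]

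The main computation — and the only step with any substance — is to expand this double sum and recognize it as $\hat\Sigma = \bb{E}_n[U_i \otimes U_i^*] - \bb{E}_n[U_i]\otimes(\bb{E}_n[U_i])^*$. Expanding the tensor product, $\frac12(V_i - V_j)\otimes(V_i - V_j)^* = \frac12 V_i\otimes V_i^* + \frac12 V_j\otimes V_j^* - \frac12 V_i \otimes V_j^* - \frac12 V_j \otimes V_i^*$. Summing over $i,j$ and dividing by $n^2$: the first two groups each contribute $\frac1n\sum_i V_i\otimes V_i^* = \bb{E}_n[V_i\otimes V_i^*]$, while the last two each contribute $\frac12\,\bar V \otimes \bar V^*$ with $\bar V = \bb{E}_n[V_i]$, with a minus sign; so the total is $\bb{E}_n[V_i\otimes V_i^*] - \bar V\otimes\bar V^*$. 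Since $V_i = U_i + \mu$ with $\mu$ deterministic, the centered quantity $\bb{E}_n[V_i\otimes V_i^*] - \bar V\otimes\bar V^* = \bb{E}_n[(V_i - \bar V)\otimes(V_i - \bar V)^*] = \bb{E}_n[(U_i - \bar U)\otimes(U_i - \bar U)^*] = \bb{E}_n[U_i\otimes U_i^*] - \bar U\otimes\bar U^* = \hat\Sigma$, where $\bar U = \bb{E}_n[U_i]$. This is precisely the point flagged in the text — the symmetrization automatically recenters, so the bias $\mu$ drops out and one is forced to work with the empirically centered $\hat\Sigma$ rather than the raw second moment.

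I anticipate the only real care needed is the Gaussianity-and-equality-of-law conclusion: one should note that $W$, conditionally on $U$, is a (possibly degenerate) Gaussian random element of the finite-dimensional subspace $\vspan\{V_i - V_j\}$ of $H$, hence a genuine Gaussian measure on $H$ determined by its mean and covariance operator; likewise $\hat\Sigma^{1/2}g$ is conditionally Gaussian with mean $0$ and covariance $\hat\Sigma^{1/2}(\hat\Sigma^{1/2})^* = \hat\Sigma$ (using that $\hat\Sigma$ is self-adjoint and positive, and that $g$ has identity covariance on the relevant span). Two Gaussian measures on a separable Hilbert space with equal means and equal covariance operators coincide, which yields $\hat\Sigma^{1/2}g \overset U\sim W$. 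No serious obstacle is expected; the lemma is essentially a bookkeeping identity, and the substantive work is isolating the recentering phenomenon so that $\hat\Sigma$ appears in place of $\bb{E}_n[U_i\otimes U_i^*]$.
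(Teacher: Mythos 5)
Your proposal is correct and takes essentially the same route as the paper's proof: compute the conditional covariance of the double sum by independence of the $h_{ij}$, expand the tensor product, and conclude by equality of conditionally Gaussian laws determined by their covariance operators. The only cosmetic difference is that the paper swaps $V_i - V_j$ for $U_i - U_j$ immediately (so $\mu$ never appears in the algebra), whereas you carry $V_i$ through and note that empirical recentering kills $\mu$ at the end; both are the same observation.
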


\begin{proof}Since
$\frac{1}{n} \sum_{i=1}^{n} \sum_{j=1}^n h_{ij} \left(\frac{V_i-V_j}{\sqrt 2}\right)$
is jointly Gaussian conditional upon $V = (V_1, V_2, \ldots, V_n)$, it suffices to compute
\begin{align*}
    & \bb{E}_h \left[ \left\{ \frac{1}{n} \sum_{i=1}^{n} \sum_{j=1}^n h_{ij} \left(\frac{V_i-V_j}{\sqrt 2}\right) \right\} \otimes \left\{ \frac{1}{n} \sum_{k=1}^{n} \sum_{\ell=1}^n h_{k\ell} \left(\frac{V_k-V_{\ell}}{\sqrt 2}\right) \right\}^{*} \right]
    \\
    &= \bb{E}_h \left[ \left\{ \frac{1}{n} \sum_{i=1}^{n} \sum_{j=1}^n h_{ij} \left(\frac{U_i-U_j}{\sqrt 2}\right) \right\} \otimes \left\{ \frac{1}{n} \sum_{k=1}^{n} \sum_{\ell=1}^n h_{k\ell} \left(\frac{U_k-U_\ell}{\sqrt 2}\right) \right\}^{*} \right]
    \\
    &= \frac{1}{2n^2}\sum_{i=1}^n\sum_{j=1}^n \{(U_i- U_j) \otimes (U_i - U_j)^*\} \\
    &= \frac{1}{n}\sum_{i=1}^n (U_i\otimes U_i^*) - \left(\frac{1}{n}\sum_{i=1}^n U_i \otimes \frac{1}{n}\sum_{j=1}^n U_j^*\right)
    = \hat{\Sigma}.
\end{align*}
The second equality uses independence of standard Gaussian multipliers $h_{ij}$. The fourth equality uses symmetry under transposition of $i$ and $j$.
Since $\hat{\Sigma}^{\f 1 2} g$ given $D$ is also jointly Gaussian with the same covariance, the two are equal in conditional distribution.
\end{proof}

\begin{lemma}[Anti-symmetric bootstrap as single sum]\label{lemma:single}
$$
\frac{1}{n} \sum_{i=1}^{n} \sum_{j=1}^n h_{ij} \left(\frac{V_i-V_j}{\sqrt 2}\right)
=
\sqrt{n}\mathbb{E}_n(q_iU_i),\quad q=\frac{1}{\sqrt{2n}} (h-h^{\top}) \boldsymbol{1},\quad var(q_i)<1
$$
where $(q_i)$ are non-independent Gaussians defined in terms of $(h_{ij})$.
\end{lemma}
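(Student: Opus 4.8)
The plan is to establish the identity by a direct algebraic rearrangement of the double sum into a single sum, and then to read off the variance of $q_i$ from the antisymmetric structure of $(h - h^{\top})\mathbf{1}$. No probabilistic ingredient is needed for the identity itself --- it holds pointwise --- while the variance bound is an elementary second-moment computation using independence of the $h_{ij}$.

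First I would expand, using $\sum_j h_{ij} = (h\mathbf{1})_i$ and $\sum_i h_{ij} = (h^{\top}\mathbf{1})_j$,
\[
\frac{1}{n}\sum_{i=1}^n\sum_{j=1}^n h_{ij}\,\frac{V_i - V_j}{\sqrt 2}
= \frac{1}{n\sqrt 2}\left(\sum_{i=1}^n V_i\sum_{j=1}^n h_{ij} - \sum_{j=1}^n V_j\sum_{i=1}^n h_{ij}\right)
= \frac{1}{n\sqrt 2}\sum_{i=1}^n V_i\,\big[(h - h^{\top})\mathbf{1}\big]_i .
\]
Since $q = \tfrac{1}{\sqrt{2n}}(h - h^{\top})\mathbf{1}$, i.e.\ $\big[(h - h^{\top})\mathbf{1}\big]_i = \sqrt{2n}\,q_i$, the right-hand side equals $\tfrac{1}{\sqrt n}\sum_{i=1}^n q_i V_i = \sqrt n\,\mathbb{E}_n[q_i V_i]$. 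To pass from $V_i$ to $U_i$, write $V_i = U_i + \mu$ and observe that $\sum_{i=1}^n q_i = \tfrac{1}{\sqrt{2n}}\mathbf{1}^{\top}(h - h^{\top})\mathbf{1} = 0$, because $\mathbf{1}^{\top}h\mathbf{1}$ and $\mathbf{1}^{\top}h^{\top}\mathbf{1}$ both equal the sum of all entries of $h$. Hence $\sum_i q_i V_i = \sum_i q_i U_i$, which is exactly $\sqrt n\,\mathbb{E}_n[q_i U_i]$.

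For the variance, I would write $q_i = \tfrac{1}{\sqrt{2n}}\sum_{j=1}^n (h_{ij} - h_{ji})$; the $j = i$ term is zero, and the remaining $n-1$ summands $\{h_{ij} - h_{ji}\}_{j \neq i}$ are independent, mean-zero, of variance $2$ each. Therefore $\Var(q_i) = \tfrac{1}{2n}\cdot 2(n-1) = \tfrac{n-1}{n} < 1$. Each $q_i$ is Gaussian, being a fixed linear functional of the i.i.d.\ Gaussian array $(h_{ij})$; but the $q_i$ are \emph{not} independent across $i$, since $q_i$ and $q_{i'}$ both involve the entries $h_{ii'}$ and $h_{i'i}$.

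There is no serious obstacle here; the lemma is bookkeeping. The one point deserving care is the cancellation $\sum_i q_i = 0$, which is precisely what removes the deterministic bias $\mu$ --- and hence is the structural reason the symmetrized (antisymmetric-multiplier) bootstrap was introduced --- and one should be explicit that the first displayed equation is a genuine almost-sure identity of random elements of $H$, not merely an equality in distribution.
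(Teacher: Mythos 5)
Your proposal is correct and takes essentially the same approach as the paper: the identical algebraic rearrangement of the double sum into the single sum $\tfrac{1}{\sqrt n}\sum_i q_i U_i$ with $q = \tfrac{1}{\sqrt{2n}}(h-h^{\top})\mathbf{1}$, and the same variance observation that the $j=i$ term vanishes. The only (cosmetic) difference is ordering: the paper substitutes $V_i-V_j = U_i-U_j$ at the outset, while you carry $V_i$ through and eliminate $\mu$ at the end via $\sum_i q_i = \tfrac{1}{\sqrt{2n}}\mathbf{1}^{\top}(h-h^{\top})\mathbf{1} = 0$, which is an equally valid use of the same antisymmetry.
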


\begin{proof}
Write
    \begin{align*}
     &\frac{1}{n} \sum_{i=1}^n \sum_{j=1}^n  h_{ij} \left(\frac{V_i-V_j}{\sqrt 2}\right)
     = \frac{1}{n} \sum_{i=1}^n \sum_{j=1}^n  h_{ij} \left(\frac{U_i-U_j}{\sqrt 2}\right) \\
     &=\frac{1}{\sqrt{n}}\sum_{i=1}^n U_i  \left(\frac{1}{\sqrt{2n}}\sum_{j=1}^n  h_{ij} \right)
     - \frac{1}{\sqrt{n}} \sum_{j=1}^n U_j \left(\frac{1}{\sqrt{2n}} \sum_{i=1}^n  h_{ij} \right)
     =\frac{1}{\sqrt{n}}\sum_{i=1}^n U_i q_i
\end{align*}
where
$
q_i=\frac{1}{\sqrt{2n}} \sum_{j=1}^n\left( h_{ij}-h_{ji} \right)$ and hence $q=\frac{1}{\sqrt{2n}} (h-h^{\top}) \boldsymbol{1}.$
The variance of $q_i$ is strictly less than one since $h_{ij}-h_{ji}=0$ when $i=j$.
\end{proof}

\paragraph{\textcolor{black}{Validity via covariance estimation.}}
Next, we bound
$
\|\Sigma^{1/2}g - \hat{\Sigma}^{1/2} g\|
$ in terms of $\Sigma$, $\hat\Sigma$.
As before, we will proceed using finite dimensional approximation; our use of orthogonal projections exploits Hilbert space structure, leading to simple and sharp bounds.

We write this subsection with some additional generality to accommodate alternative bootstraps. Throughout this subsection, $\hat{\Sigma}$ denotes some feasible covariance estimator, with the property that
$\hat{\Sigma}^{1/2}g \overset{D}{\sim} \sqrt{n}\mathbb{E}_n(q_iU_i)$ for some jointly Gaussian random variables $q_i$ that have variance at most one and that may be correlated. As before, $\Pi_m$ denotes the projection onto the top $m$
eigenvectors of $\Sigma$; we also put $\Pi_m^\perp = I - \Pi_m$.

The structure of the argument is as follows: (i) we state and prove necessary lemmas; (ii) we give an abstract bound on $\|\Sigma^{1/2}g - \hat{\Sigma}^{1/2} g\|$ that is agnostic to the covariance estimator $\hat \Sigma$; (iii) we specialize the bound to the $\hat\Sigma$ that corresponds to the anti-symmetric bootstrap.

 We first prove infinite-dimensional variants of familiar results: orthogonal decomposition symmetric matrices and the ``sandwich'' covariance formula.
 \begin{lemma}[Covariance splitting]\label{lemma:limit-tail-split-bs}
We have
$\Sigma^{\f 1 2} = (\Pi_m \Sigma \Pi_m)^{\f 1 2} + (\Pi_m^\perp \Sigma \Pi_m^\perp)^{\f 1 2}.$
\end{lemma}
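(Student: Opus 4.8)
The plan is to exploit that $\Pi_m$ is the spectral projection of $\Sigma$ onto its top $m$ eigenvectors, so it commutes with $\Sigma$ and with any Borel function of $\Sigma$. First I would invoke the eigendecomposition $\Sigma = \sum_{s \ge 1}\nu_s\, e_s \otimes e_s^*$, available because $\Sigma$ is self-adjoint and trace-class, and write $\Pi_m = \sum_{s \le m} e_s \otimes e_s^*$ and $\Pi_m^\perp = I - \Pi_m = \sum_{s > m} e_s \otimes e_s^*$. Using the orthonormality identity $(e_s \otimes e_s^*)(e_t \otimes e_t^*) = \delta_{st}\,(e_s \otimes e_s^*)$, the cross terms $\Pi_m \Sigma \Pi_m^\perp$ and $\Pi_m^\perp \Sigma \Pi_m$ vanish, and one obtains $\Pi_m \Sigma \Pi_m = \sum_{s \le m}\nu_s\, e_s \otimes e_s^*$ and $\Pi_m^\perp \Sigma \Pi_m^\perp = \sum_{s > m}\nu_s\, e_s \otimes e_s^*$.

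Next I would identify the positive square roots of these two blocks. The operator $A_1 := \sum_{s \le m}\sqrt{\nu_s}\, e_s \otimes e_s^*$ is self-adjoint with nonnegative spectrum, hence positive semidefinite, and by the same orthonormality computation $A_1^2 = \sum_{s \le m}\nu_s\, e_s \otimes e_s^* = \Pi_m \Sigma \Pi_m$; by uniqueness of the positive semidefinite square root, $(\Pi_m\Sigma\Pi_m)^{1/2} = A_1$. The identical argument over $s > m$ gives $(\Pi_m^\perp\Sigma\Pi_m^\perp)^{1/2} = A_2 := \sum_{s > m}\sqrt{\nu_s}\, e_s \otimes e_s^*$. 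Summing, $A_1 + A_2 = \sum_{s\ge 1}\sqrt{\nu_s}\, e_s \otimes e_s^* = \Sigma^{1/2}$, which is the assertion.

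There is no genuinely hard step here; the only point requiring a word of care is convergence of the series defining $\Sigma^{1/2}$ and $A_2$ (in the strong operator topology, or in Hilbert--Schmidt norm), which follows from $\sum_s \nu_s = \tr\Sigma < \infty$ together with the fact that $f \mapsto \sqrt{f}$ applied to a compact positive operator is precisely the operator given by this series via the spectral calculus. Alternatively, one can sidestep series manipulations entirely: $E_m = \mathrm{ran}\,\Pi_m$ and $E_m^\perp$ are complementary orthogonal $\Sigma$-invariant subspaces, so $\Sigma^{1/2}$ leaves each invariant, and $\Sigma^{1/2}$ is the orthogonal direct sum of the square roots of the restrictions $\Sigma|_{E_m}$ and $\Sigma|_{E_m^\perp}$, which are exactly $(\Pi_m\Sigma\Pi_m)^{1/2}$ and $(\Pi_m^\perp\Sigma\Pi_m^\perp)^{1/2}$, each extended by zero off its subspace.
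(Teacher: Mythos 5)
Your proof is correct and follows essentially the same route as the paper's: both expand $\Sigma$, $\Pi_m$, and $\Sigma^{1/2}$ in the common eigenbasis, identify the block square roots as explicit spectral series, and add. You are slightly more explicit in invoking uniqueness of the positive semidefinite square root to justify $(\Pi_m\Sigma\Pi_m)^{1/2} = \sum_{s\le m}\sqrt{\nu_s}\,e_s\otimes e_s^*$, but this is the same underlying argument, not a different approach.
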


\begin{proof}
Write $\Sigma^{\f 1 2} = \Sigma^{\f 1 2}(\Pi_m + \Pi_m^\perp)$.
Since
$(e_i \otimes e_i^*)(e_j \otimes e_j^*) = \bk{e_i}{e_j}(e_i \otimes e_j^*) = \d_{ij}(e_i \otimes e_j^*),$
it follows by definition of $\Sigma^{1/2}$ and $\Pi_m$ that
$$
    \Sigma^{\f 1 2} \Pi_m
    =\left(\sum_{i=1}^{\infty}\sqrt{\nu_i}e_i\otimes e_i^*\right)\left(\sum_{j=1}^{m}e_j\otimes e_j^*\right)
    =\sum_{j=1}^{m}\sqrt{\nu_j}(e_j \otimes e_j^*)
$$
which is self-adjoint. Therefore
$\Sigma^{\f 1 2}\Pi_m = (\Sigma^{\f 1 2}\Pi_m)^* = \Pi_m^*(\Sigma^{\f 1 2})^* = \Pi_m \Sigma^{\f 1 2}$ and hence
$$
(\Pi_m \Sigma \Pi_m)^{\f 1 2}=\Pi_m\Sigma^{\f 1 2}\Pi_m=\Sigma^{\f 1 2}\Pi_m^2=\Sigma^{\f 1 2}\Pi_m.
$$
The same holds for $\Pi_m^\perp$, replacing the indexing from $j\leq m$ to $j>m$.
\end{proof}

\begin{lemma}[Sandwich]\label{lemma:joint-gaussian-conditional-covariance}
    If $(q_i)$ is a sequence of jointly Gaussian random variables, $A: H \to H$ is a self-adjoint operator, and
    $\frac{1}{\sqrt n} \sum_{i=1}^n q_i U_i \overset{D}{\sim} \hat{\Sigma}^{\f 1 2} g$
    then
    $\frac{1}{\sqrt n} \sum_{i=1}^n q_i AU_i \overset{D}{\sim} (A\hat{\Sigma} A)^{\f 1 2}g.$
\end{lemma}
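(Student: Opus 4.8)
The plan is to work entirely at the level of $\sigma(U)$-conditional laws, using the fact that a centered Gaussian measure on a separable Hilbert space is determined by its covariance operator. First I would observe that $\frac{1}{\sqrt n}\sum_{i=1}^n q_i AU_i = A\bigl(\frac{1}{\sqrt n}\sum_{i=1}^n q_i U_i\bigr)$, so that the left-hand side is the image of $\frac{1}{\sqrt n}\sum_i q_i U_i$ under the fixed bounded linear (hence Borel) map $A$. Since conditional equality in law is preserved by pushforward along a fixed Borel map --- because $\bb{P}(AX \in B \mid U) = \bb{P}(X \in A^{-1}B \mid U)$ --- the hypothesis $\frac{1}{\sqrt n}\sum_i q_i U_i \overset U \sim \hat\Sigma^{\f 1 2} g$ immediately upgrades to $\frac{1}{\sqrt n}\sum_i q_i AU_i \overset U \sim A\hat\Sigma^{\f 1 2} g$. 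It therefore suffices to show $A\hat\Sigma^{\f 1 2}g \overset U \sim (A\hat\Sigma A)^{\f 1 2}g$.

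For this I would compute conditional covariances. Conditioning on $U$ freezes $\hat\Sigma$, which is $\sigma(U)$-measurable, while $g$ remains a standard Gaussian series independent of $U$; hence, conditionally on $U$, the element $A\hat\Sigma^{\f 1 2}g$ is centered Gaussian in $H$ with covariance operator $(A\hat\Sigma^{\f 1 2})(A\hat\Sigma^{\f 1 2})^{*} = A\hat\Sigma^{\f 1 2}\hat\Sigma^{\f 1 2}A = A\hat\Sigma A$, using self-adjointness of $\hat\Sigma^{\f 1 2}$ and of $A$. One should note in passing that $A\hat\Sigma A$ is trace-class --- the trace-class operators form a two-sided ideal and $\hat\Sigma$ is trace-class --- so $(A\hat\Sigma A)^{\f 1 2}$ is Hilbert--Schmidt and $(A\hat\Sigma A)^{\f 1 2}g$ is a genuine $H$-valued random element. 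By the same reasoning, conditionally on $U$, $(A\hat\Sigma A)^{\f 1 2}g$ is centered Gaussian in $H$ with covariance $(A\hat\Sigma A)^{\f 1 2}\bigl[(A\hat\Sigma A)^{\f 1 2}\bigr]^{*} = A\hat\Sigma A$, since $A\hat\Sigma A$ is self-adjoint and positive semidefinite. Two centered Gaussian laws on a separable Hilbert space with equal covariance coincide, so $A\hat\Sigma^{\f 1 2}g \overset U \sim (A\hat\Sigma A)^{\f 1 2}g$, and chaining with the pushforward step yields $\frac{1}{\sqrt n}\sum_i q_i AU_i \overset U \sim (A\hat\Sigma A)^{\f 1 2}g$, as claimed.

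I do not expect a substantive obstacle here. The only points requiring care are the bookkeeping that makes ``condition on $U$'' rigorously freeze $\hat\Sigma$ and the $U_i$ while keeping $g$ independent (for which one invokes that the background probability space supports a countable i.i.d.\ standard Gaussian sequence independent of the data), and the elementary functional-analytic remark that $A\hat\Sigma A$ is trace-class, so that all square roots and Gaussian elements are well defined. Joint Gaussianity of the $q_i$ is used only indirectly, through the standing hypothesis $\frac{1}{\sqrt n}\sum_i q_i U_i \overset U \sim \hat\Sigma^{\f 1 2}g$.
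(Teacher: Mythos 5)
Your proposal is correct and takes essentially the same approach as the paper: identify the conditional covariance operator of both sides and invoke that centered Gaussian laws on a separable Hilbert space are determined by their covariance. Your observation that $\frac{1}{\sqrt n}\sum_i q_i A U_i = A\bigl(\frac{1}{\sqrt n}\sum_i q_i U_i\bigr)$, so that the conditional law pushes forward under the fixed Borel map $A$, is a slightly cleaner way to organize the same covariance computation that the paper carries out directly from bilinearity and self-adjointness.
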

\begin{proof}
    Since
    $\frac{1}{\sqrt n} \sum_{i=1}^n q_i U_i \overset{D}{\sim} \hat{\Sigma}^{\f 1 2} g,$ both vectors must have covariance operator $\hat{\Sigma}$.
    Now, conditional upon $U$, both vectors
    are jointly Gaussian in $\vspan(U_1, AU_1, \ldots, U_n, AU_n) \subset H$. Therefore it suffices to compute the covariance
    \begin{align*}
    &\bb{E}\left(\bk{\frac{1}{\sqrt n} \sum_{i=1}^n q_i AU_i}{u}\bk{ \frac{1}{\sqrt n} \sum_{i=1}^n q_i AU_i}{v}\right)
        = \bb{E}\left(\bk{\frac{1}{\sqrt n} \sum_{i=1}^n q_i U_i}{Au}\bk{ \frac{1}{\sqrt n} \sum_{i=1}^n q_i U_i}{Av}\right) \\
        &=
    \bb{E}\left(\bk{\hat{\Sigma}^{\f 1 2} g}{Au}\bk{\hat{\Sigma}^{\f 1 2} g}{Av}\right)
    =
        \bk{Au}{\hat{\Sigma} Av}
        = \bk{u}{A\hat{\Sigma} A v}
    \end{align*}
   by repeatedly using self-adjointness of $A$ and the definition of the covariance.
\end{proof}
A challenge of finite-dimensional projection is that $\hat \Sigma^{\f 1 2} \ne (\Pi_m \hat \Sigma \Pi_m)^{\f 1 2} + (\Pi_m^\perp \hat \Sigma \Pi_m^\perp)^{\f 1 2},$ in contrast to Lemma \ref{lemma:limit-tail-split-bs}, since the principal components of $\Sigma$ and $\hat \Sigma$ are not aligned. We need the following technical lemma, which says $\hat \Sigma^{\f 1 2}g$ and $(\Pi_m \hat \Sigma \Pi_m)^{\f 1 2}g$ are close in distribution whenever $(\Pi_m^\perp \hat \Sigma \Pi_m^\perp)^{\f 1 2}g$ is small.
\begin{lemma}[Covariance alignment]\label{lemma:plug-in-tail-split-bs}
Suppose $\bb{P}\{\|(\Pi_m^\perp \hat{\Sigma} \Pi_m^\perp)^{\f 1 2}g\| > \a \} \le \beta$. Then, there exists a random variable $G \overset{D}{\sim} \hat{\Sigma}^{\f 1 2}g$ such that w.p. $1-\beta$,
$\norm{(\Pi_m \hat{\Sigma} \Pi_m)^{\f 1 2}g-G} \le \a.$
\end{lemma}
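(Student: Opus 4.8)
The natural idea of splitting $\hat\Sigma^{\f 1 2} = (\Pi_m\hat\Sigma\Pi_m)^{\f 1 2} + (\Pi_m^\perp\hat\Sigma\Pi_m^\perp)^{\f 1 2}$, as in Lemma~\ref{lemma:limit-tail-split-bs}, is \emph{not} available here, since $\Pi_m$ projects onto the top eigenvectors of $\Sigma$, which need not commute with the empirical object $\hat\Sigma$. Instead I would compare $(\Pi_m\hat\Sigma\Pi_m)^{\f 1 2}g$ directly to $\hat\Sigma^{\f 1 2}g$, using two covariance observations together with the conditional Strassen argument of Corollary~\ref{cor:pushforward-coupling}.

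First, since $\Pi_m$ and $\hat\Sigma^{\f 1 2}$ are self-adjoint, a direct covariance computation (cf.\ the proof of Lemma~\ref{lemma:gaussian-concentration-trace} and Lemma~\ref{lemma:joint-gaussian-conditional-covariance}) shows that, conditional on $U$, the element $\Pi_m\hat\Sigma^{\f 1 2}g$ is Gaussian with covariance $\Pi_m\hat\Sigma\Pi_m$ while $(I-\Pi_m)\hat\Sigma^{\f 1 2}g$ is Gaussian with covariance $\Pi_m^\perp\hat\Sigma\Pi_m^\perp$. Since a Gaussian law on $H$ is determined by its covariance, this gives two equalities of $\sigma(U)$-conditional laws: $\Pi_m\hat\Sigma^{\f 1 2}g \overset U \sim (\Pi_m\hat\Sigma\Pi_m)^{\f 1 2}g$, and $\norm{(I-\Pi_m)\hat\Sigma^{\f 1 2}g} \overset U \sim \norm{(\Pi_m^\perp\hat\Sigma\Pi_m^\perp)^{\f 1 2}g}$; in particular, by the hypothesis (integrating over $U$, or arguing conditionally), $\bb{P}\{\norm{(I-\Pi_m)\hat\Sigma^{\f 1 2}g} > \a\} \le \beta$.

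Now set $X \coloneqq (\Pi_m\hat\Sigma\Pi_m)^{\f 1 2}g$, $X' \coloneqq \Pi_m\hat\Sigma^{\f 1 2}g$, and $Y' \coloneqq \hat\Sigma^{\f 1 2}g$, so that $\norm{X'-Y'} = \norm{(I-\Pi_m)\hat\Sigma^{\f 1 2}g}$. By the first observation $X'$ and $X$ share the same $\sigma(U)$-conditional law, and by the second $\bb{P}(\norm{X'-Y'} > \a) \le \beta$. Applying Corollary~\ref{cor:pushforward-coupling} with $\c F = \sigma(U)$ --- which is countably generated since $H$ is separable, the required auxiliary randomness being supplied by the rich probability space --- yields a random variable $G$ with the same $\sigma(U)$-conditional law as $Y' = \hat\Sigma^{\f 1 2}g$, i.e.\ $G \overset U \sim \hat\Sigma^{\f 1 2}g$, and satisfying $\bb{P}(\norm{X - G} > \a) \le \beta$, which is the claim. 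I expect the only delicate point to be the bookkeeping between conditional and unconditional probabilities and the verification of the hypotheses of Corollary~\ref{cor:pushforward-coupling} (equality of the conditional laws of $X$ and $X'$, and the coupling bound for $(X',Y')$); there is no genuine analytic obstacle once the tempting but invalid direct decomposition of $\hat\Sigma^{\f 1 2}$ is set aside.
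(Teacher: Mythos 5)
Your proof is correct and takes essentially the same approach as the paper: both apply Corollary~\ref{cor:pushforward-coupling} with $X = (\Pi_m\hat\Sigma\Pi_m)^{\f 1 2}g$, a suitable $X'$ having the same $\sigma(U)$-conditional law, and a $Y'$ with the conditional law of $\hat\Sigma^{\f 1 2}g$, bounding $\norm{X'-Y'}$ via the orthogonal complement. The only cosmetic difference is that you decompose $\hat\Sigma^{\f 1 2}g = \Pi_m\hat\Sigma^{\f 1 2}g + (I-\Pi_m)\hat\Sigma^{\f 1 2}g$ directly and read off the conditional covariances, while the paper uses the equivalent bootstrap-sum representation $n^{-1/2}\sum_i q_iU_i$ together with Lemma~\ref{lemma:joint-gaussian-conditional-covariance}.
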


\begin{proof}
By Lemma~\ref{lemma:joint-gaussian-conditional-covariance},
$$
 \frac{1}{\sqrt n} \sum_{i=1}^n q_iU_i  \overset{D}{\sim} \hat{\Sigma}^{\f 1 2}g, \quad \frac{1}{\sqrt n} \sum_{i=1}^n q_i\Pi_mU_i \overset{D}{\sim} (\Pi_m \hat{\Sigma} \Pi_m)^{\f 1 2}g, \quad \frac{1}{\sqrt n} \sum_{i=1}^n q_i\Pi_m^\perp U_i \overset{D}{\sim} (\Pi_m^\perp \hat{\Sigma} \Pi_m^\perp)^{\f 1 2}g.$$
Moreover,
$\frac{1}{\sqrt n} \sum_{i=1}^n q_iU_i = \frac{1}{\sqrt n} \sum_{i=1}^n q_i\Pi_mU_i + \frac{1}{\sqrt n} \sum_{i=1}^n q_i\Pi_m^\perp U_i.$
Thus we may apply Strassen's lemma (Lemma~\ref{lemma:pushforward-coupling}) with $\c F = \sigma(D)$, choosing the random variables to be
$X=(\Pi_m \hat{\Sigma} \Pi_m)^{\f 1 2}g$, $X'=\frac{1}{\sqrt n} \sum_{i=1}^n q_i\Pi_mU_i$, and $Y'= \frac{1}{\sqrt n} \sum_{i=1}^n q_iU_i.$
In particular,
$$\bb{P}\left(\beef \norm{X'-Y'} > \a\right) = \bb{P}\left( \norm{\frac{1}{\sqrt n} \sum_{i=1}^n q_i\Pi_m^\perp U_i} > \a \right) = \bb{P}\left(\beef\|(\Pi_m^\perp \hat{\Sigma} \Pi_m^\perp)^{\f 1 2}g\| > \a \right) \le \beta,$$
by construction of $X'$ and $Y'$ for the first step and equality in conditional distribution for the second. Thus Strassen's lemma (Lemma~\ref{lemma:pushforward-coupling}) guarantees the random variable $Y=G$ exists
with the same conditional distribution as $Y'$, and hence $\hat \Sigma^{\f 1 2}g$, such that
$\bb{P}\left(\beef \norm{X-Y} > \a\right) \le \b.$
\end{proof}

Next, we bound $
\snorm{\Sigma^{1/2}g - \hat{\Sigma}^{1/2} g}$ with high probability conditional upon $D$. We prove the gap generally depends on
$
    \Delta_1=\|\hat{\Sigma}-\Sigma\|_{\HS}$ and $
    \Delta_2= \tr\,\{\Pi_m^\perp (\hat \Sigma - \Sigma) \Pi_m^\perp\} \vee 0.
$
We will bound these quantities later for a particular $\hat{\Sigma}$ using the randomness in $U$.

\begin{proposition}[Abstract bound]\label{prop:bs-coupling}
There exists a random variable $G \overset{D}{\sim} \hat{\Sigma}^{\f 1 2}g$ such that w.p. $1-3\eta$, conditional on $D$,
$
\norm{\Sigma^{\f 1 2} g-G}\leq \left \{1 + \sqrt{2\log(1/\eta)}\,\right\} \left\{ m^{\f 1 4}\Delta_1^{1/2}+\Delta_2^{1/2} +  2\sigma(m)\right\}.
$
\end{proposition}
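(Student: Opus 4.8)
The plan is to decompose $\Sigma^{1/2}g$ and $\hat\Sigma^{1/2}g$ using the projection $\Pi_m$ onto the top-$m$ eigenvectors of $\Sigma$, then bound each piece separately using the finite-dimensional H\"older bound (Lemma~\ref{lemma:holder-frob}), Gaussian concentration (Lemma~\ref{lemma:gaussian-concentration-trace}), and the coupling machinery (Lemma~\ref{lemma:plug-in-tail-split-bs}, Corollary~\ref{cor:pushforward-coupling}). By Lemma~\ref{lemma:limit-tail-split-bs} we have $\Sigma^{1/2} = (\Pi_m\Sigma\Pi_m)^{1/2} + (\Pi_m^\perp\Sigma\Pi_m^\perp)^{1/2}$. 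The tail part $(\Pi_m^\perp\Sigma\Pi_m^\perp)^{1/2}$ is Hilbert--Schmidt with $\norm{(\Pi_m^\perp\Sigma\Pi_m^\perp)^{1/2}}_{\HS}^2 = \tr \Pi_m^\perp\Sigma\Pi_m^\perp = \sigma^2(m)$, so by Lemma~\ref{lemma:gaussian-concentration-trace} we get $\norm{(\Pi_m^\perp\Sigma\Pi_m^\perp)^{1/2}g} \le (1+\sqrt{2\log(1/\eta)})\sigma(m)$ with probability $\ge 1-\eta$. This handles one $\sigma(m)$ term.

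\textbf{Main steps.} First I would bound the bootstrap tail term: $\norm{(\Pi_m^\perp\hat\Sigma\Pi_m^\perp)^{1/2}g}$. Writing $\tr\Pi_m^\perp\hat\Sigma\Pi_m^\perp = \tr\Pi_m^\perp\Sigma\Pi_m^\perp + \tr\Pi_m^\perp(\hat\Sigma-\Sigma)\Pi_m^\perp = \sigma^2(m) + \Delta_2$, Lemma~\ref{lemma:gaussian-concentration-trace} gives (conditionally on $U$, on a high-probability event) $\norm{(\Pi_m^\perp\hat\Sigma\Pi_m^\perp)^{1/2}g} \le (1+\sqrt{2\log(1/\eta)})\sqrt{\sigma^2(m)+\Delta_2} \le (1+\sqrt{2\log(1/\eta)})(\sigma(m)+\Delta_2^{1/2})$. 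Feeding this into Lemma~\ref{lemma:plug-in-tail-split-bs} (with $\alpha = (1+\sqrt{2\log(1/\eta)})(\sigma(m)+\Delta_2^{1/2})$, $\beta=\eta$) produces a random variable $G \overset{U}{\sim} \hat\Sigma^{1/2}g$ with $\norm{(\Pi_m\hat\Sigma\Pi_m)^{1/2}g - G} \le \alpha$ with probability $\ge 1-\eta$. Second, I would compare the two "head" pieces $(\Pi_m\Sigma\Pi_m)^{1/2}g$ and $(\Pi_m\hat\Sigma\Pi_m)^{1/2}g$. Both live in the $m$-dimensional range of $\Pi_m$, identified with $\bb{R}^m$; applying the trivial Gaussian coupling via a common standard normal and the computation in the bootstrap proof sketch, $\bb{E}\norm{(\Pi_m\Sigma\Pi_m)^{1/2}g - (\Pi_m\hat\Sigma\Pi_m)^{1/2}g}^2 = \norm{(\Pi_m\Sigma\Pi_m)^{1/2} - (\Pi_m\hat\Sigma\Pi_m)^{1/2}}_{\HS}^2$, and by Lemma~\ref{lemma:holder-frob} this is at most $m^{1/2}\norm{\Pi_m\Sigma\Pi_m - \Pi_m\hat\Sigma\Pi_m}_{\HS} \le m^{1/2}\norm{\Sigma-\hat\Sigma}_{\HS} = m^{1/2}\Delta_1$ (using that $\Pi_m$ is a contraction, so projecting only decreases the HS norm of the difference). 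Then Gaussian concentration in the form of Lemma~\ref{lemma:gaussian-concentration-trace} gives $\norm{(\Pi_m\Sigma\Pi_m)^{1/2}g - (\Pi_m\hat\Sigma\Pi_m)^{1/2}g} \le (1+\sqrt{2\log(1/\eta)})m^{1/4}\Delta_1^{1/2}$ with probability $\ge 1-\eta$.

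\textbf{Assembling.} Finally I would combine via the triangle inequality:
\[
\norm{\Sigma^{1/2}g - G} \le \norm{(\Pi_m^\perp\Sigma\Pi_m^\perp)^{1/2}g} + \norm{(\Pi_m\Sigma\Pi_m)^{1/2}g - (\Pi_m\hat\Sigma\Pi_m)^{1/2}g} + \norm{(\Pi_m\hat\Sigma\Pi_m)^{1/2}g - G},
\]
and a union bound over the three events (each of probability $\ge 1-\eta$) gives, with probability $\ge 1-3\eta$,
\[
\norm{\Sigma^{1/2}g - G} \le \{1+\sqrt{2\log(1/\eta)}\}\{m^{1/4}\Delta_1^{1/2} + \Delta_2^{1/2} + 2\sigma(m)\},
\]
which is the claim. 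The one subtlety requiring care is the bookkeeping in the coupling step: Lemma~\ref{lemma:plug-in-tail-split-bs} already builds $G$ from $(\Pi_m\hat\Sigma\Pi_m)^{1/2}g$ and the tail bound, so I need to make sure that the comparison of the head pieces is done consistently with the $g$ used there (it is, since $g$ is a single fixed standard Gaussian series throughout) and that the conditional-versus-unconditional probability statements are handled correctly; the bound on $\Delta_2$ inside the bootstrap tail term is conditional on $U$, but since $\Delta_1,\Delta_2$ are $\sigma(U)$-measurable this causes no difficulty. The main obstacle is organizing the coupling so that one $G$ simultaneously handles the head comparison and the tail, which is exactly what Lemma~\ref{lemma:plug-in-tail-split-bs} is designed for — the rest is routine.
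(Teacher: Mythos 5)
Your proposal is correct and follows essentially the same approach as the paper: you use the same split $\Sigma^{1/2}=(\Pi_m\Sigma\Pi_m)^{1/2}+(\Pi_m^\perp\Sigma\Pi_m^\perp)^{1/2}$, bound the head difference via the H\"older bound on square roots plus Gaussian concentration, build $G$ via the conditional Strassen-based coupling of Lemma~\ref{lemma:plug-in-tail-split-bs} with $\alpha=(1+\sqrt{2\log(1/\eta)})(\sigma(m)+\Delta_2^{1/2})$, and assemble by a union bound over three events. The only cosmetic difference is that you make the intermediate second-moment identity $\bb{E}\norm{Ag}^2=\norm{A}_{\HS}^2$ explicit before invoking Gaussian concentration, whereas the paper cites Lemma~\ref{lemma:gaussian-concentration-trace} directly; the logic is identical.
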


\begin{proof}
We proceed in steps, first decomposing $\norm{\Sigma^{\f 1 2} g-G}$ into projection and remainder terms, then bounding each.
\begin{enumerate}
    \item We show that if $\bb{P}\{\|(\Pi_m^\perp \hat{\Sigma} \Pi_m^\perp)^{\f 1 2}g\| > \a \} \le \beta$, then w.p. $1 - \beta$ it holds that
   $$
   \norm{\Sigma^{\f 1 2} g-G}  \le \norm{(\Pi_m\Sigma\Pi_m)^{\f 1 2} g - (\Pi_m\hat{\Sigma}\Pi_m)^{\f 1 2}g} + \norm{(\Pi_m^\perp \Sigma\Pi_m^\perp )^{\f 1 2} g}+\a.
$$
Let $G$ be constructed as in Lemma~\ref{lemma:plug-in-tail-split-bs}. By Lemma~\ref{lemma:limit-tail-split-bs},
$\Sigma^{\f 1 2}g = (\Pi_m \Sigma \Pi_m)^{\f 1 2}g + (\Pi_m^\perp \Sigma \Pi_m^\perp)^{\f 1 2}g.$
Thus, by adding and subtracting, we have the decomposition
\begin{equation*}
    \Sigma^{\f 1 2} g - G = \{(\Pi_m \Sigma \Pi_m)^{\f 1 2}g - (\Pi_m \hat{\Sigma} \Pi_m)^{\f 1 2}g\}
     + (\Pi_m^\perp \Sigma \Pi_m^\perp)^{\f 1 2}g
  + \{(\Pi_m \hat{\Sigma} \Pi_m)^{\f 1 2}g - G\}.
\end{equation*}
We use the triangle inequality. By Lemma~\ref{lemma:plug-in-tail-split-bs}, the last term has norm at most $\a$ w.p. $1-\beta$.

    \item We show that if $\bb{P}\{\|(\Pi_m^\perp \hat{\Sigma} \Pi_m^\perp)^{\f 1 2}g\| > \a \} \le \beta$, then w.p. $1 - \beta-2\eta $
   $$
   \norm{\Sigma^{\f 1 2} g-G}
   \le \left\{ m^{\f 1 4}\snorm{\Sigma - \hat{\Sigma}}_{\HS}^{\f 1 2} +  \sigma(m)\right\}\left \{1 + \sqrt{2\log(1/\eta)}\,\right\}+\a.
   $$
We apply Borell's inequality (Lemma~\ref{lemma:gaussian-concentration-trace}) to the first and second terms in the above decomposition, conditional upon the data. For the first term,  w.p. $1-\eta$,
\begin{align*}
    &\snorm{(\Pi_m \Sigma \Pi_m)^{\f 1 2}g- (\Pi_m \hat{\Sigma} \Pi_m)^{\f 1 2}g}
    \le \snorm{(\Pi_m \Sigma \Pi_m)^{\f 1 2}- (\Pi_m \hat{\Sigma} \Pi_m)^{\f 1 2}}_{\HS}\{1 + \sqrt{2\log(1/\eta)}\} \\
&\le  m^{\f 1 4}\snorm{(\Pi_m \Sigma \Pi_m)- (\Pi_m \hat{\Sigma} \Pi_m)}_{\HS}^{\f 1 2}\{1 + \sqrt{2\log(1/\eta)}\} \\
&\le  m^{\f 1 4}\snorm{\Pi_m}_{\op}^{\f 1 2}\snorm{\Sigma- \hat{\Sigma} }_{\HS}^{\f 1 2}\snorm{\Pi_m}_{\op}^{\f 1 2}\{1 + \sqrt{2\log(1/\eta)}\}
\le m^{\f 1 4}\snorm{\Sigma- \hat{\Sigma} }_{\HS}^{\f 1 2}\{1 + \sqrt{2\log(1/\eta)}\}
\end{align*}
using the Powers-Stormer inequality (Lemma~\ref{lemma:holder-frob}) and $\norm{\Pi_m}_{\op}=1$. For the other term we apply Borell's inequality (Lemma~\ref{lemma:gaussian-concentration-trace}) similarly. Since $\Pi_m^\perp \Sigma^{\f 1 2}\Pi_m^\perp$ is self-adjoint we have
$
    \snorm{(\Pi_m^\perp \Sigma \Pi_m^\perp)^{\f 1 2}}_{\HS}= \sqrt{\tr \Pi_m^\perp \Sigma \Pi_m^\perp}
= \sqrt{\sigma^2(m)}.
$
After a union bound over events w.p. $\eta$, $\eta$, and $\beta$, the proof is complete.

    \item Finally, we determine $\a$ and $\beta$. By Borell's inequality (Lemma~\ref{lemma:gaussian-concentration-trace}), w.p. $1-\eta$,
  $
        \beef\|(\Pi_m^\perp \hat \Sigma \Pi_m^\perp)^{\f 1 2}g\|
        \leq  \beef\|(\Pi_m^\perp \hat \Sigma \Pi_m^\perp)^{\f 1 2}\|_{\HS} \{1 + \sqrt{2\log(1/\eta)}\}.
  $
   Moreover, with $|x|_+ = x \vee 0$,
   \begin{align*}
       &\beef\|(\Pi_m^\perp \hat{\Sigma} \Pi_m^\perp)^{\f 1 2}\|_{\HS}
       =\{\tr (\Pi_m^\perp \hat{\Sigma} \Pi_m^\perp)\}^{1/2}
       =[\tr \{\Pi_m^\perp (\hat{\Sigma}-\Sigma) \Pi_m^\perp\}+\tr (\Pi_m^\perp \Sigma \Pi_m^\perp)]^{1/2} \\
       &\leq |\tr \{\Pi_m^\perp  (\hat{\Sigma}-\Sigma) \Pi_m^\perp\}|^{1/2}_+ +\{\tr (\Pi_m^\perp \Sigma \Pi_m^\perp) \}^{1/2}
       =|\tr \{\Pi_m^\perp (\hat{\Sigma}-\Sigma) \Pi_m^\perp\}|^{1/2}_+ +\sigma(m).
   \end{align*}
    In summary, w.p. $1-\eta$
    $$
    \beef\|(\Pi_m^\perp \hat \Sigma \Pi_m^\perp)^{\f 1 2}g\|
        \leq \left\{1 + \sqrt{2\log(1/\eta)}\,\right\} \left(|\tr \{\Pi_m^\perp (\hat{\Sigma}-\Sigma) \Pi_m^\perp\}|_+^{1/2}+\sigma(m)\right)=\alpha.
    $$
    Therefore by the results above, w.p. $1-3\eta$, $\norm{\Sigma^{\f 1 2} g-G}$ is bounded by
    $$
   \left \{1 + \sqrt{2\log(1/\eta)}\,\right\} \left\{ m^{\f 1 4}\snorm{\Sigma - \hat \Sigma}_{\HS}^{\f 1 2}+|\tr \{\Pi_m^\perp (\hat{\Sigma}-\Sigma) \Pi_m^\perp\}|^{1/2}_+ +  2\sigma(m)\right\}. \qedhere
$$
\end{enumerate}
\end{proof}

\paragraph{\textcolor{black}{Covariance error bound.}} We bound $\Delta_1$ and $\Delta_2$ from Proposition \ref{prop:bs-coupling} for the anti-symmetric bootstrap, where  $
\hat{\Sigma}=\mathbb{E}_n(U_i\otimes U_i^*)-\mathbb{E}_n(U_i)\otimes \{\mathbb{E}_n(U_i)\}^*.
$ Appendix~\ref{sec:cov_appendix} gives proofs.

\begin{lemma}[Covariance estimation]\label{lemma:all}
Under $a$-boundedness, w.p. $1-3\eta$, both %
\begin{align*}
    \Delta_1&\leq 2\log(2/\eta)^2\left\{\sqrt{\frac{a^2\sigma^2(0)}{n}} \vee \frac{4a^2}{n} \vee \frac{8a^2}{n^2}\right\},\quad
    \Delta_2\leq 2\log(2/\eta)\left\{\sqrt{\frac{a^2\sigma^2(m)}{n}} \vee \frac{2a^2}{n}\right\}.
\end{align*}
\end{lemma}

\paragraph{\textcolor{black}{Main results.}} The main results consider randomness of the multipliers and of the data.

\begin{theorem}[Bootstrap coupling]\label{thm:bootstrap}
Suppose $a$-boundedness holds and $n \ge 2$. Then there exists a random element $G$ with the same conditional distribution as the multiplier bootstrap process $\hat \Sigma^{\f 1 2}g$ such that with total probability at least $1 - 6\eta$
$$
\norm{\Sigma^{\f 1 2}g - G} \le  C\log(2/\eta)^{3/2} R_{\mathrm{bd}}(n),\quad R_{\mathrm{bd}}(n)=\inf_{m\geq 1} \left[m^{\f 1 4}\left\{\frac{a^2\sigma^2(0)}{n} + \frac{a^4}{n^2}\right\}^{\f 1 4} + \sigma(m)\right].$$
\end{theorem}

\begin{proof}
\textcolor{black}{We substitute the bounds from Lemma~\ref{lemma:all} into Proposition~\ref{prop:bs-coupling}, using a union bound. Here the bound on $\Delta_1$ dominates $\Delta_2$.}
\textcolor{black}{With probability $1-6\eta$,}
\begin{align*}
   \norm{\Sigma^{\f 1 2} g-G}
   &\le C \log(2/\eta)^{3/2} \left[ m^{\f 1 4}\left\{\sqrt{\frac{a^2\sigma^2(0)}{n}} \vee \frac{4a^2}{n}\vee \frac{8a^2}{n^2}\right\}^{\f 1 2} + \sigma(m)\right].
\end{align*}
If $n \ge 2$ then $8a^2/n^2 \le 4a^2/n$.
\end{proof}

The following corollary is useful in case we cannot sample from the multiplier bootstrap process $\hat \Sigma^{\f 1 2}g$ directly, but can sample from a proxy for it, namely $\mathfrak{B}$.  %

\begin{corollary}[Approximate bootstrap]\label{cor:bootstrap-apx}
{\color{black}
    Fix $\eta\in(0,1)$. Suppose $W,W'$ are random variables in $H$ with $\bb{P}(\norm{W'-W} \ge \d) \le \eta^2/2$ and $W \overset{D}{\sim} \hat \Sigma^{\f 1 2} g$. Under the conditions of Theorem \ref{thm:bootstrap}, there exists a Gaussian $Z\overset{D}{\sim}\Sigma^{\f 1 2}g$ such that w.p. $1-\eta$,
$
\bb{P}\{\beef \norm{Z - W'} \ge  C' \log(6/\eta)^{3/2} R_{\mathrm{bd}}(n) + \d | D \} \le \eta
$.
If $\eta=n^{-\xi}$, the premise uses failure probability $n^{-2\xi}/2$, affecting only logarithmic factors.}
\end{corollary}

\begin{proof}[Proof of Corollary~\ref{cor:bootstrap-apx}]
{\color{black}
We proceed in steps.

\begin{enumerate}
\item Apply Theorem \ref{thm:bootstrap} with failure parameter $\eta^2/12$. Let $G \overset U \sim \hat \Sigma^{\f 1 2}g$ be the resulting random element. Then
$\bb{P}\left\{\snorm{\Sigma^{\f 1 2}g - G} \ge C\log(24/\eta^2)^{3/2} R_{\mathrm{bd}}(n)\right\} \le \eta^2/2.$
By Strassen's lemma (Lemma~\ref{lemma:pushforward-coupling}) with $X=W$, $X'=G$, $Y'=\Sigma^{\f 1 2}g$, and $Y=Z$,  there exists a $Z \overset D \sim \Sigma^{\f 1 2}g$  such that
    $$\bb{P}\{\norm{Z - W} \ge C\log(24/\eta^2)^{3/2} R_{\mathrm{bd}}(n)\} \le \eta^2/2.$$

\item By the triangle inequality and the hypothesis on $W'-W$,
\begin{align*}
\bb{P}\left\{\norm{Z-W'} \ge C\log(24/\eta^2)^{3/2} R_{\mathrm{bd}}(n)+\d\right\}
\le \eta^2.
\end{align*}
Consequently, for
$$A(\eta) = \bb{P}\left\{\beef \norm{Z-W'} \ge C\log(24/\eta^2)^{3/2} R_{\mathrm{bd}}(n)+\d\middle|D \right\},$$
we have $\bb{E}_D\{A(\eta)\}\leq\eta^2$.

\item Markov's inequality gives $\mathbb{P}_U\{A(\eta)>\eta\}\leq\eta$. Hence, w.p.~$1-\eta$,
\[
 \bb{P}\left\{\beef  \norm{Z - W'} \ge C\log(24/\eta^2)^{3/2} R_{\mathrm{bd}}(n)+\d\middle|D \right\}\leq \eta.
\]
Finally, note that \(
C\{\log(24/\eta^2)\}^{\f 3 2}
\le 2^{\f 3 2}C\{\log(6/\eta)\}^{3/2}
=C'\{\log(6/\eta)\}^{3/2}.\) \qedhere

\end{enumerate}}
\end{proof}

\section{Applying general results to KRR}\label{sec:symbols}

Our abstract results in Section~\ref{sec:partial} are for partial sums of $U_i$ with covariance $\Sigma=\mathbb{E}(U_i \otimes U_i^*)$. Our concrete results in Section~\ref{sec:algo_main} are for KRR. We now relate our abstract assumptions on $U_i$ and $\Sigma$ to KRR, where we take $U_i = T_{\lambda}^{-1}\{(k_{X_i} \otimes k_{X_i}^* - T)(f_0-f_{\lambda}) + \ep_i k_{X_i}\}$ with $T_{\lambda}=T+\lambda$.

\paragraph{\textcolor{black}{Local width.}}  We translate our assumptions for KRR into bounds on the local width of $\Sigma$.

\begin{lemma}[Upper bounding the covariance]\label{lemma:krr-bahadur-covariance}
We have
$ 0 \preceq \Sigma \preceq (\kappa^2\norm{f_0}^2 + \bar\sigma^2)T_{\lambda}^{-2}T.$
\end{lemma}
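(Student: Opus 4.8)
The plan is to work directly from the definition $\Sigma = \bb{E}[U_i \otimes U_i^*]$ with $U_i = T_{\lambda}^{-1}\{(k_{X_i} \otimes k_{X_i}^* - T)(f_0-f_{\lambda}) + \ep_i k_{X_i}\}$, and to bound the covariance operator by a deterministic multiple of $T_{\lambda}^{-2}T$. The key structural observation is that $U_i = T_{\lambda}^{-1} W_i$ where $W_i \coloneqq (k_{X_i} \otimes k_{X_i}^* - T)(f_0-f_{\lambda}) + \ep_i k_{X_i}$, so that $\Sigma = T_{\lambda}^{-1}\bb{E}[W_i \otimes W_i^*] T_{\lambda}^{-1}$ using self-adjointness of $T_{\lambda}^{-1}$. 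Nonnegativity $0 \preceq \Sigma$ is immediate since $\Sigma$ is a covariance. It therefore suffices to show $\bb{E}[W_i \otimes W_i^*] \preceq (\kappa^2\norm{f_0}^2 + \bar\sigma^2) T$, and then conjugate by $T_{\lambda}^{-1}$, which preserves the operator ordering.

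First I would write $W_i = A_i + B_i$ with $A_i = (k_{X_i}\otimes k_{X_i}^* - T)(f_0 - f_\lambda)$ and $B_i = \ep_i k_{X_i}$. A cleaner route that avoids cross terms: note $\bb{E}[W_i] = 0$ (since $\bb{E}[k_{X_i}\otimes k_{X_i}^*] = T$ and $\bb{E}[\ep_i k_{X_i}] = \bb{E}[\bb{E}(\ep_i|X_i) k_{X_i}] = 0$ because $\ep_i = Y_i - f_0(X_i)$ and $f_0$ is the $L^2$ projection). Hence $\bb{E}[W_i \otimes W_i^*] = \bb{E}[Y_i' k_{X_i} \otimes Y_i' k_{X_i}^*]$ after recognizing that $W_i = \{Y_i - f_\lambda(X_i)\}k_{X_i} - \bb{E}[\{Y_i - f_\lambda(X_i)\}k_{X_i}]$; wait — more carefully, $(k_{X_i}\otimes k_{X_i}^*)(f_0 - f_\lambda) = f_0(X_i)k_{X_i} - f_\lambda(X_i)k_{X_i}$ by the reproducing property, so $W_i = \{f_0(X_i) - f_\lambda(X_i) + \ep_i\}k_{X_i} - T(f_0 - f_\lambda) = \{Y_i - f_\lambda(X_i)\}k_{X_i} - \mu'$ where $\mu' \coloneqq T(f_0-f_\lambda) = \bb{E}[\{Y_i - f_\lambda(X_i)\}k_{X_i}]$. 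So $W_i$ is the centering of $\xi_i \coloneqq \{Y_i - f_\lambda(X_i)\}k_{X_i}$, and by the standard identity $\bb{E}[W_i\otimes W_i^*] = \bb{E}[\xi_i \otimes \xi_i^*] - \mu'\otimes\mu'^* \preceq \bb{E}[\xi_i\otimes\xi_i^*]$.

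Then I would bound $\bb{E}[\xi_i \otimes \xi_i^*] = \bb{E}[\{Y_i - f_\lambda(X_i)\}^2\, k_{X_i}\otimes k_{X_i}^*]$. Writing $Y_i - f_\lambda(X_i) = \ep_i + (f_0 - f_\lambda)(X_i)$, we get $\{Y_i - f_\lambda(X_i)\}^2 \le 2\ep_i^2 + 2(f_0-f_\lambda)(X_i)^2 \le 2\bar\sigma^2 + 2\kappa^2\norm{f_0 - f_\lambda}^2$ using $|\ep_i| \le \bar\sigma$ a.s.\ and $|(f_0-f_\lambda)(X_i)| = |\bk{f_0 - f_\lambda}{k_{X_i}}| \le \kappa\norm{f_0-f_\lambda} \le \kappa\norm{f_0}$ (the last since $f_\lambda = (I + \lambda T^{-1})^{-1}$-type shrinkage contracts $f_0$, i.e.\ $0 \preceq I - T_\lambda^{-1}T \preceq I$ so $\norm{f_0 - f_\lambda} \le \norm{f_0}$). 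Since $k_{X_i}\otimes k_{X_i}^* \succeq 0$, multiplying by the scalar bound and taking expectations gives $\bb{E}[\xi_i\otimes\xi_i^*] \preceq (\kappa^2\norm{f_0}^2 + \bar\sigma^2)\,\bb{E}[k_{X_i}\otimes k_{X_i}^*] = (\kappa^2\norm{f_0}^2 + \bar\sigma^2)T$ — here I should double-check the constant $2$'s are absorbed (the displayed bound has no factor of $2$, so I would instead use the sharper $\{Y_i - f_\lambda(X_i)\}^2 = \ep_i^2 + 2\ep_i(f_0-f_\lambda)(X_i) + (f_0-f_\lambda)(X_i)^2$ and bound the conditional expectation $\bb{E}[\{Y_i - f_\lambda(X_i)\}^2 | X_i] = \bb{E}[\ep_i^2|X_i] + (f_0-f_\lambda)(X_i)^2 \le \bar\sigma^2 + \kappa^2\norm{f_0}^2$, which is the cleanest argument and gives exactly the stated constant). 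Conjugating $\bb{E}[W_i\otimes W_i^*] \preceq (\kappa^2\norm{f_0}^2 + \bar\sigma^2)T$ by $T_\lambda^{-1}$ yields $\Sigma \preceq (\kappa^2\norm{f_0}^2 + \bar\sigma^2)T_\lambda^{-2}T$, as claimed.

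The main obstacle — really just a bookkeeping point — is being careful with the cross term $2\ep_i(f_0-f_\lambda)(X_i)$ and exploiting $\bb{E}[\ep_i | X_i] = 0$ to kill it conditionally on $X_i$ before taking the operator bound; doing it that way is what produces the tight constant $\kappa^2\norm{f_0}^2 + \bar\sigma^2$ rather than a lossy factor of $2$. The other minor point to state explicitly is that conjugation $A \mapsto T_\lambda^{-1} A T_\lambda^{-1}$ is operator-monotone on self-adjoint operators, which is standard.
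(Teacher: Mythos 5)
Your proof is correct and follows essentially the same approach as the paper, with a somewhat cleaner organization: you recognize up front that $U_i = T_\lambda^{-1}(\xi_i - \bb{E}\xi_i)$ with $\xi_i = \{Y_i - f_\lambda(X_i)\}k_{X_i}$, apply centering once to the full operator, and then use $\bb{E}[\ep_i\mid X_i]=0$ conditionally to kill the scalar cross term $2\ep_i(f_0-f_\lambda)(X_i)$; the paper instead expands $\bb{E}[U_i\otimes U_i^*]$ into four tensor terms, zeroes the two cross terms using the same conditional moment, applies the centering bound to only the first term, and then invokes the a.s.\ scalar bounds. The single-centering route buys you less bookkeeping and matches the paper's own identity $V_i = U_i + \mu$, $V_i = T_\lambda^{-1}\{Y_i - f_\lambda(X_i)\}k_{X_i}$, used elsewhere for the bootstrap. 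One small terminological slip: the map $A \mapsto T_\lambda^{-1}AT_\lambda^{-1}$ preserves the L\"owner order because $\bk{f}{T_\lambda^{-1}AT_\lambda^{-1}f} = \bk{T_\lambda^{-1}f}{AT_\lambda^{-1}f}$, but this is order-preservation under conjugation, not ``operator monotonicity'' (which usually refers to a scalar function such as $t\mapsto t^{1/2}$); the underlying fact you invoke is correct.
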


\begin{proof}
By bilinearity of the tensor product and  $\bb{E}(\eps_i|X_i)=0$,  $\Sigma=\Sigma_1+\Sigma_2$ where
\begin{align*}
0 &\preceq \Sigma_1
 = \bb{E}\left[\left\{\beef T_{\lambda}^{-1}(k_{X_i} \otimes k_{X_i}^* - T)(f_0-f_\lambda) \right\} \otimes \left\{\beef T_{\lambda}^{-1}(k_{X_i} \otimes k_{X_i}^* - T)(f_0-f_\lambda) \right\}^*\right] \\
0 &\preceq \Sigma_2=\bb{E}\left\{\left(\beef\ep_iT_{\lambda}^{-1}k_{X_i} \right) \otimes \left(\beef\ep_iT_{\lambda}^{-1}k_{X_i} \right)^*\right\}.
\end{align*}
For the first term, since $\bb{E}\{(f - \bb{E}f) \otimes (f - \bb{E}f)^*\}
\preceq \bb{E}(f \otimes f^*),$
we have
\begin{align*}
\Sigma_1
&\preceq  \bb{E}\left[\left\{\beef T_{\lambda}^{-1}(k_{X_i} \otimes k_{X_i}^*)(f_0-f_\lambda) \right\} \otimes \left\{\beef T_{\lambda}^{-1}(k_{X_i} \otimes k_{X_i}^*)(f_0-f_\lambda) \right\}^*\right] \\
&= \bb{E}\left[ (T_{\lambda}^{-1}k_{X_i}) \otimes \left(T_{\lambda}^{-1}k_{X_i}\right)^* \{f_0(X_i) - f_\lambda(X_i)\}^2 \right].
\end{align*}
Combining this bound with the second term,
\[0 \preceq \Sigma \preceq \bb{E}\left(\left\{\beef (T_{\lambda}^{-1}k_{X_i}) \otimes  (T_{\lambda}^{-1}k_{X_i})^*\right\}\left[\beef\{f_0(X_i) - f_\lambda(X_i)\}^2 + \ep_i^2\right]\right).\]
We bound the scalars as $|\ep_i| \le \bar\sigma$ and
$\left|f_0(X_i) - f_\lambda(X_i)\right| =\left|\bk{f_0-f_{\lambda}}{k_{X_i}}\right| \le \kappa\norm{f_0}$. The latter follows from Cauchy-Schwarz; $f_0-f_{\lambda}=(I-T_{\lambda}^{-1}T)f_0$ implies $\|f_0-f_{\lambda}\|\leq \|I- T_{\lambda}^{-1}T)\|_{\op}\|f_0\| \le \|f_0\|$, as $I- T_{\lambda}^{-1}T$ is self-adjoint with eigenvalues $0\le \lambda/(\lambda + \nu_s) \le 1$.
Lastly,
\[\bb{E}\left\{(T_{\lambda}^{-1}k_{X_i}) \otimes \left(T_{\lambda}^{-1}k_{X_i}\right)^*\right\} = \bb{E}\{T_{\lambda}^{-1}(k_{X_i} \otimes k_{X_i}^*)T_{\lambda}^{-1}\} = T_{\lambda}^{-1}TT_{\lambda}^{-1}=T_{\lambda}^{-2}T. \qedhere \]
\end{proof}

\begin{lemma}[Lower bounding the covariance]\label{lemma:krr-bahadur-covariance2}
If $\mathbb{E}(\ep_i^2|X_i)\geq \underline{\sigma}^2$ then $\Sigma \succeq \underline{\sigma}^2 T_{\lambda}^{-2}T $.
\end{lemma}

\begin{proof}
As argued in Lemma~\ref{lemma:krr-bahadur-covariance},
$
     \Sigma \succeq \Sigma_2
     \succeq \underline{\sigma}^2 \bb{E}\left\{\left(\beef T_{\lambda}^{-1}k_{X_i} \right) \otimes \left(\beef T_{\lambda}^{-1}k_{X_i} \right)^*\right\}
     .%
$
\end{proof}

\begin{lemma}[Local width comparison]\label{lemma:ordering}
If $A \preceq B$ for trace-class, self-adjoint operators $A$ and $B$, then
$
\sigma^2(A,m)  \leq  \sigma^2(B,m).
$
In particular, taking $m=0$, we recover $\tr(A)\leq \tr(B)$.
\end{lemma}

\begin{proof}
Recall the definition
$\sigma^2(A,m) = \sum_{s=m+1}^\infty \nu_s(A) = \sum_{s=m+1}^\infty\bk{e_s(A)}{Ae_s(A)},$ where $\nu_s(A)$ are eigenvalues and $e_s(A)$ are corresponding eigenfunctions of $A$. Let $(f_1, f_2, \ldots)$ denote any other orthonormal basis of $H$. By the variational representation of the top $m$ eigenfunctions,
$\sum_{s=1}^m \nu_s(A) \ge \sum_{s=1}^m \bk{f_s}{Af_s}.$
Taking $f_s=e_s(B)$, and noting that the trace is independent of the chosen orthonormal basis,
\begin{align*}
\sigma^2(A,m)
&= \sum_{s=m+1}^\infty \nu_s(A)
= \tr(A) - \sum_{s=1}^m \nu_s(A)
\le \tr(A) - \sum_{s=1}^m \bk{e_s(B)}{Ae_s(B)} \\
&= \sum_{s=m+1}^\infty \bk{e_s(B)}{Ae_s(B)}
\le \sum_{s=m+1}^\infty \bk{e_s(B)}{Be_s(B)}
= \sigma^2(B,m). & \qedhere
\end{align*}
\end{proof}

\begin{lemma}[Local width bounds]\label{lemma:inference-rv-bounds}
In our setting,
$
\sigma(\Sigma,m) \le \left(\frac{\kappa\norm{f_0} + \bar\sigma}{\lambda}\right) \sigma(T,m)$ and $\sigma(\Sigma,0) \le (\kappa\norm{f_0} + \bar\sigma)\sqrt{\N(\lambda)}.
$
\end{lemma}

\begin{proof}
By Lemmas~\ref{lemma:krr-bahadur-covariance},
$0 \preceq \Sigma \preceq (\kappa\norm{f_0} + \bar\sigma)^2T_{\lambda}^{-2}T \preceq \left(\frac{\kappa\norm{f_0} + \bar\sigma}{\lambda}\right)^2T.$
Therefore by Lemma~\ref{lemma:ordering},
$
    \sigma^2(\Sigma,0)=\tr(\Sigma) \le (\kappa\norm{f_0} + \bar\sigma)^2\tr(T_{\lambda}^{-2}T)
 $ and $
    \sigma^2(\Sigma,m) \le \left(\frac{\kappa\norm{f_0} + \bar\sigma}{\lambda}\right)^2 \sigma^2(T,m).
$
Finally recall the definition of $\N(\lambda)$.
\end{proof}

\paragraph{\textcolor{black}{Summands.}} \textcolor{black}{We establish boundedness of $U_i$ from the KRR assumptions.}

\begin{lemma}[Bounded summands]\label{lemma:inference-rv-bounds-bounded}
We have
$\norm{U_i} \le a=\left(\frac{\kappa^2\norm{f_0} + \kappa\bar\sigma}{\lambda}\right).$
\end{lemma}

\begin{proof}
Write $\snorm{U_i}\leq \snorm{T_{\lambda}^{-1}}_{\op}\left\{ \snorm{k_{X_i} \otimes k_{X_i}^* - T}_{\op}\snorm{f_0-f_{\lambda}} + \snorm{\ep_i k_{X_i}} \right\}$. Clearly $\snorm{T_{\lambda}^{-1}}_{\op}\leq\lambda^{-1}$ and $\snorm{\ep_i k_{X_i}} \leq \bar{\sigma}\kappa$. Since $k_{X_i} \otimes k_{X_i}^* - T$ is a difference of two positive definite operators, $\snorm{k_{X_i} \otimes k_{X_i}^* - T}_{\op} \le \snorm{k_{X_i} \otimes k_{X_i}^*}_{\op} \vee \snorm{T}_{\op}\leq \kappa^2$. Finally,  we bound $\|f_0-f_{\lambda}\|\leq \|f_0\|$ as in the proof of Lemma~\ref{lemma:krr-bahadur-covariance}.
\end{proof}

\begin{center}
{\large\bf SECONDARY APPENDIX}
\end{center}

Appendix~\ref{sec:algo} derives Algorithms~\ref{algo:incremental} and~\ref{algo:variable}.
Appendix~\ref{sec:technical} lists technical lemmas from probability theory.
Appendix~\ref{sec:bahadur} proves that the KRR residual term vanishes.
Appendix~\ref{sec:bahadur2} proves an analogous result for our proposed bootstrap.
Appendix~\ref{sec:anti} proves Propositions~\ref{prop:h-band} and~\ref{prop:inf-intro}.
Appendix~\ref{sec:simulation_details} provides simulation details.
Appendix~\ref{sec:application_details} provides application details.
\section{Closed form inference}\label{sec:algo}

 We derive closed form expressions for $\mathfrak{B}(x)$ and $\mathfrak{s}^2(x)$, justifying Algorithms \ref{algo:incremental} and \ref{algo:variable}.%

\paragraph{\textcolor{black}{Notation.}}
Let $h \in\mathbb{R}^{n\times n}$ be a matrix  of independent, standard Gaussians $h_{ij}$. Define the kernel matrix $K\in\mathbb{R}^{n\times n}$ with entries $K_{ij}=k(X_i,X_j)$. Define the vectors $K_x,\,\hat\varepsilon,\,\boldsymbol{1}\in \mathbb{R}^n$ with entries
$(K_x)_i=k(x,X_i)$, $\hat\ep_i=Y_i - \hat f(X_i)$, and $\boldsymbol{1}_i=1$, respectively. Finally, define the feature operator $\Phi : H \to \bb{R}^n$ which satisfies $\Phi: f \mapsto (\bk{k_{X_i}} {f})_{i=1}^n$. Intuitively, each ``row'' is $k_{X_i}^*$, generalizing the matrix of covariates. The covariance operator is $\hat T = n^{-1}\Phi^*\Phi$, the Gram matrix is $K=\Phi\Phi^*$, and the evaluation vector is $K_x=k_x^*\Phi^*$.

\paragraph{\textcolor{black}{Closed form expressions.}}
We first express $\mathfrak{B}(x)$ in closed form, then $\tilde{\mathfrak{s}}(x)$.

\begin{proposition}\label{prop:b}
$
\mathfrak{B}(x)=K_x(K+n\lambda)^{-1}\beta
$
 where $\beta=\diag(\hat\ep)\frac{1}{\sqrt{2}}(h - h^{\top})\boldsymbol{1}$.
\end{proposition}

\begin{proof}
    To begin, write
    \begin{align*}
        \mathfrak{B}&=\frac{1}{n}\sum_{i=1}^n\sum_{j=1}^n h_{ij} \frac{\hat{V}_i-\hat{V}_j}{\sqrt{2}}
        = \hat{T}_{\lambda}^{-1}\left\{\frac 1 n \sum_{i=1}^n \sum_{j=1}^n \left(\frac{k_{X_i}\hat\ep_i - k_{X_j}\hat\ep_j}{\sqrt{2}}\right) h_{ij} \right\}.
    \end{align*}
    Focusing on the inner expression,
    \begin{align*}
    &\sum_{i=1}^n \sum_{j=1}^n \left(\frac{k_{X_i}\hat\ep_i - k_{X_j}\hat\ep_j}{\sqrt{2}}\right) h_{ij}
    =  \frac{1}{\sqrt 2} \left(\sum_{i=1}^n k_{X_i}\hat\ep_i \sum_{j=1}^n h_{ij} - \sum_{j=1}^n  k_{X_j}\hat\ep_j \sum_{i=1}^n h_{ij}   \right) \\
    &=  \frac{1}{\sqrt 2}\sum_{i=1}^n k_{X_i}\hat\ep_i \left\{\sum_{j=1}^n (h_{ij} - h_{ji})\right\}
   = \sum_{i=1}^n \beta_i k_{X_i}
   =\Phi^*\beta.
\end{align*}
Substituting this into the full expression,
\begin{align*}
   \mathfrak{B}=\left(\frac{1}{n}\Phi^*\Phi+\lambda\right)^{-1} \left(\frac{1}{n}\Phi^*\beta\right)
    =\left(\Phi^*\Phi+n\lambda\right)^{-1} \Phi^*\beta
    =\Phi^*(\Phi\Phi^*+n\lambda)^{-1}\beta
\end{align*}
and hence
$
\mathfrak{B}(x)=K_x(K+n\lambda)^{-1}\beta.
$
\end{proof}

\begin{proposition}\label{prop:s_tilde}
Let $\mathbb{E}_h$ denote the expectation over $h$, conditional upon data. Then
   $$
   \tilde{\s}^2(x)=\mathbb{E}_h\{\mathfrak{B}(x)^2\}=n K_x(K+n\lambda)^{-1}\diag(\hat\ep) (I-\boldsymbol{1}\boldsymbol{1}^{\top}/n) \diag(\hat\ep)(K+n\lambda)^{-1}K_x^{\top}.
    $$
\end{proposition}

\begin{proof}
By Proposition~\ref{prop:b},
    $
    \mathbb{E}_h\{\mathfrak{B}(x)^2\}
        =K_x(K+n\lambda)^{-1}\mathbb{E}_h(\beta \beta^{\top})(K+n\lambda)^{-1}K_x^{\top}.
    $
Observe that $\beta=\diag(\hat\ep) \sqrt{n}q$ where $q=\frac{1}{\sqrt{2n}} (h-h^{\top}) \boldsymbol{1}$, so
    $\mathbb{E}_h(\beta \beta^{\top})=n\diag(\hat\ep)  \mathbb{E}_h(q q^{\top})\diag(\hat\ep)$. What remains is $\mathbb{E}(q q^{\top})$. We will show that $\mathbb{E}_h(qq^{\top})=I-\boldsymbol{1}\boldsymbol{1}^{\top}/n$, proving the result. Towards this end, write $
q_i=\frac{1}{\sqrt{2n}} \sum_{j=1}^n\left( h_{ij}-h_{ji} \right)$.

\begin{enumerate}
\item For diagonal terms, fix $i$ and write $\mathbb{E}_h(q_i^2)$ as
    \begin{align*}
        &\frac{1}{2n} \sum_{j=1}^n\sum_{k=1}^n\mathbb{E}_h\left\{\left( h_{ij}-h_{ji} \right)\left( h_{ik}-h_{ki} \right)\right\}
        =\frac{1}{2n} \sum_{j=1}^n\sum_{k=1}^n\mathbb{E}_h\left( h_{ij}h_{ik}-h_{ij}h_{ki}-h_{ji}h_{ik}+h_{ji}h_{ki} \right)\\
        &=\frac{1}{2n} \sum_{j=1}^n\sum_{k=1}^n\left( 1_{j=k}-1_{i=j=k}-1_{i=j=k}+1_{j=k} \right)
        =1-1/n.
    \end{align*}
    \item For off diagonal terms, fix $i\neq \ell$ and write $\mathbb{E}_h(q_iq_{\ell})$ as
    \begin{align*}
    &\frac{1}{2n} \sum_{j=1}^n\sum_{k=1}^n\mathbb{E}_h\left\{\left( h_{ij}-h_{ji} \right)\left( h_{\ell k}-h_{k\ell } \right)\right\}
        =\frac{1}{2n} \sum_{j=1}^n\sum_{k=1}^n\mathbb{E}_h\left( h_{ij}h_{\ell k}-h_{ij}h_{k\ell}-h_{ji}h_{\ell k}+h_{ji}h_{k\ell} \right)\\
          &=\frac{1}{2n} \sum_{j=1}^n\sum_{k=1}^n\left( 1_{i=\ell,j=k}-1_{i=k,j=\ell}-1_{j=\ell,i=k}+1_{j=k,i=\ell} \right)
          =-1/n. & \qedhere
    \end{align*}
\end{enumerate}
\end{proof}

\section{Technical lemmas}\label{sec:technical}

\paragraph{\textcolor{black}{Bernstein's inequality.}}
We use a concentration inequality for i.i.d.~sums in Hilbert space.

\begin{lemma}[Proposition 2 of \cite{caponnetto2007optimal}]\label{lemma:c_dv}
Suppose that $\xi_i$ are i.i.d.~random elements of a Hilbert space, which satisfy, for all $\ell \ge 2$
$\bb{E}\norm{\xi_i-\bb{E}\xi_i}^\ell \le \frac 1 2 \ell! B^2 (A/2)^{\ell-2}.$
Then
 for any $0<\eta < 1$ it holds w.p. $1-\eta$ that
$$
\norm{\frac{1}{n}\sum_{i=1}^n \xi_i-\mathbb{E}(\xi_i)}
\le 2 \left\{\sqrt{\frac{B^2\log(2/\eta)}{n}} \vee \frac{A\log(2/\eta)}{n}\right\}
\leq 2 \log (2/\eta)\left(\frac{A}{n} \vee \sqrt{\frac{B^2}{n}}\right).
$$
In particular, this holds if
$\bb{E}(\norm{\xi_i}^2) \le B^2$ and $\norm{\xi_i} \le A/2$ almost surely.
\end{lemma}

Lemma~\ref{lemma:c_dv} for bounded random vectors may be recovered from Talagrand's concentration inequality for empirical processes (see \citealp{massart2000constants}), by considering the special case where the sample paths are linear and the parameter space is an ellipsoid.

\paragraph{\textcolor{black}{Gaussian concentration.}}
We quote a result of Borell, Sudakov, Ibragamov, and Tsirelson.
\begin{lemma}[Borell's inequality, Theorem 2.5.8 of {\citet{gine2021mathematical}}] \label{lemma:borell}
Let $G_t$ be a centered Gaussian process, a.s.~bounded on $T$. Then for $u > 0$,
$$\mathbb{P}\left( \sup_{t\in T} G_t-\mathbb{E}\sup_{t\in T} G_t >u \right) \vee  \mathbb{P}\left( \sup_{t\in T} G_t-\mathbb{E}\sup_{t\in T} G_t < -u \right) \leq \exp\left(\frac{-u^2}{2\sigma^2_T}\right)
$$
where
$\sigma^2_T=\sup_{t\in T} \mathbb{E}G_t^2$.
\end{lemma}

The following corollary bounds the norm of a Gaussian vector.

\begin{lemma}[Gaussian norm bound]\label{lemma:gaussian-concentration-trace} Let $Z$ be a Gaussian random element in a Hilbert space $H$ such that $\bb{E}\norm{Z}^2 < \infty$. Then, w.p. $1-\eta$,
$$\norm{Z} \le
\left\{1 + \sqrt{2\log(1/\eta)}\right\}\sqrt{\bb{E}\norm{Z}^2}.$$
In particular, if $A: H \to H$ is a Hilbert-Schmidt operator, then w.p. $1-\eta$ with respect to $g$,
$$ \snorm{Ag} \le \left\{1 + \sqrt{2\log(1/\eta)}\,\right\}\snorm{A}_{\HS}.
$$
\end{lemma}

\begin{proof}
We proceed in steps.
\begin{enumerate}
    \item For the first claim, we express $\snorm{Z}$ as the supremum of a separable Gaussian process, in particular
$\snorm{Z} = \sup_{t \in B_H}\bk{t}{Z}=\sup_{t\in T} G_t$ where $B_H$ is the unit ball in $H$.
The result follows from Borell's inequality (Lemma \ref{lemma:borell}) provided we can estimate
\[\mathbb{E}\sup_{t\in T} G_t=\bb{E}\sup_{t \in B_H}\bk{t}{Z} = \bb{E}\snorm{Z}, \quad \sigma^2_T=\sup_{t \in B_H}\bb{E}\bk{t}{Z}^2,\] and show that the process is a.s.~bounded.

By Jensen's inequality we have
$
    (\bb{E}\snorm{Z})^2 \le \bb{E}\snorm{Z}^2 < \infty,
$
so we may deduce from Markov's inequality that
$\bk{t}{Z}$ is a.s.~bounded.
Also by Jensen's inequality
\[\sup_{t \in B_H}\bb{E}\bk{t}{Z}^2 \le \bb{E} \sup_{t \in B_H}\bk{t}{Z}^2 = \mathbb{E}\norm{Z}^2.\]
Plugging these estimates into Borell's inequality (Lemma~\ref{lemma:borell}) then gives
\[\bb{P}\left(\norm{Z} \ge \sqrt{\bb{E}\norm{Z}^2} + u\right) \le \exp\left(\frac{-u^2}{2\bb{E}\norm{Z}^2}\right).\] Choosing $u = \sqrt{2\log(1/\eta)\bb{E}\norm{Z}^2}$ gives the desired result.

    \item For the second claim, take $Z = Ag$. We need to check that $\bb{E}\norm{Ag}^2 = \norm{A}_{\HS}^2$.
Indeed,
$$
\bb{E}\|Ag\|^2 = \sum_{s,t} \mathbb{E}[g_sg_t]\langle Ae_s,Ae_t \rangle=\sum_s \|Ae_s\|^2= \|A\|^2_{\HS}. \qedhere
$$
\end{enumerate}
\end{proof}

\paragraph{\textcolor{black}{Powers-Stormer inequality.}} We cite the following matrix norm inequality.

\begin{lemma}[cf.~{\citet[Theorem 1.1]{wihler2009holder}}]\label{lemma:holder-frob}
Let $A$ and $B$ be $m \times m$ real, positive semidefinite, symmetric matrices. Then
$\snorm{A^{\f 1 2} - B^{\f 1 2}}_{\HS} \le m^{\f 1 4}\snorm{A - B}_{\HS}^{\f 1 2}.$
\end{lemma}
\begin{proof} According to \citet[Theorem 1.1]{wihler2009holder}, we have
$\snorm{f(A) - f(B)}_{\HS} \le (f)_{0,\f 1 2}m^{\f 1 4}\snorm{A - B}_{\HS}^{\f 1 2}$ where $(f)_{0,\f 1 2}$ is the H\"older constant
$(f)_{0,\f 1 2} = \sup_{x \ne y} \frac{|f(x)-f(y)|}{|x-y|^{\f 1 2}}.$ We show that for $f(x) = \sqrt{x}$ this constant is at most $1$. Without loss of generality, $x > y$. In this case we have
$(\sqrt x - \sqrt y)^2 = x(1-\sqrt{y/x})^2 \le x(1-y/x) = x-y,$
since $|1-a| \le \sqrt{|1-a^2|}$ for $a = \sqrt{y/x}$, as the graph of the semicircle is concave over $[-1,1]$. %
\end{proof}

\paragraph{\textcolor{black}{Strassen's lemma.}} The following result is useful for coupling probability distributions.

\begin{lemma}[Conditional Strassen's lemma; {\citealt[Theorem 4]{monrad1991nearby}}]\label{lemma:conditional-strassen}
Let $X$ be a random variable on a probability space
$(\Omega, \c S, \bb{P})$, and suppose that $X$ takes values in a complete metric space $(S,d)$. Let $\c F \subset \c S$ be countably generated as a $\sigma$-algebra, and assume that there exists a random variable $R$ on $(\Omega, \c S, \bb{P})$ that
is independent of $\c F \vee \sigma(X)$. Let $G(-| \c F)$ be a regular conditional distribution on the Borel sets $\c B$ of $(S,d)$ and suppose that for some non-negative numbers $\a$ and $\beta$
\[ \bb{E}\sup_{A \in \c B} \left[ \beef \bb{P}(X\in A|\c F) - G\{\mr{cl}(A^{\a})|\c F\} \right] \le \beta\]
where $A^{\a}$ is the $\a$-extension of $A$ and the randomness in the expectation is over $\c F$.

Then there exists a random variable $Y$ with values in $S$, defined on $(\Omega, \c S, \bb{P})$
with conditional distribution $G$ satisfying
$\bb{P}\left\{ \beef d(X, Y) > \a \right\} \le \beta.$
\end{lemma}

\begin{lemma}[Simplified Strassen's lemma]\label{lemma:pushforward-coupling}
Under the same conditions as Lemma~\ref{lemma:conditional-strassen}, suppose there exist random variables $X'$ and $Y'$ such that (i) $X$ and $X'$ have the same distribution conditional upon $\c F$, and (ii) $\bb{P}\{d(X',Y') > \alpha\} \le \beta$. Then there exists some $Y$ with the same conditional distribution as $Y'$ such that $\bb{P}\{d(X,Y) > \a\} \le \beta$.
\end{lemma}
\begin{proof}
According to Lemma \ref{lemma:conditional-strassen}, it suffices to show
\[ \bb{E}\sup_{A \in \c B} \left[ \beef \bb{P}(X\in A|\c F) - \bb{P}\{Y' \in \mr{cl}(A^{\a})|\c F\} \right] = \bb{E}\sup_{A \in \c B} \left[ \beef \bb{P}(X'\in A|\c F) - \bb{P}\{Y' \in \mr{cl}(A^{\a})|\c F\} \right]\le \beta,\] where we have first used the fact that $X'$ and $X$ are equal in conditional distribution.

Now, consider the event
$E = \set{\beef d(X',Y') \le \a}.$ By hypothesis, $\bb{P}(E) \ge 1-\beta$.
Also, by construction, for any Borel set $A$, we have
$\set{\beef X' \in A} \cap E \subseteq \set{\beef Y' \in \mr{cl}(A^{\a})}.$
It follows that on the event $E$,
$\sup_{A \in \c S} \mathbbm{1}\set{\beef X' \in A} - \mathbbm{1}\set{\beef Y' \in \mr{cl}(A^{\a})} \le 0$, where $\mathbbm{1}(\cdot)$ is the indicator function for an event.
Moreover, since the expression inside the supremum is a difference of two probabilities, it is at most $1$ everywhere. In particular, this crude bound holds on the complement of $E$. Thus, by the conditional version of Jensen's inequality,
\begin{align*}
&\bb{E}\sup_{A \in \c S} \left[ \beef \bb{P}(X'\in A|\c F) - \bb{P}\{Y' \in \mr{cl}(A^{\a})|\c F\} \right]
= \bb{E} \left(\sup_{A \in \c S} \bb{E}\left[ \mathbbm{1}\set{\beef X' \in A} - \mathbbm{1}\set{\beef Y' \in \mr{cl}(A^{\a})} \middle| \c F \right] \right)  \\
&\le \bb{E} \left[\sup_{A \in \c S}  \mathbbm{1}\set{\beef X' \in A} - \mathbbm{1}\set{\beef Y' \in \mr{cl}(A^{\a})}\right]
\le 0 \cdot \bb{P}(E) + 1 \cdot\{1 - \bb{P}(E)\} \le \beta. & \qedhere
\end{align*}
\end{proof}

\paragraph{\textcolor{black}{Iterated resolvent identity.}} We give a convenient series expansion of $A^{-1}-B^{-1}$.

\begin{lemma}[Higher-order resolvent]\label{lemma:powers}
Let $V$ be a vector space and $A, B: V \to V$ be invertible linear operators. Then, for all $\ell \ge 1$, it holds
$$\inv A - \inv B = A^{-1}\{(B-A)B^{-1}\}^\ell+ \sum_{r=1}^{\ell-1}B^{-1}\{(B-A)B^{-1}\}^{r}.$$
\end{lemma}

\begin{proof}
When $\ell=1$, this reduces to the familiar ``resolvent identity''
\[ A^{-1}-B^{-1}=A^{-1}(B-A)B^{-1} \iff A^{-1} = A^{-1}(B-A)B^{-1} +  B^{-1}.\]
We proceed by induction. Suppose the inequality holds for $\ell-1$. Plugging in the resolvent identity for the left-most appearance of $A^{-1}$ gives
\begin{align*}
\quad \inv A - \inv B
&= A^{-1}\{(B-A)B^{-1}\}^{\ell-1}+ \sum_{r=1}^{\ell-2}B^{-1}\{(B-A)B^{-1}\}^{r} \\
&= \{A^{-1}(B-A)B^{-1} + B^{-1}\}\{(B-A)B^{-1}\}^{\ell-1}+ \sum_{r=1}^{\ell-2}B^{-1}\{(B-A)B^{-1}\}^{r} \\
&= A^{-1}\{(B-A)B^{-1}\}^{\ell} + B^{-1}\{(B-A)B^{-1}\}^{\ell-1} + \sum_{r=1}^{\ell-2}B^{-1}\{(B-A)B^{-1}\}^{r} \\
&= A^{-1}\{(B-A)B^{-1}\}^{\ell} + \sum_{r=1}^{\ell-1}B^{-1}\{(B-A)B^{-1}\}^{r}. & \qedhere
\end{align*}
\end{proof}

\section{Bahadur representation}\label{sec:bahadur}

We prove
$
    \sqrt{n}(\hat{f}-f_{\lambda})
$
is well approximated by
$
\sqrt{n} T_{\lambda}^{-1}\{(\hat{T}-T)(f_0-f_{\lambda})+\mathbb{E}_n(k_{X_i}\ep_i)\}=\sqrt{n}\mathbb{E}_n(U_i).
$
To lighten notation, we write $T_{\lambda}=T+\lambda$, $T_i=k_{X_i}\otimes k_{X_i}^*$, and $\N(\lambda)=\tr(T_{\lambda}^{-2}T)$.

\paragraph{\textcolor{black}{Overview.}} We begin by establishing two high-probability bounds using randomness in the data. We then show that these bounds imply a bound on the gap between the quantities above.

\paragraph{\textcolor{black}{High probability events.}}

\begin{lemma}\label{lemma:numerator-concentration}
$
    \norm{\frac 1 n \sum_{i=1}^n T_{\lambda}^{-1}\ep_i k_{X_i}} \le 2\bar{\sigma} \ln(2/\eta)\left\{\sqrt{\frac{\N(\lambda)}{n}} \vee \frac{2\kappa}{n\lambda}\right\}
$ w.p. $1-\eta$.
\end{lemma}

\begin{proof}
Note that $\bb{E}(T_{\lambda}^{-1}\ep_i k_{X_i}) = \bb{E}\{T_{\lambda}^{-1} k_{X_i}\bb{E}(\ep_i|X_i)\} = 0$. It therefore suffices to show $\norm{T_{\lambda}^{-1}\ep_i k_{X_i}} \le \norm{T_{\lambda}^{-1}}_{\op}\norm{\ep_i k_{X_i}} \le \frac{\kappa \bar{\sigma}}{\lambda}$ and
\begin{align*}
    \bb{E}\norm{T_{\lambda}^{-1}\ep_i k_{X_i}}^2 &\le \bar{\sigma}^2 \bb{E}\norm{T_{\lambda}^{-1} k_{X_i}}^2
    = \bar{\sigma}^2\sum_{s=1}^\infty \frac{\bb{E}\bk{k_{X_i}}{e_s}^2}{(\nu_s + \lambda)^2}
    = \bar{\sigma}^2 \sum_{s=1}^\infty \frac{\nu_s}{(\nu_s + \lambda)^2}
    = \bar{\sigma}^2\N(\lambda).
    \end{align*}
Plugging these into Bernstein inequality (Lemma~\ref{lemma:c_dv}) gives the result.
\end{proof}

\begin{lemma}\label{lemma:denominator-concentration}
$
    \norm{\frac 1 n \sum_{i=1}^n T_{\lambda}^{-1}(T_i - T)}_{\HS} \le 2\kappa \ln(2/\eta)\left\{\sqrt{\frac{\N(\lambda)}{n}} \vee \frac{4\kappa}{n\lambda}\right\}
$ w.p. $1-\eta$.
\end{lemma}

\begin{proof}
Note that $\bb{E}\{T_{\lambda}^{-1}(T_i - T)\} = T_{\lambda}^{-1} \bb{E}(T_i - T) = 0$. Also, $\norm{T_{\lambda}^{-1}(T_i - T)}_{\HS} \le \norm{T_{\lambda}^{-1}}_{\op}\norm{T_i - T}_{\HS} \le \frac{2\kappa^2}{\lambda}.$
Since $T$, $T_i$ and $T_{\lambda}^{-1}$ are self-adjoint,
\begin{align*}
    &\bb{E}\norm{T_{\lambda}^{-1}(T_i - T)}^2_{\HS}
    = \bb{E} \tr\{T_{\lambda}^{-1}(T_i - T)^2T_{\lambda}^{-1}\}
    = \bb{E} \tr(T_{\lambda}^{-1}T_i^2T_{\lambda}^{-1}-2T_{\lambda}^{-1}T_iTT_{\lambda}^{-1}+T_{\lambda}^{-1}T^2T_{\lambda}^{-1}) \\
     &= \bb{E} \tr(T_{\lambda}^{-1}T_i^2T_{\lambda}^{-1}-T_{\lambda}^{-1}T^2T_{\lambda}^{-1})
     \leq \bb{E} \tr(T_{\lambda}^{-1}T_i^2T_{\lambda}^{-1})
     = \bb{E} \tr(T_i^2T_{\lambda}^{-2})
   \leq \kappa^2\tr(TT_{\lambda}^{-2})
   =\kappa^2\N(\lambda).
    \end{align*}
Plugging these into Bernstein inequality (Lemma~\ref{lemma:c_dv}) gives the result.
\end{proof}

\paragraph{\textcolor{black}{Main results.}}

\begin{lemma}[Linearization]\label{lemma:first-order-denom}Suppose $\|T_{\lambda}^{-1}(\hat T - T)\|_{\HS} \le \d < 1$.
Then for all $k\geq 1$
$$
(\hat{T}_{\lambda}^{-1}-T_{\lambda}^{-1})u=A_1u+A_2 T_{\lambda}^{-1}u+A_3 T^{-1}_{\lambda} u,\quad
\|A_1\|_{\HS}\leq \frac{\delta^k}{\lambda},\quad \|A_2\|_{\HS}\leq \delta,\quad \|A_3\|_{\HS}\leq  \frac{\delta^2}{1-\delta}.
$$
If in addition $\delta\leq1/2$ then
$
\|(\hat{T}^{-1}_{\lambda}-T^{-1}_{\lambda})u\|\leq 2\delta \|T^{-1}_{\lambda}u\|.
$
\end{lemma}
\begin{proof}
By the iterated resolvent identity (Lemma~\ref{lemma:powers}) with $A =\hat{T}_{\lambda}$ and $B=T_{\lambda}$, we obtain
\begin{align*}
    \hat{T}_{\lambda}^{-1}-T_{\lambda}^{-1}
    &=\hat{T}_{\lambda}^{-1}\{(T-\hat{T})T_{\lambda}^{-1}\}^k+ \sum_{r=1}^{k-1}T_{\lambda}^{-1}\{(T-\hat{T})T_{\lambda}^{-1}\}^{r}\\
    &=\hat{T}_{\lambda}^{-1}\{(T-\hat{T})T_{\lambda}^{-1}\}^k
    +
    T_{\lambda}^{-1}(T-\hat{T})T_{\lambda}^{-1}
    + \sum_{r=2}^{k-1}T_{\lambda}^{-1}\{(T-\hat{T})T_{\lambda}^{-1}\}^{r} \\
    &=\hat{T}_{\lambda}^{-1}\{(T-\hat{T})T_{\lambda}^{-1}\}^k
    +
    T_{\lambda}^{-1}(T-\hat{T})T_{\lambda}^{-1}
    + \sum_{r=2}^{k-1} \left\{T_{\lambda}^{-1}(T-\hat{T})\right\}^{r}T_{\lambda}^{-1}\\
    &= A_1+A_2 T_{\lambda}^{-1} +A_3T_{\lambda}^{-1}.
\end{align*}
where in the third equality, we use $A(BA)^r=(AB)^rA$.
Since $\|(T-\hat{T})T_{\lambda}^{-1}\|_{\HS}\leq\delta<1$,
$
\|A_1\|_{\HS}=\|\hat{T}_{\lambda}^{-1}\{(T-\hat{T})T_{\lambda}^{-1}\}^k\|_{\HS}\leq \frac{\delta^k}{\lambda}$ and $\|A_2\|_{\HS}=\|T_{\lambda}^{-1}(T-\hat{T})\|_{\HS}\leq \delta.
$
Moreover, by the triangle inequality
\begin{align*}
    \|A_3\|_{\HS}
    &\leq \sum_{r=2}^{k-1}\left\|T_{\lambda}^{-1}(T-\hat{T})\right\|_{\HS}^{r}
    \leq \sum_{r=2}^{k-1} \d^r \le \sum_{r=2}^{\infty} \d^r =\frac{\delta^2}{1-\delta}.
\end{align*}
We use the first result to prove the second. By the triangle inequality,
\begin{align*}
    &\|(\hat{T}^{-1}_{\lambda}-T^{-1}_{\lambda})u\|
    \leq \|A_1\|_{\HS}\cdot \|u\|+\|A_2\|_{\HS}\cdot \|T_{\lambda}^{-1}u\|+\|A_3\|_{\HS}\cdot \|T_{\lambda}^{-1}u\| \\
    &\leq \frac{\delta^k}{\lambda}\|u\|+\left(\delta+\frac{\delta^2}{1-\delta}\right) \|T_{\lambda}^{-1}u\|
    =\frac{\delta^k}{\lambda}\|u\|+\frac{\delta}{1-\delta} \|T_{\lambda}^{-1}u\|.
\end{align*}
The first term vanishes as $k \uparrow \infty$. When $\delta\leq 1/2$, $\delta(1-\delta)^{-1}\leq 2\delta$.
\end{proof}

\begin{proposition}[Abstract Bahadur representation]\label{prop:bahadur-expansion}
Suppose $\|T_{\lambda}^{-1}(\hat T - T)\|_{\HS} \le \d \le \f 1 2$, and $\norm{T_{\lambda}^{-1}\bb{E}_n(k_{X_i}\ep_i)} \le \gamma$. Then
$\hat f - f_\lambda = T_{\lambda}^{-1}\bb{E}_n(k_{X_i}\ep_i) + T_{\lambda}^{-1}(\hat T - T)(f_0 - f_\lambda) + u$
for some $u$ with
$
\norm{u} \leq  2\delta  \left(\gamma+\delta\|f_0-f_{\lambda}\|\right).
$
\end{proposition}

\begin{proof}
We proceed in steps.
\begin{enumerate}
    \item Decomposition. Write
\begin{align*}
&\hat{f}-f_{\lambda}
    =\hat{T}^{-1}_{\lambda} \mathbb{E}_n(k_{X_i}Y_i)-T^{-1}_{\lambda}Tf_0 =\hat{T}^{-1}_{\lambda} \hat{T}f_0+\hat{T}^{-1}_{\lambda} \mathbb{E}_n(k_{X_i}\ep_i)-T^{-1}_{\lambda}Tf_0 \\
    &=(\hat{T}^{-1}_{\lambda}-T^{-1}_{\lambda})\mathbb{E}_n(k_{X_i}\ep_i)
    +T^{-1}_{\lambda} \mathbb{E}_n(k_{X_i}\ep_i)
     +(\hat{T}^{-1}_{\lambda}-T^{-1}_{\lambda})\hat{T}f_0
    +T_{\lambda}^{-1}\hat{T}f_0
    -T^{-1}_{\lambda}Tf_0 \\
    &=(\hat{T}^{-1}_{\lambda}-T^{-1}_{\lambda})\mathbb{E}_n(k_{X_i}\ep_i)
    +T^{-1}_{\lambda} \mathbb{E}_n(k_{X_i}\ep_i)
    \\ & \qquad +(\hat{T}^{-1}_{\lambda}-T^{-1}_{\lambda})(\hat{T}-T)f_0
    +(\hat{T}^{-1}_{\lambda}-T^{-1}_{\lambda})Tf_0
    +T_{\lambda}^{-1}\hat{T}f_0
    -T^{-1}_{\lambda}Tf_0 \\
    &=(\hat{T}^{-1}_{\lambda}-T^{-1}_{\lambda})\{\mathbb{E}_n(k_{X_i}\ep_i)
     +(\hat{T}-T)f_0\}+T^{-1}_{\lambda} \mathbb{E}_n(k_{X_i}\ep_i)
      -\hat{T}^{-1}_{\lambda}(\hat{T}-T)f_{\lambda}+T_{\lambda}^{-1}(\hat{T}-T)f_0,
\end{align*}
where in the last line we use the resolvent identity (base case of Lemma~\ref{lemma:powers}) to write
\begin{align*}
    (\hat{T}^{-1}_{\lambda}-T^{-1}_{\lambda})Tf_0
    =\hat{T}^{-1}_{\lambda}(T-\hat{T})T_{\lambda}^{-1}Tf_0
    =\hat{T}^{-1}_{\lambda}(T-\hat{T})f_{\lambda}
    =-\hat{T}^{-1}_{\lambda}(\hat{T}-T)f_{\lambda}.
\end{align*}
Focusing on the final two terms
\begin{align*}
    &-\hat{T}^{-1}_{\lambda}(\hat{T}-T)f_{\lambda}+T_{\lambda}^{-1}(\hat{T}-T)f_0\pm T^{-1}_{\lambda}(\hat{T}-T)f_{\lambda}  \\
&=(T^{-1}_{\lambda}-\hat{T}^{-1}_{\lambda})(\hat{T}-T)f_{\lambda}+T_{\lambda}^{-1}(\hat{T}-T)(f_0-f_{\lambda}) \\
&=(\hat{T}^{-1}_{\lambda}-T^{-1}_{\lambda})(\hat{T}-T)(-f_{\lambda})+T_{\lambda}^{-1}(\hat{T}-T)(f_0-f_{\lambda}).
\end{align*}
Therefore $\hat{f}-f_{\lambda}=(\hat{T}^{-1}_{\lambda}-T^{-1}_{\lambda})S_n +T^{-1}_{\lambda}S_n$ for $S_n =\mathbb{E}_n(k_{X_i}\ep_i)
    +(\hat{T}-T)(f_0-f_{\lambda})$.

    \item What remains is to control the first term:
$
(\hat{T}^{-1}_{\lambda}-T^{-1}_{\lambda})S_n.
$
    By Lemma~\ref{lemma:first-order-denom},
$$
    \|(\hat{T}^{-1}_{\lambda}-T^{-1}_{\lambda})S_n\|
    \leq 2\delta \|T_{\lambda}^{-1}S_n\|
    \leq  2\delta \left(\gamma+\delta\|f_0-f_{\lambda}\|\right). \qedhere
$$
\end{enumerate}
\end{proof}

\begin{theorem}[Bahadur representation]\label{thm:bahadur}
If $n \ge 16\kappa^2\ln(4/\eta)^2\{\N(\lambda) \vee \lambda^{-1}\}$ then w.p. $1-\eta$,
$\hat f - f_\lambda = \bb{E}_n(T_{\lambda}^{-1}k_{X_i}\ep_i) + T_{\lambda}^{-1}(\hat T - T)(f_0 - f_\lambda) + u$ for some $u$ with
$\norm{u} \le 8(\kappa^2\norm{f_0-f_{\lambda}} + \bar{\sigma}\kappa)\ln(4/\eta)^2\left\{\sqrt{\frac{\N(\lambda)}{n}} \vee \frac{4\kappa}{n\lambda}\right\}^2.$
\end{theorem}

\begin{proof}
By combining Lemmas \ref{lemma:numerator-concentration} and \ref{lemma:denominator-concentration} with a union bound, w.p. $1-2\eta$, we have $\norm{T_{\lambda}^{-1}\bb{E}_n(k_{X_i}\ep_i)} \le \gamma$ and $\|T_{\lambda}^{-1}(\hat T - T)\|_{\HS} \le \delta$, with
$$
\gamma=2\bar{\sigma} \ln(2/\eta)\left\{\sqrt{\frac{\N(\lambda)}{n}} \vee \frac{2\kappa}{n\lambda}\right\},
\quad \delta=2\kappa \ln(2/\eta)\left\{\sqrt{\frac{\N(\lambda)}{n}} \vee \frac{4\kappa}{n\lambda}\right\}.
$$
On the event that these inequalities hold, when $\d < \f 1 2$, we have from Proposition \ref{prop:bahadur-expansion} that
\[\norm{u} \le 2\delta  \left(\gamma+\delta\|f_0-f_{\lambda}\|\right) \leq 8(\bar{\sigma}\kappa+\kappa^2\norm{f_{\lambda}-f_0})\ln(2/\eta)^2\left\{\sqrt{\frac{\N(\lambda)}{n}} \vee \frac{4\kappa}{n\lambda}\right\}^2.\]
Moreover, the condition $\d \le \f 1 2$ can be seen to hold whenever
$
2\kappa \ln(2/\eta)\sqrt{\frac{\N(\lambda)}{n}} <\frac{1}{2} \iff 16\kappa^2  \ln(2/\eta)^2 \N(\lambda) <n
$
and
$
2\kappa \ln(2/\eta)\frac{2\kappa}{n\lambda} <\frac{1}{2} \iff 16\kappa^2 \ln(2/\eta)\frac{1}{\lambda} < n.
$
\end{proof}

\section{Feasible bootstrap}\label{sec:bahadur2}

We bound $\snorm{Z_{\mathfrak{B}}-\mathfrak{B}}=
\snorm{\frac{1}{n}\sum_{i=1}^n\sum_{j=1}^n h_{ij} \frac{V_i-V_j}{\sqrt{2}} - \frac{1}{n}\sum_{i=1}^n\sum_{j=1}^n h_{ij} \frac{\hat{V}_i-\hat{V}_j}{\sqrt{2}}}$. To lighten notation, we write $T_{\lambda}=T+\lambda$, $T_i=k_{X_i}\otimes k_{X_i}^*$, and $\N(\lambda)=\tr(T_{\lambda}^{-2}T)$.

\paragraph{\textcolor{black}{Overview.}} We again begin by showing two high-probability bounds. We then decompose $Z_{\mathfrak{B}}-\mathfrak{B}$ as $\Delta_1 + \Delta_2$. Finally, we show our high-probability bounds control $\Delta_1$ and $\Delta_2$.

\paragraph{\textcolor{black}{High probability events.}} %

\begin{lemma}\label{lemma:high1}
    $
\mathbb{E}_n\|  T_{\lambda}^{-1} \ep_i k_{X_i}\|^2 \leq \bar{\sigma}^2 \N(\lambda)+4\bar{\sigma}^2\kappa^2\ln(2/\eta)\left\{\frac{1}{n\lambda^2}+\sqrt{\frac{\N(\lambda)}{n\lambda^2}}\right\}
$ w.p. $1-\eta$.
\end{lemma}

\begin{proof}
    Write $\xi_i=\|  T_{\lambda}^{-1} \ep_i k_{X_i}\|^2\geq 0$. Clearly $\xi_i\leq \frac{\bar{\sigma}^2\kappa^2}{\lambda^2}$. Moreover
    \begin{align*}
   \mathbb{E}\xi_i
   &=\mathbb{E}\|  T_{\lambda}^{-1} \ep_i k_{X_i}\|^2
   = \int
    \ep_i^2 \langle T_{\lambda}^{-1} k_{X_i},T_{\lambda}^{-1} k_{X_i} \rangle
    \mathrm{d}\mathbb{P}
    \leq \bar{\sigma}^2 \int
    \langle k_{X_i},T_{\lambda}^{-2} k_{X_i} \rangle  \mathrm{d}\mathbb{P}  \\
    &=\bar{\sigma}^2 \int
    \tr (T_{\lambda}^{-2} T_i)  \mathrm{d}\mathbb{P}
    = \bar{\sigma}^2\tr (T_{\lambda}^{-2} T)
    =\bar{\sigma}^2 \N(\lambda).
\end{align*}
These imply $\mathbb{E}\xi_i^2\leq \frac{\bar{\sigma}^4\kappa^2}{\lambda^2} \N(\lambda)$. Therefore by Bernstein inequality (Lemma~\ref{lemma:c_dv}), w.p. $1-\eta$
$$
\mathbb{E}_n\xi_i \leq \mathbb{E}\xi_i+2\ln(2/\eta)\left\{\frac{2\bar{\sigma}^2\kappa^2}{n\lambda^2}+\sqrt{\frac{\bar{\sigma}^4\kappa^2\N(\lambda)}{n\lambda^2}}\right\}.\qedhere
$$
\end{proof}

\begin{lemma}\label{lemma:high2}
    $
    \mathbb{E}_n\|  T_{\lambda}^{-1} T_i\|_{\HS}^2 \leq \kappa^2 \N(\lambda)+4\kappa^4\ln(2/\eta)\left\{\frac{1}{n\lambda^2}+\sqrt{\frac{\N(\lambda)}{n\lambda^2}}\right\}
    $ w.p. $1-\eta$.
\end{lemma}

\begin{proof}
    Write $\xi_i=\|  T_{\lambda}^{-1} T_i\|_{\HS}^2\geq 0$. Clearly $\xi_i\leq \frac{\kappa^4}{\lambda^2}$. Moreover
    \begin{align*}
   \mathbb{E}\xi_i
   &=\mathbb{E}\|  T_{\lambda}^{-1} T_i\|_{\HS}^2
   = \int
    \tr (T_i T_{\lambda}^{-2} T_i)
    \mathrm{d}\mathbb{P}
    \leq \kappa^2 \int
    \tr (T_{\lambda}^{-2} T_i)  \mathrm{d}\mathbb{P}
    = \kappa^2\tr (T_{\lambda}^{-2} T)
    =\kappa^2\N(\lambda).
\end{align*}
These imply $\mathbb{E}\xi_i^2\leq \frac{\kappa^6}{\lambda^2} \N(\lambda)$. Therefore by Bernstein inequality (Lemma~\ref{lemma:c_dv}), w.p. $1-\eta$
$$
\mathbb{E}_n\xi_i \leq \mathbb{E}\xi_i+2\ln(2/\eta)\left\{\frac{2\kappa^4}{n\lambda^2}+\sqrt{\frac{\kappa^6\N(\lambda)}{n\lambda^2}}\right\}.\qedhere
$$
\end{proof}
\paragraph{\textcolor{black}{Decomposition.}}
Define $\ep^{\lambda}_i = Y_i-f_{\lambda}(X_i)$ and recall that
\begin{align*}
    V_i&=T_{\lambda}^{-1}\{(k_{X_i} \otimes k_{X_i}^*)(f_0-f_{\lambda}) + \ep_i k_{X_i}\}=T_{\lambda}^{-1}\{Y_i-f_{\lambda}(X_i)\} k_{X_i}=T_{\lambda}^{-1}\ep^{\lambda}_i k_{X_i}, \\
    \hat{V_i}&=\hat{T}_{\lambda}^{-1}\{(k_{X_i} \otimes k_{X_i}^*)(f_0-\hat{f}) + \ep_ik_{X_i}\}=\hat{T}_{\lambda}^{-1}\{Y_i-\hat{f}(X_i)\} k_{X_i}=\hat{T}_{\lambda}^{-1}\hat{\ep}_i k_{X_i}.
\end{align*}
To lighten notation, let
$
w_{ij}=\frac{1}{\sqrt{2}}(\ep^{\lambda}_i k_{X_i}-\ep^{\lambda}_j k_{X_j})$ and $
\hat{w}_{ij}=\frac{1}{\sqrt{2}}(\hat{\ep}_i k_{X_i}-\hat{\ep}_j k_{X_j}).
$

\begin{lemma}\label{lemma:decomp_bahadur2}
$\mathfrak{B}-Z_{\mathfrak{B}}=\Delta_1+\Delta_2$ where
$
    \Delta_1=(\hat{T}_{\lambda}^{-1}-T_{\lambda}^{-1})\left(\frac{1}{n} \sum_{i=1}^n \sum_{j=1}^n \hat{w}_{ij} h_{ij} \right)$ and $
    \Delta_2=T_{\lambda}^{-1}\left\{\frac{1}{n} \sum_{i=1}^n \sum_{j=1}^n (\hat{w}_{ij}-w_{ij}) h_{ij} \right\}.$
\end{lemma}

\begin{proof}
Write $\mathfrak{B}-Z_{\mathfrak{B}}$ as
\begin{align*}
    &\frac{1}{n}\sum_{i=1}^n\sum_{j=1}^n h_{ij} \frac{\hat{V}_i-\hat{V}_j}{\sqrt{2}}-\frac{1}{n}\sum_{i=1}^n\sum_{j=1}^n h_{ij} \frac{V_i-V_j}{\sqrt{2}}
    =\frac{1}{n}\sum_{i=1}^n\sum_{j=1}^n h_{ij} \hat{T}_{\lambda}^{-1}\hat{w}_{ij}-\frac{1}{n}\sum_{i=1}^n\sum_{j=1}^n h_{ij} T_{\lambda}^{-1}w_{ij} \\
    &=\frac{1}{n}\sum_{i=1}^n\sum_{j=1}^n h_{ij}(\hat{T}_{\lambda}^{-1}\hat{w}_{ij}-T_{\lambda}^{-1}w_{ij}\pm T_{\lambda}^{-1}\hat{w}_{ij} ). &\qedhere
\end{align*}
\end{proof}

\paragraph{\textcolor{black}{First term.}} We initially focus on
$
\Delta_1=(\hat{T}_{\lambda}^{-1}-T_{\lambda}^{-1})u$ where $u=\frac{1}{n} \sum_{i=1}^n \sum_{j=1}^n \hat{w}_{ij} h_{ij}.
$

\begin{lemma}[First term]\label{lemma:delta1}
If $\mathbb{E}_n\|  T_{\lambda}^{-1} T_i\|_{\HS}^2  \leq \delta'$ and $\mathbb{E}_n\|  T_{\lambda}^{-1} \ep_i k_{X_i}\|^2 \leq \gamma'$, then conditional on data,  w.p. $1-\eta$,
$
\norm{T_{\lambda}^{-1} u}\leq \{1 + \sqrt{2\log(1/\eta)}\}\sqrt{4(\gamma'+\|\hat{f}-f_0\|^2\delta')}.
$
If additionally $\|T_{\lambda}^{-1}(\hat T - T)\|_{\HS} \le \d \le \f 1 2$ then also $\|\Delta_1\|\leq \{1 + \sqrt{2\log(1/\eta)}\}\cdot 2\delta \cdot \sqrt{4(\gamma'+\|\hat{f}-f_0\|^2\delta')}$.
\end{lemma}

\begin{proof}
Since $T_{\lambda}^{-1} u$ is Gaussian
conditional upon data, by Borell's inequality (Lemma~\ref{lemma:gaussian-concentration-trace})
w.p. $1-\eta$,
$\norm{T_{\lambda}^{-1}u} \le
\{1 + \sqrt{2\log(1/\eta)}\}\sqrt{\bb{E}\norm{T_{\lambda}^{-1} u}^2}.$
Note that  $\bb{E}_h\norm{T_{\lambda}^{-1} u}^2=\frac{1}{n^2}\sum_{i=1}^n\sum_{j=1}^n  \| T_{\lambda}^{-1} \hat{w}_{ij}\|^2$. Within each term,
\begin{align*}
\| T_{\lambda}^{-1} \hat{w}_{ij}\|^2
    &=\frac{1}{2} \left\| T_{\lambda}^{-1} (\hat{\ep}_i k_{X_i}-\hat{\ep}_j k_{X_j})\right\|^2
    \leq \| T_{\lambda}^{-1} \hat{\ep}_i k_{X_i}\|^2+\| T_{\lambda}^{-1} \hat{\ep}_j k_{X_j}\|^2  \\
    \hat{\ep}_i k_{X_i}
    &=\{Y_i-\hat{f}(X_i)\}k_{X_i}
    =\{\ep_i+f_0(X_i)-\hat{f}(X_i)\}k_{X_i}
    =\ep_i k_{X_i}+T_i(f_0-\hat{f}).
\end{align*}
Therefore by the triangle inequality and parallelogram law
\begin{align*}
    &\frac{1}{n^2}\sum_{i=1}^n\sum_{j=1}^n  \| T_{\lambda}^{-1} \hat{w}_{ij}\|^2
\leq \frac{2}{n} \sum_{i=1}^n\|  T_{\lambda}^{-1} \hat{\ep}_i k_{X_i}\|^2
\leq \frac{4}{n} \sum_{i=1}^n\|  T_{\lambda}^{-1} \ep_i k_{X_i}\|^2+\frac{4}{n} \sum_{i=1}^n\|  T_{\lambda}^{-1} T_i(f_0-\hat{f})\|^2 \\
&\leq 4\left(\mathbb{E}_n\|  T_{\lambda}^{-1} \ep_i k_{X_i}\|^2+\|f_0-\hat{f}\|^2 \mathbb{E}_n\|  T_{\lambda}^{-1} T_i\|_{\HS}^2  \right)
\leq 4(\gamma'+\|\hat{f}-f_0\|^2\delta').
\end{align*}
In summary, $\norm{T_{\lambda}^{-1}u}\leq \{1 + \sqrt{2\log(1/\eta)}\}\sqrt{4(\gamma'+\|\hat{f}-f_0\|^2\delta')}$.
The second result follows from Lemma~\ref{lemma:first-order-denom}:
  $\|\Delta_1\| = \|(\hat{T}_{\lambda}^{-1}-T_{\lambda}^{-1})u\|\leq 2\delta \norm{T_{\lambda}^{-1}u}$.
\end{proof}

\paragraph{\textcolor{black}{Second term.}} Next, we turn to
$
\Delta_2=T_{\lambda}^{-1}\left\{\frac{1}{n} \sum_{i=1}^n \sum_{j=1}^n (\hat{w}_{ij}-w_{ij}) h_{ij} \right\}.
$

\begin{lemma}[Second term]\label{lemma:delta2}
If $\mathbb{E}_n\|  T_{\lambda}^{-1} T_i\|_{\HS}^2  \leq \delta'$, then conditional on data,  w.p. $1-\eta$,
$\|\Delta_2\|\leq \{1 + \sqrt{2\log(1/\eta)}\}\cdot \|\hat{f}-f_{\lambda}\|\cdot \sqrt{2\delta'}$
\end{lemma}

\begin{proof}
To begin, we show $ \|\Delta_2\|\leq \left\|T_{\lambda}^{-1}u\right\|_{\HS}\cdot \|\hat{f}-f_{\lambda}\|$ where $u=\frac{1}{n} \sum_{i=1}^n \sum_{j=1}^n \frac{1}{\sqrt{2}}(T_i-T_j) h_{ij}$. Observe that $\hat{w}_{ij}-w_{ij}$ equals
\begin{align*}
 &\frac{1}{\sqrt{2}}\left[\{Y_i-\hat{f}(X_i)\}k_{X_i}-\{Y_j-\hat{f}(X_j)\}k_{X_j}\right] -
    \frac{1}{\sqrt{2}}\left[\{Y_i-f_{\lambda}(X_i)\}k_{X_i}-\{Y_j-f_{\lambda}(X_j)\}k_{X_j}\right] \\
    &=\frac{1}{\sqrt{2}}\left[\{f_{\lambda}(X_i)-\hat{f}(X_i)\}k_{X_i}-\{f_{\lambda}(X_j)-\hat{f}(X_j)\}k_{X_j}\right]
    =\frac{1}{\sqrt{2}}\left[T_i\{f_{\lambda}-\hat{f}\}-T_j\{f_{\lambda}-\hat{f}\}\right].
\end{align*}
Therefore $\Delta_2=T_{\lambda}^{-1} \left\{\frac{1}{n} \sum_{i=1}^n \sum_{j=1}^n \frac{1}{\sqrt{2}}(T_i-T_j) (f_{\lambda}-\hat{f}) h_{ij} \right\}$.

    Since $T_{\lambda}^{-1}u$ is Gaussian conditional upon data, by Borell's inequality (Lemma~\ref{lemma:gaussian-concentration-trace}), w.p. $1-\eta$,
    $
    \|T_{\lambda}^{-1} u\|_{\HS} \leq \{1 + \sqrt{2\log(1/\eta)}\}\sqrt{\bb{E}\norm{T_{\lambda}^{-1} u}_{\HS}^2}.
    $ Note that $\bb{E}_h\norm{T_{\lambda}^{-1} u}_{\HS}^2=\frac{1}{n^2}\sum_{i=1}^n\sum_{j=1}^n  \| T_{\lambda}^{-1} \frac{1}{\sqrt{2}}(T_i-T_j)\|_{\HS}^2$. Within each term,
    $$
        \| T_{\lambda}^{-1} 2^{-1/2}(T_i-T_j)\|_{\HS}^2=\frac{1}{2}\| T_{\lambda}^{-1}(T_i-T_j)\|_{\HS}^2\leq \| T_{\lambda}^{-1}T_i\|^2+\| T_{\lambda}^{-1}T_j\|_{\HS}^2.
 $$
 Therefore by the triangle inequality and the parallelogram law
 \begin{align*}
     &\frac{1}{n^2}\sum_{i=1}^n\sum_{j=1}^n  \| T_{\lambda}^{-1} 2^{-1/2}(T_i-T_j)\|_{\HS}^2
     \leq \frac{2}{n}\sum_{i=1}^n  \| T_{\lambda}^{-1}T_i\|_{\HS}^2=2\mathbb{E}_n\| T_{\lambda}^{-1}T_i\|_{\HS}^2\leq 2\delta'.
 \end{align*}
 In summary, $  \|T_{\lambda}^{-1} u\|_{\HS} \leq \{1 + \sqrt{2\log(1/\eta)}\}\sqrt{2\delta'}.$
\end{proof}

\paragraph{\textcolor{black}{Main results.}} Before proving the main results, we show for completeness that Proposition~\ref{prop:bahadur-expansion}  implies consistency of KRR. This result mirrors ~\citet[Theorem 16]{fischer2020sobolev}.

\begin{lemma}[Error bound for KRR]\label{lemma:delta2_latter}
Suppose $\|T_{\lambda}^{-1}(\hat T - T)\|_{\HS} \le \d \le \f 1 2$ and $\norm{T_{\lambda}^{-1}\bb{E}_n(k_{X_i}\ep_i)} \le \gamma$. Then
$\snorm{\hat f - f_\lambda} \le 2(\gamma + \d\snorm{f_0})$ and
$\snorm{\hat f - f_0} \le 2(\gamma +\snorm{f_0})$.
\end{lemma}

\begin{proof}
 By Proposition \ref{prop:bahadur-expansion},
$
\hat f - f_\lambda = T_{\lambda}^{-1}\bb{E}_n(k_{X_i}\ep_i) + T_{\lambda}^{-1}(\hat T - T)(f_0 - f_\lambda) + u
$
for some $u$ with
$
\norm{u} \leq 2\delta  \left(\gamma+\delta\|f_0-f_{\lambda}\|\right).
$
By the triangle inequality,
\[
 \snorm{\hat f - f_\lambda} \le \gamma + \d\snorm{f_0 - f_\lambda} + 2\d(\gamma + \d\snorm{f_0 - f_\lambda}).
\]
Using $f_0 - f_\lambda = (I - T_{\lambda}^{-1}T)f_0$,
$0 \preceq (I - T_{\lambda}^{-1}T) \preceq I,$ and $\d \le \f 1 2$,
\[
 \snorm{\hat f - f_\lambda} \le \gamma + \d\snorm{f_0} + 2\d(\gamma + \d\snorm{f_0}) \le 2(\gamma + \d\snorm{f_0}).
\]
We use the first result to prove the second. By the triangle inequality, we bound $\|\hat{f}-f_0\|$  by
$$
 \|\hat{f}-f_{\lambda}\|+\|f_{\lambda}-f_0\|
\leq \|\hat{f}-f_{\lambda}\|+\|f_0\|
\leq 2(\gamma +\delta \snorm{f_0})+\|f_0\|
\leq 2(\gamma+\|f_0\|). \qedhere
$$
\end{proof}

\begin{theorem}[Feasible bootstrap]\label{thm:bahadur2}
 If $n \ge 16\kappa^2\ln(12/\eta)^2\{\N(\lambda) \vee \lambda^{-1}\}$ then w.p. $1-\eta$
\begin{align*}
    &\left\|\mathfrak{B}-Z_{\mathfrak{B}} \right\|
    \leq 180\kappa^2(\bar{\sigma}+\kappa\|f_0\|) \ln(12/\eta)^2\left\{\sqrt{\frac{\N(\lambda)}{n}} \vee \frac{4\kappa}{n\lambda}\right\} \cdot \left[ \sqrt{\N(\lambda)}+\left\{\frac{1}{\sqrt{n}\lambda} \vee \frac{ \N(\lambda)^{1/4}}{n^{1/4}\lambda^{1/2}}\right\}  \right].
\end{align*}
\end{theorem}

\begin{proof}
We proceed in steps. First we collect high-probability bounds proved in Appendices \ref{sec:bahadur} and \ref{sec:bahadur2}. We then show that when combined, these imply bounds on $\Delta_1$ and $\Delta_2$. We conclude by applying a union bound and simplifying.
\begin{enumerate}
    \item As argued in Theorem~\ref{thm:bahadur}, w.p. $1-2\eta$, we have $\norm{T_{\lambda}^{-1}\bb{E}_n(k_{X_i}\ep_i)} \le \gamma$ and $\|T_{\lambda}^{-1}(\hat T - T)\|_{\HS} \le \delta$, with
$$
\gamma=2\bar{\sigma} \ln(2/\eta)\left\{\sqrt{\frac{\N(\lambda)}{n}} \vee \frac{2\kappa}{n\lambda}\right\},
\quad \delta=2\kappa \ln(2/\eta)\left\{\sqrt{\frac{\N(\lambda)}{n}} \vee \frac{4\kappa}{n\lambda}\right\},
$$
and the condition on $n$ suffices for $\delta\leq 1/2$. By Lemmas~\ref{lemma:high1} and~\ref{lemma:high2} and a union bound, w.p. $1-2\eta$, we have $\mathbb{E}_n\|  T_{\lambda}^{-1} \ep_i k_{X_i}\|^2 \leq \gamma'$ and $\mathbb{E}_n\|  T_{\lambda}^{-1} T_i\|_{\HS}^2  \leq \delta'$, with
$$
\gamma'=\bar{\sigma}^2 \N(\lambda)+4\bar{\sigma}^2\kappa^2\ln(2/\eta)\left\{\frac{1}{n\lambda^2}+\sqrt{\frac{\N(\lambda)}{n\lambda^2}}\right\},\quad
\delta'=\kappa^2 \N(\lambda)+4\kappa^4\ln(2/\eta)\left\{\frac{1}{n\lambda^2}+\sqrt{\frac{\N(\lambda)}{n\lambda^2}}\right\}.
$$
    \item  By Lemmas~\ref{lemma:decomp_bahadur2},~\ref{lemma:delta1}, and~\ref{lemma:delta2}, w.p. $1-2\eta$ conditional upon data,
    \begin{align*}
        \|\mathfrak{B}-Z_{\mathfrak{B}}\|
        &\leq \|\Delta_1\|+\|\Delta_2\|
        \leq \{1 + \sqrt{2\log(1/\eta)}\}\cdot
        \{ 2\delta \cdot \sqrt{4(\gamma'+\|\hat{f}-f_0\|^2\delta')}+\|\hat{f}-f_{\lambda}\|\cdot \sqrt{2\delta'}\}.
    \end{align*}
   Focusing on the latter factor's first term, by Lemma~\ref{lemma:delta2_latter},
\begin{align*}
    &2\delta \cdot \sqrt{4(\gamma'+\|\hat{f}-f_0\|^2\delta')}
\leq 4\delta (\sqrt{\gamma'}+\|\hat{f}-f_0\|\sqrt{\delta'})
\leq 4\delta \{\sqrt{\gamma'}+2(\gamma +\snorm{f_0})\sqrt{\delta'}\}\\
&\leq 4\delta \sqrt{\gamma'} +4\gamma\sqrt{\delta'}+8\delta \|f_0\|\sqrt{\delta'}.
\end{align*}
   Focusing on the latter factor's second term, by Lemma~\ref{lemma:delta2_latter},
   $$
   \|\hat{f}-f_{\lambda}\|\cdot \sqrt{2\delta'}
   \leq 2(\gamma + \d\snorm{f_0}) \cdot \sqrt{2\delta'}
   \leq 3\gamma \sqrt{\delta'}+3\d\snorm{f_0}\sqrt{\delta'}.
   $$
   Therefore the latter factor is bounded by
   $
   4\delta \sqrt{\gamma'}+7\gamma\sqrt{\delta'}+11\delta \|f_0\|\sqrt{\delta'}.
   $
    \item Combining the randomness from sampling and from the multipliers, w.p. $1-6\eta$, the intersection of the various events holds. Replacing $\eta$ with $\eta/6$, the former factor satisfies $\{1+\sqrt{2\ln(6/\eta)}\}\leq 3\ln(6/\eta)^{1/2}$. The terms in the latter factor satisfy
    \begin{align*}
        &4\delta \sqrt{\gamma'}
        \leq
        4\cdot 2\kappa \ln(12/\eta)\left\{\sqrt{\frac{\N(\lambda)}{n}} \vee \frac{4\kappa}{n\lambda}\right\}
        \left[
        \bar{\sigma} \N(\lambda)^{1/2}+2\bar{\sigma}\kappa\ln(12/\eta)^{1/2}\left\{\frac{1}{n^{1/2}\lambda}+\frac{\N(\lambda)^{1/4}}{n^{1/4}\lambda^{1/2}}\right\}
        \right] \\
       &(7\gamma+11\delta \|f_0\|)\sqrt{\delta'}  \\
       &\leq
       11\cdot
       2(\bar{\sigma}+\kappa\|f_0\|) \ln(12/\eta)\left\{\sqrt{\frac{\N(\lambda)}{n}} \vee \frac{4\kappa}{n\lambda}\right\}
       \left[
       \kappa \N(\lambda)^{1/2}+2\kappa^2\ln(12/\eta)^{1/2}\left\{\frac{1}{n^{1/2}\lambda}+\frac{\N(\lambda)^{1/4}}{n^{1/4}\lambda^{1/2}}\right\}
       \right].
    \end{align*}
    From these, we pull out the constants $\kappa^2(\bar{\sigma}+\kappa\|f_0\|)$, $4\cdot2\cdot2$, and $11\cdot2\cdot2$. \qedhere
\end{enumerate}
\end{proof}

\section{Uniform confidence band}\label{sec:anti}

\paragraph{\textcolor{black}{Notation and background.}} In this section we prove Propositions \ref{prop:h-band} and \ref{prop:inf-intro} from Section \ref{sec:algo_main}. Following Assumption \ref{assumption:light-main}, we define $(Q,R,L,B)$ so that the following hold.
\begin{itemize}
     \item There exists a Gaussian $Z$ in $H$ with covariance $\Sigma$ such that w.p. $1-\eta$,
    $\snorm{\sqrt{n}(\hat f - f_\lambda) - Z} \le Q(n,\lambda,\eta).$
    \item There exists $Z'$ in $H$ that, conditional on $D$, is almost surely Gaussian with covariance $\Sigma$. Upon an event $\mathcal{E} \in \sigma(D)$ that holds w.p. $1-\eta$,
    $\bb{P}\left\{\beef \norm{\mathfrak{B} - Z'} \le R(n,\lambda,\eta)|D\right\} \ge 1-\eta.$
    \item It holds w.p. at least $1-\eta$ that
    $\snorm{Z} \ge L(\lambda,\eta)$.

    \item The bias satisfies $\sqrt{n}\snorm{f_\lambda - f_0} \le B(\lambda)$.
\end{itemize}
Let $\Delta(n,\lambda,\eta) = Q(n,\lambda,\eta) + R(n,\lambda,\eta)$. We abbreviate by suppressing arguments.

We recall that $\hat t_\a = \hat t_\a(D)$ is the $1-\a$ quantile of the conditional distribution of $\|\BS\|$ given the data, i.e. $\mathbb{P}(\|\BS\| > \hat t_\a|D) = \a$. Quantiles are unique since a.s.~finite Gaussian suprema are continuously distributed. We similarly define $t_\a$ as the unique $1-\a$ quantile of $\|Z\|$. By our assumption on $L$ above, it follows that $L(\lambda,\eta) \le t_{1-\eta}$.
By our coupling construction, $Z'$ has the same conditional distribution given any realization of $D$ as the marginal distribution of $Z$. The quantity $t_{\a}$ thus also uniquely satisfies $\mathbb{P}(\|Z'\| > t_{\a}|D) = \a$, so for any $\sigma(D)$-measurable $\hat t$, we have
$\{\mathbb{P}(\|Z'\| > \hat t |D) \le \a \}  \subset \{ \hat t \ge t_\a\}$ as $\sigma(D)$-measurable events.

\paragraph{\textcolor{black}{Overview.}} We first prove validity and sharpness of $H$-norm confidence sets $\hat C_\a$ using an incremental factor approach, then establish exact inference for arbitrary functionals $F: H\to\mathbb{R}$ under continuity and anti-concentration conditions, allowing sup-norm confidence bands. We use our Gaussian and bootstrap couplings to show that $\sqrt n (\hat f - f_0)$ and its bootstrap counterpart $\BS$ are  close in distribution to the finite-sample Gaussian approximation $Z$.

\paragraph{\textcolor{black}{Incremental factor.}} We show that coupling $V$ and $W$ implies similarity of their CDFs.

\begin{lemma}[One-sided error bound]\label{lemma:inf-factor}
    Let $V, W$ be random variables such that $\mathbb{P}(|V-W| > r_1| \c A) \le r_2$ for some $r_1, r_2 > 0$, where $\c A$ is $\sigma$-subalgebra of $\mathbb{P}$. Then, for any random variable $\hat t \in \bb{R}$, it holds that
    $\bb{P}(V > \hat t| \c A) \le \bb{P}(W > \hat t - r_1| \c A) + r_2.$
\end{lemma}

\begin{proof}
Note that if $V > \hat t$, then either $W > \hat t - r_1$ or $|W-V| \ge r_1$.
Thus, for any $A \in \c A$, $\mathbbm{1}(V > \hat t)\mathbbm{1}_A \le \{\mathbbm{1}(W > \hat t - r_1)+\mathbbm{1}(|W-V| \ge r_1)\}\mathbbm{1}_A$. The result follows by taking expectations and using the definition of $\mathbb{E}[-|\mathcal{A}]$.%
\end{proof}

We apply this to show that the CDFs of $\|\BS\|$ and $\sqrt{n}\|\hat f - f_0\|$ are similar to that of $\|Z\|$.
\begin{lemma}[High probability events]\label{lemma:inference_f0}
For any $t \in \mathbb{R}$, we have
$$
\bb{P}(\sqrt{n}\snorm{\hat f - f_0}
>  t)\le \bb{P}(\snorm{Z} > t -Q-B)
 +  \eta,\;
\bb{P}( \snorm{Z}
> t) \le \bb{P}\{\sqrt{n}\snorm{\hat f - f_0} > t -Q-B\}
 +  \eta.
$$
Moreover, on the event $\c E \in \sigma(D)$ with $\mathbb{P}(\mathcal{E}) \ge 1 - \eta$, for any random variable $\hat t$,
$$
\bb{P}(\|Z'\|
> \hat t |D ) \le \bb{P}\left( \|\mathfrak{B}\| > \hat t -R |D\right)
 +  \eta,\quad
\bb{P}(\|\mathfrak{B}\|
> \hat t |D ) \le \bb{P}( \|Z'\| > \hat t -R |D)
 +  \eta.
$$
\end{lemma}
\begin{proof}
We proceed in steps.
\begin{enumerate}
    \item By two applications of the triangle inequality and the definitions of $(B,Q)$, w.p. $1-\eta$,
$$\mathclap{
\left|\sqrt{n}\norm{\hat f - f_0}-\norm{Z}\right|\leq \norm{\sqrt{n}(\hat f - f_0) - Z} \le \norm{\sqrt{n}(f_\lambda - f_0)} + \norm{\sqrt{n}(\hat f - f_\lambda) - Z}
\le B + Q.}
$$
By Lemma~\ref{lemma:inf-factor} with $\c A$ chosen to be the trivial $\sigma$-algebra, $V=\sqrt{n}\snorm{\hat f - f_0}$, $W=\snorm{Z}$, $r_1= Q + B$, and $r_2=\eta$,
$$
\bb{P}(\sqrt{n}\snorm{\hat f - f_0}
> t)\le \bb{P}\{ \snorm{Z} > t -(Q+B)\}
 +  \eta.
$$
If we reverse the roles of $V$ and $W$ in our application of Lemma \ref{lemma:inf-factor}, we obtain
$$
\bb{P}( \snorm{Z}
> t) \le \bb{P}\{\sqrt{n}\snorm{\hat f - f_0} > t -(Q+B)\}
 +  \eta.
$$

    \item By Assumption \ref{assumption:light-main}, on an event $\mathcal{E}$ w.p. $1-\eta$,
$\mathbb{P}(\beef\norm{\mathfrak{B}-Z'}
> R | D) \le \eta$. Therefore by the triangle inequality, upon $\mathcal{E}$,
$\mathbb{P}(\beef|\norm{\mathfrak{B}}-\norm{Z'}|
> R | D) \le \eta$. By Lemma~\ref{lemma:inf-factor}, with $\c A = \sigma(D)$,
with  $V=\|Z'\|$, $W=\|\mathfrak{B}\|$, $r_1=R$, and $r_2=\eta$, it follows that on $\mathcal{E}$,
$$
\bb{P}(\|Z'\|
> \hat t |D ) \le \bb{P}\left( \|\mathfrak{B}\| > \hat t -R |D\right)
 +  \eta.
$$
Reversing $V$ and $W$ gives us that on $\mathcal{E}$,
$
\bb{P}(\|\mathfrak{B}\|
> \hat t |D ) \le \bb{P}( \|Z'\| > \hat t -R |D)
 +  \eta.
$
\qedhere
\end{enumerate}
\end{proof}

\begin{proof}[Proof of Proposition~\ref{prop:h-band}(a)]
Recall that $\hat t_\alpha$ has been defined so that $\mathbb{P}(\|\BS\|>\hat t_\alpha|D) = \alpha$. By the construction of $\BS$, $\hat t_\alpha$ is $\sigma(D)$-measurable. Let the event $\mathcal{E}$ be  defined as in Lemma \ref{lemma:inference_f0} above.

Our first aim is to show that $\mathbb{P}(\sqrt{n}\|\hat f - f_0\| > (1+\delta) \hat t_\alpha ) \le \alpha + 3\eta$, which corresponds to validity. We show that on the high-probability event $\mathcal{E}$, the bootstrap critical value $\hat t_\alpha$ resembles that of the approximating Gaussian $\|Z\|$, so it is typically large enough to cover $f_0$.
We proceed in steps.
\begin{enumerate}
\item
On the event $\mathcal E$, Lemma \ref{lemma:inference_f0} implies
$$\mathbb{P}(\|Z'\| \ge \hat t_{\alpha} + R|D) \le \mathbb{P}(\|\BS\| > \hat t_\alpha|D) + \eta= \alpha + \eta = \mathbb{P}(\|Z'\| \ge  t_{\alpha+\eta}|D).$$ Thus, $\hat t_\alpha + R \ge t_{\alpha + \eta}$ on $\c E$, since $t_{\alpha}$ is the unique $1-\alpha$ quantile of $\|Z'\|$ given $D$.

\item Also by Lemma \ref{lemma:inference_f0},
$\mathbb{P}(\sqrt{n}\|\hat f - f_0\| > t_{\alpha+\eta} + Q + B) \le \mathbb{P}(\|Z\| > t_{\alpha + \eta}) + \eta = \alpha + 2\eta$.

\item To show coverage unconditionally, we have
\[\mathbb{P}(\sqrt{n}\|\hat f - f_0\| > \hat t_\alpha + Q + R + B) = \mathbb{E}[\mathbbm{1}\{\sqrt{n}\|\hat f - f_0\| >
\hat t_\alpha + Q + R + B\}
(\mathbbm{1}_{\mathcal{E}} + \mathbbm{1}_{\mathcal{E}^c})]
\]
For the first summand, we are on the good event $\mathcal E$, so by  steps 1 and 2 above,
\begin{align*}\mathbb{E}[\mathbbm{1}\{\sqrt{n}\|\hat f - f_0\| >
\hat t_\alpha + Q + R + B\}
\mathbbm{1}_{\mathcal{E}}] &\le \mathbb{E}[\mathbbm{1}\{\sqrt{n}\|\hat f - f_0\| >
t_{\alpha+\eta} + Q + B\}
\mathbbm{1}_{\mathcal{E}}] \\
&\le \mathbb{P}(\sqrt{n}\|\hat f - f_0\| >
t_{\alpha+\eta} + Q + B) \le \alpha + 2\eta.
\end{align*}
 For the second summand, $\mathbb{E}[\mathbbm{1}\{\sqrt{n}\|\hat f - f_0\| >
\hat t_\alpha + Q + R + B\}
\mathbbm{1}_{\mathcal{E}^c}] \le \mathbb{P}(\mathcal{E}^c) \le \eta$. Combining these, with $\D = Q +R$, we find
$\mathbb{P}(\sqrt{n}\|\hat f - f_0\| > \hat t_\alpha + \D + B) \le \alpha + 3\eta$.

\item We have finished showing validity if $\hat t_\alpha + \D + B \le (1 + \delta) \hat t_{\alpha}$.
Write $\tilde L = L(\lambda,1-\alpha-2\eta)$.
By definition, $\tilde L \le t_{\alpha+2\eta} \le t_{\alpha+\eta} $.
Combining with step 1 gives $\hat t_\alpha + \D \ge \hat t_\alpha + R \ge t_{\alpha + \eta} \ge \tilde L$, so $\hat t_\a \ge \tilde L - \D$.
For any $\delta$ that satisfies
    $
    \f 1 2 \ge \d \ge \frac{\D + B}{\tilde{L} - \D},$ we then have
    $
    \d \hat t_\a \ge \frac{\D + B}{\tilde{L} - \D}  (\tilde L - \D) = \D + B.
    $
    It follows that $(1+\d)\hat t_\a \ge \hat t_\a + B + \Delta$, as needed.
\end{enumerate}
Next we must show that $\hat C_\alpha$ is $(2\delta, 3\eta)$-sharp at level $\a$, namely that
$\bb{P}\{f_0 \in 2\d \hat f + (1-2\delta) \hat{C}_{\alpha} \} \le 1 - \a + 3\eta.$ For this, it suffices to show that with probability at least $\a - 3\eta$,
\[
\sqrt n \|\hat{f}-f_0\| > (1-\d)\hat{t}_\a
 \ge(1-2\delta) (1+\d)\hat{t}_\a  ,\]
which follows by the definition of $\hat C_\a$ and since $(1-2\d)(1+\d) \le 1-\d$.
\begin{enumerate}
\item By Lemma \ref{lemma:inference_f0}, on $\c E$, $\a = \mathbb{P}(\|\BS\| > \hat t_\alpha|D)
 \le \mathbb{P}(\|Z'\| \ge \hat t_{\alpha} - R|D) + \eta$. It follows that $\hat t_\alpha - R \le t_{\alpha - \eta}$ on $\c E$, hence with probability $\mathbb{P}(\c E) \ge 1-\eta$.
 \item Also by Lemma \ref{lemma:inference_f0}
 $
 \alpha - \eta = \bb{P}( \snorm{Z}
> t_{\alpha-\eta}) \le \bb{P}\{\sqrt{n}\snorm{\hat f - f_0} > t_{\alpha-\eta} -(Q+B)\}
 +  \eta.
 $
 \item By step 1, the event $\hat t_\a - R \le t_{\a - \eta}$ holds with probability at least $1-\eta$. By step 2, the event $\{\sqrt{n}\snorm{\hat f - f_0} > t_{\alpha-\eta} -(Q+B)\}$ holds with probability at least $1-\eta' = \a - 2\eta$ so $\eta' = 1 - \a + 2\eta$.
 By a union bound, these simultaneously hold with probability at least $1 - \eta - \eta' = 1 - \eta - (1 - \a + 2\eta) = \a-3\eta$, hence
 \[\mathbb{P}\{\sqrt{n}\snorm{\hat f - f_0} > \hat t_{\alpha} -(R+Q+B)\} \ge \a - 3 \eta.\]
 \item Thus, noting $\D = Q+R$, we are done if we can show that $\hat t_\a - \D - B \ge (1-\d)\hat t_\a$. This follows from $\d\hat t_\a \ge \D + B$, which was derived when proving validity.\qedhere
 \end{enumerate}
\end{proof}

\paragraph{\textcolor{black}{Anti-concentration.}}
We obtain valid inference for any functional $F: \H \mapsto \mathbb{R}$ under uniform continuity and anti-concentration conditions. To begin, we bound the Kolmogorov distance between coupled random variables.

\begin{lemma}[cf. {\citet[Lemma 2.1]{chernozhukov2016empirical}}]\label{lemma:chern-anti-lemma}
Let $V, W$ be real-valued random variables such that $\mathbb{P}(|V - W| >
r_1| \c A) \le r_2$ for some constants $r_1, r_2 > 0$ where $\c A$ is a $\sigma$-algebra comprised of Borel sets. Then we have
\[\sup_{t \in \bb R}\left|\beef\bb P(V \le t|\c A) - \bb P(W \le t| \c A) \right| \le \sup_{t \in \bb R}
\bb P\left(\beef|W - t| \le r_1 \middle| \c A \right) + r_2.\]
\end{lemma}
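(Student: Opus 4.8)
The plan is to deduce this from the one-sided bound already established as Lemma~\ref{lemma:inf-factor}, combined with elementary event inclusions and the observation that an ``interval probability'' $\bb{P}(t < W \le t + r_1 \mid \c A)$ is controlled by the anti-concentration quantity $\sup_{s}\bb{P}(|W - s| \le r_1 \mid \c A)$. Throughout, all probabilities are conditional on $\c A$, and inequalities between conditional probabilities are understood $\bb{P}$-almost surely; this is legitimate because every such inequality will follow from an almost-sure event inclusion by monotonicity of conditional expectation.

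First I would record two consequences of Lemma~\ref{lemma:inf-factor}. Since its hypothesis is symmetric in $V$ and $W$, applying it in the stated roles and taking complements gives $\bb{P}(W \le t - r_1 \mid \c A) \le \bb{P}(V \le t \mid \c A) + r_2$ for every $t \in \bb{R}$; applying it with the roles of $V$ and $W$ interchanged, taking complements, and relabelling $t \mapsto t + r_1$ gives $\bb{P}(V \le t \mid \c A) \le \bb{P}(W \le t + r_1 \mid \c A) + r_2$ for every $t$.

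For the upper direction I would then write, for fixed $t$,
\[
\bb{P}(V \le t \mid \c A) - \bb{P}(W \le t \mid \c A) \le \bb{P}(W \le t + r_1 \mid \c A) - \bb{P}(W \le t \mid \c A) + r_2 = \bb{P}(t < W \le t + r_1 \mid \c A) + r_2,
\]
and bound the interval probability via the inclusion $\{t < W \le t + r_1\} \subseteq \{|W - (t + r_1)| \le r_1\}$, so that $\bb{P}(t < W \le t + r_1 \mid \c A) \le \sup_{s \in \bb{R}}\bb{P}(|W - s| \le r_1 \mid \c A)$. For the lower direction a naive symmetry argument fails, since interchanging $V$ and $W$ would produce a quantity involving $V$ rather than $W$; instead I would split
\[
\bb{P}(W \le t \mid \c A) - \bb{P}(V \le t \mid \c A) = \bb{P}(t - r_1 < W \le t \mid \c A) + \left[\bb{P}(W \le t - r_1 \mid \c A) - \bb{P}(V \le t \mid \c A)\right],
\]
bound the bracketed term by $r_2$ using the first consequence of Lemma~\ref{lemma:inf-factor}, and bound the remaining interval probability by $\sup_{s}\bb{P}(|W - s| \le r_1 \mid \c A)$ via $\{t - r_1 < W \le t\} \subseteq \{|W - (t - r_1)| \le r_1\}$. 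Combining the two directions and taking the supremum over $t \in \bb{R}$ yields the claim.

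The argument is essentially bookkeeping, so there is no serious obstacle; the one point that requires a little care is the asymmetry just noted — the right-hand side must be expressed through $W$ alone, which forces the two-step decomposition (inserting the intermediate cutoff $t - r_1$) in the lower direction rather than a direct symmetry appeal. One should also keep in mind that all comparisons are between conditional probabilities and hence hold only almost surely, but since each step reduces to a pointwise event inclusion this causes no difficulty.
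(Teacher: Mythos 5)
Your proof is correct, but it follows a genuinely different route from the paper's. The paper argues directly at the indicator level: it establishes, for any fixed $(V,W,t,z)$, the deterministic inequality $\mathbbm{1}\{V \le t\} - \mathbbm{1}\{W \le t\} \le \mathbbm{1}\{|W-t| \le z\} \vee \mathbbm{1}\{|V-W| > z\}$, then multiplies by $\mathbbm{1}_A$, takes expectations, and invokes the definition of conditional expectation. You instead treat Lemma~\ref{lemma:inf-factor} as a black box and work at the level of conditional CDFs: you extract the two shift inequalities $\bb{P}(W \le t - r_1\mid\c A) \le \bb{P}(V \le t\mid\c A) + r_2$ and $\bb{P}(V \le t\mid\c A) \le \bb{P}(W \le t + r_1\mid\c A) + r_2$, and then turn the resulting gaps into interval probabilities which you absorb into the anti-concentration supremum. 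The paper's version is more self-contained (a single pointwise inclusion does everything, with no asymmetry to worry about), while yours is more modular and makes explicit the logical dependence of the two lemmas, which the paper's presentation leaves implicit since it proves them independently. Your remark on the asymmetry of the reverse direction is well taken: both your CDF-level decomposition (inserting the intermediate cutoff $t - r_1$) and the paper's indicator-level argument have to do a small amount of case-specific work to keep the bound expressed through $W$ rather than $V$, and you have handled that correctly.
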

\begin{proof}
We proceed in steps.
\begin{enumerate}
    \item To begin, we show that for fixed $(V,W,t,z)$, where $z>0$,
    $$
    \mathbbm{1}(V \le t) - \mathbbm{1}(W \le t) \le \mathbbm{1}(|W-t| \le z) \vee \mathbbm{1}(|V-W| > z).
    $$
    If the left hand side is one, then $V \le t < W$, so $|W-t| < |W-V|$. Thus, it is not possible that $|W-t| > z \ge |W-V|$, so the right hand side is also one. Otherwise the left hand side~is at most zero and the right hand side~is at least zero.
    \item Taking $z=r_1$, bounding the max by the sum, and multiplying by indicators,
    $$\mathbbm{1}_A\mathbbm{1}(V \le t) - \mathbbm{1}_A\mathbbm{1}(W \le t)
   \le \mathbbm{1}_A\mathbbm{1}(|W-t| \le r_1) + \mathbbm{1}_A\mathbbm{1}(|V-W| > r_1).$$
Taking expectations, the result without the absolute value follows by definition of $\mathbb{E}[-|\c A]$.
    \item The same bound holds reversing the roles of $V$ and $W$, so that we may replace the left hand side by its absolute value. Finally, we take the supremum over $t \in \bb{R}$. \qedhere
\end{enumerate}
\end{proof}

We then apply this to our Gaussian and bootstrap couplings.

\begin{lemma}[High probability events]\label{lemma:inference_sup}
        If $F$ is uniformly continuous then on the event $\mathcal{E}$, with $\mathbb{P}(\c E) \ge 1-\eta$,
        \begin{align*}
            &\sup_{t \in \bb{R}} \left|\bb{P}\left[F\Big\{\sqrt{n}(\hat f - f_0)\Big\}
\le t \right] -  \bb{P}\left\{ \beef F(Z) \le t \right\} \right|
 \leq \sup_{t \in \bb R}
\bb P\left\{\beef|F(Z) - t| \le \psi (Q+B) \right\} + \eta, \\
&\sup_{t \in \bb{R}} \bigg|\bb{P}\left\{F(\mathfrak{B}) \le t|D \right\} -  \bb{P}\left\{F(Z')
\le t |D\right\} \bigg|
\le \sup_{t \in \bb{R}} \mathbb{P}\left\{\beef |F(Z') -t| \le \psi R \middle| D \right\} + \eta.
        \end{align*}
\end{lemma}

\begin{proof}
By continuity of $F$, w.p. $1-\eta$, $|F\{\sqrt{n}(\hat f - f_0)\} - F(Z)| \le \psi (Q+B)$. The first result then follows by Lemma~\ref{lemma:chern-anti-lemma} with $\c A$ as the trivial $\sigma$-algebra, $V=F\{\sqrt{n}(\hat f - f_0)\}$, $W=F(Z)$, $r_1=\psi (Q+B)$, and $r_2=\eta$.

Upon $\mathcal{E}$, by continuity of $F$, $\mathbb{P}\left\{\beef|F(\mathfrak{B}) - F(Z')|
\leq \psi R | D \right\} \geq 1 - \eta$, which can be written $\mathbb{P}\left\{\beef|F(\mathfrak{B}) - F(Z')|
> \psi R | D \right\} < \eta$. The second result then follows by Lemma~\ref{lemma:chern-anti-lemma}, with $\c A = \sigma(D)$, $V=F(\mathfrak{B})$, $W=F(Z')$, $r_1=\psi R$, and $r_2=\eta$.
\end{proof}

\begin{proof}[Proof of Proposition~\ref{prop:inf-intro}]
We reduce the result to the bounds in Lemma~\ref{lemma:inference_sup}, then simplify.
\begin{enumerate}
\item By the triangle inequality, Lemma \ref{lemma:chern-anti-lemma}, and $\mathbb{P}(Z \in - ) = \mathbb{P}(Z' \in - | D)$,
\begin{align*}
    &\sup_{t \in \bb{R}} \left|\bb{P}\left[F\Big\{\sqrt{n}(\hat f - f_0)\Big\} \le t \right] -  \bb{P}\left\{ \beef F(\mathfrak{B}) \le t | D\right\} \right|  \\
    &\leq
    \sup_{t \in \bb{R}} \left|\bb{P}\left[F\Big\{\sqrt{n}(\hat f - f_0)\Big\}
\le t \right] -  \bb{P}\left\{ \beef F(Z) \le t \right\} \right|
+
\sup_{t \in \bb{R}} \bigg|\bb{P}\left\{F(\mathfrak{B}) \le t|D \right\} -  \bb{P}\left\{F(Z')
\le t |D\right\} \bigg| \\
&\leq
\sup_{t \in \bb R}
\bb P\left\{\beef|F(Z) - t| \le \psi (Q+B) \right\}
+
\sup_{t \in \bb{R}} \mathbb{P}\left\{\beef |F(Z') -t| \le \psi R \middle| D \right\} +2 \eta. \\
&=
\sup_{t \in \bb R}
\bb P\left\{\beef|F(Z) - t| \le \psi (Q+B) \right\}
+
\sup_{t \in \bb{R}} \mathbb{P}\left\{\beef |F(Z) -t| \le \psi R \right\} +2 \eta.
\end{align*}
    \item {\color{black}
    By anti-concentration (Assumption~\ref{assumption:anti-concentration}),
\begin{align*}
  &\sup_{t \in \bb R}
\bb P\left\{\beef|F(Z) - t| \le \psi (Q+B) \right\}
+
\sup_{t \in \bb{R}} \mathbb{P}\left\{\beef |F(Z) -t| \le \psi R\right\} \\
&\qquad\leq
\mathfrak{a}\{\psi(Q+B)\}+\mathfrak{a}(\psi R),
\end{align*}
which proves the first statement.
}

    \item For the second statement, given $\d > 0$, we define
    $$
    \mathcal{E}_1=\{|F(Z')-t|\leq\delta\}, \quad\mathcal{E}_2 = \{|F(\mathfrak{B})-t|\leq 2\delta\} \cup \{|F(\mathfrak{B})-F(Z')|>\delta \}
    $$
    We prove that $\mathcal{E}_2^c\subset\mathcal{E}_1^c$, which is equivalent to $\c{E}_1 \subset \c{E}_2$. To begin, we observe that
    $
    \mathcal{E}_2^c
        =\{|F(\mathfrak{B})-t|> 2\delta \} \cap \{|F(\mathfrak{B})-F(Z')|\leq \delta\}.
        $
    Therefore, on $\mathcal{E}_2^c$,
    $$
    2\delta< |F(\mathfrak{B})-t| \leq |F(\mathfrak{B})-F(Z')|+|F(Z')-t| \leq \delta+|F(Z')-t|
    $$
    which implies $\delta<|F(Z')-t|$, i.e. $\mathcal{E}_1^c$ holds.
    Since $\c{E}_1 \subset \c{E}_2$, it follows that $\c{E}_1 \cap A \subset \c{E}_2 \cap A$ for any $A \in \sigma(D)$, so $\mathbb{P}(\c E_1|D) \le \mathbb{P}(\c E_2 | D)$. Therefore, since $\mathbb{P}(Z \in - ) = \mathbb{P}(Z' \in - | D)$,
    \begin{align*}
        \mathbb{P}\left\{\beef |F(Z) -t| \le \delta \right\}
        &= \mathbb{P}\left\{\beef |F(Z') -t| \le \delta \middle| D \right\}
        =\mathbb{P}(\mathcal{E}_1|D)
        \leq \mathbb{P}(\mathcal{E}_2|D) \\
        &\leq \mathbb{P}\left\{\beef |F(\mathfrak{B}) -t| \le 2\delta \middle| D \right\} + \mathbb{P}\left\{\beef|F(Z') - F(\mathfrak{B})|
> \delta \middle| D \right\}.
    \end{align*}

We showed $\mathbb{P}\{|F(Z') - F(\mathfrak{B})|
> \d | D \} <\eta$ on $\c E$ in the proof of Lemma~\ref{lemma:inference_sup}, for $\d = \psi R$, hence also for $\delta=\psi(\Delta+B) > \psi R$. Taking the supremum over $t \in \mathbb{R}$ then yields that on $\c E$,
    \begin{align*}
        &\sup_{t\in\mathbb{R}}\mathbb{P}\left\{\beef |F(Z) -t| \le \psi(\Delta+B) \right\}
        = \sup_{t\in\mathbb{R}} \mathbb{P}\left\{\beef |F(Z') -t| \le \psi(\Delta+B) \middle| D \right\} \\
        &\leq \sup_{t\in\mathbb{R}} \mathbb{P}\left\{\beef |F(\mathfrak{B}) -t| \le 2\psi(\Delta+B) \middle| D \right\} +\eta.
    \end{align*}
   We conclude that on $\c E$, with $\mathbb{P}(\c E) \ge 1-\eta$,
   \begin{align*}
       &\sup_{t \in \bb R}
\bb P\left\{\beef|F(Z) - t| \le \psi (Q+B) \right\}
+
\sup_{t \in \bb{R}} \mathbb{P}\left\{\beef |F(Z') -t| \le \psi R \middle| D \right\} \\
&\leq
2\sup_{t\in\mathbb{R}} \mathbb{P}\left\{\beef |F(\mathfrak{B}) -t| \le 2\psi(\Delta+B) \middle| D \right\} + 4\eta. & \qedhere
  \end{align*}
\end{enumerate}
\end{proof}

\begin{proof}[Proof of Corollary~\ref{cor:mis}]
    Observe that $U_i\in H$ even when $f_0\not \in H$ since we can express
    $$U_i=T_{\lambda}^{-1}[\{Y-f_\lambda(X)\}k_{X_i}-\lambda f_{\lambda}].$$
    Then $\E(U_i)=0$ by the population first order condition and Definition~\ref{def:regularity_partial} is satisfied with
    $$
    \|U_i\|\leq \frac{2\kappa}{\lambda}(\bar{\sigma}+\bar{\rho}),\quad \|f_0-f_{\lambda}\|_{\infty}\leq \bar{\rho}.
    $$
    Similarly, the remaining results in Appendices~\ref{sec:symbols}--\ref{sec:anti} generalize with a modified constant involving $\bar{\rho}$ rather than $\kappa\|f_0\|_H$.
\end{proof}
\section{Simulation details}\label{sec:simulation_details}

\paragraph{\textcolor{black}{Simulation design and preference kernel universality.}} Each observation in the standard data design is generated as follows. Draw $X_i \sim \mathrm{Unif}(0,1)$ and $\varepsilon_i \sim \mathrm{Unif}(-2,2)$ independently. Then set $Y_i = f_0(X_i)+\varepsilon_i$ where $f_0$ is a weighted average of the initial five eigenfunctions of the covariance operator $T$. Specifically,
$
f_0=\tilde{f_0}/(10\|\tilde{f_0}\|)$ where $\tilde{f}_0=\sum_{s=1}^5 g_s e_s(T)$, and
$g_s$ are standard normal random variables drawn once to serve as coefficients; $f_0$ is fixed across experiments.
We use the Gaussian kernel $k(x,x') = \exp\{-\snorm{x-x'}^2/(2\iota^2)\}$, with the lengthscale $\iota$ set to $0.1$.

Each observation in the preference data design is generated similarly, with some changes. The covariates $X_i$ are a uniform random permutation of $\set{1, 2, \cdots, 7}$. We use the preference kernel $k(x,x') = \exp\{-N(x,x')/(2\iota^2)\}$, where $N$ is Kendall's (unnormalized) rank correlation. We employ the standard heuristic and choose $\iota$ to be the median of $N(X_i,X_j)$ for independent data draws.

The RKHS for this preference kernel is automatically well-specified in a certain sense.

\begin{lemma}[Universal kernel; {\citealp[Theorem 5]{mania2018kernel}}]
The stated preference kernel is universal. Formally, if $\mathbb{P}$ is a probability distribution over the set of possible preferences, then the RKHS for this kernel contains any function
$f:\mr{supp}(\mathbb{P}) \to \bb{R}$.
\end{lemma}

Conventionally, a kernel is universal if and only if $H$ is dense in $L^2(\mathbb{P})$. In the case of preferences, a stronger statement is possible because the set of preferences is finite.

Throughout, we implement Algorithm~\ref{algo:krr} as the point estimate, Algorithm~\ref{algo:incremental} as the $H$ norm band, and Algorithm~\ref{algo:variable} as the $\sup$ norm band. The regularization is typically $\lambda=n^{-1/2}$, following Theorem~\ref{thm:near-minimax-band}(b), unless specified otherwise.

\paragraph{\textcolor{black}{Metrics beyond coverage.}}
For the standard data and preference data designs, we fix $n=500$ and $\lambda=n^{-1/2}$ and examine additional metrics of confidence band performance beyond coverage. Table~\ref{tab:detail_all} records bias and width of the confidence bands across samples from the designs. We find that the $H$ norm bands have more bias and width than the $\sup$ norm bands. Both types of bands achieve nominal coverage for both $f_0$ and $f_{\lambda}$. Since we use the same band for both $f_0$ and $f_{\lambda}$, the width is the same for both quantities, but the bias for $f_0$ reflects $f_{\lambda}-f_0$.

\begin{table}
\captionsetup[subtable]{justification=Centering}
\caption{\label{tab:detail_all} A detailed look: Coverage, bias, and width. We fix $n=500$ and set $\lambda=n^{-1/2}$.}
\begin{subtable}[t]{0.42\textwidth}
    \centering
    \resizebox{\textwidth}{!}{%
    \begin{tabular}{ccccc}
         \multirow{2}{*}{metric} & \multicolumn{2}{c}{$\sup$ norm} &\multicolumn{2}{c}{$H$ norm } \\
         \cmidrule(lr){2-3}\cmidrule(lr){4-5}
         & true & pseudo & true & pseudo \\
         \hline
 coverage &      0.985 &        0.985 &    0.953 &      0.955 \\
 bias     &      0.015 &        0.000 &    0.019 &      0.000 \\
 width    &      0.324 &        0.324 &    1.395 &      1.395 \\
        \hline
    \end{tabular}
    }
    \caption{\label{tab:cov_detail} \footnotesize \textsc{Standard data.}}
\end{subtable} \quad\quad\quad
\begin{subtable}[t]{0.42\textwidth}
    \centering
    \resizebox{\textwidth}{!}{%
    \begin{tabular}{ccccc}
         \multirow{2}{*}{metric} & \multicolumn{2}{c}{$\sup$ norm} &\multicolumn{2}{c}{$H$ norm } \\
         \cmidrule(lr){2-3}\cmidrule(lr){4-5}
        & true & pseudo & true & pseudo \\
        \hline
 coverage &      0.955 &        0.965 &    0.963 &      0.965 \\
 bias     &      0.019 &        0.000 &    0.040 &      0.000 \\
 width    &      0.328 &        0.328 &    2.255 &      2.255 \\
        \hline
        \end{tabular}
    }
    \caption{\label{tab:cov_detail_ranking} \footnotesize \textsc{Preference data.}}
    \end{subtable}
\end{table}

\paragraph{\textcolor{black}{Robust performance in Sobolev spaces.}}\label{sec:sobolev}
We present additional results in Sobolev space, which corresponds to the Matern kernel. Each observation is generated as in the standard data design, replacing $f_0$ with a weighted average of the initial five eigenfunctions of the appropriate Matern kernel.
We consider  $f_0\in \mathbb{H}_2^1$ or $f_0\in \mathbb{H}_2^2$.
Figures~\ref{fig:cdf_sob1} and~\ref{fig:cdf_sob2} compare distributions that Proposition~\ref{prop:inf-intro} proves to be close. Tables~\ref{tab:cov_n_sob1},~\ref{tab:cov_n_sob2},~\ref{tab:cov_lambda_sob1}, and~\ref{tab:cov_lambda_sob2} verify nominal coverage across smoothness degrees, sample sizes, and regularization values. Tables~\ref{tab:cov_detail_sob1} and~\ref{tab:cov_detail_sob2} record additional metrics beyond coverage.

\begin{figure}[ht]
\captionsetup[subfigure]{justification=centering}
\begin{subfigure}{.32\textwidth}
  \centering
  \includegraphics[width=\linewidth]{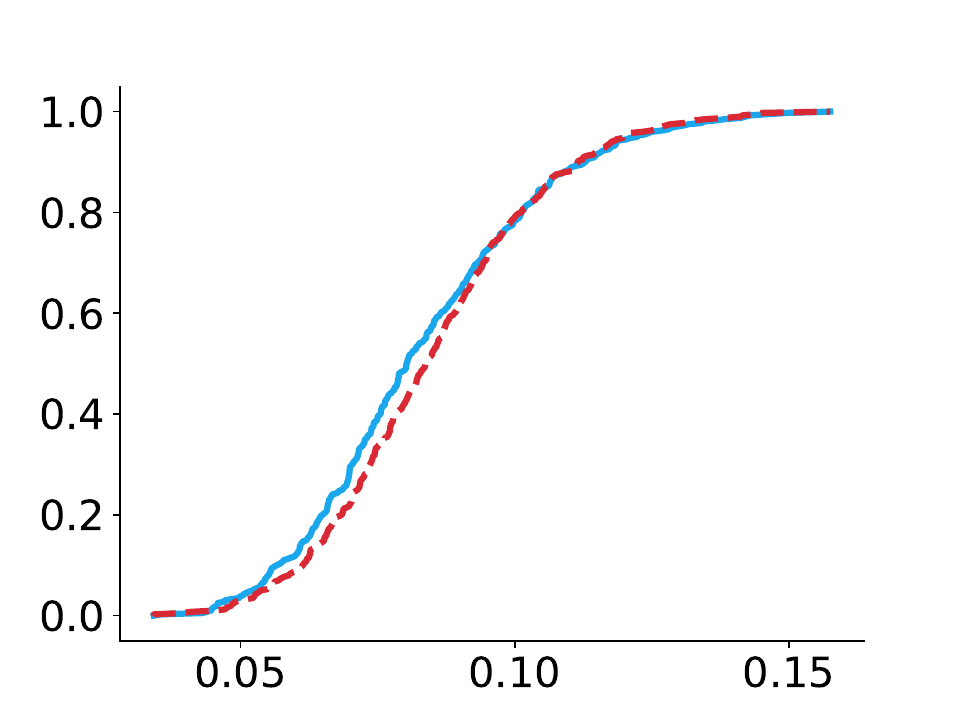}
  \caption{\label{fig:cdf_sob1} \footnotesize \textsc{Sobolev space $\mathbb{H}_2^1$.}}
\end{subfigure}
\begin{subfigure}{.32\textwidth}
  \centering
  \includegraphics[width=\linewidth]{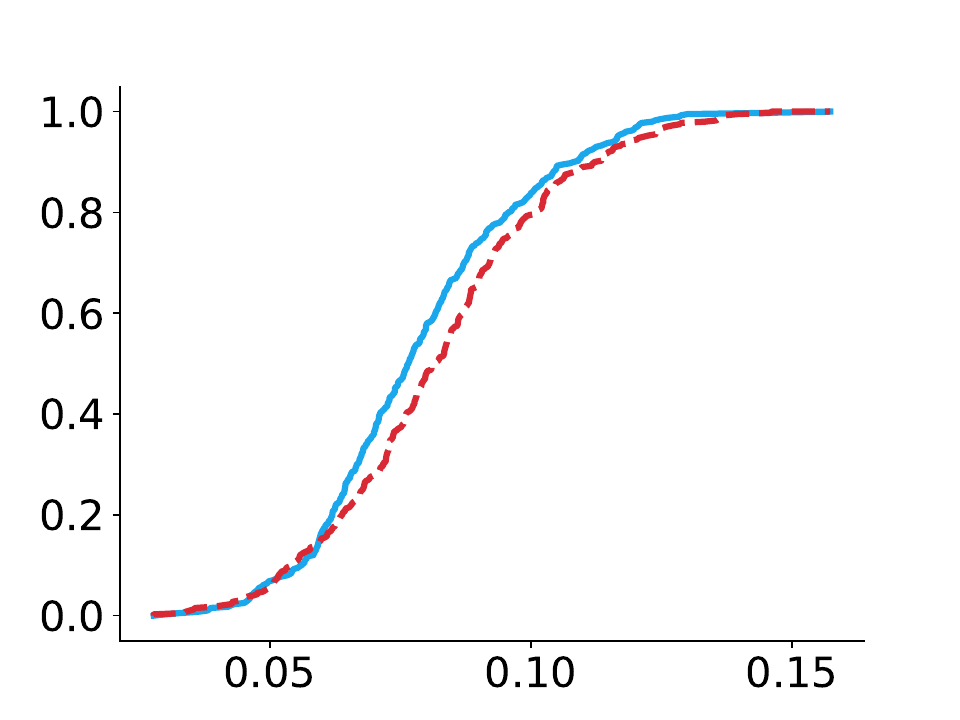}
  \caption{\label{fig:cdf_sob2} \footnotesize \textsc{Sobolev space $\mathbb{H}_2^2$.}}
\end{subfigure}
\begin{subfigure}{.32\textwidth}
  \centering
  \includegraphics[width=\linewidth]{img_ms/cdf_plot_misspec_data.pdf }
  \caption{\label{fig:cdf_hard} \footnotesize \textsc{Mis-specification.}}
\end{subfigure}
\caption{We compare the distribution of $n^{1/2}\snorm{\hat f-f_0}_\infty$ (or $n^{1/2}\snorm{\hat f-f_{\lambda}}_\infty$ in Figure~\ref{fig:cdf_hard})  across many samples (dashed red), with the distribution of our proposal $\snorm{\mathfrak{B}}_\infty$ across many bootstrap iterations, conditional upon a single sample (solid blue).}
\end{figure}

\begin{table}
\captionsetup[subtable]{justification=Centering}
 \caption{Coverage is nominal across sample sizes. Across rows, we vary $n$ and set $\lambda=n^{-1/2}$.}
\begin{subtable}[t]{0.31\textwidth}
    \centering
    \resizebox{\textwidth}{!}{%
    \begin{tabular}{ccccc}
         \multirow{2}{*}{sample} & \multicolumn{2}{c}{$\sup$ norm} &\multicolumn{2}{c}{$H$ norm } \\
         \cmidrule(lr){2-3}\cmidrule(lr){4-5}
         & true & pseudo & true & pseudo \\
         \hline
  100 &               0.960 &                 0.960 &             0.978 &               0.978 \\
  250 &               0.942 &                 0.942 &             0.978 &               0.972 \\
  500 &               0.938 &                 0.942 &             0.960 &               0.962 \\
 1000 &               0.945 &                 0.955 &             0.975 &               0.970 \\
        \hline
    \end{tabular}
    }
    \caption{\label{tab:cov_n_sob1} \footnotesize \textsc{Sobolev space $\mathbb{H}^1_2$.}}
\end{subtable} \quad
\begin{subtable}[t]{0.31\textwidth}
    \centering
    \resizebox{\textwidth}{!}{%
    \begin{tabular}{ccccc}
         \multirow{2}{*}{sample} & \multicolumn{2}{c}{$\sup$ norm} &\multicolumn{2}{c}{$H$ norm } \\
         \cmidrule(lr){2-3}\cmidrule(lr){4-5}
        & true & pseudo & true & pseudo \\
        \hline
  100 &               0.972 &                 0.975 &             0.878 &               0.882 \\
  250 &               0.958 &                 0.952 &             0.972 &               0.972 \\
  500 &               0.950 &                 0.965 &             0.962 &               0.962 \\
 1000 &               0.970 &                 0.972 &             0.965 &               0.965 \\
        \hline
        \end{tabular}
    }
    \caption{\label{tab:cov_n_sob2} \footnotesize \textsc{Sobolev space $\mathbb{H}^2_2$.}}
    \end{subtable}\quad
\begin{subtable}[t]{0.31\textwidth}
    \centering
    \resizebox{\textwidth}{!}{%
    \begin{tabular}{ccccc}
         \multirow{2}{*}{sample} & \multicolumn{2}{c}{$\sup$ norm} &\multicolumn{2}{c}{$H$ norm } \\
         \cmidrule(lr){2-3}\cmidrule(lr){4-5}
        & true & pseudo & true & pseudo \\
        \hline
  100 &               0.000 &                 0.980 &             0.000 &               0.992 \\
  250 &               0.000 &                 0.958 &             0.000 &               0.948 \\
  500 &               0.000 &                 0.948 &             0.000 &               0.952 \\
 1000 &               0.000 &                 0.958 &             0.000 &               0.978 \\
        \hline
        \end{tabular}
    }
    \caption{\label{tab:cov_n_hard} \footnotesize \textsc{Mis-specification.}}
    \end{subtable}
\end{table}

\begin{table}
\captionsetup[subtable]{justification=Centering}
\caption{Coverage is nominal across regularization values. Across rows, we fix $n=500$ and vary $\lambda$.}
\begin{subtable}[t]{0.31\textwidth}
    \centering
    \resizebox{\textwidth}{!}{%
    \begin{tabular}{ccccc}
         \multirow{2}{*}{reg.} & \multicolumn{2}{c}{$\sup$ norm} &\multicolumn{2}{c}{$H$ norm } \\
         \cmidrule(lr){2-3}\cmidrule(lr){4-5}
         & true & pseudo & true & pseudo \\
         \hline
  0.500 &               0.040 &                 0.975 &             0.802 &               0.953 \\
  0.100 &               0.958 &                 0.983 &             0.970 &               0.973 \\
  0.050 &               0.980 &                 0.990 &             0.968 &               0.970 \\
  0.010 &               0.975 &                 0.978 &             0.970 &               0.970 \\
  0.005 &               0.980 &                 0.975 &             0.960 &               0.963 \\
  0.001 &               0.945 &                 0.945 &             0.935 &               0.935 \\
        \hline
    \end{tabular}
    }
    \caption{\label{tab:cov_lambda_sob1} \footnotesize \textsc{ Sobolev space $\mathbb{H}^1_2$.}}
\end{subtable} \quad
\begin{subtable}[t]{0.31\textwidth}
    \centering
    \resizebox{\textwidth}{!}{%
    \begin{tabular}{ccccc}
         \multirow{2}{*}{reg.} & \multicolumn{2}{c}{$\sup$ norm} &\multicolumn{2}{c}{$H$ norm } \\
         \cmidrule(lr){2-3}\cmidrule(lr){4-5}
        & true & pseudo & true & pseudo \\
        \hline
  0.500 &               0.070 &                 0.978 &             0.810 &               0.948 \\
  0.100 &               0.917 &                 0.927 &             0.948 &               0.955 \\
  0.050 &               0.958 &                 0.968 &             0.975 &               0.970 \\
  0.010 &               0.968 &                 0.968 &             0.958 &               0.958 \\
  0.005 &               0.940 &                 0.935 &             0.960 &               0.960 \\
  0.001 &               0.920 &                 0.920 &             0.980 &               0.980 \\
        \hline
        \end{tabular}
    }
    \caption{\label{tab:cov_lambda_sob2} \footnotesize \textsc{ Sobolev space $\mathbb{H}^2_2$.}}
    \end{subtable}\quad
\begin{subtable}[t]{0.31\textwidth}
    \centering
    \resizebox{\textwidth}{!}{%
    \begin{tabular}{ccccc}
         \multirow{2}{*}{reg.} & \multicolumn{2}{c}{$\sup$ norm} &\multicolumn{2}{c}{$H$ norm } \\
         \cmidrule(lr){2-3}\cmidrule(lr){4-5}
        & true & pseudo & true & pseudo \\
        \hline
  0.500 &               0.000 &                 0.968 &             0.000 &               0.975 \\
  0.100 &               0.000 &                 0.935 &             0.000 &               0.983 \\
  0.050 &               0.000 &                 0.955 &             0.000 &               0.963 \\
  0.010 &               0.000 &                 0.968 &             0.000 &               0.935 \\
  0.005 &               0.000 &                 0.955 &             0.000 &               0.945 \\
  0.001 &               0.000 &                 0.950 &             0.000 &               0.965 \\
        \hline
        \end{tabular}
    }
    \caption{\label{tab:cov_lambda_hard} \footnotesize \textsc{Mis-specification.}}
    \end{subtable}
\end{table}

\begin{table}
\captionsetup[subtable]{justification=Centering}
\caption{A detailed look: Coverage, bias, and width. We fix $n=500$ and set $\lambda=n^{-1/2}$.}
\begin{subtable}[t]{0.31\textwidth}
    \centering
    \resizebox{\textwidth}{!}{%
    \begin{tabular}{ccccc}
         \multirow{2}{*}{metric} & \multicolumn{2}{c}{$\sup$ norm} &\multicolumn{2}{c}{$H$ norm } \\
         \cmidrule(lr){2-3}\cmidrule(lr){4-5}
         & true & pseudo & true & pseudo \\
         \hline
 coverage &      0.943 &        0.938 &    0.960 &      0.963 \\
 bias     &      0.017 &        0.000 &    0.023 &      0.000 \\
 width    &      0.319 &        0.319 &    1.672 &      1.672 \\
        \hline
    \end{tabular}
    }
    \caption{\label{tab:cov_detail_sob1} \footnotesize \textsc{Sobolev space $\mathbb{H}^1_2$.}}
\end{subtable} \quad
\begin{subtable}[t]{0.31\textwidth}
    \centering
    \resizebox{\textwidth}{!}{%
    \begin{tabular}{ccccc}
         \multirow{2}{*}{metric} & \multicolumn{2}{c}{$\sup$ norm} &\multicolumn{2}{c}{$H$ norm } \\
         \cmidrule(lr){2-3}\cmidrule(lr){4-5}
        & true & pseudo & true & pseudo \\
        \hline
 coverage &      0.960 &        0.973 &    0.973 &      0.970 \\
 bias     &      0.018 &        0.000 &    0.024 &      0.000 \\
 width    &      0.314 &        0.314 &    1.619 &      1.619 \\
        \hline
        \end{tabular}
    }
    \caption{\label{tab:cov_detail_sob2} \footnotesize \textsc{ Sobolev space $\mathbb{H}^2_2$.}}
    \end{subtable}\quad
\begin{subtable}[t]{0.31\textwidth}
    \centering
    \resizebox{\textwidth}{!}{%
    \begin{tabular}{ccccc}
         \multirow{2}{*}{metric} & \multicolumn{2}{c}{$\sup$ norm} &\multicolumn{2}{c}{$H$ norm } \\
         \cmidrule(lr){2-3}\cmidrule(lr){4-5}
        & true & pseudo & true & pseudo \\
        \hline
 coverage &      0.000 &        0.963 &    0.000 &      0.980 \\
 bias     &      0.989 &        0.000 &   $\infty$     &      0.000 \\
 width    &      0.392 &        0.392 &    1.485 &      1.485 \\
        \hline
        \end{tabular}
    }
    \caption{\label{tab:cov_detail_hard} \footnotesize \textsc{Mis-specification.}}
    \end{subtable}
\end{table}

\paragraph{\textcolor{black}{Pseudo true coverage under mis-specification.}}\label{sec:mis-spec}
\textcolor{black}{We present additional results for the mis-specified Gaussian kernel. Each observation is generated as in the standard data design, replacing $f_0$ with the step function $f_0(x)=\mathbbm{1}\{x \ge 1/2\}$. Figure~\ref{fig:mis_band} visualizes this design, where $f_0\not \in H$. Figure~\ref{fig:cdf_hard} compares distributions
that arise when using our procedure in a misspecified setting.
While coverage for the true parameter $f_0 \not\in H$ breaks down, coverage for the pseudo true parameter $f_{\lambda}\in H$ remains nominal across sample sizes and regularization values in Tables~\ref{tab:cov_lambda_hard} and~\ref{tab:cov_detail_hard}. These simulations
illustrate that our procedure appears effective
without any
control of
the bias $f_{\lambda}-f_0$. Again, we document additional metrics such as bias and width of the confidence bands in Table~\ref{tab:cov_detail_hard}.}

\section{Application details}\label{sec:application_details}

\paragraph{\textcolor{black}{Semi-synthetic preference data.}}
We generate synthetic data which resemble real preferences of Boston Public School students. Though the underlying student-level micro data are not publicly available, \citet{pathak2021well} report coefficients from models estimated using real student preferences. Their model reproduces the distribution of student preferences in subsequent years with high accuracy, motivating this procedure.

We generate each student preference as follows. First, we draw a location and covariate vector for that student from the distribution represented in the American Community Survey.  We obtain school locations and other covariates from the Massachusetts Department of Elementary and Secondary Education. Next, we compute walking distances from that student's location to various schools, using the Google Maps API for distances. We then use the random utility model of \citet{pathak2021well} to generate the preference list, either with or without match effects, as detailed below. Finally, we use these preference lists and the standard random serial dictatorship mechanism to assign students to schools.

\paragraph{\textcolor{black}{Random utility model.}}
\cite{pathak2021well} model the utility of student $i$ at school $s$ as
 $
 u_{is} = \hat\a_s + \hat{\beta}_1^{\top}W_1 + \beta_2^{\top}W_2 + \nu_{is}.
$
Here, $\hat \a_s$ is a school fixed effect, and $(W_1,W_2)$ are covariates about the student, school, and their interaction: the student's race and English language learner status; the school's demographics; the student's walking distance to the school, and the availability of English instructional programs in the student's language.
While $\hat{\beta}_1$ is an estimated coefficient, $\beta_2 \sim \mathcal{N}(\hat \mu_2, \hat \Sigma_2)$ is a random coefficient fit using the mixed multinomial logit procedure outlined in their paper. Finally, $\nu_i$ is an independent standard Gumbel random variable. In our replication, we use the coefficients that the authors are able to share.

In the no match effect design, the conditional average treatment effect is a flat function of student preferences, i.e. $\textsc{cate}(X)=15$. We generate counterfactual outcomes as $Y_{is} = 50 + 10\d_i + 15 D_s,$ where $\d_i$ is a standard normal random variable and $D_s$ indicates whether $s$ is a pilot school. To resemble test scores, we truncate counterfactual outcomes $Y_{is}$ to lie in $[0,100]$.\footnote{Formally, then, $\textsc{cate}(X)$ is slightly less than $15$.}

In the match effect design, the conditional average treatment effect is a nontrivial function of student preferences. Here, $\chi_i$ is a student's latent taste for pilot sector schools drawn as Laplacian with mean zero and scale two. %

In the match effect design, students who prefer pilot sector schools also benefit more from them. To ensure this, we generate student preferences and counterfactual outcomes via
$u_{is} = \hat\a_s + \hat{\beta}_1^{\top}W_1 + \beta_2^{\top}W_2 + \chi_iD_s + \nu_i$ and $Y_{is} = 40 + \d_i + 15 \chi_iD_s.$
Students know in advance if they will benefit from pilot schools or not, and by how much, due to the private information $\chi_i$.
We truncate counterfactual outcomes as before.

\paragraph{\textcolor{black}{Spectrum.}} Figure~\ref{fig:eigen} supports our main assumption: the semi-synthetic student preferences, when passed through the preference kernel, have a low effective dimension. In particular, the eigenvalues decay quickly. Now, we visualize and interpret the initial eigenfunctions.

Each eigenfunction is a mapping from $25!$ possible preferences to real numbers, so we summarize them in a similar manner to Figure~\ref{fig:sub_all}. We define four binary characteristics: is the school a 90 minute commute for the student, does the school offer an English language learner program, is the school in the top half of schools in terms of its average MCAS score, and is the school majority white or Asian. For a given characteristic, we define 25 groups of student preferences $S_1,...S_{25}$. A student belongs to $S_{\rho}$ if the highest ranking a student assigns to a school with that characteristic is $\rho$. Based on these group definitions, we report group averages.

Figures~\ref{fig:e1} and~\ref{fig:e2} visualize variation of the first and second eigenfunctions via group averages. Student preferences are well approximated by relatively few types of preferences. The initial two types of preferences are characterized well by these underlying characteristics.

\begin{figure}
 \centering
\begin{subfigure}{.24\textwidth}
  \centering
  \includegraphics[width=\linewidth]{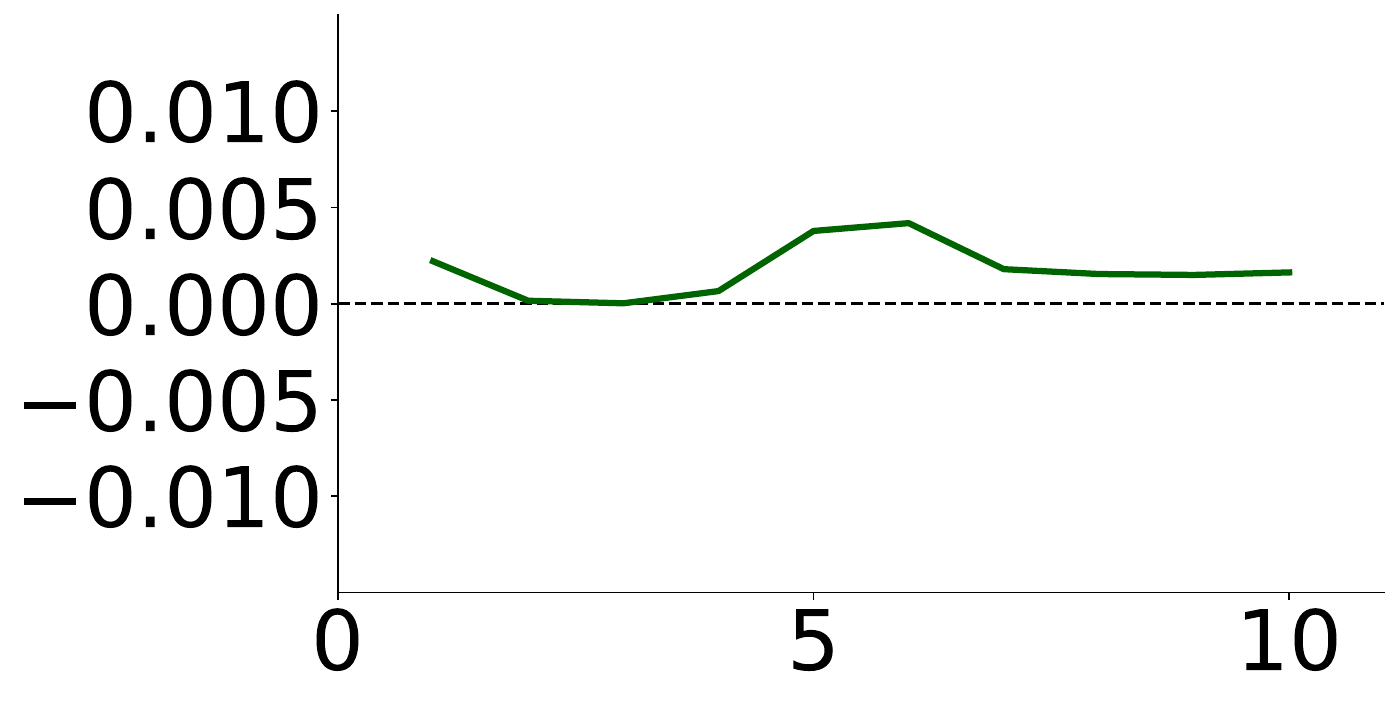}
  \caption{Commute}
  \label{fig:e1a}
\end{subfigure}
\begin{subfigure}{.24\textwidth}
  \centering
  \includegraphics[width=\linewidth]{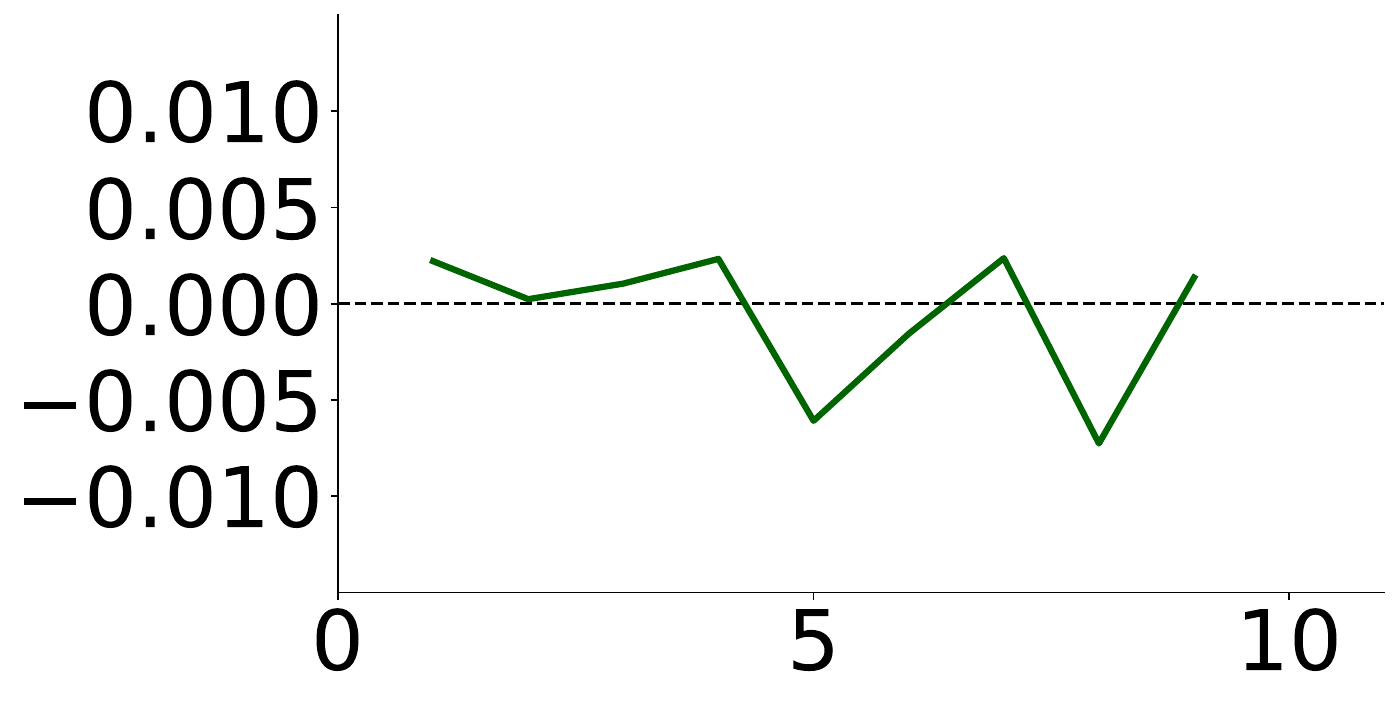}
  \caption{English learner}
  \label{fig:e1b}
\end{subfigure}
\begin{subfigure}{.24\textwidth}
  \centering
  \includegraphics[width=\linewidth]{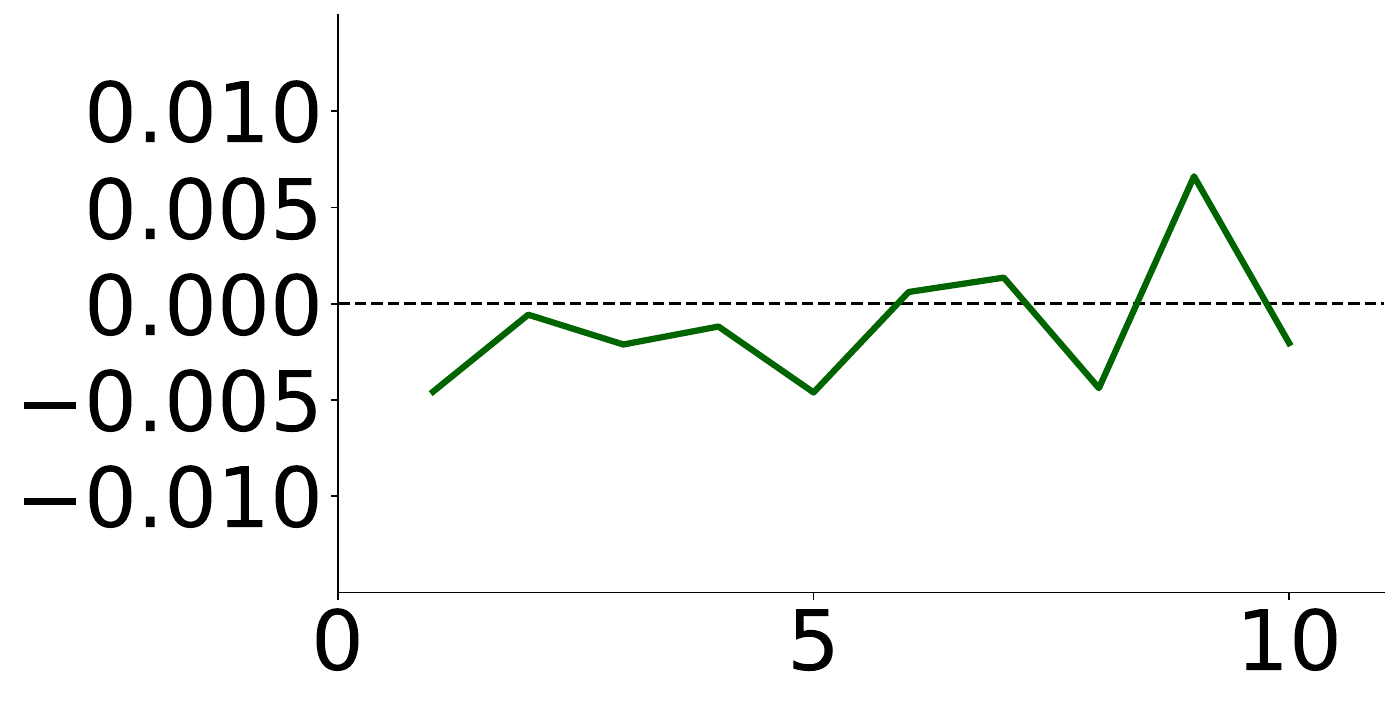}
  \caption{MCAS}
  \label{fig:e1c}
\end{subfigure}
\begin{subfigure}{.24\textwidth}
  \centering
  \includegraphics[width=\linewidth]{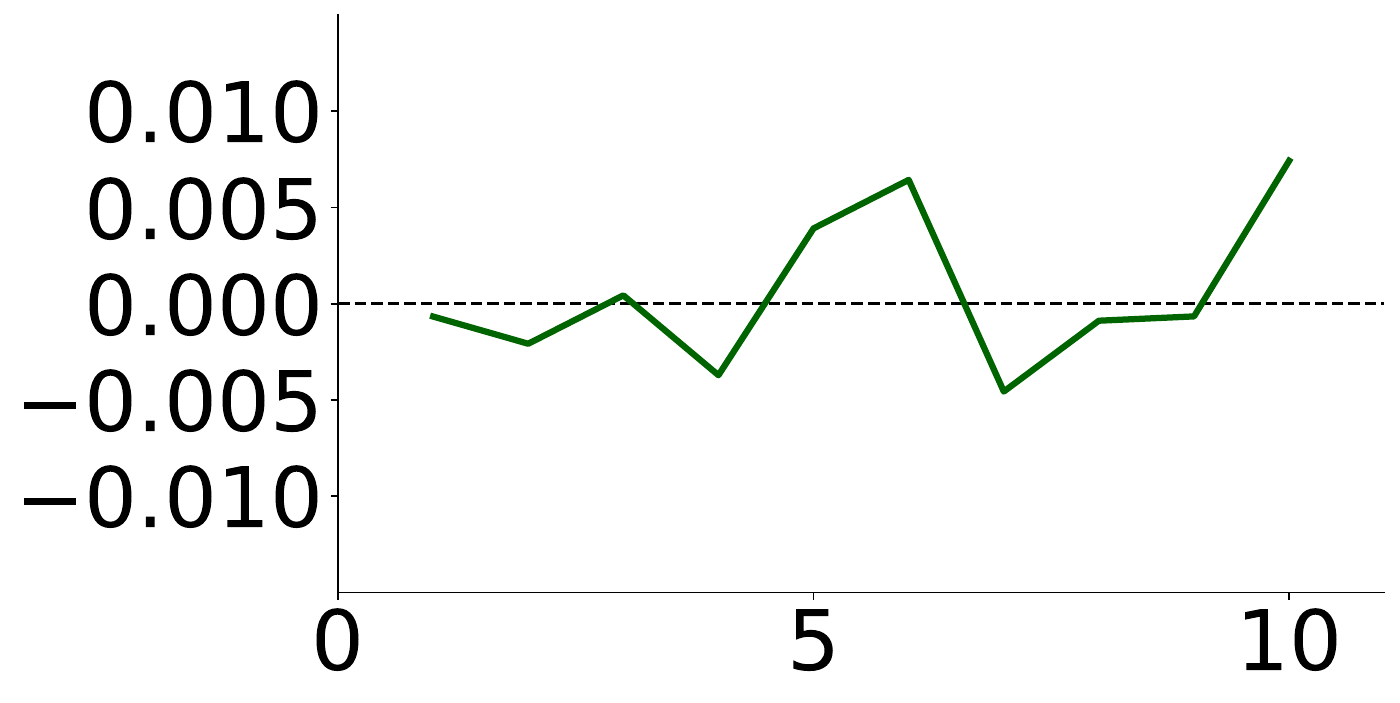}
  \caption{White/Asian}
  \label{fig:e1d}
\end{subfigure}
\caption{We visualize interpretable variation in the first eigenfunction.
}\label{fig:e1}
\end{figure}

\begin{figure}
 \centering
\begin{subfigure}{.24\textwidth}
  \centering
  \includegraphics[width=\linewidth]{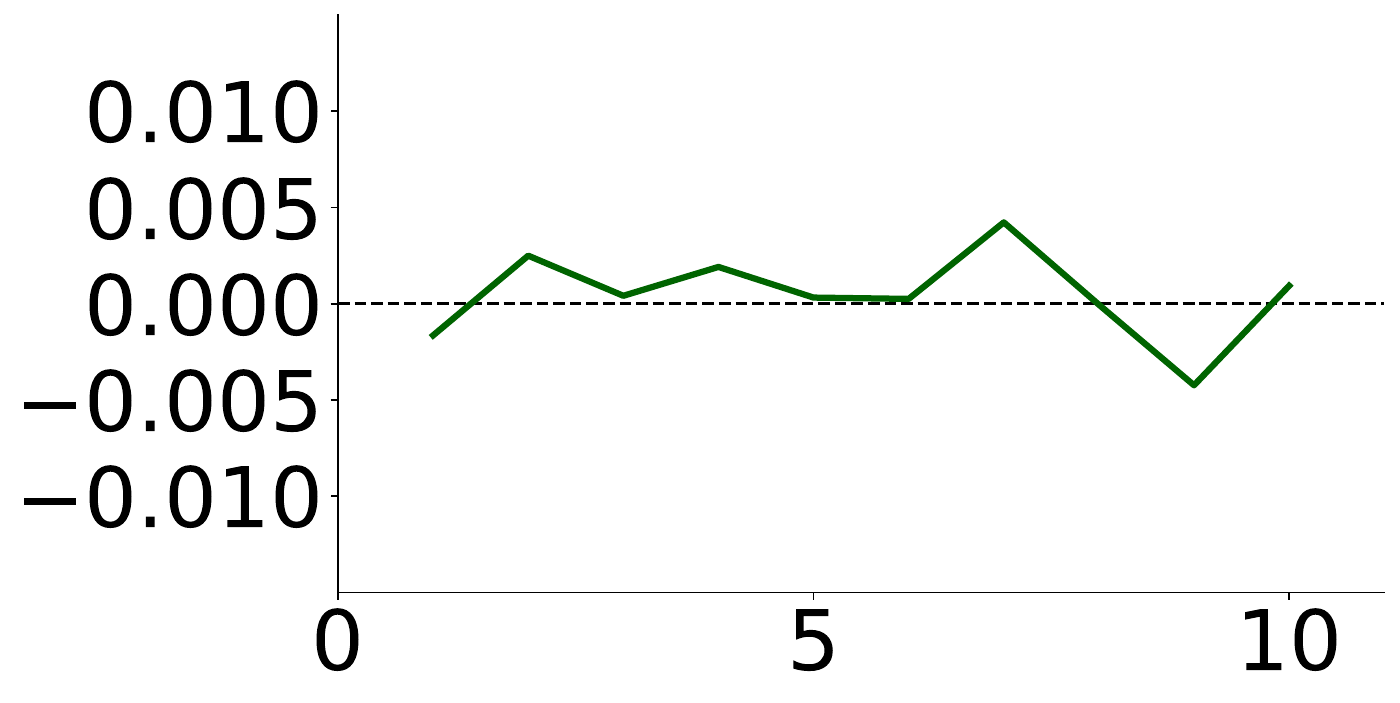}
  \caption{Commute}
  \label{fig:e2a}
\end{subfigure}
\begin{subfigure}{.24\textwidth}
  \centering
  \includegraphics[width=\linewidth]{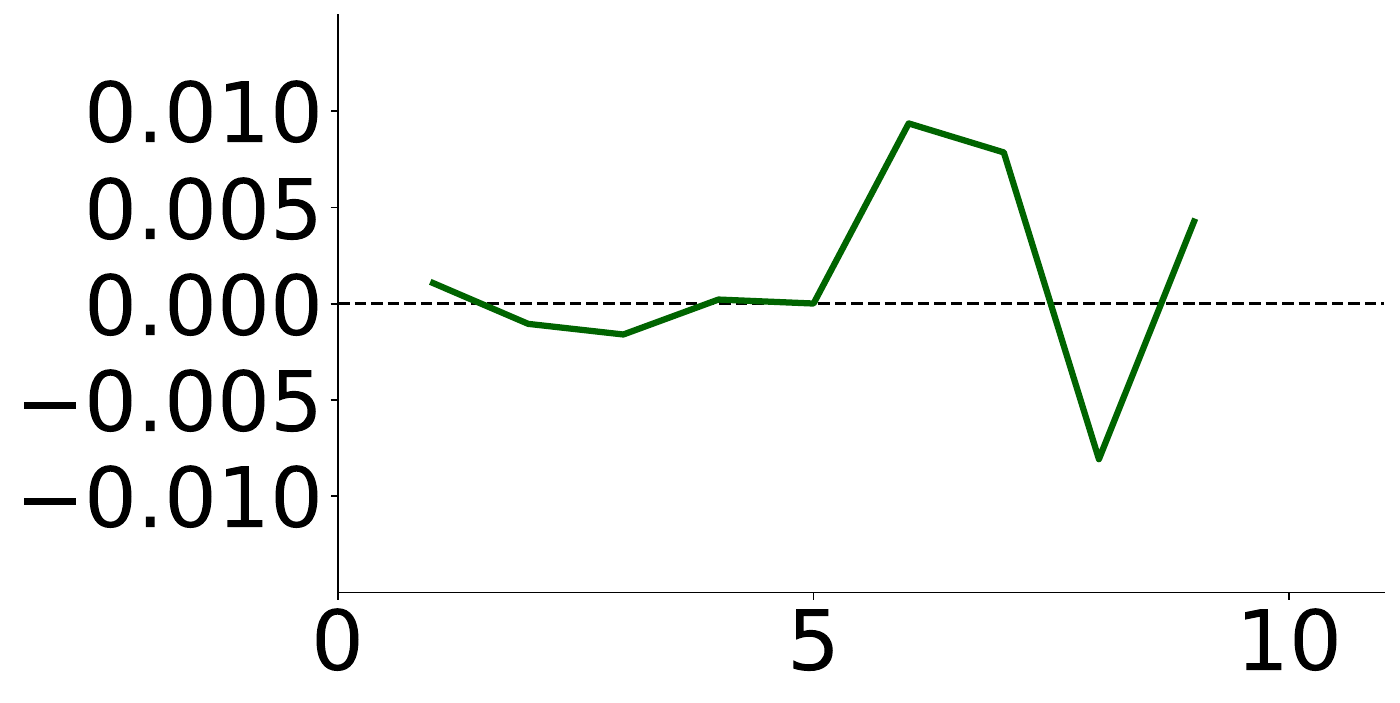}
  \caption{English learner}
  \label{fig:e2b}
\end{subfigure}
\begin{subfigure}{.24\textwidth}
  \centering
  \includegraphics[width=\linewidth]{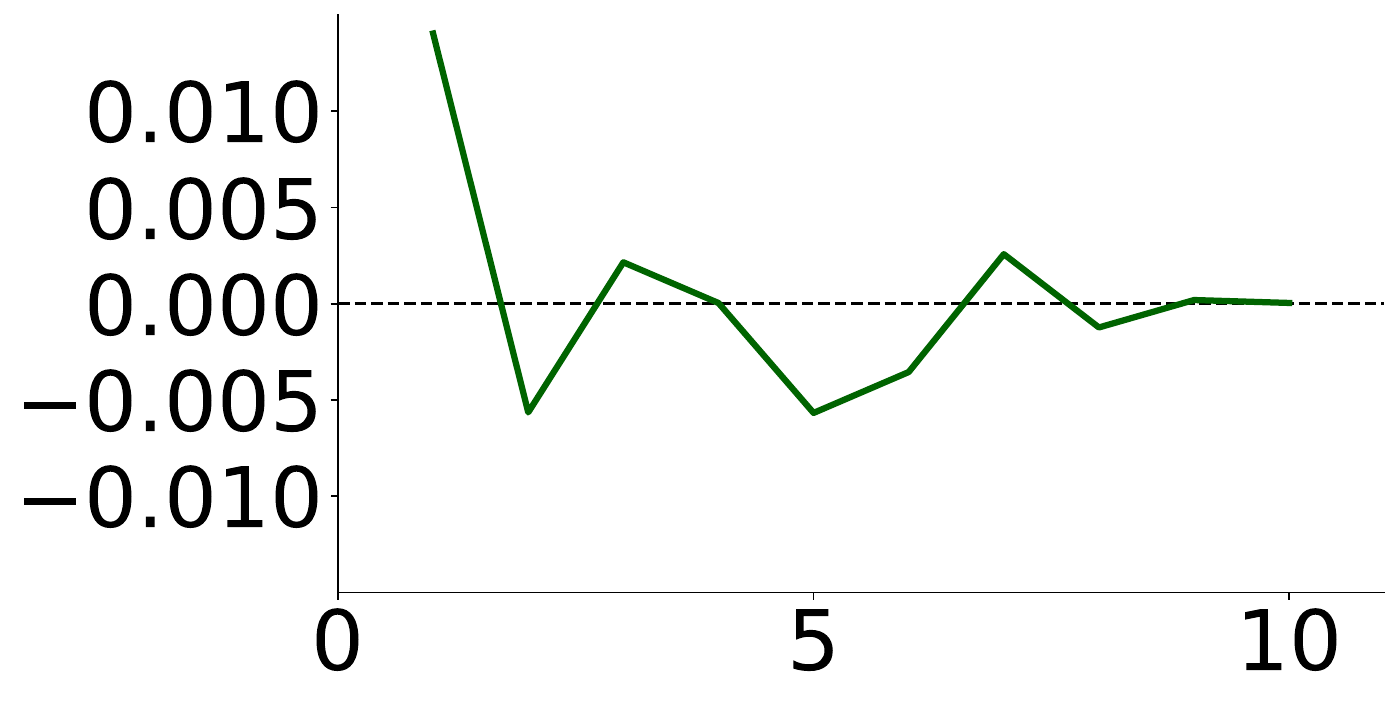}
  \caption{MCAS}
  \label{fig:e2c}
\end{subfigure}
\begin{subfigure}{.24\textwidth}
  \centering
  \includegraphics[width=\linewidth]{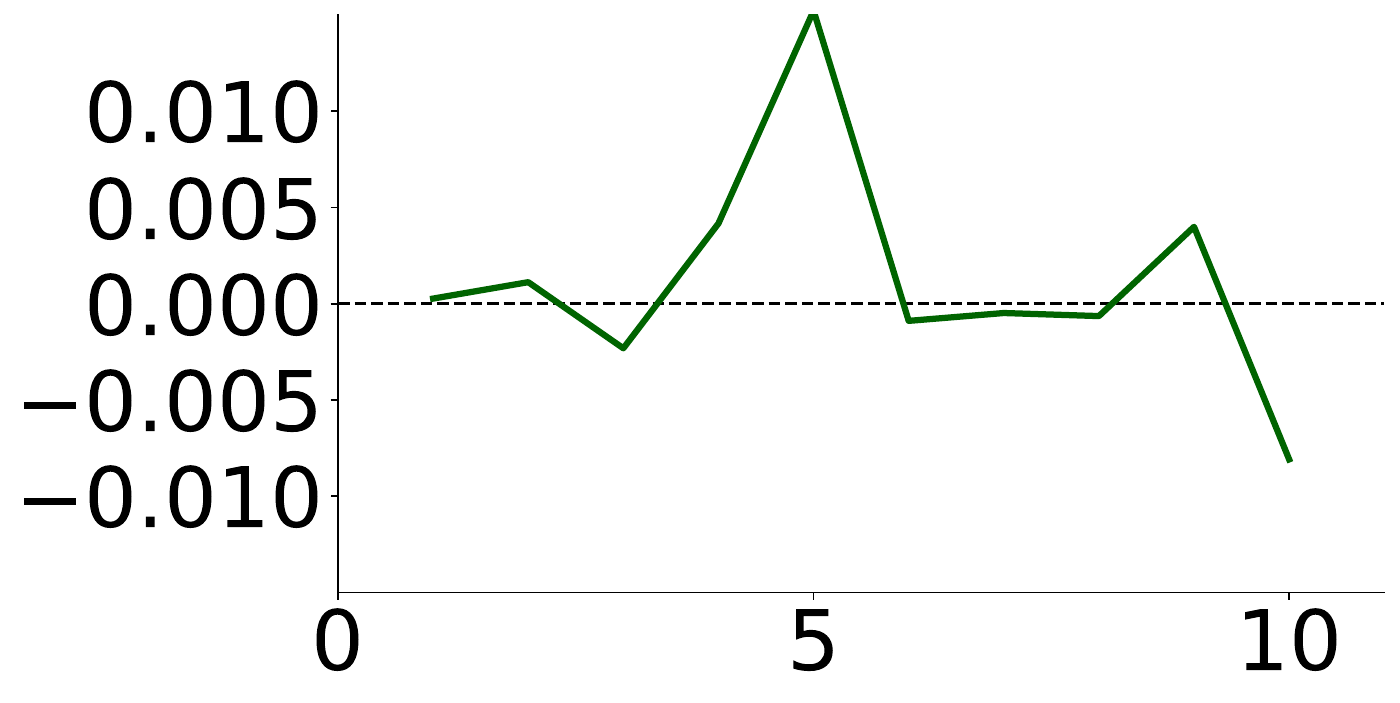}
  \caption{White/Asian}
  \label{fig:e2d}
\end{subfigure}
\caption{We visualize interpretable variation in the second eigenfunction.
}\label{fig:e2}
\end{figure}

\begin{center}
{\large\bf TERTIARY APPENDIX}
\end{center}

Appendix~\ref{sec:spectrum} supports Appendix~\ref{sec:zaitsev}, by characterizing Assumption~\ref{assumption:width}.
Appendix~\ref{sec:cov_appendix} supports Appendix~\ref{sec:cov}, by characterizing covariance estimation.
Appendix~\ref{sec:explicit} supports Appendix~\ref{sec:anti}, by characterizing Assumption~\ref{assumption:light-main}.
Appendix~\ref{sec:band} justifies variable width estimation.%
\section{Gaussian coupling details (Appendix~\ref{sec:zaitsev})}\label{sec:spectrum}

We characterize the behavior of
$
\sigma^2(m)= \sum_{s > m} \nu_s$ and $\N(\lambda)=\tr(T_{\lambda}^{-2}T)
$, where $T_{\lambda}=T+\lambda$,
under various spectral regimes. Such regimes can be deduced directly from regularity of the kernel function $k$ \citep{belkin2018approximation}.

\textbf{Spectral decay.} Define the upper incomplete gamma function $\Gamma(z,x) = \int_x^\infty u^{z-1}e^{-u}\,du.$

\begin{lemma}[{\citealt[Sec 3.1 and eq. 3.5]{natalini2000inequalities}}]\label{lem:gamma}
For $x > z > 1$  we have
$x^{z-1}e^{-x} < \Gamma(z,x) \le \frac{x^z e^{-x}}{x-z}.$
\end{lemma}

\begin{proposition}[Local width upper bound]\label{prop:spectral-integral}
Suppose $\nu_s \le \bar\nu(s): \bb{R} \to \bb{R}$ for some non-increasing positive function $\bar\nu(s)$. Then, it holds that
$
    \sigma^2(m) = \sum_{s=m+1}^{\infty} \nu_s \le \int_{m}^\infty \bar\nu(s)\,ds.
$
In particular, if $\nu_s \le \omega s^{-\beta}$ for $\beta > 1$, then we have
$
    \sigma^2(m) \le \frac{\omega m^{1-\beta}}{\beta-1}.
$
Similarly, if $\nu_s \le \omega\exp(-\alpha s^\gamma)$ for $\alpha, \omega> 0$ and $\gamma\in(0,1)$, then we have
$
    \sigma^2(m) \le C(\gamma,\alpha)\omega  m^{1-\gamma}\exp(-\alpha m^\gamma),
$
    for a constant $C(\gamma,\alpha)$ which depends only on $\alpha$ and $\gamma$ (and not on $m$).
\end{proposition}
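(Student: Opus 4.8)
The plan is to establish the general inequality $\sigma^2(m) = \sum_{s=m+1}^\infty \nu_s \le \int_m^\infty \bar\nu(s)\,ds$ by comparing the series to an integral, and then to specialize to the two concrete decay rates by computing (or bounding) the resulting integrals. For the general inequality, since $\bar\nu$ is non-increasing, for each integer $s \ge m+1$ we have $\nu_s \le \bar\nu(s) \le \bar\nu(u)$ for all $u \in [s-1,s]$, hence $\nu_s \le \int_{s-1}^s \bar\nu(u)\,du$. Summing over $s \ge m+1$ telescopes the intervals to $[m,\infty)$, giving $\sum_{s=m+1}^\infty \nu_s \le \int_m^\infty \bar\nu(u)\,du$. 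This is the only genuinely ``structural'' step; everything else is a definite-integral estimate.

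For the polynomial case $\bar\nu(s) = \omega s^{-\beta}$ with $\beta > 1$, the integral is elementary: $\int_m^\infty \omega s^{-\beta}\,ds = \frac{\omega}{\beta-1}m^{1-\beta}$, which is exactly the claimed bound. For the exponential case $\bar\nu(s) = \omega \exp(-\alpha s^\gamma)$, I would substitute $u = \alpha s^\gamma$, so $s = (u/\alpha)^{1/\gamma}$ and $ds = \frac{1}{\gamma}\alpha^{-1/\gamma} u^{1/\gamma - 1}\,du$. The integral becomes
\[
\int_m^\infty \omega e^{-\alpha s^\gamma}\,ds = \frac{\omega}{\gamma \alpha^{1/\gamma}} \int_{\alpha m^\gamma}^\infty u^{1/\gamma - 1} e^{-u}\,du = \frac{\omega}{\gamma \alpha^{1/\gamma}}\, \Gamma\!\left(\tfrac{1}{\gamma}, \alpha m^\gamma\right).
\]
Then I would invoke Lemma~\ref{lem:gamma} to bound the incomplete gamma function. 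There is a subtlety: Lemma~\ref{lem:gamma} requires $z > 1$, i.e. $1/\gamma > 1$, i.e. $\gamma < 1$; and the upper bound $\Gamma(z,x) \le x^z e^{-x}/(x-z)$ is only useful once $x = \alpha m^\gamma > z = 1/\gamma$, i.e. for $m$ large enough. Applying the bound for such $m$ gives $\Gamma(1/\gamma, \alpha m^\gamma) \le \frac{(\alpha m^\gamma)^{1/\gamma} e^{-\alpha m^\gamma}}{\alpha m^\gamma - 1/\gamma} \lesssim (\alpha m^\gamma)^{1/\gamma - 1} e^{-\alpha m^\gamma}$ once, say, $\alpha m^\gamma \ge 2/\gamma$. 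Tracking the powers of $m$: $(\alpha m^\gamma)^{1/\gamma - 1} = \alpha^{1/\gamma - 1} m^{1-\gamma}$, so $\sigma^2(m) \le C(\gamma,\alpha)\,\omega\, m^{1-\gamma} e^{-\alpha m^\gamma}$ with $C(\gamma,\alpha)$ absorbing all the $\gamma,\alpha$-dependent constants. For small $m$ the bound can be forced to hold by enlarging $C(\gamma,\alpha)$ (the left side is finite and the right side is bounded below on any finite range, or one simply notes $\sigma^2(0) = \tr\Sigma < \infty$ and adjusts the constant), and for $\gamma \ge 1$ one uses instead the cruder lower bound part of Lemma~\ref{lem:gamma} or a direct estimate $\int_m^\infty e^{-\alpha s^\gamma}\,ds \le \int_m^\infty e^{-\alpha m^{\gamma-1} s}\,ds = \frac{1}{\alpha m^{\gamma-1}} e^{-\alpha m^\gamma}$, which is even stronger than claimed; either way the stated form follows after adjusting $C(\gamma,\alpha)$.

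The main obstacle is not conceptual but bookkeeping: making the constant $C(\gamma,\alpha)$ genuinely independent of $m$ across \emph{all} $m \ge 0$, including the small-$m$ regime where Lemma~\ref{lem:gamma}'s upper bound degenerates (the denominator $x - z$ can be negative or small). I would handle this by splitting into $m$ with $\alpha m^\gamma \ge 2/\gamma$ (where the gamma bound applies cleanly, giving denominator $\ge \alpha m^\gamma / 2$) and the finitely-parametrized ``small $m$'' region, where one notes $m^{1-\gamma} e^{-\alpha m^\gamma}$ is bounded away from $0$ on $[0, (2/(\alpha\gamma))^{1/\gamma}]$ while $\sigma^2(m) \le \sigma^2(0) = \int_0^\infty \bar\nu < \infty$, so the ratio is bounded there. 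Combining the two regimes and taking the larger constant yields the claim. The same dichotomy resolves the $\gamma \ge 1$ case.
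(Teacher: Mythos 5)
Your proposal is correct and follows essentially the same route as the paper: integral comparison for the general bound, exact computation for the polynomial case, and the substitution $u = \alpha s^\gamma$ plus the incomplete gamma bound from Lemma~\ref{lem:gamma} with the same small-$m$/large-$m$ split (threshold $\alpha\gamma m^\gamma \ge 2$, i.e., $\alpha m^\gamma \geq 2/\gamma$) for the exponential case. Your observation that Lemma~\ref{lem:gamma} requires $z = 1/\gamma > 1$ and hence $\gamma < 1$ is a genuine point that the paper's proof glosses over, and your direct estimate $\int_m^\infty e^{-\alpha s^\gamma}\,ds \le (\alpha m^{\gamma-1})^{-1}e^{-\alpha m^\gamma}$ for $\gamma \ge 1$ is a clean fix that the paper does not record.
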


\begin{proof}
We upper bound
$
\sigma^2(m)
    = \sum_{s=m+1}^\infty \nu_s \le \int_m^\infty \bar\nu(s)\,ds,
$ where $\bar\nu : \bb{R} \to \bb{R}$ is any non-increasing function with $\bar\nu(s) \ge \nu_s$. The first bound follows from taking $\bar\nu(s) = \omega s^{-\beta}$ and computing the resulting integral exactly. For the second bound, we take $\bar\nu(s) = \omega\exp(-\alpha s^\gamma)$ and obtain
$
    \frac{\sigma^2(m)}{\omega} \le \int_{m}^\infty \exp(-\alpha s^\gamma)\,ds.
$
To bound the right hand side, we proceed in steps.
\begin{enumerate}
    \item Making the substitution $u =  \alpha s^\gamma$ gives us
$$
\int_{m}^\infty \exp(-\alpha s^\gamma)\,ds =\gamma^{-1}\alpha^{\frac{-1}{\gamma}} \int_{\alpha m^\gamma}^\infty \exp(-u) u^{\nicefrac{(1-\gamma)}{\gamma}} \,du.
$$
\item We have
$
\int_{\alpha m^\gamma}^\infty \exp(-u) u^{\nicefrac{(1-\gamma)}{\gamma}} \,du=\Gamma(1/\gamma, \alpha m^\gamma)
$. By Lemma \ref{lem:gamma},
$
\Gamma(z,x) \leq \frac{x^ze^{-x}}{x-z}
$:
\begin{align*}
 \frac{\sigma^2(m)}{\omega}
 \le
 \gamma^{-1}\alpha^{\frac{-1}{\gamma}}
 \Gamma\Big(1/\gamma, \alpha m^\gamma\Big) \le
    \gamma^{-1}\alpha^{\frac{-1}{\gamma}}
\frac{(\alpha m^\gamma)^{1/\gamma}e^{-\alpha m^\gamma}}{\alpha m^\gamma-1/\gamma}
    =
    \frac{m\exp(-\alpha m^\gamma)}{\alpha\gamma m^\gamma-1}.
\end{align*}

\item Finally we absorb constants.
When $m \ge \{2/(\alpha\gamma)\}^{1/\gamma}$, we have that $\a\gamma m^\gamma \ge 2$ and hence $\a\gamma m^\gamma - 1 \ge \a\gamma m^\gamma/2$. Thus, the final expression is at most $(2/\a\gamma)m^{1-\gamma}\exp(-\alpha m^\gamma)$.
For $m < \{2/(\alpha\gamma)\}^{1/\gamma}$, we decompose the sum as
$
\sum_{s=m+1}^{\lfloor\{2/(\alpha\gamma)\}^{1/\gamma}\rfloor} \nu_s+\sum_{s=\lceil\{2/(\alpha\gamma)\}^{1/\gamma}\rceil}^{\infty} \nu_s.
$
The latter sum is bounded by the previous case. Focusing on the former sum,
$$
\sum_{s=m+1}^{\{2/(\alpha\gamma)\}^{1/\gamma}} \nu_s
\leq \sum_{s=1}^{\{2/(\alpha\gamma)\}^{1/\gamma}} \nu_s
\leq \{2/(\alpha\gamma)\}^{1/\gamma} \nu_1\leq \{2/(\alpha\gamma)\}^{1/\gamma} \cdot  \omega\exp(-\alpha)
$$
which absorbs into a constant depending only on $\omega, \alpha, \gamma$ that is linear in $\omega$.\qedhere
\end{enumerate}
\end{proof}

\paragraph{Complexity measures.} Recall the definition of the ellipse $\mathfrak{E} = \Sigma^{\f 1 2}B$, where $B$ is the unit ball in $L^2(\bb P)$. We use the results of \cite{wei2020gauss} to briefly sketch the claim that, under suitable regularity conditions, the local width $\sigma(\Sigma,m)$ is roughly comparable to the local Gaussian complexity of $\mathfrak{E}$ at scale $\d = \mathrm{ent}_m(\mathfrak{E})$, which we denote by $\mathcal{G}(\mathfrak{E} \cap \d B)$, defined below.

First, some technicalities. For $S_m \subset \bb{R}^m$, the Gaussian complexity is given by
 $\c G(S_m) = \bb{E}\,\sup_{t \in S_m}|\bk{g}{s}|,$ respectively, where $g$ is an isotropic Gaussian vector. This generalizes to compact subsets of a separable Hilbert space by finite-dimensional approximation. Moreover, when $S_m$ contains the origin (as in our setting) they are equivalent up to a constant \citep[Chapter 7]{vershynin2018high}. Thus, as is standard, we can use the results of \citet{wei2020gauss}, who study the Gaussian width of finite-dimensional sets, to characterize the Gaussian complexity in our setting.

\begin{lemma} Suppose either (i) $\nu_s(\Sigma) \asymp s^{-\beta}$ for $\beta > 1$; or
    (ii) $\nu_s(\Sigma) \asymp \exp(-\alpha s^\gamma)$ for $\alpha > 0$ and $\gamma \in (0,1)$.
For sufficiently small $\d = \mathrm{ent}_m(\mathfrak{E})$, we have $\sigma(\Sigma,m) \lesssim \c G(\mathfrak{E} \cap \d B)$.
\end{lemma}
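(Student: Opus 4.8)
The goal is to compare two quantities attached to the ellipsoid $\mathfrak{E} = \Sigma^{1/2}B$: the local width $\sigma(\Sigma,m)$, which equals $\snorm{\Pi_m^\perp \Sigma^{1/2}}_{\HS} = \left(\sum_{s>m}\nu_s\right)^{1/2}$, and the local Gaussian complexity $\c G(\mathfrak{E}\cap \d B)$ at the scale $\d = \mathrm{ent}_m(\mathfrak{E})$. Since both are equivalent (up to universal constants) to Gaussian \emph{widths} because $\mathfrak{E}$ contains the origin, it suffices to work with Gaussian widths throughout. The plan is to first recall, from the theory of ellipsoids (as in \citet[Corollary 2.2 and surrounding discussion]{wei2020gauss}), the exact characterizations: the localized Gaussian width of $\mathfrak{E}$ at scale $\d$ satisfies $\c G(\mathfrak{E}\cap\d B) \asymp \left(\sum_{s}\min\{\nu_s, \d^2\}\right)^{1/2}$, while the entropy number $\mathrm{ent}_m(\mathfrak{E})$ is controlled (up to constants and log factors) by the value $\d_m$ for which the ``critical inequality'' $\sum_s \min\{\nu_s,\d^2\} \asymp m\d^2$ holds; in the two decay regimes this critical radius is explicit.

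Next I would match the scales. In the polynomial case $\nu_s \asymp s^{-\beta}$ one computes $\sum_s \min\{\nu_s,\d^2\} \asymp \d^{2-2/\beta}$, so the critical equation $\d^{2-2/\beta}\asymp m\d^2$ gives $\d_m \asymp m^{-\beta/2}$, hence $\mathrm{ent}_m(\mathfrak{E})\asymp m^{-\beta/2}$ up to log factors. Plugging this back, $\c G(\mathfrak{E}\cap\d_m B) \asymp \d_m^{1-1/\beta} = m^{-(\beta-1)/2} \asymp \sigma(\Sigma,m)$ by Proposition~\ref{prop:spectral-integral}. In the exponential case $\nu_s\asymp\exp(-\alpha s^\gamma)$ one has $\sum_s\min\{\nu_s,\d^2\}$ dominated by the index $s_\d \asymp \{\alpha^{-1}\log(1/\d)\}^{1/\gamma}$ at which $\nu_s$ crosses $\d^2$, so the sum is $\asymp s_\d \d^2$ up to lower-order terms; the critical equation $s_\d\d^2 \asymp m\d^2$ then forces $s_\d \asymp m$, i.e. $\mathrm{ent}_m(\mathfrak{E})\asymp\exp(-\tfrac{\alpha}{2}m^\gamma)$ up to polynomial-in-$m$ factors, and $\c G(\mathfrak{E}\cap\d_m B)\asymp (m\,\d_m^2)^{1/2}\asymp m^{1/2}\exp(-\tfrac{\alpha}{2}m^\gamma)$, which is comparable to $\sigma(\Sigma,m) \asymp m^{(1-\gamma)/2}\exp(-\tfrac{\alpha}{2}m^\gamma)$ from Proposition~\ref{prop:spectral-integral} up to a bounded power of $m$ — here one only needs the one-sided bound $\sigma(\Sigma,m)\lesssim \c G(\mathfrak{E}\cap\d_m B)$, so these polynomial gaps are harmless. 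Finally I would note that the statement only claims an upper bound on $\sigma(\Sigma,m)$, which in both regimes follows because the lower bound on the local Gaussian complexity already captures the dominant exponential/polynomial factor, and ``for sufficiently small $\d$'' absorbs the regime where the asymptotic spectral estimates and the incomplete-Gamma bounds of Lemma~\ref{lem:gamma} kick in.

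\textbf{Main obstacle.} The delicate point is the translation between the \emph{entropy number} $\mathrm{ent}_m(\mathfrak{E})$ and the \emph{critical radius} appearing in the Gaussian-width calculus of \citet{wei2020gauss}: the cited results are naturally phrased in terms of a radius $\d$ solving a fixed-point equation involving $\sum_s\min\{\nu_s,\d^2\}$, whereas our definition quantifies covering numbers at a fixed cardinality $2^m$. Establishing that $\log(\text{covering number at scale }\d)\asymp m$ precisely when $\d\asymp\d_m$ — equivalently, that Sudakov minoration and the dual Sudakov bound pin down $\mathrm{ent}_m(\mathfrak{E})$ up to logarithmic factors — is the step requiring care; everything downstream is a routine substitution of the two explicit spectral profiles into closed-form expressions. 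I would handle this by invoking the standard two-sided bounds relating metric entropy of ellipsoids to partial sums of their semi-axes (Sudakov/dual-Sudakov, e.g. as in \citet[Chapter 7]{vershynin2018high}), which is why the conclusion is only stated up to constants and in the regime of small $\d$.
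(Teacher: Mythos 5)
Your proposal follows essentially the same roadmap as the paper's proof: both reduce to the localized Gaussian-width calculus for ellipsoids, identify a critical radius/dimension pair, show that the local Gaussian complexity at the critical scale dominates the local width, and then relate the critical scale to the entropy number so that the monotonicity of $\mathcal{G}$ carries the bound over. The difference is one of packaging: the paper's proof is abstract, invoking \citet[Theorem 2]{wei2020gauss} for the bound $\d\sqrt{m^*}\lesssim\mathcal{G}(\mathfrak{E}\cap\d B)$ and \citet[Corollary 2]{wei2020gauss} for the bound $\d\lesssim \mathrm{ent}_{m^*}(\mathfrak{E})$, with the two decay regimes justified by a single pointer to their Example 4; you instead substitute the spectral profiles explicitly and match the dominant terms by direct calculation, correctly noting that the $m^{\gamma/2}$ discrepancy in the exponential case goes the harmless direction since only the one-sided bound $\sigma\lesssim\mathcal{G}$ is claimed.

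One place you are vaguer than the paper, and where a careless reader could go wrong, is the step $\d_m\lesssim\mathrm{ent}_m(\mathfrak{E})$, which is the content that makes the monotonicity argument work. You attribute this to "Sudakov/dual-Sudakov," but Sudakov minoration naturally yields an \emph{upper} bound on entropy numbers (equivalently, an upper bound on $\log N$ in terms of the Gaussian width), and dual Sudakov controls covering numbers of the ball by the polar of $\mathfrak{E}$, neither of which is quite the lower bound on $\mathrm{ent}_m(\mathfrak{E})$ you need. The clean way to get it — and the route the paper takes — is the two-sided characterization of ellipsoid covering numbers in terms of the semi-axes packaged in \citet[Corollary 2]{wei2020gauss}, whose lower-bound direction is essentially a packing argument rather than a Sudakov-type inequality. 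If you make that substitution, your explicit calculation and the paper's abstract invocation coincide.
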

\begin{proof}
    Both cases were covered by \citet[Example 4]{wei2020gauss}, and the subsequent discussion in Section 4 of that paper.

    In particular, it was shown that given a scale parameter $\d$, one can compute the ``critical dimension'' $m^*$ as the smallest $m$ such that $\sqrt{\nu_m(\Sigma)} \le 9\d/10$. Using regularity conditions which were checked in the aforementioned Example 4, \citet[Theorem 2]{wei2020gauss} shows that for small enough $\d$, it holds that
    $\d\sqrt{m^*} \lesssim \c G (\mathfrak{E} \cap \d B).$
    Using our bounds in Proposition \ref{prop:spectral-integral},
    $\d\sqrt{m^*} \gtrsim \sqrt{m^* \nu_{m^*}(\Sigma)} \gtrsim \sigma(\Sigma,m^*)$
    in both of the above examples.

    Next, \citet[Corollary 2]{wei2020gauss} implies, after inverting the entropy function, that in the same setting
    $\d \lesssim \mathrm{ent}_{m^*}(\mathfrak{E} \cap \d B) \le \mathrm{ent}_{m^*}(\mathfrak{E}) = \d'.$
    Our sketch is then complete by monotonicity of $\c G$ since
    $\sigma(\Sigma,m^*) \lesssim \c G (\mathfrak{E} \cap \d B) \le \c G(\mathfrak{E} \cap \d' B).$
\end{proof}

\paragraph{Effective dimension.} In this subsection, we will derive bounds on the quantities
$\psi(m,c) = \sum_{s=m+1}^\infty \frac{\nu_s}{(\nu_s+\lambda)^c},$ where $c \ge 1$. Such quantities arise frequently in studies of ridge regression, e.g. the ``effective dimension'' in \cite{caponnetto2007optimal}. According to our definition, $\N(\lambda) = \psi(0,2)$. We provide matching upper and lower bounds.

\begin{proposition}[Upper bound on effective dimension]
    \label{prop:effdim}
    Suppose $\nu_s \le \bar\nu(s): \bb{R} \to \bb{R}$ for some non-increasing positive function $\bar\nu(s)$, and let $c > 1$ be given. Then
   $$
       \psi(m,c) \le \lambda^{1-c} \inf_{s \ge m} \left\{ (s-m+1) + \frac{1}{\lambda}\int_{s}^\infty \bar\nu(t)\,dt\right\}.
  $$
    In particular, if $\nu_s \le \omega s^{-\beta}$ for some $\omega > 0$ and $\beta > 1$, and if $\lambda \leq \omega$,
   $$
        \psi(m,c) \le \frac{(\omega\vee \omega^{1/\beta}) (2\beta-1)}{\beta-1}\left(\frac{1}{\lambda^{c+1/\beta-1}}\wedge \frac{ m^{1-\beta}}{\lambda^c}\right).
   $$
    If $\nu_s \le \omega e^{-\alpha s^\gamma}$ for some  $\alpha, \omega, \gamma > 0$, and if $\lambda\leq \omega/e^{\alpha}$, then
   $$
        \psi(m,c) \le C(\alpha,\omega,\gamma) \left[ \frac{1}{\lambda^{c-1}} \left\{\frac{\log(\omega/\lambda)}{\alpha}\right\}^{1/\gamma} \wedge \frac{m^{1-\gamma}\exp(-\alpha m^\gamma)}{\lambda^c}\right]
   $$
    for some constant $C(\alpha,\omega,\gamma)$ which depends only on $\alpha$, $\omega$, and $\gamma$.
\end{proposition}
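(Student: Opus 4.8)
The plan is to prove the abstract (first) bound and then read off the two explicit regimes by combining it with the trivial estimate $\psi(m,c)\le\lambda^{-c}\sigma^2(m)$ and the tail-integral bounds already recorded in Proposition~\ref{prop:spectral-integral}. For the abstract bound I would use two per-term estimates: since $c\ge 1$ and $\nu_s+\lambda\ge\lambda$,
\[
\frac{\nu_s}{(\nu_s+\lambda)^c}\;\le\;(\nu_s+\lambda)^{1-c}\;\le\;\lambda^{1-c},\qquad\frac{\nu_s}{(\nu_s+\lambda)^c}\;\le\;\frac{\nu_s}{\lambda^c}.
\]
Fix a real $s\ge m$ and split the series at the index $\lceil s\rceil$: the head $\sum_{m<t\le\lceil s\rceil}$ has at most $s-m+1$ terms, each bounded by $\lambda^{1-c}$, while the tail $\sum_{t>\lceil s\rceil}$ is at most $\lambda^{-c}\sum_{t>\lceil s\rceil}\nu_t\le\lambda^{-c}\sum_{t>\lceil s\rceil}\bar\nu(t)\le\lambda^{-c}\int_s^\infty\bar\nu(u)\,du$, using $\nu_t\le\bar\nu(t)$, monotonicity of $\bar\nu$, and the integral comparison. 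Adding and taking $\inf_{s\ge m}$ gives the first displayed inequality.

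In each explicit regime I would produce two bounds and take their minimum. The ``$\sigma^2(m)$ term'' is immediate: $\psi(m,c)\le\lambda^{-c}\sigma^2(m)$, and Proposition~\ref{prop:spectral-integral} bounds $\sigma^2(m)$ by $\tfrac{\omega}{\beta-1}m^{1-\beta}$ (polynomial) or by a constant multiple of $\omega\,m^{1-\gamma}e^{-\alpha m^\gamma}$ (exponential), which yields the second term in each $\wedge$. The ``$\lambda$-only term'' comes from the abstract bound, minimized at the effective-dimension cutoff where $\bar\nu(s)\approx\lambda$. In the polynomial case $\bar\nu(t)=\omega t^{-\beta}$, so $\int_s^\infty\bar\nu=\tfrac{\omega}{\beta-1}s^{1-\beta}$ and the bracket $g(s)=(s-m+1)+\tfrac{\omega s^{1-\beta}}{\lambda(\beta-1)}$ is minimized (ignoring the constraint) at $s^\star=(\omega/\lambda)^{1/\beta}\ge 1$, the latter because $\lambda\le\omega$; evaluating $g$ at $s=\max(m,s^\star)$ and using $s^\star\ge 1$ to absorb the additive constant gives $g\le\tfrac{2\beta-1}{\beta-1}(\omega/\lambda)^{1/\beta}$, hence $\psi(m,c)\le\tfrac{2\beta-1}{\beta-1}\,\omega^{1/\beta}\lambda^{\,1-c-1/\beta}$, which is at most the claimed prefactor times $\lambda^{-(c+1/\beta-1)}$ (bounding $\omega^{1/\beta}$ by $\omega$ in the relevant regime).

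In the exponential case $\bar\nu(t)=\omega e^{-\alpha t^\gamma}$. I would take $s^\star=(\alpha^{-1}\log(\omega/\lambda))^{1/\gamma}$, which is $\ge 1$ precisely because $\lambda\le\omega/e^\alpha$, so that $\bar\nu(s^\star)=\lambda$; the incomplete-Gamma estimate used in the proof of Proposition~\ref{prop:spectral-integral} gives $\int_{s^\star}^\infty\bar\nu\lesssim_{\alpha,\gamma}\omega(s^\star)^{1-\gamma}e^{-\alpha(s^\star)^\gamma}=\lambda(s^\star)^{1-\gamma}$, so the bracket evaluated at $s=\max(m,s^\star)$ is $\lesssim_{\alpha,\gamma}s^\star$, whence $\psi(m,c)\lesssim_{\alpha,\gamma}\lambda^{1-c}(\alpha^{-1}\log(\omega/\lambda))^{1/\gamma}$. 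Taking the minimum with the $\sigma^2(m)$ term finishes both statements.

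The main obstacle is not depth but bookkeeping in two places. First, the cutoff $s^\star$ must be compared with $m$: when $s^\star\ge m$ one uses $s=s^\star$, and when $s^\star<m$ one falls back to $s=m$ and must check the resulting bound is still controlled by the same quantity (this uses $m\ge 1$ and, in the polynomial case, $(\omega/\lambda)^{1/\beta}<m$). Second, the clean asymptotic forms of the tail integrals degrade when $s$—equivalently $\log(\omega/\lambda)$ in the exponential case—is $O(1)$; these boundary contributions, together with the small-$m$ regime where Proposition~\ref{prop:spectral-integral}'s bound is crude, are exactly what the constants $C(\alpha,\omega,\gamma)$ absorb, and handling them mirrors the argument already given in the proof of Proposition~\ref{prop:spectral-integral}. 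Tracking the explicit prefactor $\tfrac{\omega(2\beta-1)}{\beta-1}$ in the polynomial case also forces one to keep constants through the minimization rather than writing $\lesssim$.
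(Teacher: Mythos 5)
Your proposal follows essentially the same approach as the paper's proof: both use the two per-term estimates $\nu_s/(\nu_s+\lambda)^c \le \lambda^{1-c}$ and $\le \nu_s/\lambda^c$, split the sum at a cutoff, take an infimum over the cutoff, and then evaluate at $s^\star = \bar\nu^{-1}(\lambda)$ (for the $\lambda$-only bound) and $s=m$ (for the tail bound), combining with Proposition~\ref{prop:spectral-integral}'s integral estimates. The paper's proof and yours both quietly use $\omega^{1/\beta} \le \omega$ at the final step of the polynomial case; you flag this, the paper does it silently, but the logic is identical.
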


\begin{proof}
We proceed in steps.

\begin{enumerate}
    \item First we note
    $
    \frac{\nu_s}{(\nu_s + \lambda)^c} \le \frac{\nu_s}{\lambda^c}$ and $ \frac{\nu_s}{(\nu_s + \lambda)^c} \le \lambda^{1-c}.
    $
    The former holds since $\nu_s \ge 0$.
    The latter follows from maximizing the function $f(u)=\frac{u}{(u+ \lambda)^c}$ at $u=\lambda/(c-1)$.
    \item Combining the two, we have for any $k \geq m$ that
    \begin{align*}
       \psi(m,c)
       &=\sum_{s=m+1}^\infty \frac{\nu_s}{(\nu_s + \lambda)^c}
       \le \sum_{s=m+1}^k \lambda^{1-c} + \sum_{s=k+1}^\infty \frac{\nu_s}{\lambda^c}
       \le \lambda^{1-c} \left\{ (k - m ) + \frac{1}{\lambda}\int_{k}^\infty \bar\nu(t)\,dt\right\}.
    \end{align*}
    Since $k$ was arbitrary, we can further deduce that
    \begin{align*}
      \psi(m,c)
        &\le \lambda^{1-c} \inf_{k \geq m} \left\{ (k - m ) + \frac{1}{\lambda}\int_{k}^\infty \bar\nu(t)\,dt\right\}
       =\lambda^{1-c} \inf_{s \geq m } \left\{ (\lceil s \rceil - m ) + \frac{1}{\lambda}\int_{\lceil s \rceil}^\infty \bar\nu(t)\,dt\right\}
        \\
        &\le\lambda^{1-c} \inf_{s \geq m } \left\{ (\lceil s \rceil-m) + \frac{1}{\lambda}\int_{s}^\infty \bar\nu(t)\,dt\right\},
    \end{align*}
    where the latter infimum is over real numbers $s$, and $\lceil s \rceil$ rounds $s$ up to the nearest integer.

    \item Optimizing the bounds. In the infimum, consider $s=m$. Then
    $
    \psi(m,c) \leq \lambda^{-c} \int_{m}^\infty \bar\nu(t)\,dt.
    $
    Alternatively, consider the choice $s^*=\bar\nu^{-1}(\lambda) \vee m$. Then
    $$
    \psi(m,c) \le \lambda^{1-c}\left\{1 + s^* - m + \frac{1}{\lambda}\int_{s^*}^\infty \bar\nu(t)\,dt\right\}.
    $$
    In particular, choosing $m=0$ and $s^*=\bar\nu^{-1}(\lambda)$, we see
     $$
    \psi(m,c) \le \psi(0,c) \le \lambda^{1-c}\left\{ 1 + \bar\nu^{-1}(\lambda) + \frac{1}{\lambda}\int_{\bar\nu^{-1}(\lambda)}^\infty \bar\nu(t)\,dt\right\}.
    $$
     Combining both bounds,
    $$\psi(m,c)  \le \lambda^{1-c}\left\{1 + \bar\nu^{-1}(\lambda)+ \frac{1}{\lambda}\int_{\bar\nu^{-1}(\lambda)}^\infty \bar\nu(t)\,dt\right\} \wedge  \frac{1}{\lambda^c}\int_m^\infty \bar\nu(t)\,dt.$$

    \item Polynomial decay. Now, if $\bar\nu(s) = \omega s^{-\beta}$ then we have  $\bar\nu^{-1}(\lambda) = (\omega /\lambda)^{1/\beta}$, and as argued in Proposition~\ref{prop:spectral-integral},
    $\int_s^\infty \bar\nu(t)\,dt = \frac{\omega s^{1-\beta}}{\beta-1}.$
Plugging this into the first term in the optimized bound,
$$
    \frac{1}{\lambda}\int_{\bar\nu^{-1}(\lambda)}^\infty \bar\nu(t)\,dt
    = \frac{1}{\lambda} \frac{\omega \{(\omega /\lambda)^{1/\beta}\}^{1-\beta}}{\beta-1}
    =\lambda^{-1-(1-\beta)/\beta} \omega^{1+(1-\beta)/\beta} \frac{1}{\beta-1}
    =\lambda^{-1/\beta}\cdot \omega^{1/\beta}\cdot (\beta-1)^{-1}
    $$
    so that, when $\omega\geq\lambda$,
    \begin{align*}
        &\lambda^{1-c}\left\{ 1+\bar\nu^{-1}(\lambda)+ \frac{1}{\lambda}\int_{\bar\nu^{-1}(\lambda)}^\infty \bar\nu(t)\,dt\right\}
        =\lambda^{1-c}\left\{ 1+(\omega /\lambda)^{1/\beta} + \lambda^{-1/\beta}\cdot \omega^{1/\beta}\cdot (\beta-1)^{-1} \right\} \\
        &\leq \lambda^{1-c}\left\{ 2(\omega /\lambda)^{1/\beta} + \lambda^{-1/\beta}\cdot \omega^{1/\beta}\cdot (\beta-1)^{-1} \right\}
        =\left(2\omega^{1/\beta} + \frac{\omega^{1/\beta}}{{\beta-1}}\right)\lambda^{1-c-1/\beta} \\
        &=\frac{2\beta-1}{\beta-1}\omega^{1/\beta}\lambda^{1-c-1/\beta}.
    \end{align*}
Therefore the overall bound is
    $$\frac{2\beta-1}{\beta-1}\omega^{1/\beta}\lambda^{1-c-1/\beta} \wedge \frac{\omega m^{1-\beta}}{\lambda^c(\beta-1)}\leq \frac{(\omega\vee \omega^{1/\beta}) (2\beta-1)}{\beta-1}\left(\frac{1}{\lambda^{c+1/\beta-1}}\wedge \frac{ m^{1-\beta}}{\lambda^c}\right).$$

    \item Exponential decay. If $\bar\nu(t) \le \omega\exp(-\alpha t^{\gamma})$ then we have $\bar\nu^{-1}(\lambda) = \{\log(\omega/\lambda)/\alpha\}^{1/\gamma}$ and, as argued in Proposition~\ref{prop:spectral-integral},
    $
    \int_s^\infty \bar\nu(t)\,dt \leq  C \omega s^{1-\gamma}\exp(-\alpha s^\gamma).
    $
    Plugging this into the first term in the optimized bound,
    \begin{align*}
        &\frac{1}{\lambda}\int_{\bar\nu^{-1}(\lambda)}^\infty \bar\nu(t)\,dt
    =
    \frac{1}{\lambda}
    C\omega [\{\log(\omega/\lambda)/\alpha\}^{1/\gamma}]^{1-\gamma} \cdot \exp(-\alpha[\{\log(\omega/\lambda)/\alpha\}^{1/\gamma}]^\gamma) \\
    &=\frac{1}{\lambda} C\omega \{\log(\omega/\lambda)/\alpha\}^{(1-\gamma)/\gamma} \cdot  \exp(-\alpha\{\log(\omega/\lambda)/\alpha\})  \\
    &= \frac{1}{\lambda} C\omega \{\log(\omega/\lambda)/\alpha\}^{(1-\gamma)/\gamma} \cdot \frac{\lambda}{\omega}
    =C\{\log(\omega/\lambda)/\alpha\}^{(1-\gamma)/\gamma}
      \end{align*}
    so that, when $\omega/e^{\alpha}\geq\lambda$,
    \begin{align*}
        &\lambda^{1-c}\left\{1+ \bar\nu^{-1}(\lambda)+ \frac{1}{\lambda}\int_{\bar\nu^{-1}(\lambda)}^\infty \bar\nu(t)\,dt\right\}
        =\lambda^{1-c}\left[ 1+\{\log(\omega/\lambda)/\alpha\}^{1/\gamma} + C\{\log(\omega/\lambda)/\alpha\}^{(1-\gamma)/\gamma} \right] \\
        &\leq \lambda^{1-c}\left[ 2\{\log(\omega/\lambda)/\alpha\}^{1/\gamma} + C\{\log(\omega/\lambda)/\alpha\}^{(1-\gamma)/\gamma} \right]
        \leq C'\lambda^{1-c}\{\log(\omega/\lambda)/\alpha\}^{1/\gamma}
    \end{align*}
    since $\gamma>0$ implies $1/\gamma> (1-\gamma)/\gamma$. Therefore the overall bound is
    \begin{align*}
        &C'\lambda^{1-c}\{\log(\omega/\lambda)/\alpha\}^{1/\gamma}  \wedge \frac{1}{\lambda^c} C \omega m^{1-\gamma}\exp(-\alpha m^\gamma) \\
        &\leq  C'\omega \left[ \frac{\{\log(\omega/\lambda)/\alpha\}^{1/\gamma}}{\lambda^{c-1}} \wedge \frac{m^{1-\gamma}\exp(-\alpha m^\gamma)}{\lambda^c} \right]. & \qedhere
    \end{align*}
\end{enumerate}
\end{proof}

\begin{proposition}[Lower bound on effective dimension]\label{prop:eff_dim_lb}
Suppose $\nu_s \asymp \underline{\nu}(s): \bb{R} \to \bb{R}$ for some non-increasing positive function $\underline{\nu}(s)$. Then, if $s^*$ is the smallest positive integer with $\underline\nu(s^*) \le \lambda$,
\[\psi(m,2) \gtrsim \int_{ s^*  \vee (m+1)}^\infty \frac{\underline \nu(s)}{(\underline \nu(s)+\lambda)^2}\,ds \\
\ge \frac{1}{4\lambda^2}  \int_{s^* \vee (m+1)}^\infty \underline \nu(s)\,ds.\]
Moreover, if we have $\underline{\nu}(s) = \omega s^{-\beta}$ for $\beta>1$ (polynomial decay), then, whenever $\lambda \le \omega$,
\[\psi(0,2) \gtrsim \frac{1}{c({\beta})}\left(\frac{\omega^{1/\beta}}{\lambda^{1+1/\beta}}\right).\]
If $\underline\nu(s) = \omega \exp(-\alpha s^{\gamma})$ for $\alpha>0$ and $\gamma\in(0,1)$ (exponential decay), then, whenever $\lambda\leq \omega/e^{\alpha}$,
\[\psi(0,2) \gtrsim \frac{1}{c({\a,\gamma})} \left\{\frac{\log(\omega/\lambda)^{(1-\gamma)/\gamma}}{\lambda}\right\}.\]
\end{proposition}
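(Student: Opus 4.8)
The plan is to first establish the two-inequality chain in full generality, then specialize to the polynomial and exponential profiles. For the first inequality I would discard all but the tail of the series, writing $\psi(m,2)=\sum_{s=m+1}^{\infty}\tfrac{\nu_s}{(\nu_s+\lambda)^2}\ge\sum_{s\ge s^*\vee(m+1)}\tfrac{\nu_s}{(\nu_s+\lambda)^2}$, and then comparing this tail sum to an integral. The key elementary facts are that $\phi(u):=u/(u+\lambda)^2$ has derivative $(\lambda-u)/(u+\lambda)^3$, hence is non-decreasing on $[0,\lambda]$, and that $\underline\nu(s)\le\lambda$ for every $s\ge s^*$. So on the tail range one has $\phi(\nu_s)\ge\phi(\underline\nu(s))$, and $s\mapsto\phi(\underline\nu(s))$ is non-increasing there (an increasing function composed with a non-increasing one); the standard sum-vs-integral bound for a non-increasing summand then gives $\sum_{s\ge s^*\vee(m+1)}\phi(\underline\nu(s))\ge\int_{s^*\vee(m+1)}^{\infty}\phi(\underline\nu(x))\,dx$. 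The second inequality is immediate: for $x\ge s^*$ we have $\underline\nu(x)\le\lambda$, so $(\underline\nu(x)+\lambda)^2\le4\lambda^2$ and $\phi(\underline\nu(x))\ge\underline\nu(x)/(4\lambda^2)$, which integrates to the stated bound.

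For the polynomial profile $\underline\nu(s)=\omega s^{-\beta}$ I would take $m=0$, so $s^*=\lceil(\omega/\lambda)^{1/\beta}\rceil$, and observe that $\lambda\le\omega$ forces $(\omega/\lambda)^{1/\beta}\ge1$, hence $(\omega/\lambda)^{1/\beta}\le s^*\le2(\omega/\lambda)^{1/\beta}$. Then $\int_{s^*}^{\infty}\omega x^{-\beta}\,dx=\omega(s^*)^{1-\beta}/(\beta-1)\ge\tfrac{2^{1-\beta}}{\beta-1}\,\omega^{1/\beta}\lambda^{(\beta-1)/\beta}$, and plugging this into the second inequality yields $\psi(0,2)\ge\tfrac{2^{1-\beta}}{4(\beta-1)}\,\omega^{1/\beta}\lambda^{-1-1/\beta}$, i.e.\ $c_\beta=4(\beta-1)2^{\beta-1}$. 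This case is routine because the full tail integral of $\underline\nu$ is already of the right order.

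The exponential profile $\underline\nu(s)=\omega e^{-\alpha s^\gamma}$ with $\gamma<1$ is more delicate, because a crude "length $\times$ endpoint value" lower bound on $\int_{s^*}^{\infty}\underline\nu$ loses the correct power of $\lambda$. Instead I would substitute $u=\alpha x^\gamma$ as in the proof of Proposition~\ref{prop:spectral-integral} to get $\int_{s^*}^{\infty}\omega e^{-\alpha x^\gamma}\,dx=\omega\gamma^{-1}\alpha^{-1/\gamma}\Gamma(1/\gamma,\alpha(s^*)^\gamma)$, and apply the lower bound $\Gamma(z,x)>x^{z-1}e^{-x}$ from Lemma~\ref{lem:gamma} (valid since $z=1/\gamma>1$), giving $\int_{s^*}^{\infty}\omega e^{-\alpha x^\gamma}\,dx>\omega\gamma^{-1}\alpha^{-1}(s^*)^{1-\gamma}e^{-\alpha(s^*)^\gamma}$. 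Writing $M:=(\log(\omega/\lambda)/\alpha)^{1/\gamma}$, one has $M\le s^*<M+1$, and $\lambda\le\omega/e^\alpha$ gives $M\ge1$; concavity of $t\mapsto t^\gamma$ gives $(s^*)^\gamma\le(M+1)^\gamma\le M^\gamma+\gamma M^{\gamma-1}\le M^\gamma+1$, so $e^{-\alpha(s^*)^\gamma}\ge e^{-\alpha M^\gamma-\alpha}=(\lambda/\omega)e^{-\alpha}$ while $(s^*)^{1-\gamma}\ge M^{1-\gamma}$. Feeding this into the second inequality gives $\psi(0,2)\ge\tfrac{e^{-\alpha}}{4\alpha\gamma}M^{1-\gamma}/\lambda$, and rewriting $M^{1-\gamma}=\alpha^{-(1-\gamma)/\gamma}\log(\omega/\lambda)^{(1-\gamma)/\gamma}$ absorbs the $\alpha$-powers into a constant $c_{\alpha,\gamma}$.

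The step I expect to be the main obstacle is the term-by-term comparison $\phi(\nu_s)\ge\phi(\underline\nu(s))$ in the general inequality: since $\phi$ is \emph{not} monotone on all of $[0,\infty)$, this really requires $\nu_s\le\lambda$ (so that both arguments sit in the increasing regime $[0,\lambda]$). The clean fix is to replace $s^*$ in the general statement by the smallest integer $\tilde s$ with $\nu_{\tilde s}\le\lambda$ (note $\tilde s\ge s^*$) and to argue that in the decay regimes of interest $\underline\nu$ is tight enough that $\tilde s\asymp s^*$; in the polynomial and exponential corollaries one simply applies the Lemma with $\underline\nu(s)=\nu_s$, so this issue does not arise. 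A secondary care point is precisely the need for the Gamma-function lower bound, rather than a rectangle bound, in the exponential case, to recover the $\lambda^{-1}$ scaling.
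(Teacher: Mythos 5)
Your overall route matches the paper's: sum-to-integral via the observation that $t\mapsto t/(t+\lambda)^2$ is increasing on $[0,\lambda]$, then the crude $(\underline\nu+\lambda)^2\le 4\lambda^2$, and finally the incomplete-Gamma lower bound (Lemma~\ref{lem:gamma}) for the exponential case — so this is essentially the same proof. Three remarks. First, the subtle monotonicity point you flag is real: to pass from $\nu_s\ge\underline\nu(s)$ to $\phi(\nu_s)\ge\phi(\underline\nu(s))$ one needs $\nu_s$ itself to sit in the increasing regime $[0,\lambda]$, not just $\underline\nu(s)$; the paper's proof implicitly assumes this as well (it only records the hypothesis $\underline\nu(s)\le\lambda$). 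Your patch — threshold at the smallest $\tilde s$ with $\nu_{\tilde s}\le\lambda$ — is the clean fix, and as you note it does not affect the corollaries when $\underline\nu$ is taken to be the actual decay profile of the spectrum. Second, your elementary observation that for $x\ge s^*$ one already has $(\underline\nu(x)+\lambda)^2\le 4\lambda^2$ is exactly how the paper collapses to the second inequality. Third, and most substantively: in the exponential case your concavity estimate $(M+1)^\gamma\le M^\gamma+\gamma M^{\gamma-1}\le M^\gamma+1$, which yields $\exp(-\alpha(s^*)^\gamma)\ge(\lambda/\omega)e^{-\alpha}$, is genuinely sharper than using the cruder $s^*\le 2M$. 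That cruder bound gives $\exp(-\alpha(s^*)^\gamma)\ge(\lambda/\omega)^{2^\gamma}$, which in the small-$\lambda$ regime costs a vanishing factor $(\lambda/\omega)^{2^\gamma-1}$ and does \emph{not} recover the stated $\lambda^{-1}$ scaling; the paper's proof text takes the $s^*\le 2M$ route but reaches the correct final expression through an algebraic slip (the line $\exp(-\alpha[2M]^\gamma)=(\lambda/\omega)e^{2^\gamma}$ should read $(\lambda/\omega)^{2^\gamma}$). Your concavity step is the argument that actually makes the claimed bound go through; it is the right way to write this case.
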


\begin{proof}
We proceed in steps.

\begin{enumerate}
    \item Note that the function $t \mapsto (t+\lambda)^{-2}t$ is increasing for all $t < \lambda$ while $\underline\nu(s)$ is non-increasing, so their composition is non-increasing. Thus, for each integer $s$ such that $\underline \nu(s) \le \lambda$,
\[\frac{\nu_s}{(\nu_s+\lambda)^2} \gtrsim \int_{s}^{s+1} \frac{\underline \nu(t)}{\{\underline \nu(t)+\lambda\}^2}\,dt.\]
So we may bound the sum by an integral to show
\begin{equation*}
\psi(m,2) \gtrsim \sum_{s=m+1}^\infty \frac{\nu_s}{(\nu_s+\lambda)^2}
 \ge \int_{ s^*  \vee (m+1)}^\infty \frac{\underline \nu(t)}{\{\underline \nu(t)+\lambda\}^2}\,dt
 \ge \frac{1}{4\lambda^2}  \int_{s^* \vee (m+1)}^\infty \underline \nu(t)\,dt,
\end{equation*}
where $s^*$ is the smallest positive integer such that $\underline\nu(s) \le \lambda$.

    \item     Polynomial decay.
If $\underline\nu(s) = \omega s^{-\beta}$ then $s^* \le (\lambda/\omega)^{-1/\beta} + 1$.
Computing the integral as in the proof of Proposition~\ref{prop:spectral-integral},
\begin{align*}
\psi(0,2)
&\gtrsim \frac{1}{4\lambda^2} \int_{s^*}^\infty \omega s^{-\beta}\,ds = \frac{\omega(\beta-1)^{-1}}{4\lambda^2}(s^*)^{1-\beta}
\ge \frac{\omega(\beta-1)^{-1}}{4\lambda^2}\{(\lambda/\omega)^{-1/\beta} + 1\}^{1-\beta} \\
 &\ge \frac{(\beta-1)^{-1}\omega^{1/\beta}}{2^{1+\beta}\lambda^{1+1/\beta}}
 \ge \frac{\omega^{1/\b}}{c(\beta)}\left(\frac{1}{\lambda^{1+1/\beta}}\right)
\end{align*}
since $\lambda \le \omega$ implies that the bracketed term is at most $2(\lambda/\omega)^{-1/\beta}$.
    \item Exponential decay. If $\underline\nu(s) = \omega \exp(-\alpha s^{\gamma})$ then we have
\begin{equation*}
  \{\log(\omega/\lambda)/\alpha\}^{1/\gamma}
  \le
  s^*
  \le
  \{\log(\omega/\lambda)/\alpha\}^{1/\gamma} + 1
   \le
  \{\log(\omega/\lambda)/\alpha+1\}^{1/\gamma}
\end{equation*} following from the fact that $(a + b)^{1/\gamma} \ge a^{1/\gamma} + b^{1/\gamma}$ by Jensen's inequality for $\gamma \in (0,1)$.
Making the substitution $u = \alpha s^\gamma$ as in Proposition~\ref{prop:spectral-integral}, using $\Gamma(z,x) \ge x^{z-1}e^{-x}$ for $z > 1$ from Lemma~\ref{lem:gamma}, and appealing to the bounds on $s^*$,
\begin{align*}
&4\lambda^2 \cdot \psi(0,2)
\ge \int_{s^*}^\infty \omega \exp(-\alpha s^{\gamma}) \,ds
=\omega \gamma^{-1}\alpha^{\frac{-1}{\gamma}} \int_{\alpha (s^*)^\gamma}^\infty \exp(-u) u^{\nicefrac{(1-\gamma)}{\gamma}} \,du  \\
&= \omega \gamma^{-1}\alpha^{\frac{-1}{\gamma}} \Gamma\{1/\gamma, \alpha (s^*)^\gamma\}
\ge \omega \gamma^{-1}\alpha^{\frac{-1}{\gamma}} \{\alpha (s^*)^\gamma\}^{1/\gamma-1}e^{-\alpha (s^*)^\gamma} \\
&=\omega \gamma^{-1} \alpha^{-1} (s^*)^{1-\gam} \exp\{-\alpha(s^*)^\gamma\} \\
&\geq \omega \gamma^{-1} \alpha^{-1} \left[\{\log(\omega/\lambda)/\alpha\}^{1/\gamma} \right]^{1-\gam} \exp(-\alpha[\{\log(\omega/\lambda)/\alpha + 1\}^{1/\gamma}]^\gamma)\\
&=   \omega \gamma^{-1}\alpha^{-1}  \{\log(\omega/\lambda)/\alpha\}^{(1/\gamma) - 1} \frac{\lambda}{\omega}e^{-\alpha}
=\lambda \gamma^{-1} \alpha^{-1} e^{-\alpha}  \{\log(\omega/\lambda)/\alpha\}^{(1/\gamma) - 1}.
\end{align*}
After rearranging, we obtain
$\psi(0,2) \gtrsim \frac{1}{c({\a,\gamma})} \left\{\frac{\log(\omega/\lambda)^{(1-\gamma)/\gamma}}{\lambda}\right\}.$ \qedhere
\end{enumerate}
\end{proof}
A nearly identical argument bounds a quantity useful for anti-concentration.

\begin{lemma}[Lower bound on Hilbert-Schmidt  norm]\label{lem:frob-lower} With the same conditions and notation as Proposition~\ref{prop:eff_dim_lb} we have
\[\tilde\psi \coloneqq\sum_{s=1}^\infty \left\{\frac{\nu_s}{(\nu_s + \lambda)^2}\right\}^2 \ge \frac{1}{16\lambda^4}\int_{s^*}^\infty \underline{\nu}(t)^2\,dt.\]
Under polynomial decay, this implies $\tilde\psi \gtrsim c(\beta,\omega)\lambda^{-2-1/\beta}$, while under exponential decay we have $\tilde\psi \gtrsim c(\alpha,\gamma,\omega) \lambda^{-2}\log(\omega/\lambda)^{1/\gamma - 1}$.
\end{lemma}
\begin{proof}
Since $(t+\lambda)^{-4}t^2$ increases if and only if $(t+\lambda)^{-2}t$ does, the first statement follows exactly as before. We then carry out the computation in both leading cases.
\begin{enumerate}

    \item     Polynomial decay.
If $\underline\nu(s) = \omega s^{-\beta}$ then $s^* \le (\lambda/\omega)^{-1/\beta} + 1$.
Computing the integral as in the proof of Proposition~\ref{prop:spectral-integral},
\begin{align*}
\tilde\psi
&\gtrsim \frac{1}{16\lambda^4} \int_{s^*}^\infty \omega^2 s^{-2\beta}\,ds = \frac{\omega^2(2\beta-1)^{-1}}{16\lambda^4}(s^*)^{1-2\beta}
\ge \frac{c(\omega,\beta)}{\lambda^4}\{(\lambda/\omega)^{-1/\beta} + 1\}^{1-2\beta} \ge \frac{c(\omega,\beta)}{\lambda^{2+1/\beta}}
\end{align*}
since $\lambda \le \omega$ implies that the bracketed term is at most $2(\lambda/\omega)^{-1/\beta}$.

    \item Exponential decay. If $\underline\nu(s) = \omega \exp(-\alpha s^{\gamma})$, making the substitution $u = 2\alpha s^\gamma$ as in Proposition~\ref{prop:spectral-integral}, using $\Gamma(z,x) \ge x^{z-1}e^{-x}$ for $z > 1$ from Lemma~\ref{lem:gamma}, and appealing to the bounds on $s^*$ given in the proof of Proposition~\ref{prop:eff_dim_lb},
\begin{align*}
16\lambda^4\tilde\psi
&\gtrsim \int_{s^*}^\infty \omega^2 \exp(-2\alpha s^{\gamma}) \,ds
=\omega^2 \gamma^{-1}(2\alpha)^{\frac{-1}{\gamma}} \int_{2\alpha (s^*)^\gamma}^\infty \exp(-u) u^{\nicefrac{(1-\gamma)}{\gamma}} \,du  \\
&= \omega^2 \gamma^{-1}(2\alpha)^{\frac{-1}{\gamma}} \Gamma\{1/\gamma, 2\alpha (s^*)^\gamma\}
\ge \omega^2 \gamma^{-1}(2\alpha)^{\frac{-1}{\gamma}} \{2\alpha (s^*)^\gamma\}^{1/\gamma-1}e^{-2\alpha (s^*)^\gamma} \\
&=c(\omega, \gamma,\alpha) (s^*)^{1-\gam} \exp\{-2\alpha(s^*)^\gamma\} \\
&\geq c'(\omega, \gamma,\alpha) \left[\{\log(\omega/\lambda)/\alpha\}^{1/\gamma} \right]^{1-\gam} \exp(-2\alpha[\{\log(\omega/\lambda)/\alpha+1\}^{1/\gamma}]^\gamma)\\
&=  c''(\omega, \gamma,\alpha)  \{\log(\omega/\lambda)/\alpha\}^{(1/\gamma) - 1} \frac{\lambda^2}{\omega^2}e^{-2\alpha}.  & \qedhere
\end{align*}
\qedhere
\end{enumerate}
\end{proof}

\textbf{Leading cases.}
\textcolor{black}{Suppose the eigenvalues satisfy $\nu_m \le \omega m^{-\beta}$ for some $\beta > 1$. The following results specialize the bounded-summand coupling.}

In proving these results, we freely use the following lemma to simplify calculations by optimizing over real numbers instead of integers.
\begin{lemma}
Let $\underline{m} \le m \le \bar m$ be positive integers, and let $\psi_{\uparrow}(u)$ and $\psi_{\downarrow}(u)$  be increasing and decreasing functions of $u \in \mathbb{R}$. Finally, suppose $\psi_{\uparrow}(u+1) \lesssim  \psi_{\uparrow}(u)$ for $u \ge1$. Then
\[\min_{\underline{m} \le m \le \bar m} A_1 \psi_{\downarrow}(m) + A_2 \psi_{\uparrow}(m) \lesssim \inf_{t \in [\underline{m} ,\bar m]} A_1 \psi_{\downarrow}(t) + A_2 \psi_{\uparrow}(t).\]
\end{lemma}
\begin{proof}
Suppose $t^*$ attains the infimum on the right hand side. Then
\begin{align*}
    &\min_{\underline{m} \le m \le \bar m} A_1 \psi_{\downarrow}(m) + A_2 \psi_{\uparrow}(m)
    \le  A_1 \psi_{\downarrow}(\lceil t^* \rceil) + A_2 \psi_{\uparrow}(\lceil t^* \rceil) \\
    & \qquad \lesssim A_1 \psi_{\downarrow}(\lceil t^* \rceil) + A_2\psi_{\uparrow}(\lceil t^* \rceil-1)
    \le A_1 \psi_{\downarrow}(t^* ) + A_2\psi_{\uparrow}( t^*)
\end{align*}
follows by using $t \in [\underline{m} ,\bar m]$, $\psi_{\uparrow}(u+1) \lesssim  \psi_{\uparrow}(u)$, and monotonicity.
\end{proof}
\begin{corollary}[Sobolev RKHS, bounded data]\label{cor:1}
{\color{black}
Suppose that $\nu_m \le \omega m^{-\beta}$ for some $\beta > 1$ and $\|U_i\|\leq a$. Then there exists a Gaussian $Z$ with covariance $\Sigma$ such that, with probability at least $1-\eta$,
\[
\left\|\left(\frac{1}{\sqrt{n}} \sum_{i=1}^n U_i\right)-Z\right\|
\lesssim_\beta
\omega^{\frac{1}{2\beta}}a^{\frac{\beta-1}{\beta}}
\{\log(6/\eta)\}^{\frac{1}{2\beta}}
\left\{\frac{32+2\log_2(n)}{n\eta}\right\}^{\frac{\beta-1}{2\beta}}.
\]
}
\end{corollary}

\begin{proof}
\color{black}
In this case $\sigma^2(m) \lesssim_\beta \omega m^{1-\beta}$ by Proposition~\ref{prop:spectral-integral}. Theorem~\ref{thm:technical} therefore gives, for any $m$,
\[
\left\|\left(\frac{1}{\sqrt n}\sum_{i=1}^nU_i\right)-Z\right\|
\lesssim_\beta
\sqrt{\log(6/\eta)}\,\omega^{1/2}m^{(1-\beta)/2}
+a\sqrt{\frac{m\{32+2\log_2(n)\}}{n\eta}}.
\]
Balancing the two terms gives
\[
m=\left\{\frac{\omega n\eta\log(6/\eta)}
{a^2\{32+2\log_2(n)\}}\right\}^{1/\beta}.
\]
Substitution, together with the preceding integer-rounding lemma, gives the stated rate.
\end{proof}

Now, suppose $\sigma(m)$ is exponentially decaying. We obtain a $\sqrt{n}$ rate of Gaussian approximation up to logarithmic factors. Notably, this includes the case where $U_i = k_{X_i}$ for $k$ a smooth, radial kernel on $\bb{R}^d$ \citep{belkin2018approximation, wendland2004scattered}.

\begin{corollary}[Gaussian RKHS, bounded data]\label{cor:3}
{\color{black}
Suppose that $\nu_m \leq \omega \exp(-\a m^\gamma)$ for some $\a > 0$, $\gamma\in (0,1)$, and $\|U_i\|\leq a$ almost surely. Then there exists a Gaussian $Z$ with covariance $\Sigma$ such that, with probability at least $1-\eta$,
\[
\left\|\frac{1}{\sqrt{n}} \sum_{i=1}^nU_i-Z\right\|
\lesssim_{\alpha,\gamma}
a\sqrt{\frac{32+2\log_2(n)}{n\eta}}
\left[\frac{1}{\a}\log\left\{
\frac{\omega n\eta\log(6/\eta)}
{a^2\{32+2\log_2(n)\}} \vee 1\right\}\right]^{\frac{1}{2\gamma}}.
\]
}
\end{corollary}
\begin{proof}
\color{black}
In this case $\sigma^2(m)\lesssim_{\alpha,\gamma}\omega m^{1-\gamma}\exp(-\a m^\gamma)$ by Proposition~\ref{prop:spectral-integral}. Theorem~\ref{thm:technical} gives
\[
\left\|\frac{1}{\sqrt n}\sum_{i=1}^nU_i-Z\right\|
\lesssim_{\alpha,\gamma}
\sqrt{\log(6/\eta)}\,\omega^{1/2}m^{(1-\gamma)/2}e^{-\a m^\gamma/2}
+a\sqrt{\frac{m\{32+2\log_2(n)\}}{n\eta}}.
\]
Rounding up to the nearest integer, choose
\[
m=\left[\frac{1}{\a}\log\left\{
\frac{\omega n\eta\log(6/\eta)}
{a^2\{32+2\log_2(n)\}} \vee 1\right\}\right]^{1/\gamma} \vee 1.
\]
The first term is then bounded by the second because $m^{(1-\gamma)/2}\leq m^{1/2}$.
\end{proof}

 \section{Bootstrap coupling details (Appendix~\ref{sec:cov})}\label{sec:cov_appendix}

The abstract bound is in terms of
$    \Delta_1=\|\hat{\Sigma}-\Sigma\|_{\HS}$ and $
    \Delta_2= \tr\{\Pi_m^\perp (\hat \Sigma - \Sigma) \Pi_m^\perp\} \vee 0,$
where
$$
\Sigma=\mathbb{E}(U_i\otimes U_i^*),\quad \hat{\Sigma}=\mathbb{E}_n(U_i\otimes U_i^*)-\mathbb{E}_n(U_i)\otimes \{\mathbb{E}_n(U_i)\}^*.
$$
\textcolor{black}{We bound these key quantities under boundedness. The bounds hold with high probability using randomness in the sampled data, $U$.}
Throughout, we assume that $\|U_i\|\leq a$ almost surely.

\begin{lemma}[Covariance estimation]\label{lemma:bounded-concentration-cov} If $\norm{U_i} \le a$ almost surely, then w.p. $1-2\eta$:
$
    \snorm{\hat \Sigma - \Sigma}_{\HS} \le 2\log(2/\eta)^2\left\{\sqrt{\frac{a^2\sigma^2(0)}{n}} \vee \frac{4a^2}{n} \vee \frac{8a^2}{n^2}\right\}.
$
\end{lemma}

\begin{proof}
We proceed in steps.
\begin{enumerate}
    \item Decomposition. Write
    \begin{align*}
    \hat \Sigma - \Sigma
    &=\mathbb{E}_n(U_i\otimes U_i^*)-\mathbb{E}_n(U_i)\otimes \{\mathbb{E}_n(U_i)\}^*-\Sigma \\
    &=\left\{\mathbb{E}_n(U_i\otimes U_i^*)-\mathbb{E}(U_i\otimes U_i^*)\right\}-\left[\mathbb{E}_n(U_i)\otimes \{\mathbb{E}_n(U_i)\}^*\right].
\end{align*}
    \item Focusing on the former term, write $\Sigma_i = U_i \otimes U_i^*$. Since
$
\norm{\Sigma_i}_{\HS} =\|U_i\|^2\le a^2
$
and
$$
\mathbb{E}\|\Sigma_i\|^2_{\HS}=\int \tr(\Sigma_i^* \Sigma_i) \mathrm{d}\mathbb{P}\leq \int \|\Sigma_i\|_{\op} \tr(\Sigma_i) \mathrm{d}\mathbb{P}\leq a^2 \tr(\Sigma)=a^2 \sigma^2(0),
$$
by Bernstein inequality (Lemma~\ref{lemma:c_dv}), w.p. $1-\eta$,
$$
\snorm{\mathbb{E}_n(U_i\otimes U_i^*)-\mathbb{E}(U_i\otimes U_i^*)}_{\HS} \le 2\log(2/\eta)\left\{\sqrt{\frac{a^2 \sigma^2(0)}{n}} \vee \frac{2a^2}{n}\right\}.
$$
\item Focusing on the latter term,
$
    \| \mathbb{E}_n(U_i)\otimes \{\mathbb{E}_n(U_i)\}^*\|_{\HS}
    = \|\mathbb{E}_n(U_i)\|^2
    =\|\mathbb{E}_n(U_i)-0\|^2,
$
 where
$
\|U_i\|\leq a$ and $
\mathbb{E}\|U_i\|^2 \leq a^2.
$
By Bernstein inequality (Lemma~\ref{lemma:c_dv}), w.p. $1-\eta$,
$
\|\mathbb{E}_n(U_i)-0\|\leq 2\log(2/\eta)\left\{\sqrt{\frac{a^2}{n}} \vee \frac{2a}{n}\right\}
$
and therefore
$$
\| \mathbb{E}_n(U_i)\otimes \{\mathbb{E}_n(U_i)\}^*\|_{\HS} \leq 4\log(2/\eta)^2\left\{\frac{a^2}{n} \vee \frac{4a^2}{n^2}\right\}=2\log(2/\eta)^2\left\{\frac{2a^2}{n} \vee \frac{8a^2}{n^2}\right\}.
$$
\item In summary
\begin{align*}
    &\snorm{\hat \Sigma - \Sigma}_{\HS}
    \le
     \snorm{\mathbb{E}_n(U_i\otimes U_i^*)-\mathbb{E}(U_i\otimes U_i^*)}_{\HS} + \| \mathbb{E}_n(U_i)\otimes \{\mathbb{E}_n(U_i)\}^*\|_{\HS} \\
     &\leq 2\log(2/\eta)\left\{\sqrt{\frac{a^2 \sigma^2(0)}{n}} \vee \frac{2a^2}{n}\right\}+ 2\log(2/\eta)^2\left\{\frac{2a^2}{n} \vee \frac{8a^2}{n^2}\right\}. & \qedhere
\end{align*}
\end{enumerate}
\end{proof}

\begin{lemma}\label{lemma:bounded-concentration-cov2}
If $\norm{U_i} \le a$ almost surely, then w.p. $1-\eta$:
$$
    \tr \{\Pi_m^\perp (\hat \Sigma - \Sigma) \Pi_m^\perp\} \vee 0 \le 2\log(2/\eta)\left\{\sqrt{\frac{a^2\sigma^2(m)}{n}} \vee \frac{2a^2}{n}\right\}.
$$
\end{lemma}

\begin{proof}
We proceed in steps.
\begin{enumerate}
    \item Decomposition. As before,
    $$
    \Pi_m^\perp (\hat \Sigma - \Sigma) \Pi_m^\perp = \Pi_m^\perp\left\{\mathbb{E}_n(U_i\otimes U_i^*)-\mathbb{E}(U_i\otimes U_i^*)\right\}\Pi_m^\perp-\Pi_m^\perp\left[\mathbb{E}_n(U_i)\otimes \{\mathbb{E}_n(U_i)\}^*\right]\Pi_m^\perp.
    $$
    \item In the former term,
    $
\tr [\Pi_m^\perp\left\{\mathbb{E}_n(U_i\otimes U_i^*)-\mathbb{E}(U_i\otimes U_i^*)\right\}\Pi_m^\perp] =\bb{E}_n\xi_i-\bb{E}\xi_i
$
 where
\begin{align*}
\xi_i&=\tr \Pi_m^\perp \Sigma_i \Pi_m^\perp =\tr \{\Pi_m^\perp U_i\otimes U_i^* \Pi_m^\perp\}=\|\Pi_m^\perp U_i\|^2\leq \|U_i\|^2\leq a^2, \\
\bb{E} \xi_i^2&\leq a^2 \bb{E} \xi_i=a^2 \int \tr \Pi_m^\perp \Sigma_i \Pi_m^\perp \mathrm{d}\mathbb{P}=a^2\int \tr \Pi_m^\perp \Sigma_i \mathrm{d}\mathbb{P}=a^2\tr \Pi_m^\perp \Sigma=a^2\sigma^2(m).
\end{align*}
By Bernstein inequality (Lemma~\ref{lemma:c_dv}), w.p. $1-\eta$,
$$
|\tr [\Pi_m^\perp\left\{\mathbb{E}_n(U_i\otimes U_i^*)-\mathbb{E}(U_i\otimes U_i^*)\right\}\Pi_m^\perp]|  \leq 2\log(2/\eta)\left\{\sqrt{\frac{a^2 \sigma^2(m)}{n}} \vee \frac{2a^2}{n}\right\}.
$$
    \item Focusing on the latter term,
    $
    \tr(\Pi_m^\perp\left[\mathbb{E}_n(U_i)\otimes \{\mathbb{E}_n(U_i)\}^*\right]\Pi_m^\perp)=\|\Pi_m^\perp\mathbb{E}_n(U_i)\|^2 \geq 0.
    $
    \item In summary.
    \begin{align*}
       \tr \Pi_m^\perp (\hat \Sigma - \Sigma) \Pi_m^\perp
        &\leq \tr [\Pi_m^\perp\left\{\mathbb{E}_n(U_i\otimes U_i^*)-\mathbb{E}(U_i\otimes U_i^*)\right\}\Pi_m^\perp] \\
        &\leq 2\log(2/\eta)\left\{\sqrt{\frac{a^2 \sigma^2(m)}{n}} \vee \frac{2a^2}{n}\right\}.  & \qedhere
    \end{align*}
\end{enumerate}

\end{proof}

\section{Uniform confidence band details (Appendix~\ref{sec:anti})}\label{sec:explicit}

We collect constants with the abbreviations $\bar M = (\kappa\vee 1)^2(\norm{f_0} \vee \bar{\sigma} \vee 1)^2$, $\tilde{M}=\frac{1 }{\underline{\sigma}}(\kappa\|f_0\|+\bar{\sigma})$, and $M=\bar M \vee \tilde{M}$. We abbreviate  $\N(\lambda)=\tr(T_{\lambda}^{-2}T)$ where $T_{\lambda}=T+\lambda$.
To simplify high probability statements, we impose that $n$ is sufficiently large.

\begin{assumption}[Rate condition]\label{assumption:rate-condition}
The sample size $n$, regularization parameter $\lambda$, and kernel $k$, are such that
\textcolor{black}{$n \ge 16\kappa^2\ln(24/\eta^2)^2\{\N(\lambda) \vee \lambda^{-1} \vee \lambda^{-2}\N(\lambda)^{-1}\}$}.
\end{assumption}

\textcolor{black}{For fixed $\eta$, the fundamental condition within Assumption~\ref{assumption:rate-condition} is $\N(\lambda)/n \downarrow 0$, which is necessary for KRR to be consistent in $H$-norm. If $\eta=n^{-\xi}$, the corresponding condition is $\N(\lambda)\log(n)^2/n\downarrow0$, up to constants. Under the weak regularity condition $\N(\lambda)\geq \lambda^{-1}$, this also controls the remaining terms in Assumption~\ref{assumption:rate-condition}.}

\subsubsection*{B and L for general data.}

\begin{lemma}[Bias upper bound; Theorem 4 of \cite{smale2005shannon}]\label{lemma:bias}
    Under Assumption~\ref{assumption:source}$, \|f_{\lambda}-f_0\| \leq \kappa^{1-r} \lambda^{(r-1)/2}\|f_0\|$.
\end{lemma}

\begin{lemma}[Variance lower bound]\label{lemma:hnorm-lb} Let $Z$ be a Gaussian random element of $H$ with covariance $\Sigma$, and suppose $\bb{E}(\ep_i^2|X_i) \ge \underline{\sigma}^2$ almost surely. Then w.p. $1-\eta$,
 $\norm{Z} \ge \sqrt{\underline\sigma^2\N(\lambda)} - \Big\{2 + \sqrt{2\log(1/\eta)}\Big\}\sqrt{M/\lambda}
.$
\end{lemma}

\begin{proof}
We lower bound $\bb{E}\snorm{Z}$ via the identity
$
    \{\bb{E}(\snorm{Z})\}^2 = \bb{E}(\snorm{Z}^2) - \bb{E}\{\snorm{Z} - \bb{E}(\snorm{Z})\}^2$
then appeal to Borell's inequality (Lemma~\ref{lemma:borell}). Let $B_H$ be the unit ball in $H$.

\begin{enumerate}
    \item To upper bound $ \bb{E}(\snorm{Z} - \bb{E}\snorm{Z})^2$, we express $\norm{Z}$ as the supremum of a Gaussian process: $\norm{Z}=\sup_{t\in B_H}\bk{Z}{t}=\sup_{t\in B_H}G_t$. By Lemma~\ref{lemma:krr-bahadur-covariance} and by  maximizing $s \mapsto (s+\lambda)^{-2}s$,
$
\sigma^2_T=\sup_{t \in B_H} \bb{E} \bk{Z}{t}^2 = \norm{\Sigma}_{\op} \le M \norm{T_{\lambda}^{-2}T}_{\op} \le M/\lambda.
$
Similarly,
$\bb{E}\norm{Z}^2 = \tr \Sigma \le M \tr(T_{\lambda}^{-2}T) = M\N(\lambda)$, so by Markov's inequality $\bk{Z}{t}$ is a.s.~bounded on $B_H$. Thus, by combining the two inequalities of Borell's inequality (Lemma~\ref{lemma:borell}) with a union bound we have
\begin{align*}
\mathbb{P}\left\{ (\norm{Z} - \bb{E}\norm{Z})^2 \ge u \right\} &= \mathbb{P}\left( |\norm{Z} - \bb{E}\norm{Z}| \ge \sqrt u \right) \le 2\exp\left(-\frac{u}{2M/\lambda}\right).
\end{align*}
By integrating the tail,
\begin{align*}
\bb{E}(\norm{Z} - \bb{E}\norm{Z})^2
&= \int_{0}^\infty \mathbb{P}\left\{ (\norm{Z} - \bb{E}\norm{Z})^2 \ge u \right\}\,du
\le \int_0^\infty 2\exp\left(-\frac{u}{2M/\lambda}\right) \, du
= 4 M/\lambda.
\end{align*}

    \item We lower bound $\bb{E}(\snorm{Z}^2)$ by Lemma~\ref{lemma:krr-bahadur-covariance2}: $\bb{E}\snorm{Z}^2 = \tr \Sigma \ge \underline{\sigma}^2 \tr(T_{\lambda}^{-2}T) = \underline\sigma^2 \N(\lambda).$
    \item Combining the upper and lower bounds with the identity,
$
\bb{E}\snorm{Z}
\ge \sqrt{\underline\sigma^2\N(\lambda) - 4M/\lambda}.
$ Using the second tail bound of Borell's inequality (Lemma~\ref{lemma:borell}) in inverted form, as well as $\sqrt{a-b} \ge \sqrt{a} - \sqrt{b}$ for $a\geq b \geq 0$, we conclude that w.p.~$1-\eta$,
\begin{align*}
    \norm{Z} &\ge \bb{E}\snorm{Z}
 - u \ge \sqrt{\underline\sigma^2\N(\lambda) - 4M/\lambda}
 - \sqrt{2M\log(1/\eta)/\lambda} \\
 &\ge \sqrt{\underline\sigma^2\N(\lambda)} - \Big\{2 + \sqrt{2\log(1/\eta)}\Big\}\sqrt{M/\lambda}. & \qedhere
\end{align*}
\end{enumerate}
\end{proof}

\paragraph{\textcolor{black}{Anti-concentration for general data.}}

\begin{proposition}[\textcolor{black}{Anti-concentration}]\label{prop:hnorm-anti}
{\color{black}
Suppose that the eigenvalues $\nu_s(T)$ of the integral operator $T$ satisfy either (i) polynomial decay, i.e.~$\nu_s(T) \asymp \omega s^{-\beta}$ for $\beta>1$, or
(ii) exponential decay, i.e.~$\nu_s(T) \asymp \omega \exp(-\a s^{\gamma})$ for $\gamma\in(0,1)$. Define
\[
\bar\zeta_\lambda
=\frac{\sqrt{M\N(\lambda)}}{\underline{\sigma}^2}
\|\,T_\lambda^{-2}T\|_{\HS}^{-1}
\quad\text{and}\quad
\mathfrak{a}_{\|Z\|}(\delta)
=\sup_{t\in\bb R}\bb{P}\{|\|Z\|-t|\leq\delta\}.
\]
There are universal constants $c,C>0$ such that, for all sufficiently small $\lambda$, whenever
$x=\delta\bar\zeta_\lambda\leq c$,
\[
\mathfrak{a}_{\|Z\|}(\delta)
\leq Cx\sqrt{\log(e/x)}.
\]
Under polynomial decay,
\[
\bar\zeta_\lambda
\lesssim_{\beta,\omega}
\frac{\sqrt{M\lambda}}{\underline{\sigma}^2},
\]
while under exponential decay,
\[
\bar\zeta_\lambda
\lesssim_{\alpha,\gamma,\omega}
\frac{\sqrt{M\lambda}}{\underline{\sigma}^2}
\log(\omega/\lambda)^{1/2}.
\]
}
\end{proposition}

\begin{proof} We proceed in steps.
{\color{black}
\paragraph{Step 1: $\chi^2$ anti-concentration for small $t$}
Put $A_\lambda=T_\lambda^{-2}T$. Its eigenvalues are $\nu_j/(\nu_j+\lambda)^2$, so $\|A_\lambda\|_{\HS}^2=\tilde\psi$ in the notation of Lemma~\ref{lem:frob-lower}. Since $\|A_\lambda\|_{\op}\leq(4\lambda)^{-1}$, that lemma gives, under either spectral-decay condition,
\(
\|A_\lambda\|_{\op}^2=o(\|A_\lambda\|_{\HS}^2).
\)
Moreover, Lemma~\ref{lemma:krr-bahadur-covariance2} and eigenvalue monotonicity imply
\[
\|\Sigma\|_{\HS}\geq\underline\sigma^2\|A_\lambda\|_{\HS},
\qquad
\|\Sigma\|_{\HS}^2-\|\Sigma\|_{\op}^2
\geq\underline\sigma^4\{\|A_\lambda\|_{\HS}^2-\|A_\lambda\|_{\op}^2\}.
\]
Thus the coefficient in Theorem~2.7 of \citet{gotze2019large} satisfies, for all sufficiently small $\lambda$,
\[
b\coloneqq
\left[\|\Sigma\|_{\HS}
\{\|\Sigma\|_{\HS}^2-\|\Sigma\|_{\op}^2\}^{1/2}\right]^{-1/2}
\lesssim\underline\sigma^{-2}\|A_\lambda\|_{\HS}^{-1}
=\frac{\bar\zeta_\lambda}{\sqrt{M\N(\lambda)}}.
\]
In particular, the cited theorem gives, for any $w > 0$, that
\[
\sup_{q>0}\bb{P}\{q<\|Z\|^2<q+w\}\lesssim bw.
\]

\paragraph{Step 2: concentration for large $t$. }Fix $\delta,u>0$. If $t\leq\bb{E}\|Z\|+u+\delta$ (the claim being trivial when $t+\delta\leq0$), the corresponding interval for $\|Z\|^2$ has width at most $4\delta(t+\delta)$, and hence, by a limiting argument when its lower endpoint is zero,
\[
\bb{P}(|\|Z\|-t|\leq\delta)
\lesssim b\delta(\bb{E}\|Z\|+u+\delta).
\]
If $t>\bb{E}\|Z\|+u+\delta$, Borell's inequality (Lemma~\ref{lemma:borell}) instead gives
\[
\bb{P}(|\|Z\|-t|\leq\delta)
\leq\exp\left(-\frac{u^2}{2\|\Sigma\|_{\op}}\right).
\]
Combining the two cases gives
\begin{equation}\label{eq:ac-primitive}
\mathfrak{a}_{\|Z\|}(\delta)
\lesssim b\delta(\bb{E}\|Z\|+u+\delta)
+\exp\left(-\frac{u^2}{2\|\Sigma\|_{\op}}\right).
\end{equation}

Now let $x=\delta\bar\zeta_\lambda\leq c$ and take $u=\|\Sigma\|_{\op}^{1/2}\sqrt{2\log(1/x)}$. Jensen's inequality gives $\bb{E}\|Z\|\leq(\tr\Sigma)^{1/2}$, while $\|\Sigma\|_{\op}^{1/2}\leq(\tr\Sigma)^{1/2}$ and the definition of $b$ gives $b\tr\Sigma\geq1$. Since $\tr\Sigma\leq M\N(\lambda)$ by Lemma~\ref{lemma:krr-bahadur-covariance}, the preceding comparison of $b$ and $\bar\zeta_\lambda$ yields
\[
b\delta\bb{E}\|Z\|\lesssim x,
\qquad
\frac{\delta}{(\tr\Sigma)^{1/2}}\lesssim x,
\qquad
b\delta u\lesssim x\sqrt{\log(1/x)},
\qquad
b\delta^2\lesssim x^2.
\]
Substituting these bounds into \eqref{eq:ac-primitive} and absorbing constants then proves
\(
\mathfrak{a}_{\|Z\|}(\delta)
\lesssim x\sqrt{\log(e/x)}.
\)

\paragraph{Step 3: decay rates.} Under polynomial decay, Proposition~\ref{prop:effdim} and Lemma~\ref{lem:frob-lower} give
\[
\sqrt{\N(\lambda)}\lesssim_{\beta,\omega}
\lambda^{-\f12-\f{1}{2\beta}},
\qquad
\|\,T_\lambda^{-2}T\|_{\HS}^{-1}
\lesssim_{\beta,\omega}\lambda^{1+\f{1}{2\beta}},
\]
which proves the stated polynomial bound on $\bar\zeta_\lambda$. Under exponential decay, the same results give
\[
\sqrt{\N(\lambda)}\lesssim_{\alpha,\gamma,\omega}
\lambda^{-\f12}\log(\omega/\lambda)^{1/(2\gamma)},
\qquad
\|\,T_\lambda^{-2}T\|_{\HS}^{-1}
\lesssim_{\alpha,\gamma,\omega}
\lambda\log(\omega/\lambda)^{1/2-1/(2\gamma)}. \qedhere\]
}
\end{proof}

\paragraph{\textcolor{black}{Q and R for bounded data.}}

\begin{proposition}[Gaussian approximation]\label{prop:gaussian-error-apx}
If Assumption~\ref{assumption:rate-condition} holds then \textcolor{black}{there exists a Gaussian $Z$ in $H$, with covariance $\Sigma$,} such that w.p. $1-\eta$,
$$
\textcolor{black}{\norm{\sqrt{n}(\hat f - f_\lambda) - Z} \lesssim Q_{\mathrm{bd}}(T,n,\lambda,\eta) M + \frac{\N(\lambda)}{\sqrt{n}}M\log(8/\eta)^2,}
$$
where
\textcolor{black}{$Q_{\mathrm{bd}}(T,n,\lambda,\eta) = \frac{1}{\lambda}\inf_{m\ge 1} \left[\sqrt{\log(6/\eta)}\,\sigma(T,m) + \sqrt{\frac{m\{32+2\log_2(n)\}}{n\eta}}\right].$}
\end{proposition}
\begin{proof}
By Theorem \ref{thm:bahadur}, w.p. $1-\eta$,
$\norm{\sqrt{n}(\hat f - f_\lambda) - \frac{1}{\sqrt n} \sum_{i=1}^n U_i} \lesssim \frac{\N(\lambda)}{\sqrt{n}}M\log(4/\eta)^2$
since
$
\frac{2\kappa}{n\lambda} \leq \sqrt{\frac{\N(\lambda)}{n}} \iff n\geq \frac{4\kappa^2}{\N(\lambda)\lambda^2}.
$
Then, using Theorem~\ref{thm:technical} together with the bounds deduced in Lemmas \ref{lemma:inference-rv-bounds} and~\ref{lemma:inference-rv-bounds-bounded}, we deduce that w.p. $1-\eta$,
\begin{align*}
    \textcolor{black}{\left\|\frac{1}{\sqrt{n}} \sum_{i=1}^n U_i-Z\right\|}
    & \textcolor{black}{\lesssim \inf_{m\ge 1}  \left\{\sqrt{\log(6/\eta)} \sigma(\Sigma,m) + a\sqrt{\frac{m\{32+2\log_2(n)\}}{n\eta}}  \right\}} \\
    & \textcolor{black}{\lesssim  \frac{M}{\lambda}\inf_{m\ge 1} \left\{ \sigma(T,m)\sqrt{\log(6/\eta)} + \sqrt{\frac{m\{32+2\log_2(n)\}}{n\eta}}\right\}.}
\end{align*}
A union bound gives the desired result,
after consolidating log factors.
\end{proof}

\begin{proposition}[Bootstrap approximation]\label{prop:krr-bs-coupling}
If Assumption~\ref{assumption:rate-condition} holds then there exists a random variable $Z$ whose conditional distribution given $D$ is Gaussian with covariance $\Sigma$, such that w.p. $1-\eta$
$$
\mathbb{P}\left[
\beef \norm{\mathfrak{B} - Z}
\lesssim M\log(12/\eta)^2\left\{ R_{\mathrm{bd}}(T,n,\lambda) + \frac{\kappa \N(\lambda)}{\sqrt n}\right\} \middle| D \right] \ge 1-\eta
$$
where
$R_{\mathrm{bd}}(T,n,\lambda) = \inf_{m\geq 1} \left\{\left(\frac{m \N(\lambda)}{\lambda^2 n}
+ \frac{m}{\lambda^4 n^2} \right)^{\f 1 4} + \frac{1}{\lambda}\sigma(T,m) \right\}.$
\end{proposition}

\begin{proof}
{\color{black}
Under Assumption \ref{assumption:rate-condition}, Theorem \ref{thm:bahadur2}, applied with failure parameter $\eta^2/2$, implies that
$
    \left\|\mathfrak{B}-Z_{\mathfrak{B}} \right\|
    \lesssim \frac{\N(\lambda)}{\sqrt n} \kappa M \ln(24/\eta^2)^2
$
w.p. $1-\eta^2/2$ since $n\geq \frac{16\kappa^2}{\N(\lambda)\lambda^2}$ implies
$
\frac{4\kappa}{n\lambda} \leq \sqrt{\frac{\N(\lambda)}{n}}
$, $
\sqrt{\N(\lambda)} \geq \frac{1}{\sqrt{n}\lambda}
$, and $
\sqrt{\N(\lambda)} \geq \frac{\N(\lambda)^{1/4}}{n^{1/4}\lambda^{1/2}}.
$
We use Corollary \ref{cor:bootstrap-apx} along with the bounds in Lemmas~\ref{lemma:inference-rv-bounds} and~\ref{lemma:inference-rv-bounds-bounded}. In particular, set $W=Z_{\mathfrak{B}}$, $W'=\mathfrak{B}$, and
$\delta_\eta=\frac{\N(\lambda)}{\sqrt n} \kappa M \ln(24/\eta^2)^2$.
Then there must exist $Z$ with the desired conditional distribution, such that w.p. $1-\eta$, the $\sigma(D)$-conditional probability of the event
\begin{align*}
\norm{Z - \mathfrak{B}}
&\lesssim C' \log(6/\eta)^{3/2} \inf_{m\geq 1} \left[m^{\f 1 4}\left\{\frac{a^2\sigma^2(0)}{n} + \frac{a^4}{n^2}\right\}^{\f 1 4} + \sigma(m)\right] + \delta_\eta \\
&\lesssim  \log(6/\eta)^{3/2} \inf_{m\geq 1} \bigg\{m^{\f 1 4}\left(\frac{ M^2 \cdot M^2 \N(\lambda)}{\lambda^2 n} + \frac{M^4}{\lambda^4 n^2} \right)^{\f 1 4}
 + \frac{M}{\lambda}\sigma(T,m) \bigg\} + \frac{\N(\lambda)}{\sqrt n} \kappa M \log(24/\eta^2)^2 \\
&\lesssim M\log(12/\eta)^2\left[ \inf_{m\geq 1} \left\{m^{\f 1 4}\left(\frac{ \N(\lambda)}{\lambda^2 n} + \frac{1}{\lambda^4 n^2} \right)^{\f 1 4} + \frac{\sigma(T,m)}{\lambda} \right\} + \frac{\kappa \N(\lambda)}{\sqrt n}\right]
\end{align*}
is at least $1-\eta$ when $n\geq 2$.
}
\end{proof}

\paragraph{\textcolor{black}{Q and R for leading cases.}}
We now suppress dependence on $T$ in the notation. \textcolor{black}{Write $c_n=32+2\log_2(n)$.}
\begin{align*}
\textcolor{black}{Q_{\mathrm{bd}} (n, \lambda,\eta)} &= \textcolor{black}{\f 1 \lambda \inf_{m\ge 1} \left\{\sqrt{\log(6/\eta)}\,\sigma(m) + \sqrt{\frac{m c_n}{n\eta}} \right\}},\\
\textcolor{black}{R_{\mathrm{bd}} (n, \lambda)} &= \textcolor{black}{\inf_{m\geq 1} \left[\left\{\frac{m \,\N(\lambda)}{\lambda^2 n}  + \frac{m}{\lambda^4n^2
} \right\}^{\f 1 4} + \frac{\sigma(m)}{\lambda} \right]}.
\end{align*}

\textcolor{black}{For the bounded-data rates below, we take $\eta=n^{-\xi}$ for a fixed $\xi\in(0,1)$ and suppress logarithmic factors.}
We simplify each expression under two different assumptions on the spectrum of $T$, namely
(i) polynomial decay, i.e.~$\nu_s(T) \le \omega s^{-\beta}$ (``Sobolev type RKHS'');
(ii) exponential decay, i.e.~$\nu_s(T) \le \omega \exp(-\a s^{\gamma})$ (``Gaussian type RKHS'').
Table~\ref{tab:inference-rates} is our claim.

\begin{table}[H]
    \centering
    {\color{black}
    \begin{tabular}{ccc}
        & Poly. : $\nu_s \le \omega s^{-\beta}$ & Exp. : $\nu_s \le \omega \exp(-\alpha s^\gamma)$ \\
        \cmidrule(lr){2-2}\cmidrule(lr){3-3}
        \hline
        \rule{0pt}{3ex} $Q_{\bullet}$ &
            $\lambda^{-1}n^{\f{(1-\xi)(1-\beta)}{2\beta}}$ &
            $\lambda^{-1} n^{-\frac{1-\xi}{2}}$ \\
        \rule{0pt}{1ex} $R_{\bullet}$ &
            $\left({\lambda^{3+\f{1}{\beta}+\f{2}{\beta-1}}n}\right)^{\f{1-\beta}{4\beta-2}}$ &
            $\lambda^{-3/4}n^{-1/4}$ \\
        \hline
    \end{tabular}}
    \caption{\textcolor{black}{Summary of bounded-data results under different spectral assumptions (suppressing log factors).}}
    \label{tab:inference-rates}
\end{table}

By the results above, $Q\lesssim Q_{\bullet}+\frac{\N(\lambda)}{\sqrt{n}}$ and $R\lesssim R_{\bullet}+\frac{\N(\lambda)}{\sqrt{n}}$.

\subsubsection*{Polynomial decay}

If $\nu_s \le \omega s^{-\beta}$ then by Proposition \ref{prop:spectral-integral} we have $\sigma(m) \lesssim_{\beta,\omega}  m^{1/2-\beta/2}$, and by Proposition \ref{prop:effdim} we have $\N(\lambda) \lesssim_{\beta,\omega} \lambda^{-1-1/\beta}$.

\textcolor{black}{First we study $Q_{\mathrm{bd}}(n,\lambda,\eta)$ with $\eta=n^{-\xi}$. Equating the main terms while suppressing logarithmic factors gives
\[
m^{1/2-\beta/2}=\sqrt{m/(n\eta)}\iff m=n^{(1-\xi)/\beta}.
\]
This value of $m$ gives
\[
Q_{\mathrm{bd}}(n,\lambda,\eta)
\lesssim_{\beta,\omega}\lambda^{-1}n^{(1-\xi)(1-\beta)/(2\beta)}
\quad\text{up to logarithmic factors.}
\]}

Within $R_{\mathrm{bd}}(n, \lambda)$,
$
\frac{m\N(\lambda)}{\lambda^2 n}  \geq  \frac{m}{\lambda^4n^2} \iff n\geq \frac{1}{\lambda^2 \N(\lambda)}
$
which is implied by Assumption~\ref{assumption:rate-condition}.
    Equating main terms gives
$
     m^{\f 1 4}\left(\frac{\lambda^{-1-1/\beta}}{\lambda^2n}\right)^{\f 1 4} = \frac{m^{1/2-\beta/2}}{\lambda}
    \iff   \left(\frac{\lambda^{1-1/\beta}}{n}\right)^{\f{1}{1-2\beta}} = m.
$
This value of $m$ gives the bound
\begin{align*}
    R_{\mathrm{bd}} (n, \lambda) &\lesssim_{\beta,\omega } \left(\frac{m^{1/2-\beta/2}}{\lambda}  \right)
    = \frac{1}{\lambda}\left(\frac{\lambda^{1-1/\beta}}{n}\right)^{\f{1-\beta}{2-4\beta}}
    = \left(\frac{\lambda^{1-\frac{1}{\beta}+\frac{4\beta-2}{1-\beta}}}{n}\right)^{\f{1-\beta}{2-4\beta}}
    =\left\{\frac{1}{\lambda^{3+1/\beta+2/(\beta-1)}n}\right\}^{\f{1-\beta}{2-4\beta}} .
\end{align*}

\subsubsection*{Exponential decay}

If $\nu_s \le \omega \exp(-\alpha s^\gamma)$ then by Proposition \ref{prop:spectral-integral} we have $\sigma(m) \lesssim_{\,\omega,\alpha,\gamma} m^{1/2-\gamma/2} \exp(-\alpha m^\gamma/2)$, and by Proposition \ref{prop:effdim} we have $\N(\lambda) \lesssim_{\,\omega,\alpha,\gamma} \lambda^{-1}\log(1/\lambda)^{1/\gamma}$.

\textcolor{black}{For $Q_{\mathrm{bd}}(n,\lambda,\eta)$, take $\eta=n^{-\xi}$ and
$m=\{(1-\xi)\log(n)/\alpha\}^{1/\gamma}$. Then
\begin{align*}
\sqrt{\log(6/\eta)}\,\sigma(m)
&\lesssim_{\omega,\alpha,\gamma}
n^{-(1-\xi)/2}\left\{\frac{(1-\xi)\log(n)}{\alpha}\right\}^{\frac{1-\gamma}{2\gamma}}
\sqrt{\log(6n^\xi)},\\
\sqrt{\frac{m\{32+2\log_2(n)\}}{n\eta}}
&=n^{-(1-\xi)/2}\left\{\frac{(1-\xi)\log(n)}{\alpha}\right\}^{\frac{1}{2\gamma}}
\sqrt{32+2\log_2(n)}.
\end{align*}
The second display dominates up to constants, and therefore
\[
Q_{\mathrm{bd}}(n,\lambda,\eta)
\lesssim_{\omega,\alpha,\gamma}
\lambda^{-1}n^{-(1-\xi)/2}\log(n)^{1/2+1/(2\gamma)}.
\]}

Within $R_{\mathrm{bd}}(n, \lambda)$,
$
\frac{m\N(\lambda)}{\lambda^2 n}  \geq  \frac{m}{\lambda^4n^2} \iff n\geq \frac{1}{\lambda^2 \N(\lambda)}
$
which is implied by Assumption~\ref{assumption:rate-condition}.
    Equating main terms gives
$
     \left\{\frac{\lambda^{-1}\log(1/\lambda)^{1/\gamma}}{\lambda^2n}\right\}^{\f 1 4}
    \iff    \left[\frac{1}{2\alpha}\log\left\{ \frac{n}{\lambda\log(1/\lambda)^{1/\gamma}}\right\}\right]^{\f 1 \gamma} = m.$
 This value of $m$ gives the bounds
\begin{align*}
    \frac{\sigma(m)}{\lambda} &\lesssim_{\omega,\alpha,\gamma}
    \frac{1}{\lambda} \left[\frac{1}{2\alpha}\log\left\{\frac{n}{\lambda\log(1/\lambda)^{1/\gamma}}\right\}\right]^{\frac{1-\gamma}{2\gamma}} \exp\left[-\alpha \frac{1}{2\alpha}\log\left\{\frac{n}{\lambda\log(1/\lambda)^{1/\gamma}}\right\} /2\right] \\
    &=\frac{1}{\lambda} \left[\frac{1}{2\alpha}\log\left\{\frac{n}{\lambda\log(1/\lambda)^{1/\gamma}}\right\}\right]^{\frac{1-\gamma}{2\gamma}}
    \left\{\frac{n}{\lambda\log(1/\lambda)^{1/\gamma}} \right\}^{-\frac{1}{4}}, \\
    \left\{\frac{m \N(\lambda)}{n\lambda^2}\right\}^{\frac{1}{4}}
    &\lesssim_{\omega,\gamma,\alpha } \left(\frac{1}{n\lambda^2}   \left[\frac{1}{2\alpha}\log\left\{\frac{n}{\lambda\log(1/\lambda)^{1/\gamma}}\right\}\right]^{\f 1 \gamma}  \lambda^{-1}\log(1/\lambda)^{1/\gamma}   \right)^{\frac{1}{4}}.
\end{align*}
Therefore since $\gamma>0$ implies $\frac{1}{2\gamma}>\frac{1}{4\gamma}$ and $\frac{1}{2\gamma}>\frac{1-\gamma}{2\gamma}$, we conclude that
$$
R_{\mathrm{bd}}(n,\lambda) \lesssim_{\omega,\gamma,\alpha} \left[\log\left\{\frac{n}{\lambda\log(1/\lambda)^{1/\gamma}}\right\}\right]^{\frac{1}{2\gamma}}
    \left\{\frac{n\lambda^3}{\log(1/\lambda)^{1/\gamma}} \right\}^{-\frac{1}{4}}.
$$
\paragraph{\textcolor{black}{Summary.}}
We collect the results in this section to verify Table~\ref{tab:rate-all}.
Lemma~\ref{lemma:bias} gives $B\lesssim n^{1/2}\lambda^{(r-1)/2}$.

Lemma~\ref{lemma:hnorm-lb} implies $L\asymp \N(\lambda)^{1/2}-\lambda^{-1/2}$. Since $\N(\lambda)=\sigma^2\{(T+\lambda)^{-2}T,0\}=\psi(0,2)$, we appeal to Proposition~\ref{prop:eff_dim_lb}.
For polynomial decay, $\psi(0,2) \gtrsim_{\omega,\beta} \lambda^{-1-1/\beta}$. Hence, for $\beta>1$,
$L \gtrsim \lambda^{-\frac{1}{2}-\frac{1}{2\beta}}-\lambda^{-1/2}
=\lambda^{-1/2}(\lambda^{-\frac{1}{2\beta}}-1)
\gtrsim \lambda^{-\frac{1}{2}-\frac{1}{2\beta}}.$
For exponential decay,$\psi(0,2) \gtrsim_{\omega,\gamma,\alpha} \lambda^{-1}\log(\omega/\lambda)^{(1-\gamma)/\gamma}$. Hence
$L \gtrsim \lambda^{-\frac{1}{2}}\log(\omega/\lambda)^{(1-\gamma)/2\gamma}-\lambda^{-\frac{1}{2}} \gtrsim \lambda^{-\frac{1}{2}},$ suppressing log terms.

Next we analyze the condition $B\ll L$. For polynomial decay,
    $
    n\lambda^{r-1} \ll \lambda^{-1-1/\beta} \iff \lambda \ll n^{-1/(r+1/\beta)}.
    $
    For exponential decay
    $
    n\lambda^{r-1} \ll \lambda^{-1} \iff \lambda \ll n^{-1/r}.
    $

Table~\ref{tab:inference-rates} gives $Q_{\bullet},R_{\bullet}$ for various cases. For each case, we now analyze the condition $Q+R \ll L$ for which it suffices to study $ Q_{\bullet}+R_{\bullet}+\frac{\N(\lambda)}{n^{1/2}} \ll L$.

\subsubsection*{Polynomial decay, bounded data}

\textcolor{black}{First, we have
\[
Q_{\bullet}\ll L
\iff \lambda^{-2}n^{-(1-\xi)(\beta-1)/\beta}\ll\lambda^{-1-1/\beta}
\iff \lambda\gg n^{-(1-\xi)}.
\]}
Second, we have
$
R_{\bullet} \ll L \iff \{\lambda^{3+\frac{1}{\beta}+\frac{2}{\beta-1}} n\}^{\frac{1-\beta}{2\beta-1}} \ll \lambda^{-1-1/\beta} \iff \lambda \gg n^{-1}.
$
Third, we have
$
\frac{\N(\lambda)}{n^{1/2}} \ll L \iff \frac{\lambda^{-2-2/\beta}}{n} \ll \lambda^{-1-1/\beta}\iff \lambda \gg  n^{\frac{-\beta}{1+\beta}}.
$
\textcolor{black}{If $\xi<1/(\beta+1)$, then $1-\xi>\beta/(\beta+1)$, so the residual condition binds. Combining $Q_{\bullet}+R_{\bullet}+\N(\lambda)/n^{1/2}\ll L$ with $B\ll L$ gives
\[
n^{-\beta/(\beta+1)}\ll\lambda\ll n^{-\beta/(r\beta+1)}
\iff r>1.
\]}

\subsubsection*{Exponential decay, bounded data}

\textcolor{black}{First, we have $Q_{\bullet}\ll L\iff \lambda^{-2}n^{-(1-\xi)}\ll\lambda^{-1}\iff\lambda\gg n^{-(1-\xi)}$.}
Second, we have $R_{\bullet} \ll L \iff  \lambda^{-\frac{3}{2}}n^{-\frac{1}{2}} \ll \lambda^{-1} \iff \lambda \gg n^{-1}$.
Third, we have $\frac{\N(\lambda)}{n^{1/2}}\ll L \iff \frac{\lambda^{-2}}{n}\ll \lambda^{-1}\iff \lambda \gg n^{-1}$.
\textcolor{black}{The Gaussian coupling condition binds. Combining $Q_{\bullet}+R_{\bullet}+\N(\lambda)/n^{1/2}\ll L$ with $B\ll L$ gives
\[
n^{-(1-\xi)}\ll\lambda\ll n^{-1/r},
\]
which is feasible whenever $\xi<1-1/r$.}

\section{Variable width band}\label{sec:band}

We have shown via the Bahadur representation that $\sqrt{n}(\hat{f}-f_{\lambda})$ is approximated by $\sqrt{n}\mathbb{E}_n(U_i)$. We have constructed a Gaussian coupling for this partial sum with covariance $\Sigma=\mathbb{E}(U_i\otimes U_i^*)$. Therefore the standard deviation of the approximating Gaussian for $\sqrt{n}\{\hat{f}(x)-f_{\lambda}(x)\}$ is $\bk{k_x} {\Sigma k_x}^{1/2}$, which we assume is strictly positive for all $x \in S$. We propose standard error estimates $\s(x)$ and $\tilde\s (x)$. This appendix gives conditions under which the estimation error of $\s(x)$ and $\tilde s(x)$ is negligible in the sense that
\[\sup_{x \in S} \left| \frac{\s(x)}{\bk{k_x} {\Sigma k_x}^{1/2}} - 1 \right| = o_p(1),\] justifying variable width confidence bands with estimated widths (analogous to  \citealp[Condition H4]{chernozhukov2014anticoncentration}).

\paragraph{\textcolor{black}{Algorithm details.}} To begin, we expand the statement of Algorithm~\ref{algo:variable}.

\begin{algorithm}[Variable width confidence band]\label{algo:variable_long}
    Given a sample $D = \{(X_i,Y_i)\}_{i=1}^n$, a kernel $k$, and regularization parameter $\lambda>0$:
\begin{enumerate}
    \item Compute the kernel matrix $K \in \mathbb{R}^{n\times n}$ with entries $K_{ij} = k(X_i,X_j)$ and the kernel vector $K_x\in\mathbb{R}^{1\times n}$ with entries $k(x,X_i)$. Set $v_x^{\top}=K_x(K+n\lambda  I )^{-1}\in\mathbb{R}^{1\times n}$.
    \item Estimate KRR as $\hat{f}(x)=v_x^{\top}Y$ and compute the residual vector $\hat{\varepsilon} \in \mathbb{R}^n$ with entries $\hat{\varepsilon}_i=Y_i-\hat{f}(X_i)$. Set $\s^2(x) = n\norm{v_x^{\top}\diag(\hat\ep)}^2_{\bb{R}^n}.$
    \item For each bootstrap iteration,
    \begin{enumerate}
        \item draw multipliers $q\in\R^n$ from $\mathcal{N}(0,I-\boldsymbol{1}\boldsymbol{1}^{\top}/n)$, where $\boldsymbol{1}\in\R^n$ has $\boldsymbol{1}_i=1$;
        \item set $\mathfrak{B}(x)=n^{1/2}v_x^{\top}\diag(\hat\ep)q$;
        \item compute $M=\sup_{x \in S} \left|\s(x)^{-1}\mathfrak{B}(x)\right|$.
    \end{enumerate}
    \item Across bootstrap iterations, compute the \textcolor{black}{$(1-\alpha)$}-quantile, $\hat{t}_{\alpha}$, of $M$.
    \item Calculate the band $\hat C_\a(x)=\hat f(x) \pm \hat{t}_\a \cdot  n^{-1/2}\s(x) $ for $x\in S$.
\end{enumerate}
Alternatively, replace $\s^2(x)$ with $\tilde{\s}^2(x)=\E_q\{\mathfrak{B}(x)^2\}$ by averaging across iterations.
\end{algorithm}

\paragraph{\textcolor{black}{Decomposition.}} We decompose the error into numerator and denominator terms. We then lower bound the denominator under a support condition. We upper bound the numerator using arguments from Appendix \ref{sec:bahadur2}.

\begin{lemma}\label{lemma:s_decomp}
 Almost surely, the following inequalities hold:
 \begin{align*}
    \left|\frac{\s(x)}{\bk{k_x} {\Sigma k_x}^{1/2}}-1\right| &\le \bk{k_x} {\Sigma k_x}^{-1} \left\{|\s^2(x)-\tilde{\s}^2(x)|
    +
    \left|\tilde{\s}^2(x)-\bk{k_x} {\hat{\Sigma} k_x}\right|
    +
    \left|\bk{k_x} {\hat{\Sigma} k_x}-\bk{k_x} {\Sigma k_x}\right|\right\}; \\
  \left|\frac{\tilde{\s}(x)}{\bk{k_x} {\Sigma k_x}^{1/2}}-1\right| &\leq
  \bk{k_x} {\Sigma k_x}^{-1} \left\{
    \left|\tilde{\s}^2(x)-\bk{k_x} {\hat{\Sigma} k_x}\right|
    +
    \left|\bk{k_x} {\hat{\Sigma} k_x}-\bk{k_x} {\Sigma k_x}\right|\right\}.
 \end{align*}
\end{lemma}

\begin{proof}
    For $a,b>0$,
    $
    \left|\frac{a}{b}-1\right|=\frac{|a-b|}{b}=\frac{|a^2-b^2|}{(a+b)b} \leq \frac{|a^2-b^2|}{b^2}.
    $
    Therefore
    $
    \left|\frac{\s(x)}{\bk{k_x} {\Sigma k_x}^{1/2}}-1\right| \leq \frac{|\s(x)^2-\bk{k_x} {\Sigma k_x}|}{\bk{k_x} {\Sigma k_x}}.
    $
    Finally, we apply the triangle inequality.
\end{proof}

\paragraph{\textcolor{black}{High probability events.}}

\begin{lemma}\label{lemma:s_prob}
\textcolor{black}{By the randomness of the data, w.p. $1-2\eta$, $\|\hat{\Sigma}-\Sigma\|_{\HS}\leq \Delta$, where
$ \Delta=8M^2 \log(2/\eta)^2 \left\{\frac{ \N(\lambda)^{1/2}}{n^{1/2}\lambda} \vee \frac{1}{n\lambda^2}\right\}$.}
\end{lemma}

\begin{proof}
    \textcolor{black}{By Lemma~\ref{lemma:bounded-concentration-cov}, w.p. $1-2\eta$ when $n\geq 2$,}
    $$
    \snorm{\hat \Sigma - \Sigma}_{\HS}
    \le 2\log(2/\eta)^2\left\{\sqrt{\frac{a^2\sigma^2(\Sigma,0)}{n}} \vee \frac{4a^2}{n}\right\}
    \leq 8\log(2/\eta)^2\left\{\frac{M\cdot M^{1/2} \N(\lambda)^{1/2}}{n^{1/2}\lambda} \vee \frac{M^2}{n\lambda^2}\right\}
$$
since by Lemma~\ref{lemma:inference-rv-bounds}, $\sigma(\Sigma,0)\leq (\kappa\norm{f_0} + \bar\sigma)\sqrt{\N(\lambda)}\leq \sqrt{M \N(\lambda)}$, and by Lemma~\ref{lemma:inference-rv-bounds-bounded},  $a=\left(\frac{\kappa^2\norm{f_0} + \kappa\bar\sigma}{\lambda}\right)\leq M/\lambda$.
\end{proof}

\paragraph{\textcolor{black}{Denominator term.}} We restrict the support $x\in S$ via the condition $\mathbb{E}\{k(X,x)^2\}\geq\chi^2$.

\begin{lemma}\label{lemma:s0}
    If $\mathbb{E}(\ep^2|X)\geq\underline{\sigma}^2$, $\mathbb{E}\{k(X,x)^2\}\geq\chi^2$, $\lambda\leq \kappa^2$ then
$\bk{k_x} {\Sigma k_x}^{-1}\leq \frac{4\kappa^4}{\underline{\sigma}^2 \chi^2}$.
\end{lemma}

\begin{proof}
    To being, we show that, for any $g \in H$,
    $\bk{g} {\Sigma g}
    \geq \underline{\sigma}^2 (2\kappa^2)^{-2} \mathbb{E}\{g(X)^2\}.
    $
    By Lemma~\ref{lemma:krr-bahadur-covariance2}, $\bk{g} {\Sigma g} \geq \underline{\sigma}^2 \bk{g} {T_{\lambda}^{-2}T g}$. Let $(e_s,\nu_s)$ be the spectrum of $T$. In this basis,
$$
g=\sum_s\bk{g}{e_s} e_s,\quad T g=\sum_s  \nu_s \bk{g}{e_s} e_s,\quad T_{\lambda}^{-2}T g=\sum_s  \frac{\nu_s}{(\nu_s+\lambda)^2}\bk{g}{e_s} e_s.
    $$
   Using these expansions, we write
    \begin{align*}
    \bk{g} {T_{\lambda}^{-2}T g}
    &=\sum_s  \frac{\nu_s}{(\nu_s+\lambda)^2}\bk{g}{e_s}^2
    \geq (\nu_1+\lambda)^{-2} \sum_s  \nu_s \bk{g}{e_s}^2
    =(\nu_1+\lambda)^{-2}  \bk{g} {T g}.
    \end{align*}
    Since $\nu_1=\|T\|_{\op}\leq \kappa^2$ and $\lambda\leq \kappa^2$, $(\nu_1+\lambda)^{-2} \geq (2\kappa^2)^{-2}$. Moreover, $\bk{g} {T g}=\mathbb{E}\{g(X)^2\}$.

    Taking $g=k_x$,
$
\bk{k_x} {\Sigma k_x}
    \geq \underline{\sigma}^2 (2\kappa^2)^{-2} \mathbb{E}\{k(x,X)^2\}\geq \underline{\sigma}^2 (2\kappa^2)^{-2} \chi^2.
$
\end{proof}

\paragraph{\textcolor{black}{First numerator term.}} If we use $\tilde{\s}^2(x)$ as our standard error, then this term does not exist.

\begin{lemma}\label{lemma:ss}
    $
\sup_x|\s^2(x)-\tilde{\s}^2(x)|  \leq \kappa^2 \|\hat{T}_{\lambda}^{-1} \mathbb{E}_n (\hat{\ep}_ik_{X_i})\|^2.
$
\end{lemma}

\begin{proof}
    By Propositions~\ref{prop:s_tilde} and~\ref{prop:b},
    \begin{align*}
          &\s^2(x)-\tilde{\s}^2(x)
          =
   n K_x(K+n\lambda)^{-1}\diag(\hat\ep)(\boldsymbol{1}\boldsymbol{1}^{\top}/n) \diag(\hat\ep)(K+n\lambda)^{-1}K_x^{\top} \\
   &=\{\boldsymbol{1}^{\top}\diag(\hat\ep)(K+n\lambda)^{-1}K_x^{\top}\}^2
   =\{(\hat{\ep}_1,...,\hat{\ep}_n)\Phi(\Phi^*\Phi+n\lambda)^{-1}k_x\}^2 \\
   &=\{\mathbb{E}_n (\hat{\ep}_ik^*_{X_i}) \hat{T}_{\lambda}^{-1}k_x\}^2
   =\bk{\hat{T}_{\lambda}^{-1} \mathbb{E}_n (\hat{\ep}_ik_{X_i})}{k_x}^2.
    \end{align*}
    Finally appeal to the Cauchy-Schwarz inequality.
\end{proof}

\begin{lemma}\label{lemma:ss2}
  Suppose  $\|T_{\lambda}^{-1}(\hat T - T)\|_{\HS} \le \d \le \f 1 2$ and $\norm{T_{\lambda}^{-1}\bb{E}_n(k_{X_i}\ep_i)} \le \gamma$.
  Then $\|\hat{T}_{\lambda}^{-1} \mathbb{E}_n (\hat{\ep}_ik_{X_i})\|\leq 8\gamma+6\delta \|f_0\|+3\|f_{\lambda}-f_0\|$.
\end{lemma}

\begin{proof}
    By the triangle inequality and Lemma~\ref{lemma:first-order-denom},
  \begin{align*}
    &\|\hat{T}_{\lambda}^{-1} \mathbb{E}_n (\hat{\ep}_ik_{X_i})\|
    \leq
    \|(\hat{T}_{\lambda}^{-1}-T_{\lambda}^{-1}) \mathbb{E}_n (\hat{\ep}_ik_{X_i})\|
    +
    \|T_{\lambda}^{-1}\mathbb{E}_n (\hat{\ep}_ik_{X_i})\| \\
    &\leq 2\delta \|T_{\lambda}^{-1}\mathbb{E}_n (\hat{\ep}_ik_{X_i})\|+\|T_{\lambda}^{-1}\mathbb{E}_n (\hat{\ep}_ik_{X_i})\| \leq 2\|T_{\lambda}^{-1}\mathbb{E}_n (\hat{\ep}_ik_{X_i})\|.
  \end{align*}
Since $\hat{\ep}_ik_{X_i}=\{\ep_i+f_0(X_i)-\hat{f}(X_i)\}k_{X_i}=\ep_ik_{X_i}+T_i(f_0-\hat{f})$,
$$
\|T_{\lambda}^{-1}\mathbb{E}_n (\hat{\ep}_ik_{X_i})\|
\leq
\|T_{\lambda}^{-1}\mathbb{E}_n (\ep_ik_{X_i})\|
+\|T_{\lambda}^{-1}\mathbb{E}_n (T_i)(f_0-\hat{f})\|\leq \gamma+\|T_{\lambda}^{-1}\mathbb{E}_n (T_i)(f_0-\hat{f})\|.
$$
  We bound the latter term by $\|f_0-\hat{f}\|\cdot \|T_{\lambda}^{-1}\mathbb{E}_n (T_i)\|_{\op}$. Then
 $$
  \|T_{\lambda}^{-1}\mathbb{E}_n (T_i)\|_{\op}
  \leq \|T_{\lambda}^{-1}\mathbb{E}_n (T_i-T)\|_{\op}
  + \|T_{\lambda}^{-1}T\|_{\op}
  \leq \delta+1 \leq 3/2.
 $$
 In summary, $\|\hat{T}_{\lambda}^{-1} \mathbb{E}_n (\hat{\ep}_ik_{X_i})\|\leq 2\gamma+3\|\hat{f}-f_0\|$. By the triangle inequality and Lemma~\ref{lemma:delta2_latter}, $\|\hat{f}-f_0\|\leq 2(\gamma+\delta\|f_0\|)+\|f_{\lambda}-f_0\|$.
\end{proof}

\begin{lemma}\label{lemma:s1}
    Suppose  $\|T_{\lambda}^{-1}(\hat T - T)\|_{\HS} \le \d \le \f 1 2$ and $\norm{T_{\lambda}^{-1}\bb{E}_n(k_{X_i}\ep_i)} \le \gamma$. Then
    $
\sup_x|\s^2(x)-\tilde{\s}^2(x)|  \leq 3\kappa^2 (64\gamma^2+36\delta^2\|f_0\|^2+9\|f_{\lambda}-f_0\|^2).
$
\end{lemma}

\begin{proof}
    We use Lemmas~\ref{lemma:ss} and~\ref{lemma:ss2} with $(a+b+c)^2\leq 3(a^2+b^2+c^2)$.
\end{proof}

\paragraph{\textcolor{black}{Second numerator term.}}

\begin{lemma}\label{lemma:s2}
    Suppose \textcolor{black}{$|\sbk{k_x}{(\hat{\Sigma} - \Sigma)k_x}| \leq \tilde\delta$} and the following events hold:
    $$
\mathbb{E}_n\|  T_{\lambda}^{-1} T_i\|_{\HS}^2  \leq \delta'
    \quad \mathbb{E}_n\|  T_{\lambda}^{-1} \ep_i k_{X_i}\|^2 \leq \gamma'
    \quad \|T_{\lambda}^{-1}(\hat T - T)\|_{\HS} \le \d \le \f 1 2
    \quad \norm{T_{\lambda}^{-1}\bb{E}_n(k_{X_i}\ep_i)} \le \gamma.
    $$
    From these, define $\Delta'=32 \delta^2\gamma'+(96\gamma^2+288\snorm{f_0}^2\d^2)\delta'$. Then
    $$
    \sup_x \left|\tilde{\s}^2(x) - \bk{k_x} {\hat{\Sigma} k_x}\right| \leq
    \kappa^2 \Delta'+2\kappa^2 \sqrt{\Delta'}\textcolor{black}{\sqrt{\sbk{k_x}{\Sigma k_x}+\tilde\delta}}
    $$
\end{lemma}

\begin{proof}
We proceed in steps.

\begin{enumerate}
    \item  Conditional on data, $\bk{k_x} {\hat{\Sigma} k_x}=\mathbb{E}_h(\bk{Z_{\BS}}{k_x}^2)=\mathbb{E}_h\{Z_{\BS}(x)^2\}$. Therefore
   \begin{align*}
&\left|\tilde{\s}^2(x) - \bk{k_x} {\hat{\Sigma} k_x}\right|
   =\left|\mathbb{E}_h\{\BS(x)^2-Z_{\BS}(x)^2\} \right|\\
   &=  \left|\mathbb{E}_h\left[ \{\BS(x)-Z_{\BS}(x)\}^2 + 2Z_{\BS}(x)\{\BS(x)-Z_{\BS}(x)\}\right]\right| \\
    &\leq \mathbb{E}_h[ \{\BS(x)-Z_{\BS}(x)\}^2] +  2\mathbb{E}_h\{|Z_{\BS}(x)|\cdot |\BS(x)-Z_{\BS}(x)|\} \\
   &\leq \kappa^2 \mathbb{E}_h(\|\BS-Z_{\BS}\|^2)+2\kappa \textcolor{black}{\{\mathbb{E}_h(|Z_{\BS}(x)|^2)\}^{1/2}}\{\mathbb{E}_h(\|\BS-Z_{\BS}\|^2)\}^{1/2}.
  \end{align*}

    \item By Lemma~\ref{lemma:decomp_bahadur2}, for $\Delta_1$ and $\Delta_2$ defined there,
    \begin{align*}
        \mathbb{E}_h(\|\BS-Z_{\BS}\|^2)
        &=
        \mathbb{E}_h(\|\Delta_1+\Delta_2\|^2)
        \leq 2\left\{\mathbb{E}_h(\|\Delta_1\|^2)+\mathbb{E}_h(\|\Delta_2\|^2)\right\}.
    \end{align*}

Consider the former term. As argued in Lemma~\ref{lemma:delta1}, $\|\Delta_1\|\leq 2\delta \norm{T_{\lambda}^{-1}u_1}$ for $u_1$ defined therein. Moreover, $\bb{E}_h\norm{T_{\lambda}^{-1} u_1}^2\leq 4(\gamma'+\|\hat{f}-f_0\|^2\delta')$ by the arguments given there. In summary, $\mathbb{E}_h(\|\Delta_1\|^2)\leq 16\delta^2(\gamma'+\|\hat{f}-f_0\|^2\delta')$.

Consider the latter term. As argued in Lemma~\ref{lemma:delta2}, $ \|\Delta_2\|\leq \left\|T_{\lambda}^{-1}u_2\right\|_{\HS}\cdot \|\hat{f}-f_{\lambda}\|$ for $u_2$ defined in the lemma statement. Moreover, $\bb{E}_h\norm{T_{\lambda}^{-1} u}_{\HS}^2\leq 2\delta'$ by the arguments given there. In summary, $\mathbb{E}_h(\|\Delta_2\|^2)\leq 2\delta'\|\hat{f}-f_{\lambda}\|^2$.

Collecting results and appealing to Lemma~\ref{lemma:delta2_latter},
\begin{align*}
&\mathbb{E}_h(\|\BS-Z_{\BS}\|^2)
\leq
2\left\{16\delta^2(\gamma'+\|\hat{f}-f_0\|^2\delta')+2\delta'\|\hat{f}-f_{\lambda}\|^2\right\} \\
&\leq 2\left[16\delta^2\{\gamma'+4(\gamma +\snorm{f_0})^2\delta'\}+2\delta'4(\gamma + \d\snorm{f_0})^2\right]\\
&= 32\left[\delta^2\{\gamma'+8(\gamma^2 +\snorm{f_0}^2)\delta'\}+\delta'(\gamma^2 + \d^2\snorm{f_0}^2)\right]
\leq 32 \delta^2\gamma'+(96\gamma^2+288\snorm{f_0}^2\d^2)\delta' = \Delta'.
\end{align*}
    \item \textcolor{black}{Finally,
    \begin{align*}
        \mathbb{E}_h(|Z_{\BS}(x)|^2)
        =\bk{k_x}{\hat \Sigma k_x}
        = \bk{k_x}{\Sigma k_x} + \bk{k_x}{(\hat\Sigma - \Sigma) k_x}
        = \bk{k_x}{\Sigma k_x} + \tilde\delta. \qquad  \qedhere
    \end{align*}}
\end{enumerate}
\end{proof}

\paragraph{\textcolor{black}{Third numerator term.}}

\begin{lemma}\label{lemma:s3}
   \textcolor{black}{If $\|\hat{\Sigma} - \Sigma\|_{\HS}\leq \Delta$ then
    $
    \sup_x\left|\bk{k_x} {\hat{\Sigma} k_x}-\bk{k_x} {\Sigma k_x}\right|\leq \kappa^2\Delta.
    $}
\end{lemma}

\begin{proof}
   By the Cauchy-Schwarz inequality,
        $$
        \left|\bk{k_x} {(\hat{\Sigma} - \Sigma) k_x}\right|
        \leq \|k_x\| \|(\hat{\Sigma} - \Sigma) k_x\|
        \leq \|k_x\|^2 \|\hat{\Sigma} - \Sigma\|_{\op}. \qedhere
        $$
\end{proof}

\paragraph{\textcolor{black}{Main result.}}

\begin{theorem}
    Suppose Assumption~\ref{assumption:rate-condition} holds, $\mathbb{E}(\ep^2|X)\geq\underline{\sigma}^2$, $\underline{S}=[x\in S: \mathbb{E}\{k(X,x)^2\}\geq\chi^2]$, and $\lambda\leq \kappa^2$.
    \textcolor{black}{Suppose $\frac{\N(\lambda)}{n\lambda^2}=o(1)$.}
  Then $\sup_{x\in \underline{S}}\frac{\tilde{\s}(x)}{\langle k_x,\Sigma k_x \rangle^{1/2}}=1+o_p(1)$ and
  $\sup_{x\in \underline{S}}\frac{\langle k_x,\Sigma k_x \rangle^{1/2}}{\tilde{\s}(x)}=1+o_p(1)$.
    If in addition $\|f_{\lambda}-f_0\|=o(1)$ then the same conclusions hold for $\s(x)$ instead of $\tilde{\s}(x)$.
\end{theorem}

\begin{proof}
    We prove the second result. The first follows from the same argument and bounds.
    \begin{enumerate}
        \item We prove $\sup_{x\in S}\left|\frac{\s(x)}{\bk{k_x} {\Sigma k_x}^{1/2}}-1\right|=o_p(1)$.  By Lemma~\ref{lemma:s_decomp}, it suffices to study
        $$
          \bk{k_x} {\Sigma k_x}^{-1} \left\{|\s^2(x)-\tilde{\s}^2(x)|
    +
    \left|\tilde{\s}^2(x)-\bk{k_x} {\hat{\Sigma} k_x}\right|
    +
    \left|\bk{k_x} {\hat{\Sigma} k_x}-\bk{k_x} {\Sigma k_x}\right|\right\}.
        $$
        By Lemma~\ref{lemma:s0}, the initial factor is $O(1)$. By Lemma~\ref{lemma:s1}, the first term is $O(\gamma^2+\delta^2+\|f_{\lambda}-f_0\|^2)$. \textcolor{black}{By Lemma~\ref{lemma:s2} and Lemma~\ref{lemma:s3}, the latter of which shows $\tilde \delta \le \kappa^2\Delta$ the second term is $O\{\Delta'+\sqrt{\Delta'}\sqrt{1 + \Delta}\}$. By Lemma~\ref{lemma:s3}, the third term is $O(\Delta)$.}

Therefore, \textcolor{black}{omitting logarithmic factors}, it suffices to show $\gamma^2+\delta^2=o_p(1)$, $\|f_{\lambda}-f_0\|^2=o(1)$, $\Delta=o_p(1)$, and $\Delta'=o_p(1)$. Since
        $\Delta'=O\{\delta^2\gamma'+(\gamma^2+\delta^2)\delta'\}$, the first condition does not bind. The second holds by hypothesis.

        \textcolor{black}{For $\Delta=o_p(1)$, by Lemma \ref{lemma:s_prob},} we require $\left\{\frac{ \N(\lambda)^{1/2}}{n^{1/2}\lambda} \vee \frac{1}{n\lambda^2}\right\}=o(1)$. Assumption~\ref{assumption:rate-condition} implies the former term dominates, so the condition simplifies to $\frac{\N(\lambda)}{n\lambda^2}=o(1)$.

        \textcolor{black}{For $\Delta'=o_p(1)$, we require by Lemmas~\ref{lemma:numerator-concentration},~\ref{lemma:denominator-concentration},~\ref{lemma:high1}, and~\ref{lemma:high2},
        $$
\{\delta^2\gamma'+(\gamma^2+\delta^2)\delta'\}
\lesssim
\left\{\sqrt{\frac{\N(\lambda)}{n}} \vee \frac{1}{n\lambda}\right\}^2
\cdot \left[\N(\lambda)+\left\{\frac{1}{n\lambda^2}+\sqrt{\frac{\N(\lambda)}{n\lambda^2}}\right\}\right]
=o(1).
$$
Assumption~\ref{assumption:rate-condition} implies $\sqrt{\frac{\N(\lambda)}{n}}$ is the dominant term in the first factor and $\N(\lambda)$ is the dominant term in the middle factor. Therefore the condition simplifies to $\frac{\N(\lambda)^{2}}{n}=o(1)$.}
\textcolor{black}{Since $\N(\lambda) = O(\lambda^{-2})$, the sufficient condition $\frac{\N(\lambda)}{n\lambda^2}=o(1)$ covers both cases.}
        \item Lighten notation as $\sigma(x)=\langle k_x,\Sigma k_x \rangle^{1/2}$. We have shown that, for all $x$, $\left|\frac{\s(x)}{\sigma(x)}-1\right|=o_p(1)$, which clearly implies the same for $\left|\frac{\sigma(x)}{\s(x)}-1\right|$. To finish the argument, write
$$
\frac{\s(x)}{\sigma(x)}=1+\left\{\frac{\s(x)}{\sigma(x)}-1\right\},\quad \frac{\sigma(x)}{\s(x)}=1+\left\{\frac{\sigma(x)}{\s(x)}-1\right\}.\qedhere
$$
    \end{enumerate}

\end{proof}

\end{document}